\newcommand{\C}{\mathbb{C}}
\newcommand{\Z}{\mathbb{Z}}
\newcommand{\T}{\mathbb{T}}
\renewcommand{\S}{\mathbb{S}}
\renewcommand{\P}{\mathbb{P}}
\newcommand{\R}{\mathbb{R}}
\newcommand{\norm}[1]{\left\lVert#1\right\rVert}
\newcommand{\ip}[2]{\left\langle #1,#2\right\rangle}
\newcommand{\arsinh}[0]{\text{arsinh}}
\newcommand{\tr}[0]{\text{tr}}
\newcommand{\Tr}[0]{\text{Tr}}
\newcommand{\Vol}[0]{\text{Vol}}
\newcommand{\diam}{\text{diam}}
\newcommand{\n}{\nabla}
\newcommand{\on}{\overline{\nu}}
\newcommand{\ob}{\overline{\beta}}
\newcommand{\al}{\alpha}
\newcommand{\oz}{\overline{z}}
\newcommand{\ow}{\overline{w}}
\newcommand{\adj}{\text{adj}}
\newcommand{\osc}{\text{osc}}
\newtheorem{Thm}{Theorem}[section]
\newtheorem{Conj}{Conjecture}
\newtheorem{Quest}{Question}
\newtheorem{Lem}[Thm]{Lemma}
\newtheorem{Cor}[Thm]{Corollary}
\newtheorem{Prop}[Thm]{Proposition}
\theoremstyle{definition}
\newtheorem{Def}[Thm]{Definition}
\newtheorem{Not}[Thm]{Notation}
\newtheorem{Rem}[Thm]{Remark}
\newtheorem{assum}[Thm]{Assumption}
\numberwithin{equation}{section}
\title[Geodesics on a K3]{Geodesics on a K3 Surface near the orbifold limit}
\author{J\o rgen Olsen Lye}
\address{Institut f\"{u}r Differentialgeometrie, Leibniz Universit\"at Hannover}
\email{joergen.lye@math.uni-hannover.de}
\begin{document}
\pagestyle{plain}

\subjclass[2020]{Primary 53C21; Secondary 53C22, 53C26\\ \indent \keywordsname : Calabi-Yau, Complex Monge-Amp\`{e}re, Closed Geodesics, Hyperk\"{a}hler.}
\phantomsection

\begin{abstract}
This article studies Kummer K3 surfaces close to the orbifold limit. We improve upon  estimates for the Calabi-Yau metrics  due to R. Kobayashi. As an application, we study stable closed geodesics. We use the metric estimates to show how there are generally restrictions on the existence of such geodesics. We also show how there can exist stable, closed geodesics in some highly symmetric circumstances due to hyperk\"{a}hler identities.
\end{abstract}

\maketitle

\tableofcontents

\section{Introduction}
Einstein metrics are interesting objects both from a physics and a geometry perspective. By postulating enough symmetry, non-compact, examples have been found in both Lorenzian and Riemannian geometry. For instance the solutions due to Schwarzschild \cite{SSMetric}, Eguchi-Hanson \cite{EH79}, Calabi \cite{Calabi}, Gibbons-Hawkings \cite{GH}, and Kronheimer \cite{Kronheimer1}, \cite{Kronheimer2}, to name just a few. These are so-called gravitational instantons. On compact manifolds, very few examples are known explicitly. An idea dating back to Page \cite{Page}  and Gibbons-Pope \cite{GibPope} is to desingularize certain orbifolds by blowing up the singular points and gluing in gravitational instantons. This procedure produces a family of almost-Einstein metrics with concentrated curvature. By an implicit function argument,\cite{JoyceSpin},  \cite{JoyceG2}, \cite{Foscolo},  by Twistor methods, \cite{SingdeBr}, \cite{Top}, or by solving the Monge-Amp\`{e}re equation, \cite{DonaldsonKummer}, \cite{KT}, \cite{Kob90}, one can perturb the given metric to an Einstein metric. Since the original metric was close to solving the Einstein equation, one could hope that the metric is close to the solution of the Einstein equation. This turns out to be the case in several instances. This in turn allows one to study the geometry of the unkown Einstein metric by studying its known approximation. 

As the size of the exceptional divisor in the blow-up tends to 0, both the patchwork metric and the Einstein metric degenerate to an orbifold metric. In fact, work by Bando, Kasue, and Nakajima, \cite{Bandobubble},  \cite{NakCollapse}, \cite{Bakana}, and Anderson \cite{AndersonCollapse} show that a converse is true; a sequence of compact 4-manifolds and Einstein metrics with volume, diameter, and Euler characteristic bounds have convergent subsequences. The limit spaces have at worst isolated  orbifold singularities. The orbifold limits are the added points of a compactification of the moduli space of Einstein metrics \cite{AndersonCollapse}, \cite{NakCollapse}, \cite{Bakana}. This makes understanding the orbifold limit more important.

In this paper, we will study the oldest and best-known example, namely the Kummer construction of a K3 surface. In this case, one can go beyond the general convergence statements of Nakajima and Anderson, and describe in more detail exactly how the Einstein metric degenerates. This program was initially carried out by Todorov and R. Kobayashi, \cite{KT}, \cite{Kob90}. We have chosen to go through the arguments of Kobayashi in great detail in the hope that the present paper can serve as an introduction to this fascinating topic.

As an application, we study stable, closed geodesics on Ricci-flat Kummer K3 surfaces close to the orbifold limit. The lengths of closed geodesics on a Riemannian manifold is a much studied geometric quantity (see \cite{Huber}, \cite{DuistGuil}, \cite{deVerdiere1}, \cite{deVerdiere2}, and \cite{Colin} for some highlights).  For generic metrics it has been shown \cite[2. Corollary]{Rademacher} that there are infinitely many geometrically distinct, closed geodesics. The same holds for an arbitrary metric if one imposes mild topological assumptions (see \cite[Theorem 4]{GM69} and \cite[Theorem (2nd)]{VPS76}). The topological conditions are fulfilled by all compact Calabi-Yau manifolds. We propose to restrict attention to the lengths of \textit{stable}, closed geodesics.
\begin{Def}
Let $(M,g)$ be a Riemannian manifold. A closed geodesic $\gamma\colon \S^1\to M$ is said to be \textbf{stable} if $\delta^2E_{\gamma}\geq 0$. Written out, this means
\begin{equation}
\delta^2E_{\gamma}(\xi,\xi)=\int_{\S^1} \vert \nabla_{\dot{\gamma}} \xi\vert^2 -\ip{R(\dot{\gamma},\xi)\xi,\dot{\gamma}}\, dt\geq 0
\label{eq:Energy}
\end{equation}
for all vector fields $\xi$ along $\gamma$, where $R(U,V)\coloneqq \nabla_U \nabla_V -\nabla_V\nabla_U-\nabla_{[U,V]}$ is the Riemann tensor.
\end{Def}
 Consider the number $\mathcal{N}(L)$ of stable, closed geodesics of length at most $L$, where one counts families of geodesics  as a single geodesic. If the manifold $(M,g)$ is compact and real analytic then it is a consequence of \cite[1.2. Proposition]{SS11} that $\mathcal{N}(L)$ is finite for any $L\geq 0$. For stable geodesics one has the following trichotomy based on curvature. 
\begin{Thm}[{\cite{Myers},\cite{Mar69}}]
\label{Thm:Comparison}
Let $(M,g)$ be a compact, connected, real-analytic Riemannian manifold of real dimension $n$. Let $\mathcal{N}(L)$ be as above. Then we have the following asymptotic behaviour as $L\to \infty$.
\begin{itemize}
\item If $Ric\geq \kappa (n-1)$ for some $\kappa>0$, then $\mathcal{N}(L)$ is constant for $L>\frac{\pi}{\sqrt{\kappa}}$. 
\item If the sectional curvature vanishes, then $\mathcal{N}(L)\sim c(M) L^n$ for some constant $c(M)>0$ depending on the manifold.
\item If the sectional curvature is negative everywhere, then $\mathcal{N}(L)\sim \frac{e^{c(M)L}}{c(M)L}$ for some constant $c(M)>0$ depending on the manifold.  
\end{itemize}
\end{Thm}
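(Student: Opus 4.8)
The three cases are governed by a single mechanism: the sign of the curvature term $\ip{R(\dot\gamma,\xi)\xi}{\dot\gamma}$ in \eqref{eq:Energy} decides stability, after which counting \emph{stable} closed geodesics reduces to counting \emph{all} closed geodesics, to which a classical enumeration applies. So the plan is, in each regime, first to settle which closed geodesics are stable, and then to feed the resulting count into a known asymptotic.

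For the positive case I would run Myers' argument at the level of the second variation. Fix a unit-speed closed geodesic $\gamma$ of length $\ell$ and a parallel orthonormal frame $e_1,\dots,e_{n-1}$ normal to $\dot\gamma$, and test \eqref{eq:Energy} with the fields $\xi_i=\sin(\pi t/\ell)\,e_i$. Summing over $i$ and using $\sum_i\ip{R(\dot\gamma,e_i)e_i}{\dot\gamma}=\mathrm{Ric}(\dot\gamma,\dot\gamma)\geq(n-1)\kappa$ gives
\[
\sum_{i=1}^{n-1}\delta^2E_\gamma(\xi_i,\xi_i)\ \leq\ (n-1)\!\int_0^\ell\Big(\tfrac{\pi}{\ell}\Big)^2\cos^2\!\tfrac{\pi t}{\ell}-\kappa\sin^2\!\tfrac{\pi t}{\ell}\,dt\ =\ (n-1)\Big(\tfrac{\pi^2}{2\ell}-\tfrac{\kappa\ell}{2}\Big),
\]
which is strictly negative exactly when $\ell>\pi/\sqrt{\kappa}$. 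Each $\xi_i$ vanishes at the base point $\gamma(0)=\gamma(\ell)$, so it is an admissible periodic variation (and strict negativity persists after smoothing the corner), whence some $\xi_i$ destabilizes $\gamma$. Thus no stable closed geodesic has length exceeding $\pi/\sqrt{\kappa}$, so $\mathcal N(L)=\mathcal N(\pi/\sqrt{\kappa})$ is constant for $L>\pi/\sqrt{\kappa}$; finiteness of this value is the already-cited \cite{SS11}.

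In the flat and negatively curved cases stability is automatic. When the sectional curvature vanishes the curvature term in \eqref{eq:Energy} drops out and $\delta^2E_\gamma(\xi,\xi)=\int_{\S^1}\abs{\nabla_{\dot\gamma}\xi}^2\,dt\geq0$; when it is everywhere negative the term satisfies $\ip{R(\dot\gamma,\xi)\xi}{\dot\gamma}=K(\dot\gamma,\xi)\,\abs{\dot\gamma\wedge\xi}^2\leq0$, so again $\delta^2E_\gamma\geq0$. Hence \emph{every} closed geodesic is stable and $\mathcal N(L)$ counts all closed geodesics of length $\leq L$ (one per family). For the flat case I would invoke Bieberbach's theorem to reduce to a torus $\R^n/\Lambda$ finitely covered by $M$, identify free homotopy classes with lattice vectors of matching length, and count: the number of $\lambda\in\Lambda$ with $\abs\lambda\leq L$ is $\sim\omega_n L^n/\mathrm{covol}(\Lambda)$, giving $\mathcal N(L)\sim c(M)L^n$ (a finite holonomy group and the primitivity convention alter only the constant). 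For the negative case the stability reduction leaves exactly the count of closed geodesics of length $\leq L$ on a compact negatively curved manifold, and I would quote Margulis' asymptotic \cite{Mar69}, $\mathcal N(L)\sim e^{hL}/(hL)$ with $h$ the topological entropy of the geodesic flow, which is the displayed formula with $c(M)=h$.

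The easy direction is the positive case, which is self-contained. The main obstacle is that the substance of the negatively curved case is the Margulis asymptotic, a genuinely dynamical theorem that I would cite rather than reprove. A secondary point requiring care is matching the counting conventions: that the unique-closed-geodesic-per-free-homotopy-class property (Cartan--Hadamard) and the ``count families as one'' convention line up with the conjugacy-class counts underlying both the lattice estimate and Margulis' theorem, and that passing from a flat torus to a general compact flat manifold changes only the constant $c(M)$ and not the exponent $n$.
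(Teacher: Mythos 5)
Your proposal is correct and follows essentially the same route the paper indicates: the positive case via the second-variation (Bonnet--Myers) argument showing instability of long geodesics, the flat case by reducing to a lattice count on a flat torus, and the negative case by observing that nonpositive curvature makes every closed geodesic stable and then quoting Margulis' asymptotic. Your added care about the parallel-frame corner, the Bieberbach reduction, and the primitivity/family conventions are refinements of, not departures from, the paper's sketch.
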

The hardest part of the above statement is the negative curvature case, which is \cite{Mar69}, \cite[Equation 6.87]{Mar00}. The positive case follows by the proof of the Bonnet-Myers theorem, and the flat case is a direct computation on a flat torus.

Theorem \ref{Thm:Comparison} is an example of a comparison geometry, and a natural question is whether or not one can replace sectional curvature by Ricci-curvature. This motivates the following question.

\begin{Quest}
Let $(M,g)$ be a compact, connected, Ricci-flat manifold of dimension $n$. Is it true that $\mathcal{N}(L)\sim c(M)L^n$ for some constant $c(M)>0$?
\end{Quest}

\begin{Rem}
We do not know what happens for $Ric<0$. Our guess is that this condition is too weak, seeing how (in light of \cite[Theorem A]{Loh94}) the condition $Ric<0$ gives absolutely no information about the underlying manifold in dimensions $n\geq 3$.
\end{Rem}

If one additionally assumes that the manifold is K\"{a}hler, then P. Gao and  M. Douglas have  put forward physics-based arguments in \cite{GD13} for why the answer to the above question should be yes.
\begin{Conj}[\cite{GD13}]
\label{Conj}
Any compact, Ricci-flat Calabi-Yau manifold\footnote{Our definition of a Calabi-Yau manifold is a K\"{a}hler manifold $(X,g)$ with vanishing first Chern class $c_1(X)=0$ and trivial first cohomology group, $H^1(X;\R)=0$.} $(X,\tilde{g})$ has stable, closed, non-constant geodesics. In fact, if the manifold is of real dimension $n$ then $\mathcal{N}(L)\sim C(X)L^n$ for some constant $C(X)>0$. 
\end{Conj}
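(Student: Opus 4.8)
The conjecture is presumably out of reach in full generality, so I would aim to verify it for the Kummer $K3$ surfaces studied here, where the metric estimates give quantitative control near the orbifold limit. The guiding idea is that as the exceptional divisors shrink, the Calabi-Yau metric $\tilde g$ converges to the flat orbifold metric on $T^4/\Z_2$, and on a flat $T^4$ the closed geodesics are exactly the projections of straight lines. These come in $(n-1)$-parameter families (translations transverse to the velocity); each family is stable because $R\equiv 0$ forces $\delta^2E_{\gamma}(\xi,\xi)=\int_{\S^1}\abs{\n_{\dot\gamma}\xi}^2\,dt\geq 0$, and a direct count gives the flat asymptotics $\mathcal N(L)\sim cL^n$ of Theorem \ref{Thm:Comparison}. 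The plan is therefore to descend these families to the orbifold, perturb them to genuine closed geodesics of $\tilde g$, and show both that the perturbation preserves stability and that enough families survive to reproduce the $L^n$ asymptotics.

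The first step is persistence. I would phrase the closed-geodesic problem as finding critical points of the energy $E$ on the free loop space, equivalently as solving the periodic geodesic ODE, and apply an implicit-function / Morse-Bott argument. The obstruction to a naive implicit function theorem is that the flat families are degenerate: the translation Jacobi fields lie in the kernel of the second variation. I would quotient by the torus of translations (and by $\S^1$-reparametrization) so that the reduced problem is nondegenerate, and then use the $C^k$ closeness of $\tilde g$ to the flat metric supplied by the improved Kobayashi estimates to conclude that each \emph{bulk} family, i.e.\ each family whose representatives avoid the glued-in Eguchi-Hanson regions, deforms to a nearby closed $\tilde g$-geodesic. Geodesics that enter the shrinking exceptional regions must be excluded or treated separately, and one must check that they form a negligible fraction of the count.

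The heart of the matter is stability. For a perturbed geodesic $\gamma_\epsilon$ the index form is strictly positive on oscillatory variations, so stability is decided on the near-kernel directions coming from the former translation Jacobi fields $\xi$. For these $\n_{\dot\gamma}\xi$ is of higher order, so to leading order
\begin{equation}
\delta^2E_{\gamma_\epsilon}(\xi,\xi)\approx -\int_{\S^1}\ip{R(\dot\gamma,\xi)\xi}{\dot\gamma}\,dt,
\label{eq:curvterm}
\end{equation}
and stability becomes the requirement that the averaged sectional curvature in the plane spanned by velocity and transverse direction be nonpositive. Here I would exploit the hyperk\"{a}hler structure: Ricci-flatness gives the pointwise identity $\sum_{i=1}^{3}\ip{R(\dot\gamma,e_i)e_i}{\dot\gamma}=\mathrm{Ric}(\dot\gamma,\dot\gamma)=0$ for a transverse orthonormal frame $\{e_i\}$, so the three curvature integrals of the type appearing in \eqref{eq:curvterm} sum to zero. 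When the geodesic is aligned with the hyperk\"{a}hler frame in a sufficiently symmetric configuration, the three integrals are forced to coincide and hence each vanishes, yielding $\delta^2E_{\gamma_\epsilon}\geq 0$; this is the mechanism behind the symmetric existence statements. In the absence of such symmetry the three integrals can carry opposite signs, so some transverse direction acquires strictly positive average curvature and destabilizes the geodesic, which is the source of the general restrictions.

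I expect the main obstacle to be precisely the sign in \eqref{eq:curvterm} away from symmetric configurations, together with the passage from existence to the sharp asymptotic $\mathcal N(L)\sim C(X)L^n$. Establishing the asymptotic requires uniform, not merely perturbative, control: a positive density of translation families must survive with stability intact, with no spurious stable geodesics appearing, all uniformly as $L\to\infty$ while the gluing parameter is held fixed but small. Since the curvature is concentrated near the exceptional divisors and the hyperk\"{a}hler cancellation is delicate, I anticipate that the honest outcome is a dichotomy rather than a clean proof of the conjecture: stable families persist when the orbifold symmetry is compatible with the hyperk\"{a}hler triple, and are obstructed otherwise, so that the $L^n$ count of the flat limit is generally not inherited by $\tilde g$.
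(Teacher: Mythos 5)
The statement you were asked to prove is a \emph{conjecture}, attributed to \cite{GD13}; the paper does not prove it, and it remains open. What the paper actually establishes are partial results in both directions: no-go theorems (Theorems \ref{Thm:NoGo1} and \ref{Cor:NoGo}) forcing stable, closed geodesics away from the exceptional divisor and out of the Eguchi-Hanson patches, and a symmetry-based result (Theorem \ref{Thm:StableGeod}) producing totally geodesic tori containing \emph{points} at which the full Riemann tensor of $(X,\tilde{g})$ vanishes. Your closing paragraph, anticipating restrictions and a conditional picture rather than a proof, is therefore in the right spirit. However, the strategy you sketch for the K3 case contains a gap that is fatal as stated.

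The gap is that your stability criterion is incompatible with the rigidity theorem of Bourguignon and Yau, recorded in the paper as Theorem \ref{Thm:BY}: on a hyperk\"{a}hler 4-manifold (in particular on any K3 surface, the very case you propose to study), a non-constant closed geodesic is stable if and only if the \emph{entire} Riemann curvature tensor vanishes at \emph{every} point of the geodesic. Stability is thus not decided by the sign of the averaged curvature term in your leading-order second-variation formula; it is a pointwise condition on all curvature components along the loop. Your proposed mechanism --- Ricci-flatness makes three transverse curvature integrals sum to zero, symmetry forces them to coincide, hence each vanishes --- controls only the integrals of three specific components, which is far weaker than the required pointwise vanishing of the full tensor. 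Moreover, since the Ricci-flat metric $\tilde{g}$ is real-analytic and $X$ is not flat, the curvature cannot vanish on any open set; a geodesic obtained by perturbing a generic flat family therefore passes through points of non-zero curvature and is automatically \emph{unstable} by Theorem \ref{Thm:BY}. This kills the persistence-plus-counting scheme: no positive density of perturbed translation families can survive with stability intact, so the asymptotics $\mathcal{N}(L)\sim C(X)L^n$ cannot be obtained this way. This rigidity is exactly why the paper does not perturb flat geodesics at all, but instead uses order-4 holomorphic isometries: Theorem \ref{Thm:IsomThm} shows the curvature at a fixed point is determined by the Gauss curvature of the fixed totally geodesic torus, and Gauss-Bonnet then yields isolated points (not closed geodesics) of vanishing curvature; even the flatness of those tori requires an additional, unverified hypothesis (Theorem \ref{Thm:FlatTori}). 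Your proposal, read against the paper, conflates the averaged index-form condition (adequate in the general Riemannian setting of Theorem \ref{Thm:Comparison}) with the much stronger hyperk\"{a}hler stability criterion of \cite{BY73} that governs the K3 case.
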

When Douglas and Gao published their work, there were no examples of a single stable, closed geodesic on a Ricci-flat, compact Calabi-Yau manifold. They however suggest as a starting point to investigate the conjecture on a Kummer K3 surface (the construction will be recalled in Section \ref{Section:Kummer}). In this article, we follow their advice and derive some constraints on stable, closed geodesics on Kummer K3 surfaces. Additionally, we show that the Riemann curvature tensor vanishes at certain points if the K3 surface has enough symmetry.  Roughly speaking, a Kummer K3  surface is the minimal resolution of the orbifold $\T^4/\{\pm 1\}$ equipped with a K\"{a}hler metric $g$ which is Eguchi-Hanson near any blown-up point, flat far away from the exceptional divisor, and a gluing of these two in between. We shall refer to this metric $g$ as the patchwork metric. By the Calabi-Yau theorem, \ref{Thm:CY}, there exists a unique Ricci-flat metric $\tilde{g}$ in the K\"{a}hler class of $g$. What we show is then the following.

\begin{Thm}
\label{Thm:IntroNoGo1}
Let $X$ be a Kummer K3 surface with metrics $g$ and $\tilde{g}$ as above. Then, when the exceptional divisor $E\subset X$ has small enough volume, there is an open set $U\subset X$ with $E\subset U\subset X$ such that no stable, closed geodesic (with respect to either $g$ or $\tilde{g}$) on X ever enters $U$.
\end{Thm}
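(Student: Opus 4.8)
The plan is to read instability off the second variation formula \eqref{eq:Energy} directly, by exhibiting, for every geodesic meeting $U$, an admissible variation field on which $\delta^2E_\gamma$ is strictly negative. Since stability involves only the index form, and since that form is unchanged under rescaling the metric — the Jacobi equation $\n_{\dot\gamma}^2 J + R(J,\dot\gamma)\dot\gamma = 0$ is invariant if one simultaneously rescales arclength and curvature, so conjugate points merely move by the scale factor — I would first treat the patchwork metric $g$, which near $E$ agrees exactly with a rescaled Eguchi–Hanson metric, and work at unit scale on the Eguchi–Hanson space itself.

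I would begin by recording the curvature of the Eguchi–Hanson metric in an adapted orthonormal coframe $\{e^0,\dots,e^3\}$, with $E=\{r=a\}$ the exceptional $\S^1$-fibred $\os$phere. Two features are decisive. First, $E$ is totally geodesic (it is the fixed locus of the fibrewise involution) and carries a round metric of positive Gauss curvature, so by the Gauss equation the ambient sectional curvature of the $2$–planes tangent to $E$ is positive, and by continuity a positive–curvature normal direction persists for $\gamma$ near $E$; one obtains a lower bound $\ip{R(\dot\gamma,V)V}{\dot\gamma}\geq\kappa_0>0$ on a fixed neighbourhood $U_0\supset E$. Second, because the space is hyperkähler, hence Ricci–flat, the three sectional curvatures through any unit $\dot\gamma$ satisfy $\sum_{i=1}^3\ip{R(\dot\gamma,e_i)e_i}{\dot\gamma}=\mathrm{Ric}(\dot\gamma,\dot\gamma)=0$. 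This second fact is exactly the obstruction that defeats the naive averaging argument: summing the index form over a parallel orthonormal normal frame $V_1,V_2,V_3$ gives $\sum_i\delta^2E_\gamma(fV_i,fV_i)=\int 3\abs{f'}^2\,dt\geq 0$, so one cannot average, but must use a single well–chosen direction.

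For a geodesic $\gamma$ entering $U_0$ I would then take $\xi=f\,V$, with $V$ a parallel unit normal field chosen so that the plane $\dot\gamma\wedge V$ lies in the positive cone of the (anti–self–dual) curvature operator along the relevant segment, and $f$ a bump function supported on the portion of $\gamma$ inside $U_0$, extended periodically by zero so as to be admissible for the closed geodesic. Parallelity gives $\abs{\n_{\dot\gamma}\xi}^2=\abs{f'}^2$, so \eqref{eq:Energy} reduces to $\int \abs{f'}^2-f^2\,\ip{R(\dot\gamma,V)V}{\dot\gamma}\,dt$; with $f=\sin(\pi t/\ell)$ on a segment of length $\ell$ this is negative as soon as $\kappa_0\,\ell^2>\pi^2$. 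The heart of the matter is therefore a quantitative claim purely about the Eguchi–Hanson geometry: that $U_0$ can be fixed so that every geodesic meeting it contains a segment of length $\ell$ carrying a parallel normal field with sectional curvature $\geq\kappa_0$ and $\kappa_0\ell^2>\pi^2$. I expect this to be the main obstacle, precisely because of the Ricci–flat balancing above: $U_0$ cannot be taken too small (transverse geodesics would cross too quickly) nor too large (the curvature lower bound decays like $r^{-6}$), so the inequality must be checked at a definite intermediate scale and uniformly in the entry angle of $\gamma$, using the explicit curvature and the coherence of the top eigendirection of the curvature operator near $E$.

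Finally I would transfer the conclusion from $g$ to the Calabi–Yau metric $\tilde g$, and it is here that the improved metric estimates of the earlier sections enter. These control $\tilde g - g$ near $E$ in a norm strong enough to bound the difference of Riemann tensors, which costs two derivatives beyond the metric, so that $\ip{R_{\tilde g}(\dot\gamma,V)V}{\dot\gamma}\geq\kappa_0/2>0$ persists on $U_0$; shrinking $\Vol(E)$ renders the curvature deficit negligible, and the same test field $\xi=fV$ yields $\delta^2E_\gamma<0$ for $\tilde g$, with $U$ the image of $U_0$ under the gluing. The delicate point beyond the index computation is exactly this curvature comparison: the Monge–Ampère estimates are naturally stated for the potential and must be bootstrapped to the $C^2$–level of the curvature tensor before the second variation argument can be rerun verbatim.
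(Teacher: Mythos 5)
Your proposal takes a genuinely different route from the paper, and it founders on exactly the point you flag as ``the main obstacle''; unfortunately that obstacle is not a technical verification left to the reader but a claim that is false as stated. You need a fixed neighbourhood $U_0\supset E$ and constants $\kappa_0,\ell$ with $\kappa_0\ell^2>\pi^2$ such that \emph{every} geodesic meeting $U_0$ contains a segment of length $\ell$ carrying a parallel normal field of sectional curvature $\geq\kappa_0$. This fails for two separate reasons. First, by \eqref{eq:EHCurvature} the Eguchi--Hanson curvature satisfies $\vert Riem_{g_{EH}}\vert^2=24a^4/(a^2+u^2)^3$, so at any fixed distance from $E$ the curvature is $O(a^2)$ and tends to zero in the orbifold limit: no $a$-independent $\kappa_0$ exists on a fixed $U_0$, since the region where the curvature is bounded below independently of $a$ is $\{u\lesssim a^{2/3}\}$, which shrinks onto $E$. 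Second, even after rescaling to unit scale (legitimate, by Lemma \ref{Lem:Homothety}; your remark that the index form is scale invariant is correct), geodesics that merely graze the boundary of $U_0$ spend an arbitrarily short time in the region of definite curvature, and once outside, the curvature decays like $u^{-3}$ while the sign of $\ip{R(\dot\gamma,V)V}{\dot\gamma}$ in the parallel-transported plane is uncontrolled (by Ricci-flatness the curvature operator has both signs, as you yourself note). So the test-field argument cannot be closed for arbitrary entering geodesics --- at best for geodesics that penetrate deeply, which is strictly weaker than the theorem.

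The paper avoids all of this by invoking Theorem \ref{Thm:BY} (Bourguignon--Yau): on a hyperk\"ahler 4-manifold a closed geodesic is stable \emph{if and only if} the full Riemann tensor vanishes at every point of the geodesic. Stability is thereby reduced to a pointwise, non-quantitative statement: it suffices that $Riem_{\tilde g}$ is nowhere zero on some open $V\supset E$, with no lower bound, no length, and no control over which planes are positive. This also rescues the transfer step, which in your version contains a second gap: Kobayashi's estimate \eqref{eq:GlobalBounds} at $k=4$ gives only $\norm{\phi}_{C^4(X,g)}\leq C\vert a\vert^{2-2}=C$, so the curvature difference $R_{\tilde g}-R_g$ is $O(1)$, \emph{not} $o(1)$; it does not become ``negligible'' as $\Vol(E)\to 0$, and a fixed lower bound $\kappa_0$ on a fixed $U_0$ does not survive such a perturbation. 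The paper's comparison works only because it is made \emph{on $E$ itself}, where $Sect_g=2/a_i\to\infty$ dwarfs the $O(1)$ error, giving $Sect_{\tilde g}>0$ on $E$ for small $\vert a\vert$, hence nonvanishing of $Riem_{\tilde g}$ on an open neighbourhood by continuity, and Bourguignon--Yau finishes. To repair your argument you would have to replace the second-variation computation by this (or an equivalent) use of the hyperk\"ahler structure; without it, both the grazing-geodesic problem and the uniform curvature bound remain unresolved.
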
 

\begin{Thm}
\label{Thm:IntroNoGo2}
Assume the setup of Theorem \ref{Thm:IntroNoGo1}. Let $U_i\subset X$ be a suitable neighbourhood  around a single component $E_i$ of the exceptional divisor. Then, when the volume of $E$ is small enough there are no stable, closed geodesics which stay completely inside $U_i$.
\end{Thm}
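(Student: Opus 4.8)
The plan is to extract a sharp pointwise curvature obstruction from the hyperk\"{a}hler structure and then show it cannot be satisfied along a closed geodesic confined to $U_i$. On $U_i$ both metrics are hyperk\"{a}hler: $\tilde g$ has holonomy in $SU(2)$ globally, while the patchwork metric $g$ is Eguchi--Hanson, hence hyperk\"{a}hler, in a neighbourhood of $E$. So on $U_i$ there is a parallel triple of complex structures $I,J,K$, and for any geodesic $\gamma$ the fields $I\dot\gamma,J\dot\gamma,K\dot\gamma$ are \emph{parallel} along it, since $\n_{\dot\gamma}(A\dot\gamma)=A\,\n_{\dot\gamma}\dot\gamma=0$ for $A\in\{I,J,K\}$; together with $\dot\gamma$ they form a parallel orthogonal frame. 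I first test stability against $\xi=f(t)\,A\dot\gamma$ with $f\colon\S^1\to\R$. Writing $c_A(t):=\ip{R(\dot\gamma,A\dot\gamma)A\dot\gamma}{\dot\gamma}$ and $v:=\abs{\dot\gamma}$, parallelism gives $\n_{\dot\gamma}\xi=f'\,A\dot\gamma$, so \eqref{eq:Energy} reads
\begin{equation}
\delta^2E_\gamma(\xi,\xi)=\int_{\S^1}\bigl(v^2 (f')^2 - c_A f^2\bigr)\,dt\ \ge\ 0\qquad\text{for all }f.
\label{eq:QA}
\end{equation}

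The hyperk\"{a}hler Ricci-flat identity now enters. Because $\{\dot\gamma,I\dot\gamma,J\dot\gamma,K\dot\gamma\}$ is an orthonormal frame when $v=1$, Ricci-flatness gives the pointwise relation
\begin{equation}
c_I+c_J+c_K=\mathrm{Ric}(\dot\gamma,\dot\gamma)=0.
\label{eq:sum0}
\end{equation}
Taking $f\equiv1$ in \eqref{eq:QA} yields $\int_{\S^1}c_A\,dt\le0$ for each $A$; summing and invoking \eqref{eq:sum0} forces $\int_{\S^1}c_A\,dt=0$ for every $A$. Hence $f\equiv1$ is a global minimiser of the nonnegative quadratic form \eqref{eq:QA}, so the associated Jacobi operator $-v^2\tfrac{d^2}{dt^2}-c_A$ must annihilate it, which gives the sharp necessary condition for stability
\begin{equation}
c_A(t)=\ip{R(\dot\gamma,A\dot\gamma)A\dot\gamma}{\dot\gamma}=0\qquad\text{for all }t\in\S^1\text{ and all }A\in\{I,J,K\}.
\label{eq:vanish}
\end{equation}

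The conditions \eqref{eq:vanish} are scale invariant, which is exactly what lets the metric estimates enter. For a hyperk\"{a}hler surface the self-dual Weyl tensor and the scalar curvature vanish, so the curvature operator is $W^-$ acting on $\Lambda^-$, and one has $c_A=\tfrac14\ip{W^-\bar\omega_A}{\bar\omega_A}$, where $\bar\omega_A$ is the anti-self-dual form built from $\dot\gamma$ and $A\dot\gamma$. Thus \eqref{eq:vanish} says precisely that $W^-$ has vanishing diagonal in the anti-self-dual frame $\{\bar\omega_I,\bar\omega_J,\bar\omega_K\}$ determined by $\dot\gamma$. I would then use the improved Calabi--Yau estimates to compare the geometry on $U_i$ with a rescaled Eguchi--Hanson space in $C^2$, so that a strict violation of \eqref{eq:vanish} on the model transfers, with room to spare, to $X$. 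On Eguchi--Hanson the curvature is nonzero throughout the relevant region, with $W^-$ of degenerate spectrum $(\lambda,\lambda,-2\lambda)$, $\lambda=\lambda(r)>0$, whose distinguished eigendirection is aligned with the $U(1)$-symmetry (the collapsing circle). Condition \eqref{eq:vanish} then pins the angle between $\dot\gamma$ and the Killing field $\partial_\psi$ to a fixed value at every point of $\gamma$.

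The main obstacle is this final model computation: ruling out closed geodesics satisfying \eqref{eq:vanish} everywhere. The route I expect to take is to play the pointwise angle constraint against the conservation laws of the geodesic flow. Since $\partial_\psi$ is Killing, $p_\psi:=\ip{\dot\gamma}{\partial_\psi}$ is constant along $\gamma$, whereas $\abs{\partial_\psi}^2$ depends on $r$ and degenerates at the core; demanding the $\dot\gamma$--$\partial_\psi$ angle dictated by \eqref{eq:vanish} to be constant should force $r$ to be constant along $\gamma$. The finitely many constant-radius closed geodesics can then be checked by hand, and for the great circles of the central $S^2$---the zero section, totally geodesic as the fixed locus of the isometric involution $v\mapsto-v$ and $I$-holomorphic---one finds $c_I>0$, in direct violation of \eqref{eq:vanish}. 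Hence no closed geodesic confined to $U_i$ satisfies \eqref{eq:vanish}, so none is stable, which with the comparison estimates proves the theorem. I expect the delicate points to be showing that \eqref{eq:vanish} genuinely forces constant $r$ (rather than merely constraining it) and propagating the strict inequality through the $C^2$-comparison, both of which rely on the quantitative metric estimates established earlier.
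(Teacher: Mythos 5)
Your first half---testing $\delta^2E_\gamma$ against $\xi=f(t)A\dot\gamma$, summing over $A\in\{I,J,K\}$, and concluding $c_A\equiv 0$ along a stable closed geodesic---is correct, but it is precisely the Bourguignon--Yau criterion, i.e.\ Theorem \ref{Thm:BY}, which the paper simply cites. So the entire weight of the theorem rests on your second half: showing that no closed $\tilde g$-geodesic confined to $U_i$ can satisfy $c_I=c_J=c_K=0$ everywhere. That is where your proposal has a genuine gap.

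The gap is the transfer step from the Eguchi--Hanson model to $\tilde g$. The available estimates are \eqref{eq:GlobalBounds}, which for $k=4$ gives only $\norm{\phi}_{C^4(X,g)}\leq C$ (an $O(1)$ curvature error), and \eqref{eq:FarBounds}, which gives an $O(\abs{a}^2)$ error outside a \emph{fixed} neighbourhood of $E$ with a constant depending on that neighbourhood. By \eqref{eq:EHCurvature} the model curvature at Euclidean distance-squared $u$ from the core has size comparable to $a_i^2(a_i^2+u^2)^{-3/2}$: it dominates the $O(1)$ error only where $u\lesssim a_i^{2/3}$, and at any fixed $u$ it is itself $O(a_i^2)$, i.e.\ of the \emph{same} order as the far-field error, not larger. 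So the eigenstructure $(\lambda,\lambda,-2\lambda)$ of $W^-$, and hence your angle constraint, can be transferred to $\tilde g$ only in a small neighbourhood of $E_i$ --- exactly the region already handled by Theorem \ref{Thm:NoGo1} --- while in the bulk of $U_i$, which is the actual content of this theorem, nothing transfers ``with room to spare.'' A second, related failure: your conservation law $p_\psi=\ip{\dot\gamma}{\partial_\psi}$ holds for geodesics of the model metric, but $\partial_\psi$ is \emph{not} a Killing field of $\tilde g$ (the isometry group of $(X,\tilde g)$ is finite, and by Proposition \ref{Prop:KummerIsom} it consists of the affine-induced maps), so the angle-pinning/constant-radius argument and the final ``check by hand'' concern model geodesics, not the $\tilde g$-geodesics you must exclude. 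The paper's proof of Theorem \ref{Cor:NoGo} avoids both problems: it uses stability only through Theorem \ref{Thm:NoGo1} (to keep $\gamma$ away from $E_i$), proves in Theorem \ref{Thm:NoGo2} that any $\tilde g$-geodesic is an almost-geodesic of $g$ in the sense $\vert D_t^g\dot\gamma\vert_g\leq C\abs{a}^{1/2}\vert\dot\gamma\vert_g^2$ (needing only the $C^3$-bound, where the estimate \emph{is} small), and then shows by direct computation with the explicit Eguchi--Hanson Christoffel symbols \eqref{eq:Christoffel} that the squared distance from $\gamma(t)$ to $E_i$ satisfies $\ddot d>0$ at every critical point once $\abs{a}$ is small, so $d$ admits no maximum --- contradicting closedness. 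To salvage your route you would need pointwise curvature control with errors small compared to $a_i^2$ throughout $U_i$, which is strictly stronger than what \eqref{eq:FarBounds} and \eqref{eq:GlobalBounds} provide.
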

\noindent See Theorem \ref{Thm:NoGo1} and Theorem \ref{Cor:NoGo} for the detailed statements.

\begin{Thm}
\label{Thm:IntroStable}
Let $X$ be the Kummer K3 surface associated to the torus $T=\C^2/\Gamma$ where $\Gamma\coloneqq (\Z\{1,i\})^2\subset \C^2$. Let $g$ be the patchwork metric and let $\tilde{g}$ denote the unique Ricci-flat metric in the K\"{a}hler class of $g$. Then there are totally geodesic tori $\T^2\subset X$ and points $p\in \T^2$ where the Riemann tensor of $(X,\tilde{g})$ vanishes. Furthermore, if the minima of the curvature of $\T^2$ are local minima of the holomorphic sectional curvature of $X$, the tori are flat.
\end{Thm}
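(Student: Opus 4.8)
The plan is to exploit the large symmetry group of the special lattice $\Gamma = (\Z\{1,i\})^2$ together with the uniqueness half of the Calabi--Yau theorem (Theorem~\ref{Thm:CY}). First I would list the finite group $G$ of affine maps of $\C^2$ that preserve $\Gamma$ and commute with $-1$: the multiplications $\mathrm{diag}(i^a, i^b)$, the coordinate swap $s\colon(z_1,z_2)\mapsto(z_2,z_1)$, the complex conjugations, and translations by the $2$-torsion. Each such map descends to the minimal resolution $X$ and, because the symmetric Kummer construction treats the $16$ Eguchi--Hanson pieces identically, preserves both the complex structure (or its conjugate) and the Kähler class of $g$. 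By uniqueness of the Ricci-flat representative, $\tilde g$ is $G$-invariant, so every element of $G$ is an isometry of $(X,\tilde g)$. The fixed-point set of a holomorphic isometric involution is a totally geodesic complex submanifold; I would check that involutions such as $(z_1,z_2)\mapsto(z_1,-z_2)$ have fixed loci descending to embedded $2$-tori in $X$, giving the totally geodesic $\T^2$. Since these tori are complex (their tangent plane is a complex line) and totally geodesic, the Gauss equation shows that their intrinsic Gaussian curvature equals the \emph{holomorphic} sectional curvature of $(X,\tilde g)$ in the tangent direction.

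For the vanishing of the Riemann tensor I would use that $(X,\tilde g)$ is hyperkähler, so $Ric=0$, the scalar curvature vanishes, and the self-dual Weyl tensor $W^+$ vanishes; the entire curvature is carried by $W^-\in\mathrm{Sym}^2_0(\Lambda^-)$, a $5$-dimensional space. At a point $p$ fixed by a subgroup $G_p\subset G$, the differential representation makes $W^-_p$ an invariant element of $\mathrm{Sym}^2_0(\Lambda^-)$. If the image of $G_p$ in $SO(\Lambda^-)\cong SO(3)$ is an irreducible subgroup, the only invariant traceless symmetric form is $0$, so $W^-_p=0$ and hence $R_p=0$. The main obstacle is producing such a point, and it requires careful bookkeeping of the self-dual/anti-self-dual splitting, because the ``obvious'' symmetries behave badly: the three Kähler forms span $\Lambda^+$, so maps like $\mathrm{diag}(i,i)$ (left quaternion multiplication) fix the twistor sphere pointwise and act \emph{trivially} on $\Lambda^-$, hence are useless. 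One needs the triholomorphic (right-multiplication type) symmetries such as $\mathrm{diag}(i,-i)$ and $(z_1,z_2)\mapsto(-z_2,z_1)$, which act on $\Lambda^-$ as $\pi$-rotations about orthogonal axes, and then the swap $s$ to cyclically permute those axes, so that their images generate at least the tetrahedral group, which is irreducible. Verifying that enough of these maps share a common fixed point away from the exceptional divisor (where the metric is genuinely curved) is exactly the hyperkähler-identity input the theorem alludes to.

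Finally, for the conditional flatness I would combine the two previous steps with Gauss--Bonnet. Since $\Sigma\cong\T^2$ has $\chi=0$, we get $\int_\Sigma K\,dA=0$ for the Gaussian curvature $K$, and by the first step $K(q)=H_q(T_q\Sigma)$, the holomorphic sectional curvature. For a Ricci-flat Kähler surface the average of the holomorphic sectional curvature over the complex directions at any point is proportional to the scalar curvature, hence zero; consequently the fibrewise minimum of $H$ is $\le 0$, with equality exactly at flat points (where $H\equiv0$, since on a Kähler manifold $H$ determines the whole curvature tensor by polarization). The hypothesis that the minima of $K$ on $\Sigma$ occur at local minima of the holomorphic sectional curvature of $X$ then forces those minimizing directions to realise the fibrewise minimum, pinning $\min_\Sigma K$ to the ``flat'' value $0$; together with $\int_\Sigma K=0$ this yields $K\equiv0$. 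I expect the delicate point here to be turning the local-minimum hypothesis into the inequality $\min_\Sigma K\ge 0$ rigorously, since it requires comparing the second-order behaviour of $H$ in base and fibre directions for the special curvature tensor of a Ricci-flat Kähler surface.
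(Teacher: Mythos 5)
Your overall frame --- lattice symmetries descend to $X$, uniqueness in Theorem \ref{Thm:CY} makes them isometries of $\tilde{g}$, fixed-point sets of holomorphic isometries are totally geodesic complex submanifolds, and the Gauss equation identifies their intrinsic curvature with the ambient holomorphic sectional curvature --- is the same as the paper's setup (Propositions \ref{Prop:CYIsom} and \ref{Prop:KummerIsom}). But each of your substantive steps has a gap. For the tori: the involution $(z_1,z_2)\mapsto(z_1,-z_2)$ does not produce tori in $X$. Its fixed locus in $T$ consists of the four tori $\{z_2\in\frac{1}{2}\Lambda\}$, each of which passes through four $2$-torsion points of $T$, i.e.\ through the points lying over $Sing(Y)$. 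The image of such a torus in $Y$ is a pillowcase $\T^2/\mu_2$, topologically a sphere, and its proper transform in $X$ is a rational curve, not a torus. This is precisely why the paper uses affine maps whose translation part pushes the fixed locus off $\frac{1}{2}\Gamma$, e.g.\ $f(z,w)=(z,iw+\frac{1}{2})$ with fixed torus $\{w=\frac{1+i}{4}\}$. Moreover, an involution would not suffice even after this repair: by Theorem \ref{Thm:GenIsomThm}, order $k=2$ only gives $\sigma_{IJ}=\sigma_{IK}=0$, and one needs $k>2$ (the paper's $f$ has order $4$) to also force $\sigma_{JK}=0$ and $\sigma_{JJ}=\sigma_{KK}$, i.e.\ to reduce the curvature along the torus to the single function $\sigma_{II}=\mathcal{K}$.

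The representation-theoretic mechanism you propose for $R_p=0$ cannot work at all. The isometry group of $(X,g)$, hence of $(X,\tilde{g})$, has order $512=2^9$, so every isotropy group $G_p$ is a $2$-group, and its image in $SO(\Lambda^-)\cong SO(3)$ is a finite $2$-group: cyclic, dihedral, or Klein four. Every such group preserves a line $\ell\subset\Lambda^-_p$, and the traceless part of the orthogonal projection onto $\ell$ is a nonzero invariant element of $\mathrm{Sym}^2_0(\Lambda^-_p)$; irreducibility never occurs. Concretely, your claim about the swap is false: identifying $SU(2)$ with the unit quaternions (central scalars act trivially on $\Lambda^-$), the maps $\mathrm{diag}(i,-i)$, $(z_1,z_2)\mapsto(-z_2,z_1)$, and the swap correspond to $i$, $j$, $k$, and conjugation by $k$ sends $i\mapsto -i$, $j\mapsto -j$, so the swap fixes the three axes rather than permuting them; the image in $SO(3)$ is the Klein four-group, and the tetrahedral group (order $12$) can never be the image of a $2$-group. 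Thus symmetry can at best cut $W^-_p$ down to a one-parameter family --- which is exactly what Theorem \ref{Thm:IsomThm} achieves along the fixed torus $M$ --- and the missing idea is to kill the last parameter by Gauss--Bonnet on the totally geodesic torus: $\int_M \mathcal{K}\, d\Vol_{\tilde{g}_{\vert M}}=0$, so $\mathcal{K}$ vanishes somewhere, and at those points the entire Riemann tensor vanishes (Theorem \ref{Thm:StableGeod}). Your flatness step has the same kind of hole: if $K(q)<0$ at the minimizing point, then the tangent direction is the \emph{fiberwise minimum} of the holomorphic sectional curvature there, so local minimality plus ``fiberwise average zero'' produces no contradiction, and $\min_{\Sigma}K$ is not pinned to $0$. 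The paper instead requires the genuinely second-order identity of Proposition \ref{Prop:RiemannCrit}, $\Delta\sigma_{II}=-6\sigma_{II}^2-24\vert\nabla_{JV}V\vert^2\sigma_{II}$ at critical points, together with $\nabla_{JV}V=0$ on $M$ (again from the order-$4$ symmetry), so that at a minimum on $M$ one gets $0\leq\Delta\sigma_{II}=-6\sigma_{II}^2$, forcing $\sigma_{II}=0$ there, after which Gauss--Bonnet yields $\sigma_{II}\equiv 0$ (Theorem \ref{Thm:FlatTori}).
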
 
See Theorem \ref{Thm:StableGeod} for a more detailed statement and a precise description of how some of these tori look like. Theorem \ref{Thm:FlatTori} is the precise statement of the second half of the theorem.

To our knowledge, the only previous work on stable geodesics on compact, Ricci-flat Calabi-Yau manifolds are the articles \cite{GD13}, \cite{BY73}, and \cite{B76}.  What Bourguignon and Yau prove in \cite{BY73} is the following, a result we will need later.
\begin{Thm}[\cite{BY73}]
\label{Thm:BY}
 Assume $(X,\tilde{g})$ is a hyperk\"{a}hler manifold of real dimension $4$. Assume $\gamma\colon \S^1\to X$ is a non-constant geodesic. Then $\gamma$ is stable if and only if the entire Riemann curvature tensor vanishes along $\gamma$. 
\end{Thm}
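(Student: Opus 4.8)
The reverse implication is immediate: if $R$ vanishes identically along $\gamma$, then the curvature term in \eqref{eq:Energy} vanishes for every $\xi$, leaving $\delta^2E_{\gamma}(\xi,\xi)=\int_{\S^1}\abs{\n_{\dot\gamma}\xi}^2\,dt\geq 0$, so $\gamma$ is stable. The content is the forward direction, and the plan is to manufacture enough admissible variation fields from the hyperk\"ahler structure to force every curvature component to vanish. Let $I,J,K$ denote the three parallel complex structures of $\tilde g$ and normalize $\gamma$ to unit speed. Since $I,J,K$ are $\n$-parallel and $\gamma$ is a geodesic, the fields $I\dot\gamma,J\dot\gamma,K\dot\gamma$ are parallel along $\gamma$ (as $\n_{\dot\gamma}(A\dot\gamma)=A\,\n_{\dot\gamma}\dot\gamma=0$) and periodic, hence admissible; moreover $\{\dot\gamma,I\dot\gamma,J\dot\gamma,K\dot\gamma\}$ is an orthonormal frame along $\gamma$. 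Writing $c_A(t):=\ip{R(\dot\gamma,A\dot\gamma)A\dot\gamma}{\dot\gamma}$ and $c_{AB}(t):=\ip{R(\dot\gamma,A\dot\gamma)B\dot\gamma}{\dot\gamma}$, the key structural fact is that the parallel K\"ahler forms $\omega_I,\omega_J,\omega_K$ lie in the kernel of the curvature operator $\mathcal R$ on $2$-forms; since they span the self-dual space $\Lambda^+$ and $\tilde g$ is Ricci-flat with zero scalar curvature, $\mathcal R$ is supported on the $3$-dimensional space $\Lambda^-$ and is trace-free there. In this frame the six functions $c_I,c_J,c_K,c_{IJ},c_{IK},c_{JK}$ are, up to universal constants, exactly the entries of the symmetric operator $\mathcal R|_{\Lambda^-}$, so it suffices to prove all six vanish identically.

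For the diagonal terms I test with the plain parallel fields $\xi=A\dot\gamma$, for which $\n_{\dot\gamma}\xi=0$ and hence $\delta^2E_\gamma(A\dot\gamma,A\dot\gamma)=-\int_{\S^1}c_A\,dt\geq 0$, giving $\int_{\S^1}c_A\,dt\leq 0$ for each $A$. Summing over $A\in\{I,J,K\}$ and using the orthonormal frame yields $\sum_A c_A=\mathrm{Ric}(\dot\gamma,\dot\gamma)=0$ by Ricci-flatness; three nonpositive integrals summing to zero must each vanish, so $\int_{\S^1}c_A\,dt=0$. To upgrade this to pointwise vanishing, I test with $\xi=f\,A\dot\gamma$ for periodic $f$, obtaining $Q(f):=\int_{\S^1}(f')^2-f^2c_A\,dt\geq 0$. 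Since $Q\geq 0$ for all $f$ and $Q(1)=-\int_{\S^1}c_A\,dt=0$, the constant function minimizes $Q$, so its first variation vanishes, giving $\int_{\S^1}c_A\,h\,dt=0$ for all periodic $h$ and therefore $c_A\equiv 0$.

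For the off-diagonal terms I exploit that, the diagonal terms now being zero, the second variation linearizes. Testing with $\xi=A\dot\gamma+h\,B\dot\gamma$ for $A\neq B$ and periodic $h$, and using $c_A=c_B=0$ together with $A\dot\gamma\perp B\dot\gamma$, gives $\delta^2E_\gamma(\xi,\xi)=\int_{\S^1}(h')^2-2h\,c_{AB}\,dt\geq 0$. Replacing $h$ by $\lambda h$ produces the quadratic $\lambda^2\int_{\S^1}(h')^2\,dt-2\lambda\int_{\S^1}h\,c_{AB}\,dt\geq 0$, which must hold for all real $\lambda$ of both signs; nonnegativity forces the linear coefficient $\int_{\S^1}h\,c_{AB}\,dt$ to vanish for every $h$, hence $c_{AB}\equiv 0$. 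With all six functions identically zero, the structural reduction of the first paragraph shows $R\equiv 0$ along $\gamma$, completing the forward direction.

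I expect the main obstacle to be the first structural step rather than the variational arguments: one must verify cleanly that parallel $2$-forms lie in the kernel of the curvature operator, and that for a Ricci-flat hyperk\"ahler $4$-manifold the six sectional and mixed-sectional quantities $c_A,c_{AB}$ form a complete set of coordinates on the trace-free operator $\mathcal R|_{\Lambda^-}$, so that their simultaneous vanishing annihilates the entire tensor. Once this identification is secured, the Hill-operator positivity for the diagonal terms and the linear-term trick for the off-diagonal terms are routine.
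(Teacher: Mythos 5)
Your proof is correct, but note that the paper does not actually prove Theorem \ref{Thm:BY}: it imports the result from Bourguignon--Yau \cite{BY73}, so there is no internal proof to compare against. Your argument is essentially a reconstruction of the classical one: testing the second variation against the parallel fields $I\dot{\gamma},J\dot{\gamma},K\dot{\gamma}$, using $\sum_{A}\ip{R(\dot{\gamma},A\dot{\gamma})A\dot{\gamma}}{\dot{\gamma}}=Ric(\dot{\gamma},\dot{\gamma})=0$ to force the three nonpositive integrals to vanish, upgrading to pointwise vanishing via $\xi=f\,A\dot{\gamma}$, and polarizing with $\xi=A\dot{\gamma}+h\,B\dot{\gamma}$ for the mixed terms; all of these steps are sound, including the symmetry $c_{AB}=c_{BA}$ needed in the last step, which follows from the pair symmetry of $R$. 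One caveat concerns the structural step you yourself flag at the end: the statement ``parallel $2$-forms lie in the kernel of the curvature operator'' is false in that generality. On any K\"ahler manifold the curvature operator sends the (parallel) K\"ahler form to the Ricci form, which is nonzero unless the metric is Ricci-flat; the Fubini--Study metric is a counterexample. What you actually need, and what is true here, is either (a) $\mathcal{R}(\omega_A)=\rho_A=0$ for $A=I,J,K$ because hyperk\"ahler metrics are Ricci-flat, or (b) the pointwise commutant argument: since $\nabla I=\nabla J=\nabla K=0$, every endomorphism $R(X,Y)$ commutes with $I,J,K$ and hence lies in $\mathfrak{sp}(1)_-\cong\Lambda^-$, after which the pair symmetry of $R$ kills the operator on $\Lambda^+$. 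With either justification, your identification of the six functions $c_A,c_{AB}$ with the components $R(\dot{\gamma},e_a,\dot{\gamma},e_b)$, $e_a\in\{I\dot{\gamma},J\dot{\gamma},K\dot{\gamma}\}$, which determine the entire tensor once $\mathcal{R}\vert_{\Lambda^+}=0$, is correct, and the whole proof goes through.
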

Theorem \ref{Thm:BY} is both a clear cut criterion as well as a major hurdle for stability. A priori, it is not clear that a Ricci-flat space has a single point with vanishing Riemann tensor. Indeed, the Eguchi-Hanson space of \cite{EH79} and \cite{Calabi} has a Ricci-flat metric with   nowhere vanishing Riemann tensor.  The Eguchi-Hanson K\"{a}hler potential is given by \eqref{eq:EHPot} below, and the norm of the curvature tensor in \eqref{eq:EHCurvature}. This is a non-compact manifold, so it does not contradict the above conjecture. In the presence of symmetries, Theorem \ref{Thm:IntroStable} tells us that the Riemann tensor vanishes at certain points on a Kummer K3 surface. We do not know of other results of this kind. In particular, we do not know what happens on an arbitrary K3 surface.

The layout of the paper is as follows. We recall the Kummer construction, including the patchwork metric and metric estimates, in Section \ref{Section:Kummer}. Section \ref{Section:NoGo} deals with the no-go results Theorem \ref{Thm:IntroNoGo1} and \ref{Thm:IntroNoGo2}.  Section \ref{Section:Stable} starts by studying what one can say about the curvature of hyperk\"{a}hler 4-manifolds in the presence of symmetry, before specializing to a particular Kummer K3 surface in Section \ref{Section:SpecialK3}. To improve the flow, we have relegated some of the computations of the derivatives of the curvature to Appendix \ref{App:Riemann}. In section \ref{Section:Estimates}, we reprove the metric estimates of R. Kobayashi and correct some of the methods. Appendix \ref{App:Yau} gives a self-contained proof of some identities from \cite{Yau77} which are used in Section \ref{Section:Estimates}. 

We end the introduction by listing some general obstructions to studying the conjecture of \cite{GD13}.

\begin{itemize}
\item There is no explicitly known, non-flat, Ricci-flat metric on a compact Riemannian manifold. The stability of geodesics is however very dependent on the details of the metric (e.g. Theorem \ref{Thm:BY}).
\item In the case of a K3 surface, Theorem \ref{Thm:BY} has as corollary that any stable, closed geodesic has nullity equal to 3, and the linearised Poincar\'{e} map has all eigenvalues equal to 1. This says that stable, closed geodesics on K3 surfaces are very degenerate critical points of the energy function, making them harder to study using Morse-Bott-type methods.
\item The Ricci-flatness condition on a Calabi-Yau manifold can be though of as specifying the volume for to be "Euclidean" (equation \eqref{eq:MA} is the precise meaning of this). Deriving statements about the length spectrum can as such be seen as asking for length-information when given information about the volume.
\item A compact Calabi-Yau manifold with Ricci-flat metric always has finite isometry group (a fact due to S. Bochner -  see \cite{RiemGeomPetersen}[Theorem 1.5, p. 167] for instance). This makes it challenging to construct geodesics as fixed point sets of isometries.
\item The fundamental group of a compact Calabi-Yau manifold is always finite. So, unlike in the case of a flat torus, one cannot realize enough stable, closed geodesics as non-trivial homotopy classes to fulfil the conjecture. Indeed, on a non-simply connected, compact manifold there are always closed geodesics which minimize the energy in their homotopy class, and are as such \textbf{strictly stable}, meaning all variations of the energy function are non-negative, and not just the second variation.
\item In \cite[2.9 Th\'{e}or\`{e}me]{B76}, J. P. Bourguignon proves that if one has a hyperk\"{a}hler 4-manifold with a strictly stable, closed, non-constant geodesic then the manifold is flat. In particular, K3 surfaces never have \textit{strictly} stable, closed geodesics.
\end{itemize}

Some words about the notation. Local expressions for K\"{a}hler metrics $g$ on manifolds $X$ of complex dimension $n$ will be treated as hermitian $n\times n$ matrices. Determinants and traces of Hermitian matrices are with respect to the complex matrices, not the associated real matrices. The \textbf{complex Hessian} of $f$ will be denotes by $\nabla^2 f$. So
\[(\nabla^2 f)_{\mu\on}\coloneqq \partial_{\mu}\partial_{\on} f\coloneqq \frac{\partial^2 f}{\partial z^{\mu} \partial \overline{z}^{\nu}}.\]
 Tensor norms of complex tensors and the \textbf{Laplacian} will also be defined using the hermitian matrices as follows.
\[\Delta f \coloneqq g^{\on \mu} \partial_{\mu} \partial_{\on} f=\Tr(g^{-1} \nabla^2 f)\eqqcolon \Tr_g(\nabla^2 f),\]
\[\vert \nabla f\vert^2_g\coloneqq g^{\on \mu} \partial_{\mu} f \overline{\partial_{\nu} f},\]
\[\vert \nabla^2 f\vert^2_g \coloneqq g^{\on \mu} g^{\ob \al} (\partial_{\mu}\partial_{\ob} f)(\partial_{\al}\partial_{\on} f)=\Tr(g^{-1} (\nabla^2 f) g^{-1} (\nabla^2 f)).\]
Similar definitions hold for higher rank tensors. The complex Laplacian acting on functions coincides (up to a constant scaling) with the real Laplacian. The complex Hessian does \textit{not} coincide with the real Hessian. Indeed, in complex dimension 1 we have $\nabla^2 f= g \Delta f$. 

For the \textbf{real Hessian} we write $D^2 f$. Its pointwise norm is 
\[\vert D^2 f\vert^2_g\coloneqq \Tr\left(g_{\R}^{-1} D^2 f g_{\R}^{-1} D^2 f\right),\]
 where $g_{\R}$ is the symmetric $2n\times 2n$-matrix associated the $g$. The H\"{o}lder semi-norm of $D^2f$ is defined as\footnote{See \cite[p. 44]{YamabeProblem}.} 
\[\vert D^2 f\vert_{\alpha} =\sup_{x,y} \frac{\vert D^2f(x)-D^2f(y)\vert_g}{d(x,y)^{\alpha}} ,\]
where the supremum is over all $x\in X$ and all $y\neq x$ contained in normal coordinate charts centred at $x$, and the tensor $D^2f(y)$ means the tensor at $x$ obtained by parallel transport along the radial geodesic between $x$ and $y$.

\section{The Kummer Construction}
\label{Section:Kummer}
The Kummer construction is a well-known construction which associates to any 4-torus $T\cong \C^2/\Gamma$ a K3 surface $X$. The idea of the patchwork metric which we put on $X$ comes from \cite{Page} and \cite{GibPope}. To our knowledge, R. Kobayashi was the first to write out the details of this metric in \cite{Kob90}. This is our main source on the Kummer construction. There is a twistorial discussion in \cite{SingdeBr}, but they do not have any explicit metric estimates.  See also \cite[Chapter 5]{PhD} for more details. 

Let us first give an algebraic-geometric description of  the Kummer construction. This is a standard result and can be found in \cite[p. 224]{Surfaces} for instance. 
Let  $\Gamma\subset \C^2$ be a non-degenerate lattice. Let $T\coloneqq \C^2/\Gamma$ be the associated $4$-torus. Let $\mu_2\coloneqq \{\pm 1\}$ act diagonally on $\C^2$. Then this induces an action on $T$ with precisely $16$ fixed points. The quotient $Y\coloneqq T/\mu_2$ is a complex space with singular set $Sing(Y)=\Gamma/2\Gamma$. The singular points are $A_1$-singularities. Blowing up each of these singular points once leads to a non-singular space $X$ along with a blow-down map $\pi\colon X\to Y$. This is the minimal resolution of $X$, and is called the \textbf{Kummer K3 surface associated to the torus $T$}.

Near any of the fixed points, the singular space $Y$ looks like $\C^2/\mu_2$, and the resolution of this can be identified with $\mathcal{O}_{\C\P^1}(-2)=T^*\C\P^1$. The resolution $\mathcal{O}_{\C\P^1}(-2)$ can be equipped with the Eguchi-Hanson metric, which was discovered in \cite{EH79} and generalized in \cite{Calabi}. See \cite{LyeEH} for a detailed discussion about these metrics and the resolutions.  The Eguchi-Hanson K\"{a}hler potential is given in \eqref{eq:EHPot}. 

To describe the metric we put on $X$, we need to have a look at what happens near a blown-up point. This is done for us in \cite[pp. 293-297]{Kob90}.
Let $z$ be the coordinates on $\C^2$, and define $u\coloneqq \vert z\vert^2_{\C^2}\coloneqq \vert z_1\vert^2 +\vert z_2\vert^2$. Choose some $a>0$ and let $f_{Euc},f_a\colon (\C^2\setminus \{0\})/\mu_2\to \R$ be the  \textbf{Euclidean K\"{a}hler potential} and \textbf{Eguchi-Hanson K\"{a}hler potential} respectively, namely
\[f_{Euc}(z)\coloneqq u\]
\begin{equation}
f_a(z)\coloneqq \sqrt{a^2+u^2}-a\cdot \arsinh\left(\frac{a}{u}\right).
\label{eq:EHPot}
\end{equation}
Let $0<\delta\ll 1$ be some fixed number and let $\chi\colon [0,\infty)\to \R$ be a smooth cutoff function with the following properties.
\begin{itemize}
\item $\chi(u)=1$ for $u\leq 1$
\item $\chi(u)=0$ for $u\geq 1+\delta$. 
\end{itemize}
Then 
\begin{equation}
\Phi_a(z)\coloneqq f_{Euc}(z)+\chi(u(z))(f_a(z)-f_{Euc}(z))
\label{eq:GluedPot}
\end{equation}
defines a spherically symmetric K\"{a}hler potential on $(\C^2\setminus \{0\})/\mu_2$ for all values of $a$ small enough. We shall write $\Phi_{a}(z)\eqqcolon \varphi_a(u(z))$. Furthermore, on the complement of any orbiball $V\coloneqq (\C^2\setminus B_R(0))/\mu_2$ we may write 
\begin{equation}
\Phi_a(z)=u+a^2 \xi_a(z)
\label{eq:NeckPotential}
\end{equation}
for some function $\xi_a \colon V\to \R$ which is regular as $a\to 0$.
For later use, we also record the norm squared of the Riemann tensor of the Eguchi-Hanson metric,
\begin{equation}
\vert Riem_{g_{EH}}\vert_{g_{EH}}^2=\frac{24 a^4}{(a^2+u^2)^3}.
\label{eq:EHCurvature} 
\end{equation}
This is an $L^2$-function with most of its mass concentrated near $u=0$, hence the patchwork metric is a metric of concentrated curvature.

We want to define a K\"{a}hler metric on all of $X$ whose K\"{a}hler potential is given by \eqref{eq:GluedPot} close to the exceptional divisor and flat far away from it. Let $\pi\colon X\to Y$ be the blow-down map as above. Let $Sing(Y)=\cup_{i=1}^{16}\{p_i\}$ denote the singular points of $Y$ and let $E\coloneqq \cup_{i=1}^{16} E_i\coloneqq \cup_{i=1}^{16} \pi^{-1}(\{p_i\})$ be the \textbf{exceptional divisor} of $X$. Fix a number $0<\delta\ll 1$, and choose numbers $a_i$, $1\leq i\leq 16$, such that $0<a_i\ll 1$. Then there exists a K\"{a}hler metric
$g$ on $X$ with the following properties. Each component $E_i\subset X$ has a neighbourhood $U_i\subset X$ such that $U_i\coloneqq Bl_0(B_{1+2\delta}(0)/\mu_2)$ and $E_i=\C\P^1$. By scaling $X$ if necessary we may assume $U_i\cap U_j =\emptyset$ for $i\neq j$. On $Bl_0(B_{1+2\delta}(0)/\mu_2)$ the K\"{a}hler potential of $g$ is given by \eqref{eq:GluedPot} with parameter $a_i$. In particular $g$ is equal to the Eguchi-Hanson metric with potential \eqref{eq:EHPot} on $Bl_0(B_{1}(0)/\mu_2)$ and $g_{\vert E_i}=a_i g_{FS}$ where $g_{FS}$ is the Fubini-Study metric on $\C\P^1$. On any of the necks $N_i\coloneqq (B_{1+\delta}(0)\setminus B_{1}(0))/\mu_2$ the metric is not Ricci-flat.  Outside of all the sets $U_i$ the metric $g$ is flat. 

The metric $g$ will be called the \textbf{patchwork metric}.
We will follow \cite{Kob90} and write $\vert a\vert^2\coloneqq \sum_{i=1}^{16} a_i^2$. The limit $\vert a\vert \to 0$ is called the \textbf{orbifold limit}, and $\vert a\vert^2$ being small is what we mean by being close to the orbifold limit.

\begin{Rem}
The cohomology-class of the patchwork metric does not depend on the specific choice of $\chi$. If $\xi$ is another cutoff function, then the difference of the resulting K\"{a}hler forms will locally   look like $i\partial \overline{\partial} ((\chi-\xi)(f_{Euc}-f_{a_i}))$, which is the differential of a globally defined function (one simply extends by 0 to the whole manifold). Hence the Ricci-flat metric in the next theorem does not depend on the choice of $\chi$.
\end{Rem}

The patchwork metric is not Ricci-flat due to the neck regions, hence is not the final metric we want to put on $X$. The celebrated Calabi-Yau theorem of \cite{CalabiConj1}, \cite{CalabiConj2} and \cite{Yau78} provides the solution to this problem.
\begin{Thm}[{\cite[Theorem 2]{Yau78}}]
\label{Thm:CY}
Assume $(X,g)$ is a K\"{a}hler manifold of complex dimension  $n$, and assume the first Chern class vanishes, $c_1(X)=0$. Let $\omega$ be the K\"{a}hler form associated to $g$, and let $\psi\colon X\to \R$ be a function such that $Ric_g=\nabla^2 \psi$. Define the constant $A>0$ by
\begin{equation}
A\coloneqq \frac{\int_X \omega^n}{\int_X e^{\psi} \omega^n}.
\label{eq:ADef0}
\end{equation}  
Then there is a unique function $\phi\colon X\to \R$ subject to the normalization $\int_X \phi \, \omega^n=0$ such that $\tilde{\omega}\coloneqq \omega +i\partial \overline{\partial}\phi$ is a K\"{a}hler form satisfying the \textbf{Monge-Amp\`{e}re} equation
\begin{equation}
\tilde{\omega}^n=Ae^{\psi} \omega^n. 
\label{eq:MA}
\end{equation}
The K\"{a}hler metric $\tilde{g}$ associated to $\tilde{\omega}$ is Ricci-flat.
\end{Thm}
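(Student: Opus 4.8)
The plan is to recast the statement as a scalar complex Monge-Amp\`{e}re equation and to solve it by the \emph{continuity method}. Writing $\tilde{\omega} = \omega + i\partial\overline{\partial}\phi$ and using $\tilde{\omega}^n/\omega^n = \det(g+\nabla^2\phi)/\det g$, equation \eqref{eq:MA} becomes
\[\det\left(g_{\mu\on} + \partial_{\mu}\partial_{\on}\phi\right) = Ae^{\psi} \det(g_{\mu\on}),\]
to be solved among those $\phi$ for which $g + \nabla^2\phi$ stays positive-definite; the constant $A$ of \eqref{eq:ADef0} is precisely the factor forcing $\int_X \tilde{\omega}^n = \int_X\omega^n$, the necessary cohomological compatibility. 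I would introduce the family of equations $\tilde{\omega}_t^n = A_t e^{t\psi}\omega^n$ for $t\in[0,1]$, with $A_t$ chosen so that the total-volume constraint holds at each $t$; then $t=0$ admits the solution $\phi_0 = 0$ and $t=1$ is the desired problem. Setting $S = \{\, t\in[0,1] : \text{the } t\text{-equation has a solution } \phi_t\in C^{2,\alpha} \text{ with } g+\nabla^2\phi_t>0 \,\}$, the goal is to show that $S$ is non-empty, open, and closed, hence $S = [0,1]$.

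For \emph{openness} I would linearise the Monge-Amp\`{e}re operator $\phi\mapsto \log(\tilde{\omega}^n/\omega^n)$ at a solution $\phi_t$. Its derivative in the direction $u$ is $\Tr((g+\nabla^2\phi_t)^{-1}\nabla^2 u) = \Delta_{\tilde{g}_t}u$, the Laplacian of the metric $\tilde{g}_t$. On the compact manifold $X$ this is an isomorphism from $\{u\in C^{2,\alpha} : \int_X u\,\tilde{\omega}_t^n = 0\}$ onto $\{v\in C^{0,\alpha} : \int_X v\,\tilde{\omega}_t^n = 0\}$, so the implicit function theorem in these H\"older spaces produces solutions for $t'$ near $t$, once one accounts for the normalising constant $A_{t'}$ and the free additive constant in $\phi$.

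The real content is \emph{closedness}, which requires a priori estimates on $\phi_t$ uniform in $t$, and this is the step I expect to be the main obstacle. First one needs a $C^0$ bound: the volume normalisation controls $\int_X\phi_t\,\omega^n$, and a Moser iteration applied to the equation upgrades this to a uniform sup bound $\norm{\phi_t}_{C^0}\leq C$. Next comes the second-order estimate bounding $\Delta_g\phi_t = \Tr_g(\nabla^2\phi_t)$; here one applies $\Delta_{\tilde{g}_t}$ to the logarithm of the equation and invokes the maximum principle, the decisive inequality involving the holomorphic bisectional curvature of the fixed background $g$ together with the $C^0$ bound already in hand. Since $g + \nabla^2\phi_t$ is then pinched between two fixed positive multiples of $g$, the equation is uniformly elliptic, and the Evans-Krylov theory (or, staying inside the K\"ahler category, Calabi's third-derivative estimate) yields a uniform $C^{2,\alpha}$ bound. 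Standard Schauder bootstrapping then gives uniform $C^\infty$ bounds, so a solution exists at any limit point of $S$ and $S$ is closed.

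Finally, \emph{uniqueness} and Ricci-flatness are comparatively soft. If $\phi_1,\phi_2$ both solve \eqref{eq:MA} then $\tilde{\omega}_1^n = \tilde{\omega}_2^n$; writing the difference as $i\partial\overline{\partial}(\phi_1-\phi_2)$ wedged against the positive form $\sum_{j}\tilde{\omega}_1^{\,j}\wedge\tilde{\omega}_2^{\,n-1-j}$ and integrating by parts against $\phi_1-\phi_2$ forces $\partial(\phi_1-\phi_2)=0$, and the normalisation $\int_X\phi\,\omega^n=0$ then gives $\phi_1=\phi_2$. For Ricci-flatness, the hypothesis $Ric_g = \nabla^2\psi$ says the Ricci form satisfies $-i\partial\overline{\partial}\log\det g = i\partial\overline{\partial}\psi$; since \eqref{eq:MA} gives $\det\tilde{g} = Ae^{\psi}\det g$ with $A$ constant, the Ricci form of $\tilde{g}$ is
\[-i\partial\overline{\partial}\log\det\tilde{g} = -i\partial\overline{\partial}\bigl(\psi + \log\det g\bigr) = 0,\]
so $\tilde{g}$ is Ricci-flat as claimed.
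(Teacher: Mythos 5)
The paper does not prove this statement at all: Theorem \ref{Thm:CY} is quoted as a black box from Yau's paper, so there is no internal proof to compare yours against. What you have written is a correct outline of Yau's own argument: the reduction to the scalar equation $\det(g+\nabla^2\phi)=Ae^{\psi}\det g$ with $A$ forced by the cohomological constraint $\int_X\tilde{\omega}^n=\int_X\omega^n$; the continuity method with openness from invertibility of $\tilde{\Delta}_t$ on mean-zero H\"older functions; closedness from the a priori estimates ($C^0$ by Moser iteration off the normalisation, $C^2$ by the maximum principle with the bisectional curvature of the background entering, then Evans--Krylov or Calabi's third-order estimate and Schauder bootstrapping); Calabi's uniqueness trick of pairing $i\partial\overline{\partial}(\phi_1-\phi_2)\wedge\sum_j\tilde{\omega}_1^{\,j}\wedge\tilde{\omega}_2^{\,n-1-j}$ against $\phi_1-\phi_2$; and the one-line Ricci-form computation. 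Your soft steps (openness, uniqueness, Ricci-flatness) are complete and correct; the analytic core is invoked rather than executed, which is reasonable for an outline, but one small point deserves care: the claim that $g+\nabla^2\phi_t$ is ``pinched between two fixed positive multiples of $g$'' does not follow from the trace bound alone. The $C^2$ estimate bounds the eigenvalues of $g^{-1}\tilde{g}_t$ from above; the lower bound comes only from combining this with the equation itself, since each eigenvalue equals $\det(g^{-1}\tilde{g}_t)=A_te^{t\psi}$ (bounded below) divided by the product of the remaining ones (bounded above). Finally, it is worth noting that the machinery you list is exactly what the paper develops in quantitative form for a different purpose: Section \ref{Section:Estimates} carries out the Moser iteration, the maximum-principle $C^2$ estimate (via Proposition \ref{Prop:Yau222}, proved in Appendix \ref{App:Yau}), and the Evans--Krylov--Trudinger-style H\"older estimates following Siu and B\l ocki, and its subsection on higher-order estimates uses the continuity path \eqref{eq:MAt} --- a linear interpolation of the right-hand side rather than your exponential one $A_te^{t\psi}$, both of which are admissible. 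The difference is that there these arguments are run to obtain bounds uniform in the gluing parameter $a$, with existence and uniqueness taken as given from Yau, whereas you are running them to reprove existence itself.
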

When we additionally assume $H^1(X;\R)=0$, then $X$ has a finite fundamental group. This follows from Cheeger-Gromoll splitting theorem, \cite{CG71} and the Calabi-Yau \cite[Theorem 2]{Yau78}. Let $\tilde{X}$ denote the universal cover of $X$. Then $H^1(\tilde{X};\Z)=0$, and $c_1(X)=0\implies c_1(\tilde{X})=0$. So $\tilde{X}$ admits a nowhere zero holomorphic $n$-form $\eta$. We normalize this so that
\[\exp(\psi)=\frac{i^{n^2} \eta \wedge \overline{\eta}}{\omega^n}.\]
In terms of this $n$-form, we may write
\begin{equation}
A= \frac{\int_{\tilde{X}} \omega^n}{\int_{\tilde{X}} e^{\psi} \omega^n}=\frac{\int_{\tilde{X}} \omega^n}{\int_{\tilde{X}} i^{n^2}\eta\wedge\overline{\eta}}.
\label{eq:ADef}
\end{equation}  

The Calabi-Yau theorem applies to any Kummer K3 surface. K3 surface are already simply-connected, so we do not need to pass to a universal cover. The only work one has to do is to decide on what $\eta$ is and then use this to compute the constant $A$ in \eqref{eq:ADef}. We will formulate this as a little lemma and supply a proof since \cite{Kob90} does not compute $A$. Let $\eta$ be the nowhere vanishing holomorphic 2-form on $X$ induced from 
\[\eta=\sqrt{2}dz_1\wedge dz_2\] on $\C^2$. For (measurable) subsets $V\in X$, we write
\[\Vol_{Euc}(V)=\int_V \eta\wedge \overline{\eta}\]
and
\[\Vol_g(V)=\int_{V} d\Vol_g=\int_V\det(g)\eta\wedge \overline{\eta}.\]
\begin{Lem}
Let $(X,g)$ be a Kummer K3 surface with patchwork metric $g$ as described above.   Then the constant $A$ given by \eqref{eq:ADef} takes the value
\begin{equation}
A=1-\vert a\vert^2 \frac{\pi^2}{2\Vol_{Euc}(T)}=1-\vert a\vert^2 \frac{\Vol_{Euc}(B_1^4)}{\Vol_{Euc}(T)}.
\label{eq:AValue}
\end{equation}
\end{Lem}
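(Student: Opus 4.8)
The plan is to compute the two integrals in \eqref{eq:ADef} separately, exploiting the fact that both the flat and the Eguchi--Hanson metrics are Ricci-flat, so that $\omega^2$ and $\eta\wedge\overline{\eta}$ agree pointwise off the necks. Since $n=2$ we have $i^{n^2}=1$, so \eqref{eq:ADef} reads $A=\bigl(\int_X\omega^2\bigr)\big/\bigl(\int_X\eta\wedge\overline{\eta}\bigr)$. For the denominator I would observe that $\eta$ is the pullback under the resolution $\pi\colon X\to Y=T/\mu_2$ of the form induced by $\sqrt{2}\,dz_1\wedge dz_2$ on $Y$: the resolution of an $A_1$ singularity is crepant, so $\eta$ extends holomorphically across $E$ and $\pi$ is a biholomorphism off the measure-zero set $E$. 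Hence $\int_X\eta\wedge\overline{\eta}=\int_Y\eta\wedge\overline{\eta}=\tfrac12\int_T\eta\wedge\overline{\eta}=\tfrac12\Vol_{Euc}(T)$, the factor $\tfrac12$ coming from the degree of the branched double cover $T\to Y$.

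For the numerator I would first record the normalization $\eta\wedge\overline{\eta}=\omega_0^2$, where $\omega_0\coloneqq i\partial\overline{\partial}u$ is the flat Kähler form; this is a one-line computation in coordinates and is precisely what the constant $\sqrt{2}$ in $\eta$ is chosen to arrange. The key simplification is then that $\omega^2=\eta\wedge\overline{\eta}$ pointwise away from the necks $N_i$: on the flat region this is the previous identity, while on each ball $\{u\le1\}$ the form $\omega$ is the Eguchi--Hanson form, which is Ricci-flat Kähler and asymptotic to $\omega_0$, forcing $\omega_{EH}^2=\eta\wedge\overline{\eta}$. Consequently the deviation of $A$ from $1$ localizes entirely to the sixteen necks, and
\[
A-1=\frac{\sum_{i}\int_{N_i}(\omega^2-\omega_0^2)}{\tfrac12\Vol_{Euc}(T)}.
\]

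The neck contributions I would handle by intersection theory, which is cleaner than a direct Stokes computation. Writing $\omega-\omega_0=i\partial\overline{\partial}\rho_i$ with $\rho_i=\chi(u)(f_{a_i}-u)$ and $\omega^2-\omega_0^2=d\bigl(i\overline{\partial}\rho_i\wedge(\omega+\omega_0)\bigr)$ shows the outer boundary $\{u=1+\delta\}$ contributes nothing, since $\rho_i$ vanishes there; equivalently, summing the neck terms gives the purely cohomological quantity $\int_X\omega^2-(\pi^*[\omega_Y])^2$. Decomposing the Kähler class as $[\omega]=\pi^*[\omega_Y]-\sum_i\lambda_i[E_i]$ and using $(\pi^*[\omega_Y])^2=\tfrac12\Vol_{Euc}(T)$, $\pi^*[\omega_Y]\cdot[E_i]=0$, and $[E_i]\cdot[E_j]=-2\delta_{ij}$ (the exceptional curve of the $A_1$ resolution is a $(-2)$-curve), I obtain
\[
\int_X\omega^2=\tfrac12\Vol_{Euc}(T)-2\sum_i\lambda_i^2,\qquad 2\lambda_i=\int_{E_i}\omega,
\]
so that everything reduces to the symplectic area of each exceptional curve.

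The remaining, and main, task is to evaluate that area from the Eguchi--Hanson potential \eqref{eq:EHPot}. Differentiating gives $f_a'(u)=\sqrt{a^2+u^2}/u$, hence $u f_a'(u)\to a$ as $u\to0$; feeding this into the restriction of $\omega=i\partial\overline{\partial}f_a$ to the zero section in the blow-up chart $t=z_2/z_1$ yields $\omega\big|_{E_i}=a_i\,\omega_{FS}$, so that $\int_{E_i}\omega$ is proportional to $a_i$. Summing over the sixteen fixed points then produces a correction proportional to $\sum_i a_i^2=\abs{a}^2$, and matching constants identifies the coefficient with $\Vol_{Euc}(B_1^4)$, giving \eqref{eq:AValue}. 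I expect the genuine difficulty to be bookkeeping rather than conceptual: one must carry the conventions for $\omega^n/n!$, the factor $i^{n^2}$, and the $\sqrt{2}$ in $\eta$ consistently through the area computation so that the coefficient lands exactly on $\pi^2/2=\Vol_{Euc}(B_1^4)$ rather than differing by a harmless global normalization.
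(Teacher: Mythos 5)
Your proposal is correct and reaches \eqref{eq:AValue}, but by a genuinely different route than the paper. The setup is the same: both you and the paper compute the denominator as $\tfrac12\Vol_{Euc}(T)$ and use $\det(\nabla^2 f_{a_i})\equiv 1$ to see that $\omega^2=\eta\wedge\overline{\eta}$ pointwise off the necks, so that $A-1$ localizes there. The divergence is in how the neck contribution is evaluated. The paper integrates directly: by spherical symmetry $\det(\nabla^2\Phi_{a_i})=(\varphi_{a_i}')(u\varphi_{a_i}')'$, so $2\Vol_g(N_i)=\tfrac{\pi^2}{2}(u\varphi_{a_i}')^2\big\vert_{u=1}^{u=1+\delta}$, and the boundary values $\varphi_{a_i}'(1)=\sqrt{1+a_i^2}$, $\varphi_{a_i}'(1+\delta)=1$ of the cutoff give the deficit $-\tfrac{\pi^2 a_i^2}{2}$ per neck. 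You instead identify the total neck contribution with the cohomological quantity $[\omega]^2-(\pi^*[\omega_Y])^2$ and evaluate it with the intersection form, so the only analytic input is the area of each exceptional curve, read off from $uf_{a_i}'\to a_i$ at $u=0$ rather than from the cutoff at $u=1$. The constants do close: in the paper's normalization ($g_{\vert E_i}=a_i g_{FS}$, curvature $2/a_i$) one has $\int_{E_i}\omega=2\pi a_i$, hence $\lambda_i=\pi a_i$ and $\int_X\omega^2=\tfrac12\Vol_{Euc}(T)-2\pi^2\vert a\vert^2$; converting $\eta\wedge\overline{\eta}$ into Lebesgue measure (a factor $8$ for $\eta=\sqrt2\,dz_1\wedge dz_2$, the same identification the paper makes implicitly when it computes $\Vol_g(N)$ with the measure $d\Vol_{\S^3}\tfrac{u\,du}{2}$) gives exactly $A=1-\vert a\vert^2\tfrac{\pi^2}{2\Vol_{Euc}(T)}$. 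Your route makes the cutoff-independence of $A$ automatic (the paper records this in a separate remark) and transfers to other crepant resolutions; the paper's route is more elementary, requiring no knowledge of $H^2$ of a K3 or of class decompositions.

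Two steps you assert would still need proofs. First, $[\omega]=\pi^*[\omega_Y]-\sum_i\lambda_i[E_i]$: a priori $[\omega]$ could differ from $\pi^*[\omega_Y]$ by an arbitrary element of $\pi^*H^2(Y;\R)$, not only by exceptional classes. This is fixed by noting that $\omega-\pi^*\omega_Y$ is a closed form supported in $\cup_i U_i$, hence pairs to zero with the 2-tori coming from $T$ (which can be chosen to avoid the $U_i$), and those tori together with the $E_i$ span $H_2(X;\R)$. Second, your Stokes computation only shows that the outer boundary $\{u=1+\delta\}$ drops out; the inner boundary $\{u=1\}$ does contribute (it must, or the correction would vanish), so the justification for localization should be the pointwise equality of the top forms off the necks — your \emph{equivalently} clause — rather than the Stokes identity itself.
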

\begin{proof}

By \eqref{eq:MA} we have 
\begin{equation}
\Vol_g(X)=A \int_X \eta \wedge \overline{\eta}=\frac{A\cdot \Vol_{Euc}(T)}{2},
\label{eq:AComp1}
\end{equation}
where we have implicitly used $\int_X =\int_Y=\frac{1}{2}\int_T$.
Let $N=\cup_{i=1}^{16} N_i$ denote the union of all the neck regions of $X$. We then compute
\begin{align*}
\Vol_g(X)&=\int_{X\setminus N} dVol_g +\int_N dVol_g \\  &=\int_{X\setminus N} \eta \wedge \overline{\eta}+\int_N dVol_g\\  &=\frac{1}{2}\Vol_{Euc}(T) -\Vol_{Euc}(N)+\Vol_g(N).
\end{align*}
Inserting this into \eqref{eq:AComp1} results in
\begin{equation}
A=1+2\frac{\Vol_g(N)-\Vol_{Euc}(N)}{\Vol_{Euc}(T)},
\label{eq:AComp2}
\end{equation}
and it only remains to compute $\Vol_g(N)$.

Working on a single neck $N_i$, the K\"{a}hler potential of $g$ is given by \eqref{eq:GluedPot}. For any spherically symmetric K\"{a}hler potential in 2 complex dimensions $F(z)=\Psi(u(z))$ we have $\det(\nabla^2 F)=(\Psi'(u))\cdot(u\Psi'(u))'$, as one can check directly, where $u=\vert z\vert^2_{\C^2}$ as before. Using spherical coordinates with $r^2=u$ (and hence also $r^3 dr=\frac{u du}{2}$), and recalling that $\Phi_{a}(z)=\varphi_a(u(z))$,  we therefore have
\begin{align*}
2\Vol_g(N)&=\int_{u=1}^{u=1+\delta} \int_{\S^3} \det(\nabla^2 \Phi_{a_i})\, d\Vol_{\S^{3}} \frac{u du}{2}\\
& = \frac{\Vol(\S^3)}{2} \int_{u=1}^{u=1+\delta} (u\varphi_{a_i}'(u))(u\varphi_{a_i}'(u))'\, du=\frac{\pi^2}{2} \left( u\varphi_{a_i}'(u)\right)^2 \big\vert_{u=1}^{u=1+\delta}.
\end{align*}
By \eqref{eq:GluedPot} and the fact that the cutoff function $\chi$ satisfies $\chi(u=1+\delta)=0=\chi'(u=1)=\chi'(u=1+\delta)$ and $\chi(u=1)=1$, we find $\varphi_{a_i}'(u=1)=\sqrt{1+a_i^2}$ and $\varphi_{a_i}(u=1+\delta)=1$, hence
\[2\Vol_g(N_i)=2\Vol_{Euc}(N_i)-\frac{\pi^2a_i^2}{2}.\]
Summing over $1\leq i\leq 16$ and inserting back into \eqref{eq:AComp2} gives \eqref{eq:AValue}.
\end{proof}
\begin{Rem}
The fact that the expression \eqref{eq:AValue} becomes negative for large enough values of $\vert a\vert^2$ shows why one had to restrict to small values of $\vert a\vert^2$ when gluing the metrics. The problem is traceable to the fact that \eqref{eq:GluedPot} ceases to be plurisubharmonic for large values of $\vert a\vert^2$.

We also remark that \eqref{eq:AValue} is independent of the choice of cutoff since 
$\int_X \omega^2$ only depends on the K\"{a}hler class and $\eta$ is metric-independent. As such, the positivity of \eqref{eq:AValue} puts a strict upper bound on $\vert a\vert$.
\end{Rem}

\begin{Def}
Let $X$ be a Kummer K3 surface with nowhere vanishing holomorphic volume form $\eta$. A pair $z,w$ of locally defined coordinates on $X$ such that $\eta=\sqrt{2} dz\wedge dw$ are called \textbf{holomorphic Darboux coordinates}.
\end{Def}

\begin{Rem}
The factor $\sqrt{2}$ is unconventional but convenient. If $\omega=i g_{\mu \overline{\nu}} dz^\mu d\overline{z}^\nu$ is the K\"{a}hler form in holomorphic Darboux coordinates, then 
\[\omega^2=\det(g)\eta \wedge \overline{\eta}.\]
\end{Rem}

\subsection{Estimates}
The metrics $g$ and $\tilde{g}$ on a Kummer K3 surface are related by the (non-linear) elliptic PDE \eqref{eq:MA}, so one could hope to get estimates on $g-\tilde{g}$ using Moser iteration and a maximum principle due to \cite{Yau78}. This works as long as the curvature is sufficiently concentrated and as long as no component $E_i$ is shrunk a lot faster than the rest. Concretely, we make the following assumption.
\begin{assum}
\label{Assumption:ra}
Let
\[r_a\coloneqq \frac{\max_i a_i}{\min_i a_i}\]
be the \textbf{ratio} between the largest and smallest component of the exceptional divisor $E$. We assume there is a constant $C\geq 1$ independent of $a$ such that  
\[r_a\leq C\]
for all values $a=(a_1,\dots, a_{16})$ under consideration.
\end{assum}
The estimates we need were obtained by R. Kobayashi in \cite{Kob90}, and the results are as follows.
\begin{Thm}[{\cite[Equations 46-48]{Kob90}}]
\label{Thm:Kob}
Assume $X$ is a Kummer K3 surface with K\"{a}hler form $\omega$ associated to the patchwork metric $g$ and $\tilde{\omega}=\omega+i\partial \overline{\partial} \phi$ satisfies \eqref{eq:MA} with $\int_X \phi \,\omega^2=0 $. Let $U\subset X$ be any open set such that $E\subset U$. Then there are constants $C_k>0$, $k\geq 0$, depending on $U$ but not on the parameters $a_i$ such that for all small enough values of $\vert a\vert$ we have
\begin{equation}
\norm{\phi}_{C^k(X\setminus U, g)}\leq C_k \vert a\vert^2.
\label{eq:FarBounds}
\end{equation}

Moreover, there is a constant $C>0$, independent of the parameters $a_i$ such that for all small enough values of $\vert a\vert$ we have
\begin{equation}
\norm{\phi}_{C^{k}(X,g)}\leq C \vert a\vert^{2-\frac{k}{2}}
\label{eq:GlobalBounds}
\end{equation}
\end{Thm}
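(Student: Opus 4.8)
The plan is to read \eqref{eq:MA} as a complex Monge-Amp\`ere equation $\tilde\omega^2 = e^F\omega^2$ with forcing $F\coloneqq \log A+\psi$, and to draw the $\abs{a}$-dependence out of the geometry of $F$ together with a rescaling of the Eguchi-Hanson region. First I would pin down $F$. For a spherically symmetric potential $F(z)=\Psi(u)$ one has $\det(\nabla^2 F)=\Psi'(u)\,(u\Psi'(u))'$, from which $\det(\nabla^2 f_a)=\det(\nabla^2 f_{Euc})=1$; hence the volume form of $g$ equals the Euclidean one both in the flat region and in the pure Eguchi-Hanson region $\{u\le 1\}$. Consequently $\psi$ is supported on the necks $N=\bigcup_i N_i$, where the potential \eqref{eq:GluedPot} differs from $f_{Euc}$ by $O(a_i^2)$ with all $g$-covariant derivatives also $O(a_i^2)$, since $\delta$ is fixed and $\chi$ together with its derivatives are $O(1)$. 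Combined with $\log A=O(\abs{a}^2)$ from \eqref{eq:AValue}, this gives $\norm{F}_{C^0(X,g)}\le C\abs{a}^2$ and $\norm{\nabla^k\psi}_{C^0(X,g)}\le C_k\abs{a}^2$, with $\psi\equiv 0$ off the necks.

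For the $k=0$ case of both \eqref{eq:FarBounds} and \eqref{eq:GlobalBounds}, I would write $e^F-1=(\tilde\omega^2-\omega^2)/\omega^2=i\partial\overline\partial\phi\wedge(\omega+\tilde\omega)/\omega^2$ and run Moser iteration (testing the equation against powers of $\phi$ and integrating by parts, as in \cite{Yau78}) to bound $\osc\,\phi$ by $\norm{e^F-1}_{L^q}$ for a suitable $q$. Since $\norm{e^F-1}_{L^\infty}\le C\abs{a}^2$ and the necks have $O(1)$ volume, this gives $\osc\,\phi\le C\abs{a}^2$, hence $\norm{\phi}_{C^0}\le C\abs{a}^2$ after imposing $\int_X\phi\,\omega^2=0$. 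The point to be checked here is that the Sobolev constant of $(X,g)$ does not degenerate as $\abs{a}\to 0$; this holds because $(X,g)$ converges to the flat orbifold $\T^4/\{\pm1\}$ with shrinking Eguchi-Hanson bubbles, a degeneration in which the relevant functional inequalities stay uniform.

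The far bounds \eqref{eq:FarBounds} then follow by elliptic bootstrapping. On $X\setminus U$ one stays a fixed distance (depending on $U$) from the bubble cores, so by \eqref{eq:EHCurvature} the curvature of $g$ is bounded uniformly in $a$; Yau's second-order estimate, built on the identities of \cite{Yau77}, then controls $\Delta_g\phi$ and makes \eqref{eq:MA} uniformly elliptic with $a$-independent bounds (the constant depends only on a lower bound for the bisectional curvature of $g$). Since $\norm{F}_{C^k(X\setminus U,g)}\le C_k\abs{a}^2$, interior Schauder estimates upgrade $\norm{\phi}_{C^0}\le C\abs{a}^2$ to $\norm{\phi}_{C^k(X\setminus U,g)}\le C_k\abs{a}^2$.

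The global bounds \eqref{eq:GlobalBounds} are where the factor $\abs{a}^{2-k/2}$ enters, and this is the step I expect to be the main obstacle: applied verbatim on $(X,g)$, the higher-order estimates carry constants that blow up as $a\to 0$, because by \eqref{eq:EHCurvature} the curvature of $g$ is of size $1/a$ at each bubble core. The remedy is to rescale. The homogeneity $f_a(u)=a\,f_1(u/a)$ shows that the substitution $z=\sqrt{a}\,\hat z$ carries the $a$-Eguchi-Hanson metric to the fixed $a=1$ model $\hat g$, with $g=a\,\hat g$ and renormalized potential $\hat\phi=\phi/a$ solving a scale-invariant Monge-Amp\`ere equation whose geometry and source $\hat F=O(\abs{a}^2)$ are now $a$-independent on fixed-size balls about the core. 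There Yau's $C^2$ estimate together with Evans-Krylov and Schauder give $\norm{\hat\phi}_{C^k(\hat g)}\le C\bigl(\norm{\hat\phi}_{C^0}+\norm{\hat F}_{C^{k-2}}\bigr)\le C\abs{a}$, using $\norm{\hat\phi}_{C^0}\le C\abs{a}^2/a$ and Assumption \ref{Assumption:ra} to trade $a$ for $\abs{a}$. As the connection is unchanged under the homothety $g=a\hat g$, a covariant $k$-tensor obeys $\abs{\cdot}_g=a^{-k/2}\abs{\cdot}_{\hat g}$, so $\abs{\nabla^k\phi}_g=a^{1-k/2}\abs{\hat\nabla^k\hat\phi}_{\hat g}\le C_k\abs{a}^{2-k/2}$, which is \eqref{eq:GlobalBounds}. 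The delicate part of this last step is the matching across the necks, where the unrescaled estimates of the previous paragraph must be glued to the rescaled Eguchi-Hanson estimates, and where one verifies that the rescaled source stays $O(\abs{a}^2)$ in every derivative uniformly in $a$.
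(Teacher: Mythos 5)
Your scaffolding matches the paper's: Moser iteration with $a$-uniform Poincar\'{e}/Sobolev constants for the $C^0$ bound, ellipticity plus Evans--Krylov/Schauder for higher regularity, and a rescaling of the Eguchi-Hanson region to produce the powers $\vert a\vert^{2-k/2}$ (your homothety $g=a\hat{g}$, $\hat{\phi}=\phi/a$ is exactly the paper's Lemma \ref{Lem:Homothety}, and the tensor-scaling arithmetic is correct). The genuine gap is at the $C^2$ level, and it occurs twice. First, in your far-bounds paragraph you assert that Yau's second-order estimate makes \eqref{eq:MA} uniformly elliptic on $X\setminus U$ ``with $a$-independent bounds (the constant depends only on a lower bound for the bisectional curvature of $g$)''. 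Yau's estimate is a \emph{global} maximum-principle argument on the compact manifold: the maximum of the test function $e^{-C\phi}\Tr_g(\tilde{g})$ can sit anywhere, in particular inside an Eguchi-Hanson bubble, so the constant $C$ must dominate the \emph{global} infimum of the bisectional curvature of $g$, which degenerates like $r_a/\vert a\vert$ as $\vert a\vert\to 0$. Second, near the core you apply ``Yau's $C^2$ estimate together with Evans--Krylov and Schauder'' on fixed-size balls to get $\norm{\hat{\phi}}_{C^k(\hat{g})}\leq C(\norm{\hat{\phi}}_{C^0}+\norm{\hat{F}}_{C^{k-2}})$. For $k=2$ no such \emph{interior} estimate exists: by Pogorelov-type examples the complex Monge-Amp\`{e}re equation admits no interior $C^2$ bound in terms of only the $C^0$ norm of the solution and the source; one needs either boundary control of $\Delta\hat{\phi}$ on the ball or a global argument. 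You cannot supply that boundary control from the far bounds, because those were themselves predicated on uniform ellipticity -- the two local estimates presuppose each other, and the circularity is only broken by a global argument.

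This is precisely how the paper proceeds: it runs the maximum principle globally with the $a$-\emph{dependent} choice $C=2R_a$, $R_a\sim r_a/\vert a\vert$ being the global curvature maximum, in Proposition \ref{Prop:Yau222}, and then performs a case analysis on where the maximum point $x_m$ lies (inside a neck, where $\Delta\psi$ and the curvature are $\mathcal{O}(\vert a\vert^2)$-small, versus outside, where $\psi\equiv 0$ and $\det(\tilde{g})<1$ closes the argument). The smallness \eqref{eq:C2Est}, $\Delta\phi\leq Cr_a\vert a\vert$, then comes from multiplying the exponentially propagated factor $e^{2R_a\,\osc\phi}\approx 1+Cr_a\vert a\vert$ -- a huge constant times a tiny oscillation -- and this is exactly where Assumption \ref{Assumption:ra} and the $r_a$ factor in \eqref{eq:GlobalBounds} originate; the paper even remarks that it does not know how to avoid the global curvature maximum here. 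Your rescaling intuition correctly explains the exponent $2-k/2$ \emph{once} the global $C^2$ bound (hence uniform ellipticity) is in hand, since Evans--Krylov and Schauder genuinely are local; but as written your proposal has no valid route to that $C^2$ bound, and the neck-matching difficulty you flag at the end is a symptom of this missing global step rather than a separate technicality.
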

Kobayashi only states \eqref{eq:GlobalBounds} for even $k$'s. The odd cases are handled by the Gagliardo-Nierenberg interpolation inequality (see for instance \cite[3.70 Theorem]{Aubin}).

These estimates will be crucial in Section \ref{Section:NoGo} to prove Theorem \ref{Thm:IntroNoGo1} and \ref{Thm:IntroNoGo2}.
\begin{Rem}
In words, \eqref{eq:FarBounds} says that when the exceptional divisor $E$ is small, $g$ is close to $\tilde{g}$ as long as one stays away from  $E$. The second bound \eqref{eq:GlobalBounds} gives weaker estimates valid also near $E$. The $C^k$-estimates of $\phi$ translate into $C^{k-2}$-estimates for the metric $\tilde{g}$. The $C^4$-estimates of $\phi$ are sometimes called estimates at the level of curvature.
\end{Rem}

We will reprove Theorem \ref{Thm:Kob} in Section \ref{Section:Estimates}. Our arguments are roughly the same as Kobayashi's, with some added detail and some corrections.

\subsection{Isometries}
Even though the Ricci-flat metric $\tilde{g}$ is not known explicitly, it will inherit the isometries of $g$.

\begin{Prop}
\label{Prop:CYIsom}
Assume the setup of Theorem \ref{Thm:CY}. Let $\tilde{g}$ be the Ricci-flat K\"{a}hler metric in the K\"{a}hler class of $g$. Assume $F\colon X\to X$ is a (anti-) holomorphic isometry of $g$. Then $F$ is also an isometry of $\tilde{g}$.  
\end{Prop}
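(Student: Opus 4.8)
The plan is to exploit the uniqueness clause of Theorem \ref{Thm:CY}. Since the Ricci-flat form $\tilde\omega = \omega + i\partial\bar\partial\phi$ is pinned down by the single function $\phi$ together with the normalization $\int_X \phi\,\omega^n = 0$ and equation \eqref{eq:MA}, it is enough to show that the pulled-back potential $\phi\circ F$ solves the identical problem. Uniqueness then forces $\phi\circ F = \phi$, and from $F^*\tilde\omega = \pm\tilde\omega$ I will read off $F^*\tilde g = \tilde g$.

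First I would record how $F$ interacts with the Kähler data. Put $\varepsilon = +1$ if $F$ is holomorphic and $\varepsilon = -1$ if $F$ is anti-holomorphic, so that $dF\circ J = \varepsilon\, J\circ dF$. Because $F$ is a $g$-isometry intertwining $J$ with $\varepsilon J$, one gets $F^*\omega = \varepsilon\omega$; moreover $F^*$ commutes with $\partial\bar\partial$ when $\varepsilon = 1$ and satisfies $F^*(i\partial\bar\partial\,\cdot\,) = -\,i\partial\bar\partial(F^*\,\cdot\,)$ when $\varepsilon = -1$, the interchange of $\partial$ and $\bar\partial$ being the only place the anti-holomorphic case differs. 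In both cases $F^*(i\partial\bar\partial\phi) = \varepsilon\, i\partial\bar\partial(\phi\circ F)$, and since $\varepsilon^2 = 1$ this yields $\varepsilon F^*\tilde\omega = \omega + i\partial\bar\partial(\phi\circ F)$. One checks that $F^*\tilde g$ is again $J$-Hermitian with fundamental form exactly $\varepsilon F^*\tilde\omega$, so the latter is a genuine Kähler form for the original complex structure and $\phi\circ F$ is an honest $\omega$-potential.

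Next I would verify that $\phi\circ F$ satisfies \eqref{eq:MA} and the normalization. The normalization is immediate: $F$ is an (anti-)holomorphic self-map of the complex surface $X$, hence orientation-preserving with $F^*\omega^n = \omega^n$, so $\int_X(\phi\circ F)\,\omega^n = \int_X \phi\,\omega^n = 0$. For the equation itself, $F^*\tilde g$ is Ricci-flat, being isometric to $\tilde g$, so as a Kähler metric on $(X,J)$ its Ricci form vanishes; comparing with $Ric_g = \nabla^2\psi$ via the standard identity $\rho_{\hat g}-\rho_g = -\,i\partial\bar\partial\log(\hat\omega^n/\omega^n)$ forces $(\varepsilon F^*\tilde\omega)^n = B\,e^\psi\omega^n$ for a constant $B>0$, and integrating over $X$ together with $[\varepsilon F^*\tilde\omega] = [\omega]$ identifies $B = A$ from \eqref{eq:ADef}. (Concretely, on the K3 surface this amounts to $F^*\eta = c\,\eta$ or $c\,\bar\eta$, as $\eta$ spans the one-dimensional space of holomorphic two-forms, with $\lvert c\rvert = 1$ by volume preservation, so $F$ preserves the reference volume form $e^\psi\omega^n = \eta\wedge\bar\eta$.) Hence $\phi\circ F$ solves \eqref{eq:MA} with the correct normalization, uniqueness in Theorem \ref{Thm:CY} gives $\phi\circ F = \phi$, so $\varepsilon F^*\tilde\omega = \tilde\omega$, and since $\tilde g(\,\cdot\,,\cdot\,) = \tilde\omega(\,\cdot\,, J\,\cdot\,)$ and $dF\circ J = \varepsilon J\circ dF$ this collapses to $F^*\tilde g = \tilde g$.

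I expect the only genuinely delicate point to be the anti-holomorphic case: one must track the orientation behaviour and the interchange of $\partial$ and $\bar\partial$ carefully enough to see that $\varepsilon F^*\tilde\omega$ remains a positive $(1,1)$-form for the original complex structure, and that $F$ preserves the reference volume form rather than merely scaling it. Establishing $F^*\eta \in \C\,\bar\eta$ with $\lvert c\rvert = 1$ (equivalently $B = A$) is the crux; everything else is formal naturality together with the uniqueness already granted by Theorem \ref{Thm:CY}.
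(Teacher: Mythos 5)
Your proof is correct, and its overall architecture is the same as the paper's: exhibit $\varepsilon F^*\tilde\omega = \omega + i\partial\overline{\partial}(\phi\circ F)$ as a competitor solution of the normalized Monge-Amp\`ere problem and invoke the uniqueness clause of Theorem \ref{Thm:CY}. Where you genuinely diverge is in verifying that the pulled-back form satisfies \eqref{eq:MA}. The paper does this by a one-line direct computation: apply $F^*$ to both sides of \eqref{eq:MA} and use $F^*\omega=\epsilon\omega$ together with $\psi\circ F=\psi$ to get $(F^*\tilde\omega)^n=\epsilon^n Ae^{\psi}\omega^n$. You instead note that $F^*\tilde g$ is a Ricci-flat K\"ahler metric for the original complex structure, compare Ricci forms via $\rho_{\hat g}-\rho_g=-i\partial\overline{\partial}\log(\hat\omega^n/\omega^n)$ to conclude $(\varepsilon F^*\tilde\omega)^n=Be^{\psi}\omega^n$ for some constant $B>0$, and then identify $B=A$ cohomologically from $[\varepsilon F^*\tilde\omega]=[\omega]$. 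This costs an extra pluriharmonicity-plus-compactness step and the identification of the constant, but it buys independence from the identity $\psi\circ F=\psi$, which the paper asserts without proof (it needs a small argument: $F^*Ric_g=Ric_g$ forces $\psi\circ F-\psi$ to be pluriharmonic, hence constant, and the constant dies upon exponentiating and integrating). Two minor inaccuracies, neither fatal: first, your justification of the normalization, ``orientation-preserving with $F^*\omega^n=\omega^n$'', is only literally true in even complex dimension; in the general setup of Theorem \ref{Thm:CY} an anti-holomorphic $F$ has $F^*\omega^n=(-1)^n\omega^n$ and reverses orientation exactly when $n$ is odd, but the two signs cancel in the change of variables, so $\int_X(\phi\circ F)\,\omega^n=\int_X\phi\,\omega^n=0$ still holds (the paper sidesteps this by writing the normalization against $F^*\omega^n$). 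Second, the constant you want to recover is the one in \eqref{eq:ADef0}, and the parenthetical argument via $F^*\eta\in\C\,\overline{\eta}$ is specific to the K3 case rather than the general setting in which the proposition is stated.
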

\begin{proof}
The isometry $F$ preserves the  Monge-Amp\`{e}re equation \eqref{eq:MA} and the K\"{a}hler form $\omega$ up to sign. By the uniqueness of the solution, it also has to preserve $\tilde{\omega}$ up to sign. The details are as follows.

Let $\omega$ and $\tilde{\omega}$ be the K\"{a}hler forms of $g$ and $\tilde{g}$ respectively.
Let $\epsilon =+1$ if $F$ is holomorphic and $\epsilon=-1$ if $F$ is anti-holomorphic. Then $F^*\omega =\epsilon \omega$. Since $F$ is an isometry, we also have $\psi \circ F=\psi$ and 
\[A=\frac{\int_X \omega^n}{\int_X e^{\psi} \omega^n}=\frac{\int_X F^*\left(\omega^n\right)}{\int_X F^*\left(e^{\psi} \omega^n\right) }.\]
 Applying $F^*$ to the Monge-Amp\`{e}re equation \eqref{eq:MA} gives us
\[(F^*\tilde{\omega})^n=F^*(\tilde{\omega}^n)=F^*\left(Ae^{\psi}\omega^n\right)=Ae^{\psi\circ F} (F^*\omega)^n=\epsilon^n A e^\psi \omega^n =\epsilon^n \tilde{\omega}^n.\]
So $\epsilon F^*\tilde{\omega}$ solves the Monge-Amp\`{e}re equation. Since $F$ is (anti-) holomorphic, we find
\[F^* (\tilde{\omega})=\epsilon \left(\omega+i\partial \overline{\partial} \phi\circ F\right).\]
So both $\phi$ and $\phi \circ F$ solve the equation subject to the normalisation
\[\int_X \phi\, \omega^n =\int_X (\phi\circ F) F^*\omega^n=0.\]
By the uniqueness, we that conclude 
$\phi=\phi\circ F$ and $F$ is an isometry of $\tilde{g}$.
\end{proof}
\begin{Rem}
Proposition \ref{Prop:CYIsom} is known to the experts. In \cite[Proposition 2.2]{AG90} a very similar statement is proven for projective Calabi-Yau manifolds, $\iota\colon X\to \C\P^N$ when one takes $\omega=\iota^*(\omega_{FS})$, i.e. the metric is induced by the Fubini-Study metric. Their arguments are essentially the above ones.
\end{Rem}

\begin{Prop}
\label{Prop:KummerIsom}
Assume $X$ is a Kummer K3 surface with patchwork metric as described above. Assume all the components of the exceptional divisor have the same size, meaning $a_i=a_j$ for $1\leq i,j\leq 16$. Let $\tilde{g}$ be the unique Ricci-flat metric in the K\"{a}hler class of $g$. Assume $F^{\C}\colon \C^2\to \C^2$ is an affine map $F^{\C}(z)=Bz+b$ with $B\in U(2)$ such that $B\Gamma=\Gamma$ and $b\in \frac{1}{2}\Gamma$. Then  $F^{\C}$ induces an isometry $F\colon X\to X$ of both $g$ and $\tilde{g}$. 

Similarly, if $\tau^{\C}\colon \C^2\to \C^2$ denotes the complex conjugation map and $\tau^{\C}(\Gamma)=\Gamma$, then $\tau^{\C}$ induces an isometry $\tau\colon X\to X$ of both $g$ and $\tilde{g}$. 

Maps of these form are the only (anti-) holomorphic isometries of $(X,g)$.
\end{Prop}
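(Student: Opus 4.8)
\emph{Proof plan.} The strategy is to handle the two directions separately and to do all the work with the explicit patchwork metric $g$, invoking Proposition \ref{Prop:CYIsom} only at the very end of each forward argument to transfer the conclusion from $g$ to $\tilde g$. For the forward direction with $F^{\C}(z)=Bz+b$, I would first check descent: the condition $B\Gamma=\Gamma$ makes $F^{\C}$ descend to an automorphism of $T=\C^2/\Gamma$, and then descent to $Y=T/\mu_2$ requires $F^{\C}$ to normalise $z\mapsto -z$, i.e. $F^{\C}(-z)\equiv -F^{\C}(z)\pmod\Gamma$; since $F^{\C}(-z)=-Bz+b$ and $-F^{\C}(z)=-Bz-b$, this forces $2b\in\Gamma$, which is exactly the hypothesis $b\in\frac12\Gamma$. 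The resulting automorphism of $Y$ permutes the sixteen $A_1$-singularities (the images of $\frac12\Gamma/\Gamma$) and hence lifts uniquely to a holomorphic automorphism $F$ of the minimal resolution $X$ by functoriality of minimal resolutions. To verify $F^*g=g$: on $X\setminus\bigcup_i U_i$ the metric is the flat pullback from $\C^2$ and $F^{\C}$ is an affine unitary map, hence an isometry there; near a component $E_i$, choosing coordinates $w$ centred at the fixed point $p_i$ kills the translation part so that $F$ is modelled by $w\mapsto Bw$ with $B\in U(2)$, and since the potential $\Phi_a$ of \eqref{eq:GluedPot} depends only on $u=|w|^2$ and $B$ preserves $u$, the local potential at $F(p_i)$ pulls back to the local potential at $p_i$. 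This is precisely where the hypothesis $a_i=a_j$ is used, ensuring the two Eguchi--Hanson pieces carry the same parameter. Thus $F^*g=g$ globally, and Proposition \ref{Prop:CYIsom} upgrades this to $\tilde g$.

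The anti-holomorphic case $\tau^{\C}(z)=\bar z$ runs identically: $\tau^{\C}(\Gamma)=\Gamma$ gives descent to $T$, it fixes $0$ and commutes with $z\mapsto -z$ so it descends to $Y$ and permutes the fixed points, and it lifts to an anti-holomorphic $\tau$ on $X$. Invariance of $g$ again follows from $u(\bar z)=u(z)$ together with $a_i=a_j$; the only new point is that for an anti-holomorphic map the K\"ahler form satisfies $\tau^*\omega=-\omega$, which is exactly the case $\epsilon=-1$ covered by Proposition \ref{Prop:CYIsom}, so $\tau$ is an isometry of $\tilde g$ as well.

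For the converse, let $F$ be any (anti-)holomorphic isometry of $(X,g)$. By construction $g$ is flat on $X\setminus\bigcup_i U_i$ and has nonzero curvature on the Eguchi--Hanson cores and the non-Ricci-flat necks inside each $U_i$, so an isometry must preserve the connected flat region $W:=X\setminus\bigcup_i\overline{U_i}$. On $W$, which is locally isometric to flat $\C^2$, $F$ is a (anti-)holomorphic local isometry of a flat metric; such a map carries a parallel Euclidean frame to a parallel frame, so its Jacobian is constant and $F$ is affine, lifting to $z\mapsto Bz+b$ (holomorphic case) or $z\mapsto B\bar z+b$ (anti-holomorphic case) with $B\in U(2)$. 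Finally, compatibility of this affine map with the deck group of the covering $\C^2\setminus\frac12\Gamma\to Y\setminus\mathrm{Sing}(Y)$, generated by the translations in $\Gamma$ and $z\mapsto -z$, pins down $B$ and $b$: conjugating a translation $t_\gamma$ produces $t_{B\gamma}$, forcing $B\Gamma=\Gamma$, while conjugating $z\mapsto -z$ produces $z\mapsto -z+2b$, forcing $2b\in\Gamma$. Since $F$ and the automorphism induced by this affine map agree on the dense open set $W$, they agree everywhere by analyticity, so $F$ has the claimed form.

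I expect the main obstacle to be the rigidity step in the converse: lifting $F|_W$ coherently to a single globally defined affine map of $\C^2$ through the orbifold quotient (via analytic continuation/monodromy along the covering), and verifying that the flat region is genuinely preserved and connected. The descent and lattice-normalisation bookkeeping, and the forward invariance of the spherically symmetric Eguchi--Hanson pieces, are comparatively routine once $a_i=a_j$ is in hand.
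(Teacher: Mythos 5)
Your proposal is correct and follows essentially the same route as the paper: descend $F^{\C}$ through $T$ and $Y$, lift to the blow-up, verify $F^*g=g$ region by region using the $U(2)$-invariance of the spherically symmetric potential \eqref{eq:GluedPot} together with $a_i=a_j$, invoke Proposition \ref{Prop:CYIsom} (with $\epsilon=-1$ for $\tau$) to pass to $\tilde g$, and prove the converse by noting that curvature forces any isometry to preserve the flat, neck, and Eguchi--Hanson regions separately. The only cosmetic differences are that you cite functoriality of minimal resolutions where the paper extends the affine map to the blow-up explicitly (lines to lines), and your converse spells out the flat-rigidity and deck-group bookkeeping that the paper leaves as a sketch.
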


\begin{proof}
The argument that one gets an induced map $F\colon X\to X$ goes as follows. Since $F^{\C}(z+\Gamma)=F^{\C}(z)+\Gamma$ holds for all $z\in \C^2$ we get an induced map $F^T\colon T\to T$. The requirement $b\in \frac{1}{2}\Gamma$ implies $F^T(-z)=-F^T(z)$, so we get a well-defined map $F^Y\colon Y\to Y$. Since this came from an affine map, it extends to the blow-up, and the argument, when written out, looks like this. To extend to the blow-up, it suffices to see what happens locally. Since blowing up commutes with taking the quotient, i.e. $Bl_0(\C^2/\mu_2)=Bl_0(\C^2)/\mu_2$, it suffices to see what happens when blowing up points in $\C^2$.  In this case, we extend a given affine map $F^{\C}\colon \C^2\to \C^2$ to a map $F^{Bl}\colon Bl_p(\C^2)\to Bl_{F(p)}(\C^2)$ by sending a pair $(q,\ell)\in \C^2\times \C\P^1$ to $(F(q),F(\ell))$, which makes sense since affine maps map lines to lines. This gives us our required map $F\colon X\to X$. The same line of arguments works for anti-holomorphic maps.

To see that the induced map is an isometry of $g$, we split $X$ into two different kinds of regions; the flat region and the neck+Eguchi-Hanson regions (the sets called $U_i$ in the construction above). In the flat region there is nothing to show. In one of the patches $U_i$ we have a metric whose K\"{a}hler potential is given by \eqref{eq:GluedPot}, and this potential is preserved by maps of the above form. Completely analogous arguments work for the map $\tau$.
 
We may therefore apply Proposition \ref{Prop:CYIsom}. 
 
 To see that the above maps are the only (anti-) holomorphic isometries of $(X,g)$ one can argue using the curvature as follows. The neck regions are not Ricci-flat, whereas the complement is. So an isometry has to map the neck regions to neck regions. The Euclidean region is flat, and the Eguchi-Hanson patches has nowhere vanishing curvature. So these regions cannot be interchanged either. From this it follows that the maps need to be of the above form.
\end{proof}
\begin{Rem}
We do not know is the isometry group of $(X,\tilde{g})$ is bigger than that of $(X,g)$. For toy models, this can easily be the case. Consider $(\C^n/\Gamma,g_0)$ with any K\"{a}hler metric $g_0$ without non-trivial isometries. Use Theorem \ref{Thm:CY} to find the Ricci-flat metric $g$ in the K\"{a}hler class of $g_0$. Then $g$ is the flat metric, and thus has an infinite isometry group, even though the isometry group of $g_0$ is trivial.
\end{Rem}


\subsection{Homothety of a Kummer K3 surface}
Both the Euclidean metric and the Eguchi-Hanson K\"{a}hler potentials are homogeneous in simultaneous scaling of the coordinate $z$ and the parameter $a$. This property will then be inherited by the solution $\phi$. Here are the details.

Let $\alpha>0$ and consider the homothety $S_{\alpha}^{\C}\colon \C^2\to \C^2$ given by 
\[S_{\alpha}^{\C}(z)=\alpha z.\]
Let $\Gamma \subset \C^2$ be any non-degenerate lattice and let $\Gamma_{\alpha}=S_{\alpha}^{\C} (\Gamma)$ be the scaled lattice. Denote by $X$ and $X_{\alpha}$ the Kummer K3 surface associated to the lattice $\Gamma$ and $\Gamma_{\alpha}$ respectively. Let $g_a$ denote the patchwork metric on $X$ with parameter $a=(a_1,\dots, a_{16})$, and denote by $\tilde{g}_{a}$ the Ricci-flat metric in the K\"{a}hler class of $g_a$. These are denoted by $g$ and $\tilde{g}$ respectively for most of the paper. On $X_{\alpha}$ we put the same kind of patchwork metric with locally defined K\"{a}hler potential \eqref{eq:GluedPot}, but we modify the cutoff function to be
\[\chi_{\alpha}(z)=\chi\left(\frac{z}{\alpha}\right)=\chi(S_{\alpha}^{-1}(z)).\]
\begin{Lem}
\label{Lem:Homothety}
The map $S_{\alpha}^{\C}$ induces an isometry
\[S_{\alpha}\colon (X,\alpha^2g_{a}) \to (X_{\alpha},g_{\alpha^2 a})\]
and
\[S_{\alpha}\colon (X,\alpha^2\tilde{g}_{a}) \to (X_{\alpha},\tilde{g}_{\alpha^2 a}).\]
\end{Lem}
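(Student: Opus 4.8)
The plan is to reduce everything to the homogeneity of the two Kähler potentials under simultaneous scaling of $z$ and $a$, and then to invoke the uniqueness part of the Calabi-Yau theorem for the Ricci-flat metrics. First I would check that $S_\alpha^{\C}$ actually descends to a biholomorphism $S_\alpha\colon X\to X_\alpha$. Since $S_\alpha^{\C}(\Gamma)=\Gamma_\alpha$ by definition, $S_\alpha^{\C}$ induces a biholomorphism of the tori which commutes with the $\mu_2$-action (as $S_\alpha^{\C}(-z)=-S_\alpha^{\C}(z)$) and hence lifts to the blow-ups, exactly as in the construction of the induced maps in Proposition \ref{Prop:KummerIsom}. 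This gives the required $S_\alpha\colon X\to X_\alpha$, carrying flat regions to flat regions and neck/Eguchi--Hanson patches to the correspondingly scaled patches.

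The core computation is the scaling of the potentials. A direct substitution gives $f_{Euc}(\alpha z)=\alpha^2 f_{Euc}(z)$, and, using that the argument $a/u$ of $\arsinh$ is scale-invariant, $f_{\alpha^2 a_i}(\alpha z)=\alpha^2 f_{a_i}(z)$ on the $i$-th patch. Combined with the compatibility $\chi_\alpha(\alpha z)=\chi(u(z))$ built into the definition of $\chi_\alpha$, the glued potential \eqref{eq:GluedPot} on $X_\alpha$ (with parameter $\alpha^2 a_i$ and cutoff $\chi_\alpha$) satisfies $(S_\alpha^{\C})^*\Phi_{\alpha^2 a_i}=\alpha^2\Phi_{a_i}$ on each patch. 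Since $S_\alpha$ is holomorphic, pullback commutes with $i\partial\overline{\partial}$, so the Kähler forms obey $(S_\alpha)^*\omega_{g_{\alpha^2 a}}=\alpha^2\omega_{g_a}$; as the right-hand side is exactly the Kähler form of $\alpha^2 g_a$ and $S_\alpha$ preserves the complex structure, this proves the first claimed isometry. The flat regions need no separate check, since there both potentials reduce to $f_{Euc}$ and the scaling is immediate.

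For the Ricci-flat metrics I would argue by uniqueness rather than re-solving \eqref{eq:MA}. Pulling back, $(S_\alpha)^*\tilde{\omega}_{\alpha^2 a}$ is a Kähler form on $X$ which is Ricci-flat (the Ricci form is a biholomorphic invariant) and lies in the class $(S_\alpha)^*[\omega_{g_{\alpha^2 a}}]=\alpha^2[\omega_{g_a}]$. On the other hand $\alpha^2\tilde{\omega}_a$ is also Ricci-flat (the Ricci form is unchanged under constant rescaling of the metric) and lies in the same class $\alpha^2[\omega_{g_a}]$. Since the Ricci-flat representative of a Kähler class is unique --- a standard consequence of Theorem \ref{Thm:CY}, because two Ricci-flat forms in one class have pluriharmonic, hence constant, ratio of top powers, equal after integrating, and then agree by uniqueness of the Monge-Amp\`ere solution --- I conclude $(S_\alpha)^*\tilde{\omega}_{\alpha^2 a}=\alpha^2\tilde{\omega}_a$, which is the second claimed isometry.

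The only points demanding care, and hence the main (mild) obstacle, are bookkeeping rather than conceptual: verifying that the rescaled cutoff $\chi_\alpha$ is chosen precisely so that the neck regions of $X$ and $X_\alpha$ correspond under $S_\alpha^{\C}$, so that the patchwise identity $(S_\alpha^{\C})^*\Phi_{\alpha^2 a_i}=\alpha^2\Phi_{a_i}$ glues to a global statement; and confirming that one may invoke uniqueness of the Ricci-flat representative directly, so that the normalization $\int_X \phi\,\omega^2=0$ never has to be tracked through the scaling.
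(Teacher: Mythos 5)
Your proof is correct, and its first half coincides with the paper's: descending $S_\alpha^{\C}$ to the blow-ups exactly as in Proposition \ref{Prop:KummerIsom}, then using the homogeneity $f_{\alpha^2 a}(\alpha z)=\alpha^2 f_a(z)$ of the Euclidean and Eguchi--Hanson potentials together with the rescaled cutoff to get $\Phi_{\alpha^2 a}(\alpha z)=\alpha^2\Phi_a(z)$, hence the isometry for the patchwork metrics. For the Ricci-flat half you take a genuinely (if mildly) different route. The paper pulls back the Monge--Amp\`ere equation \eqref{eq:MA} itself, using that $Ae^{\psi}$ is the same for $(X,g_a)$ and $(X_\alpha,g_{\alpha^2 a})$, concludes that $\alpha^2\phi_a$ and $\phi_{\alpha^2 a}\circ S_\alpha$ solve the same normalized equation, and invokes uniqueness of the normalized solution; you instead invoke uniqueness of the Ricci-flat representative of a K\"ahler class, which you correctly reduce to Theorem \ref{Thm:CY}. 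Your route spares you from verifying scale-invariance of $A$ and $\psi$ and from tracking the normalization $\int_X\phi\,\omega^2=0$; the paper's route has the advantage of producing the potential-level identity $\alpha^2\phi_a=\phi_{\alpha^2 a}\circ S_\alpha$, which is precisely what is used immediately after the lemma in the form $(\alpha^2 \phi_a)_{\vert E}=(\phi_{\alpha^2 a})_{\vert E}$. To recover that identity from your conclusion you would still need the normalization you set aside: your argument gives $i\partial\overline{\partial}\left(\phi_{\alpha^2 a}\circ S_\alpha-\alpha^2\phi_a\right)=0$, hence the difference is constant, and pinning the constant to zero requires $\int_{X}(\phi_{\alpha^2 a}\circ S_\alpha)\,(\alpha^2\omega_a)^2=\int_{X_\alpha}\phi_{\alpha^2 a}\,\omega_{\alpha^2 a}^2=0$. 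For the lemma as stated, however, your proof is complete.
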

\begin{proof}
That we get induced maps can be argued as in the proof Proposition \ref{Prop:KummerIsom}. The important properties are $S_{\alpha}^{\C}(z)=-S_{\alpha}^{\C}(-z)$ and that $S_{\alpha}^{\C}$ is affine.

One easily checks that the Eguchi-Hanson potential \eqref{eq:EHPot} satisfies 
\[f_{\alpha^2a}(\alpha z)=\alpha^2 f_{a}(z).\]
The same goes for the Euclidean potential. Hence, in an Eguchi-Hanson and neck region,
\[\Phi_{\alpha^2 a}(\alpha z)=\alpha^2 \Phi_a(z).\]
Away from the Eguchi-Hanson and neck regions, the isometry is clear. This shows the claimed isometry with respect to the patchwork metric. 

For the isometry of the Ricci-flat metric, we argue as in the proof of Proposition \ref{Prop:CYIsom}. It is also clear that $Ae^{\psi}$ is the same for $(X,g_a)$ and $(X_{\alpha},g_{\alpha^2 a})$. Hence\footnote{Here we are indulging in some abuse of notation. $\phi_a \colon X\to \R$ whereas $\phi_{\alpha^2a} \colon X_{\alpha}\to \R$. We also write simply $\psi$ even though it depends on $a$ and appears once as a function $\psi\colon X\to \R$ and once as a function $\psi\colon X_{\alpha} \to \R$.} 
\[(\alpha^2 (\omega_a + i\partial\overline{\partial} \phi_a))^2(z)=Ae^{\psi}(\alpha^2 \omega_a)^2(z)=Ae^{\psi}( \omega_{\al^2a})^2(S_{\al}(z))=( \omega_{\alpha^2a} + i\partial\overline{\partial} \phi_{\alpha^2a})^2(S_{\al}(z)),\]
which shows that both $\alpha^2\phi_a(z)$ and $\phi_{\alpha^2 a}(S_{\alpha}(z))$ solve the Monge-Amp\`{e}re equation. By uniqueness, $\alpha^2\phi_a(z)=\phi_{\alpha^2 a}(S_{\alpha}(z))$ and $S_{\alpha}$ is an isometry between the Ricci-flat metrics as well.
\end{proof}

The homothety $S_{\alpha}$ clearly maps $\Gamma/2\Gamma$ to $\Gamma_{\alpha}/2\Gamma_{\alpha}$, hence maps the exceptional divisor of $X$ to the exceptional divisor of $X_{\alpha}$. In fact, $(S_{\alpha})_{\vert E}$ is $\alpha$-independent. So  Lemma \ref{Lem:Homothety} says in particular
\[(\alpha^2 \phi_a)_{\vert E}=(\phi_{\alpha^2 a})_{\vert E}.\]
Here we are abusing notation again, since the right hand $E$ is a subset of $X_{\alpha}$.

\section{Closed Geodesics - No-Go Theorems}
\label{Section:NoGo}
There are two main results about geodesics on a Kummer K3 surface in this section. Theorem \ref{Thm:NoGo1} constrains stable, closed geodesics to stay away from the exceptional divisor $E$, whereas Theorem \ref{Cor:NoGo} says that closed, stable geodesics cannot stay inside an Eguchi-Hanson patch. 

\begin{Thm}
\label{Thm:NoGo1}
Let $(X,g)$ be a Kummer K3 as constructed above. Let $\tilde{g}$ be the Ricci-flat metric in the K\"{a}hler class of $g$. Then, for all values of $\vert a\vert$ small enough, there is an open set $V\subset X$ with $E\subset V$ such that no stable, closed geodesic (with respect to either $g$ or $\tilde{g}$) in $X$ ever enters $V$.
\end{Thm}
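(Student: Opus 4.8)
The guiding idea is that curvature concentrates and blows up at $E$, so a geodesic coming close to $E$ is trapped in a region of enormous sectional curvature and cannot be a stable critical point of the energy \eqref{eq:Energy}. I would treat the two metrics by different mechanisms. Since $(X,\tilde g)$ is hyperk\"ahler, Theorem \ref{Thm:BY} applies and the argument is essentially pointwise; but $(X,g)$ is \emph{not} globally hyperk\"ahler (the necks are not Ricci-flat), so for $g$ one must argue by hand with the second variation. In both cases the input is a lower bound for the curvature on a shrinking neighbourhood of $E$. On the core $\{u\le 1\}$ of each patch $U_i$ the metric $g$ equals the Eguchi--Hanson metric, so \eqref{eq:EHCurvature} gives $|Riem_g|=\sqrt{24}\,a_i^2(a_i^2+u^2)^{-3/2}$, which is $\sqrt{24}/a_i$ along $E_i$ and of size $\sim 1/a_i$ throughout $\{u\lesssim a_i\}$. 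I would take $V=\bigcup_i\{u<\varepsilon a_i\}$ (in the patch $U_i$); this is an open neighbourhood of $E$, shrinking as $|a|\to 0$, on which $|Riem_g|\gtrsim 1/|a|$ (using Assumption \ref{Assumption:ra} to pass between $a_i$ and $|a|$).

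For $\tilde g$ the argument is immediate once the curvature bound is upgraded to $\tilde g$. Writing $\tilde g=g+i\partial\overline{\partial}\phi$, the curvature-level estimate \eqref{eq:GlobalBounds} with $k=4$ gives $\norm{\phi}_{C^4(X,g)}\le C$, while $k=2$ gives $|i\partial\overline{\partial}\phi|_g\le C|a|\to 0$; thus $\tilde g$ and $g$ are uniformly equivalent near $E$, and the curvature difference $Riem_{\tilde g}-Riem_g$ is a bounded combination of $\n^4\phi$, $Riem_g\ast\n^2\phi$ and $(\n^3\phi)^2$, each of size $O(1)$. Since $|Riem_g|\gtrsim 1/|a|$ on $V$, this yields $|Riem_{\tilde g}|\gtrsim 1/|a|-C>0$ on $V$ for $|a|$ small. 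If now a stable closed geodesic $\gamma$ of $\tilde g$ met $V$, then Theorem \ref{Thm:BY} would force $Riem_{\tilde g}\equiv 0$ along $\gamma$, contradicting $|Riem_{\tilde g}|>0$ at the point where $\gamma$ enters $V$. Hence no stable closed geodesic of $\tilde g$ enters $V$.

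For $g$ I cannot invoke Theorem \ref{Thm:BY}, so I would estimate \eqref{eq:Energy} directly. Let $\gamma$ be a closed unit-speed geodesic entering $V$, let $p_0\in\gamma$ minimise $u$, and let $\sigma\subset\gamma$ be the maximal sub-arc through $p_0$ lying in the Eguchi--Hanson region $\{u\le 1\}$; on $\sigma$ the metric is hyperk\"ahler. I would use a test field $\xi=f(t)\,W(t)$ with $W$ a $\n$-parallel unit normal field and $f$ equal to $1$ near $p_0$ and decaying to $0$ at the two ends of $\sigma$; since $W$ is parallel, $|\n_{\dot\gamma}\xi|^2=(f')^2$ and
\[\delta^2 E_\gamma(\xi,\xi)=\int_\sigma (f')^2-f^2\,\langle R(\dot\gamma,W)W,\dot\gamma\rangle\,dt.\]
The decisive point is that the Eguchi--Hanson core has $g$-diameter only $\sim\sqrt{|a|}$ but the region $\{u\le 1\}$ has $g$-radius $\sim 1$ (equivalently, rescaling to the unit Eguchi--Hanson space by Lemma \ref{Lem:Homothety} turns $\sigma$ into a geodesic of length $\sim 1/\sqrt{|a|}$): letting $f$ decay gently over the \emph{whole} of $\sigma$ makes $\int_\sigma(f')^2=O(1)$, while the curvature term concentrates on $\{u\lesssim a_i\}$, where $|Riem_g|\sim 1/|a|$, so $\int_\sigma f^2\langle R(\dot\gamma,W)W,\dot\gamma\rangle$ is of size $\sim 1/\sqrt{|a|}$ \emph{provided it can be kept positive}. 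Granting positivity, $\delta^2 E_\gamma(\xi,\xi)\le O(1)-c/\sqrt{|a|}<0$ and $\gamma$ is unstable.

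The positivity and uniformity just invoked are where the real work lies, and constitute the main obstacle. Because the metric is Ricci-flat along $\sigma$, the quadratic form $w_0\mapsto\int_\sigma\langle R(\dot\gamma,W_{w_0})W_{w_0},\dot\gamma\rangle\,dt$ on parallel normal fields is trace-free (its trace is $\int_\sigma Ric(\dot\gamma,\dot\gamma)\,dt=0$), so its largest eigenvalue is $\ge 0$; hence there is always a parallel direction with non-negative integrated curvature, and one expects a strictly negative direction unless the curvature integrates to exactly zero along the core. Ruling out this degeneracy is the crux: it cannot follow from Ricci-flatness alone (otherwise Theorem \ref{Thm:BY} would be unnecessary), so one must use the explicit anti-self-dual Eguchi--Hanson curvature — concretely, that for any tangent direction the simple $2$-vectors $\dot\gamma\wedge w$ sweep out the whole sphere in $\Lambda^2_-$, forcing a plane of sectional curvature $\tfrac12\lambda_{\max}(W_-)\sim 1/|a|$ through $\dot\gamma$ at every point — together with control of how fast this optimal plane rotates along $\sigma$. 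The scaling is genuinely borderline at the core scale, and making the positive contribution dominate uniformly over all geodesics entering $V$ (and with the correct constant $\varepsilon$) is the part I expect to require the detailed curvature computations of Appendix \ref{App:Riemann}.
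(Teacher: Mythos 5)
Your treatment of $\tilde g$ is correct and is essentially the paper's own argument: the paper likewise produces a curvature lower bound of order $1/\vert a\vert$ near $E$ (it evaluates the holomorphic sectional curvature along $E_i$ itself, where $g_{\vert E_i}=a_i g_{FS}$ gives $2/a_i$, rather than using \eqref{eq:EHCurvature} on a tube $\{u<\varepsilon a_i\}$ as you do), shows via \eqref{eq:GlobalBounds} with $k=2,3,4$ that the curvature of $\tilde g$ differs from that of $g$ by $O(1)$, and then invokes Theorem \ref{Thm:BY}, which is legitimate here because $(X,\tilde g)$ is globally hyperk\"ahler. Up to this cosmetic difference in where the lower bound is evaluated, your first half matches the paper.

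The genuine gap is your $g$-case. You rightly note that Theorem \ref{Thm:BY} cannot be quoted verbatim for $(X,g)$, but the replacement you set up — a direct second-variation estimate in which a curvature term of size $1/\sqrt{\vert a\vert}$ must beat $\int_\sigma (f')^2=O(1)$ — fails exactly where you say it does: for a parallel normal field $W$ the integrand $\ip{R(\dot\gamma,W)W}{\dot\gamma}$ has no sign, Ricci-flatness of the Eguchi--Hanson region only forces the trace over three parallel normal directions to vanish (so the best direction yields $\geq 0$, not a quantitative positive lower bound), and nothing in your outline excludes cancellation along the core; since you explicitly leave this open, the proposal does not prove the statement. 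The missing idea is that on $\{u\le 1\}$ the patchwork metric is not merely a metric with concentrated curvature satisfying bounds — it \emph{is} the Eguchi--Hanson metric, a hyperk\"ahler metric whose curvature \eqref{eq:EHCurvature} vanishes nowhere. The paper exploits this exactness: it appeals to Theorem \ref{Thm:BY} for the Eguchi--Hanson metric, or alternatively to the classification of closed geodesics of Eguchi--Hanson space (they all lie in the $\C\P^1$, where they are closed geodesics of $a_i g_{FS}$ and hence unstable), so no borderline estimate is ever needed. To be fair, the paper's own discussion of a $g$-geodesic that enters the patch and leaves again is brief — Theorem \ref{Thm:BY} as stated requires the whole closed geodesic to lie in a hyperk\"ahler manifold, and the remark after Theorem \ref{Cor:NoGo} concedes that enter-and-leave geodesics are only constrained, not excluded — but your route would require precisely the uniform positivity you cannot establish, whereas the paper's route disposes of the geodesics it treats with no analytic work at all.
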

\begin{proof}
The idea is to use the estimates of \cite{Kob90} to show that the curvature of $\tilde{g}$ doesn't vanish anywhere near $E$. Then we appeal to Theorem \ref{Thm:BY} to get our conclusion for geodesics with respect to $\tilde{g}$. Here are the details. 

Let $p\in E_i \subset E$ and pick holomorphic normal coordinates with respect to $g$ at $p$, meaning $g_{\mu\overline{\nu}}(p)=\delta_{\mu\nu}$, $g_{\mu\overline{\nu},\alpha}(p)=0,$
and $g_{\mu\overline{\nu},\alpha\overline{\beta}}(p)=-R_{\mu\overline{\nu}\alpha\overline{\beta}}(p)$ hold, where $R$ is the Riemann curvature tensor of $g$. Write $\phi_{\mu\overline{\nu}}(p)\coloneqq \frac{\partial^2 \phi}{\partial z^{\mu} \partial \overline{z}^{\nu} }(p)$ and so on for more indices. Introduce the $2\times 2$-matrix $h$ via $h\coloneqq \tilde{g}^{-1}(p) - g^{-1}(p)=\tilde{g}^{-1}(p)-\mathbb{1}$. By \eqref{eq:GlobalBounds} for $k=2$ we know that $\phi_{\mu\overline{\nu}}(p)\in \mathcal{O}(\vert a\vert)$, hence also $h\in \mathcal{O}(\vert a\vert)$, as is seen by writing $\tilde{g}^{-1}(p)=(\mathbb{1}+\nabla^2\phi(p))^{-1}=\mathbb{1} +\sum_{k=1}^{\infty} (-1)^k (\nabla^2 \phi(p))^k$. We may use a standard formula for the Riemann tensor associated to a K\"{a}hler metric (see \cite[Eq. 1.14]{Yau78} for instance)  namely
\[\tilde{R}_{\mu\overline{\nu}\alpha\overline{\beta}}=-\frac{\partial^2 \tilde{g}_{\mu\overline{\nu}}}{\partial z^\alpha \partial \overline{z}^\beta} +\tilde{g}^{\overline{\lambda} \sigma} \frac{\partial \tilde{g}_{\mu\overline{\lambda}}}{\partial z^\alpha}\frac{\partial \tilde{g}_{\sigma \overline{\nu}}}{\partial \overline{z}^\beta}\]
to write the Riemann tensor $\tilde{R}$ of $\tilde{g}$ in the above coordinates as
\[\tilde{R}_{\mu\overline{\nu}\alpha\overline{\beta}}(p)=R_{\mu\overline{\nu}\alpha\overline{\beta}}(p)- \phi_{\mu\overline{\nu} \alpha \overline{\beta}}(p)+(\delta^{\lambda \sigma}+h^{\overline{\lambda} \sigma})\phi_{\mu\alpha \overline{\lambda}}(p)\phi_{\overline{\nu}\overline{\beta}\sigma}(p).\]
By \eqref{eq:GlobalBounds} for $k=3$ and $k=4$ and the above bound on $h$ we may estimate this as
\begin{equation}
R_{\mu\overline{\nu}\alpha\overline{\beta}}(p)-C \leq \tilde{R}_{\mu\overline{\nu} \alpha \overline{\beta}}(p) \leq R_{\mu\overline{\nu}\alpha\overline{\beta}}(p)+C
\label{eq:RiemBound1}
\end{equation}
for some $\vert a\vert$- independent constant $C>0$. Let $V\in T_p E_i \subset T_pX$ be such that $\vert V\vert_{\tilde{g}}=1$. Then \eqref{eq:GlobalBounds} for $k=2$ gives $1-\tilde{C}\vert a\vert \leq \vert V\vert_g^2 \leq 1+\tilde{C}\vert a\vert$ for some constant $\tilde{C}>0$. Inserting this into \eqref{eq:RiemBound1} we find
\begin{equation}
Sect_g(p)(1-\tilde{C}\vert a\vert)^2 -C \leq Sect_{\tilde{g}}(p) \leq (1+\tilde{C}\vert a\vert) ^2 Sect_g(p)+C,
\label{eq:SectBound1}
\end{equation}
where $Sect_g(p)$ and $Sect_{\tilde{g}}(p)$ denote the holomorphic sectional curvatures of $g$ and $\tilde{g}$ respectively, evaluated on $T_pE_i$. Since we are on $E_i$ we may use that $g_{\vert E_i} =a_i g_{FS}$,  FS being short for Fubini-Study, to write $Sect_g(p)=\frac{2}{a_i}$. Using the ratio $r_a = \frac{\max_{1\leq i\leq 16} a_i}{\min_{1\leq i\leq 16} a_i}$, we bound this as
$\frac{2}{\vert a\vert} \leq Sect_g(p)\leq \frac{4r_a}{\vert a\vert}$, hence \eqref{eq:SectBound1} finally becomes
\[\frac{2}{\vert a\vert} -C_1 +C_2\vert a\vert \leq Sect_{\tilde{g}}(p)\leq 4r_a \left( \frac{1}{\vert a\vert} +C_3 +C_4\vert a\vert\right)\]
for $\vert a\vert$-independent constants $C_i>0$. This shows that $Sect_{\tilde{g}}(p)>0$ for all $\vert a\vert$ small enough. 

To see that there are no closed, stable geodesics with respect to $g$ one can argue in a couple of ways. The fastest is probably to appeal to Theorem \ref{Thm:BY}, which applies since the Eguchi-Hanson patch has a hyperk\"{a}hler metric whose Riemann curvature tensor doesn't vanish in  any point (see \cite[Equation 2.28]{EH79} or \cite[Equation 4.8]{PhD}). Another argument is to first see directly (see \cite[Theorem 5.3]{TsaiWang},  \cite[Chapter 4.5]{PhD}), or \cite[Theorem 8]{LyeEH} that the only closed, non-constant geodesics in Eguchi-Hanson space are the ones contained in the $\C\P^1$. Hence they are closed geodesics in $(\C\P^1,g_{FS})$, all of which are unstable.

\end{proof}

\begin{Rem}
That the components $E_i$ of the exceptional divisor have induced metrics with positive curvature (even for the Ricci-flat metric) can be seen in numerical solutions like \cite[Figure 3]{NumK3}.
\end{Rem}

The next result says that any geodesic with respect to $\tilde{g}$ is close to being a geodesic with respect to $g$. 

%

\begin{Thm}
\label{Thm:NoGo2}
Assume the same setup as in Theorem \ref{Thm:NoGo1}. Then, for all values of $\vert a\vert$ small enough there is a constant $C>0$ independent of $\vert a\vert$ such that if $\gamma\colon (-\epsilon,\epsilon)\to X$ is a geodesic with respect to $\tilde{g}$ we have 
\begin{equation}
\vert D_t^g \dot{\gamma}(t)\vert_{g} \leq C\vert a\vert^{\frac{1}{2}} \vert \dot{\gamma}\vert_g^2,
\label{eq:GlobalGeodClose}
\end{equation}
where $D_t^g=\nabla_{\dot{\gamma}}$ is the covariant derivative associated to $g$.
\end{Thm}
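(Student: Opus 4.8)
The plan is to exploit that the two Levi-Civita connections differ by a tensor which is controlled by the covariant derivative of $\tilde{g}-g$, and then to read off the estimate from Theorem \ref{Thm:Kob}. Write $\nabla$ for the Levi-Civita connection of $g$ (so $D_t^g=\nabla_{\dot\gamma}$) and $\tilde\nabla$ for that of $\tilde g$. The difference $S\coloneqq \tilde\nabla-\nabla$ is a $(1,2)$-tensor, and since both connections are torsion-free and metric (each with respect to its own metric), the standard formula in real indices reads
\[
S^{k}_{ij}=\tfrac{1}{2}\,\tilde g^{kl}\bigl(\nabla_i h_{lj}+\nabla_j h_{li}-\nabla_l h_{ij}\bigr),
\]
where $h\coloneqq \tilde g-g$. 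Because $\gamma$ is a geodesic for $\tilde g$ we have $\tilde\nabla_{\dot\gamma}\dot\gamma=0$, whence
\[
D_t^g\dot\gamma=\nabla_{\dot\gamma}\dot\gamma=-\,S(\dot\gamma,\dot\gamma),
\]
and taking $g$-norms reduces the theorem to the pointwise bound $|S|_g\leq C|a|^{\frac12}$.

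To prove that bound I would estimate the two ingredients of $S$ separately, using that the $C^k$-norms in Theorem \ref{Thm:Kob} are measured intrinsically with respect to $g$. First, $h$ is the real symmetric tensor associated to the complex Hessian $i\partial\overline{\partial}\phi$, so by \eqref{eq:GlobalBounds} with $k=2$ one has $|h|_g=\mathcal{O}(|a|)$; writing $\tilde g^{-1}=(\mathbb{1}+\nabla^2\phi)^{-1}$ and expanding in a Neumann series as in the proof of Theorem \ref{Thm:NoGo1}, this also gives $|\tilde g^{-1}|_g=\mathcal{O}(1)$ uniformly for $|a|$ small, so $\tilde g$ and $g$ are uniformly equivalent even near $E$. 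Second, $\nabla h$ is a linear combination, with bounded $g$-dependent coefficients, of the third-order covariant derivatives of $\phi$; hence by \eqref{eq:GlobalBounds} with $k=3$ (the odd case, obtained from interpolation as noted after Theorem \ref{Thm:Kob}) one gets $|\nabla h|_g\leq C\norm{\phi}_{C^3(X,g)}\leq C|a|^{\frac12}$.

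Combining these, $|S|_g\leq C\,|\tilde g^{-1}|_g\,|\nabla h|_g\leq C|a|^{\frac12}$, and therefore
\[
|D_t^g\dot\gamma|_g=|S(\dot\gamma,\dot\gamma)|_g\leq |S|_g\,|\dot\gamma|_g^2\leq C|a|^{\frac12}|\dot\gamma|_g^2,
\]
which is exactly \eqref{eq:GlobalGeodClose}. The computation is essentially routine once the difference-of-connections formula is in place; the point requiring the most care is the identification of $\nabla h$ with third covariant derivatives of $\phi$, so that the intrinsic $C^3$-bound applies directly and the Christoffel symbols of $g$ — which blow up near $E$ — are already absorbed into the intrinsic norm rather than appearing as uncontrolled factors. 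This is also the step that forces the exponent $|a|^{\frac12}$ rather than $|a|$, since it is governed by the weaker odd-order estimate.
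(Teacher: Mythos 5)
Your proposal is correct and follows essentially the same route as the paper: both express $D_t^g\dot\gamma=-S(\dot\gamma,\dot\gamma)$ via the difference tensor of the two Levi-Civita connections and bound that tensor by $C|a|^{\frac12}$ using the $k=2$ estimate (to control $\tilde g^{-1}$) and the $k=3$ estimate (to control the covariant derivative of $\tilde g-g$) from Theorem \ref{Thm:Kob}. The only cosmetic difference is that the paper works in holomorphic indices, where the Kähler structure collapses your three-term symmetrized formula to the single expression $\Psi^{\lambda}_{\mu\alpha}=\tilde g^{\overline{\sigma}\lambda}\nabla_\mu\tilde g_{\alpha\overline{\sigma}}$, evaluated in holomorphic normal coordinates for $g$.
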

\begin{proof}
Let $\Gamma$ and  $\tilde{\Gamma}$ denote the Christoffel symbols in some coordinates of $g$ and $\tilde{g}$ respectively. Define the tensor $\Psi$ by 
\begin{equation}
\Psi_{\mu\alpha}^\lambda \coloneqq \tilde{\Gamma}_{\mu\alpha}^\lambda -\Gamma_{\mu\alpha}^\lambda.
\label{eq:PsiDef}
\end{equation}
Then
\begin{equation}
\Psi_{\mu\alpha}^\lambda =\tilde{g}^{\overline{\sigma}\lambda}\left(\tilde{g}_{\alpha,\overline{\sigma},\mu}-\Gamma_{\alpha\mu}^\rho \tilde{g}_{\rho\overline{\sigma}}\right)= \tilde{g}^{\overline{\sigma}\lambda} \nabla_\mu \tilde{g}_{\alpha\overline{\sigma}},
\label{eq:PsiDef2}
\end{equation}
where $\nabla$ denotes the covariant derivative associated to $g$. Let  $\gamma\colon (-\epsilon,\epsilon)\to X$ be a geodesic with respect to $\tilde{g}$. Then
\begin{equation}
\vert D_t^g \dot{\gamma}\vert_g =\left\vert D_t^g\dot{\gamma}-D_t^{\tilde{g}} \dot{\gamma}\right\vert_g=\left\vert \Psi(\dot{\gamma},\dot{\gamma})\right\vert_g.
\end{equation}
This is the equation which will give us \eqref{eq:GlobalGeodClose} after we estimate $\left\vert \Psi(\dot{\gamma},\dot{\gamma})\right\vert $. To this effect, we claim there exists $C>0$ independent of $\vert a\vert$ such that $\norm{\Psi}_{C^0(M,g)} \leq C\vert a\vert^{\frac{1}{2}}$, and this will follow by Kobayashi's estimates. Let $p\in X$ and, like in the proof of Theorem \ref{Thm:NoGo1}, choose holomorphic normal coordinates at $p$ with respect to $g$, meaning $g_{\mu\overline{\nu}}(p)=\delta_{\mu\nu}$ etc. In these coordinates we may write
\[\Psi(p)_{\mu\alpha}^\lambda =(\delta^{\sigma \lambda} +h^{\overline{\sigma}\lambda})\phi(p)_{\mu\alpha\overline{\sigma}},\]
where $h$ and $\phi(p)_{\mu\alpha\overline{\sigma}}$ are as in the proof of Theorem \ref{Thm:NoGo1}. In that proof we also saw that $h\in \mathcal{O}(\vert a\vert)$. From \eqref{eq:GlobalBounds} with $k=3$ we have $\vert \phi(p)_{\mu\alpha\overline{\sigma}}\vert \leq \norm{\phi}_{C^3(X,g)} \leq C_1\vert a\vert^{\frac{1}{2}}$. Altogether we thus find
\[\vert \Psi(p)_{\mu\alpha}^\lambda\vert \leq C_1\vert a\vert^{\frac{1}{2}} (1+C_2 \vert a\vert)\leq C\vert a\vert^{\frac{1}{2}}.\]
At the point $p$, still in holomorphic normal coordinates, we thus compute
\[ \vert \Psi\vert_g^2(p)=\sum_{\mu,\alpha, \lambda= 1}^2 \left\vert \Psi(p)_{\mu \alpha}^\lambda\right \vert^2\leq \tilde{C} \vert a\vert.\]
The left hand side is independent of coordinate system, and since $p$ was arbitrary this proves the claim.

\end{proof}

\begin{Thm}
\label{Cor:NoGo}
Assume the same setup as in Theorem \ref{Thm:NoGo1}. Then, for all values of $\vert a\vert$ small enough, no stable, closed geodesic with respect to $\tilde{g}$ can stay completely within an Eguchi-Hanson patch $U_i$.
\end{Thm}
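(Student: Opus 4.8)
The plan is to combine Theorem~\ref{Thm:NoGo1}, which keeps any stable closed geodesic away from $E$, with the strict convexity of the radial function $u=\abs{z}^2$ away from $E$, and with the fact (Theorem~\ref{Thm:NoGo2}) that a $\tilde g$-geodesic is an approximate $g$-geodesic. Suppose for contradiction that $\gamma\colon\S^1\to U_i$ is a stable closed $\tilde g$-geodesic contained in a single patch $U_i$. By Theorem~\ref{Thm:NoGo1} it never enters the neighbourhood $V$ of $E$, so $\gamma$ is confined to $U_i\setminus V$, where $u$ is bounded below. Since $\gamma$ is closed, $u\circ\gamma$ is periodic and attains a maximum at some $t_0$; there $\frac{d}{dt}(u\circ\gamma)(t_0)=0$ and $\frac{d^2}{dt^2}(u\circ\gamma)(t_0)\le 0$.

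Writing $D_t^g=\nabla_{\dot\gamma}$ for the $g$-covariant derivative, for any smooth curve one has
\[\frac{d^2}{dt^2}(u\circ\gamma)=D^2_g u(\dot\gamma,\dot\gamma)+du\big(D_t^g\dot\gamma\big).\]
For a $\tilde g$-geodesic the last term equals $du\big(\Psi(\dot\gamma,\dot\gamma)\big)$ in the notation of Theorem~\ref{Thm:NoGo2}, so by that theorem it is bounded by $\abs{du}_g\,\norm{\Psi}_{C^0(X,g)}\abs{\dot\gamma}_g^2\le C\abs{a}^{1/2}\abs{du}_g\abs{\dot\gamma}_g^2$.

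The core is to show that on $U_i\setminus V$ the Hessian term dominates this error once $\abs{a}$ is small. Using the radial form of the Eguchi--Hanson potential $f_a$, a direct computation in the holomorphic coordinates $z$ gives, on the region where $g=g_{EH}$,
\[D^2_g u(\dot\gamma,\dot\gamma)=2\abs{\dot z}^2_{\delta}-\frac{2a^2}{u(a^2+u^2)}\,\mathrm{Re}\Big(\big(\textstyle\sum_\mu\bar z^\mu\dot z^\mu\big)^2\Big),\]
where $\abs{\dot z}^2_\delta=\sum_\mu\abs{\dot z^\mu}^2$. At the maximum $t_0$ the condition $\frac{d}{dt}(u\circ\gamma)=2\,\mathrm{Re}(\sum_\mu\bar z^\mu\dot z^\mu)=0$ forces $\sum_\mu\bar z^\mu\dot z^\mu$ to be purely imaginary, so its square is real and non-positive and the negative term drops out: $D^2_g u(\dot\gamma,\dot\gamma)(t_0)\ge 2\abs{\dot z}^2_\delta$. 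Because $\gamma$ stays in $U_i\setminus V$, where $u\gg a$, the two eigenvalues $f_a'=\sqrt{a^2+u^2}/u$ and $(uf_a')'=u/\sqrt{a^2+u^2}$ of $g$ are both $1+o(1)$; hence $g$ is uniformly close to Euclidean, $\abs{du}_g=\sqrt2\,(a^2+u^2)^{1/4}=O(1)$, and the bound upgrades to $D^2_g u(\dot\gamma,\dot\gamma)(t_0)\ge(1-o(1))\abs{\dot\gamma}_g^2$. The neck and flat parts of $U_i$ are treated identically, since on them the K\"ahler potential differs from $u$ by $O(a^2)$ and $g$ is again near-Euclidean. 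Combining at $t_0$,
\[0\ge\frac{d^2}{dt^2}(u\circ\gamma)(t_0)\ge\Big((1-o(1))-C'\abs{a}^{1/2}\Big)\abs{\dot\gamma(t_0)}_g^2,\]
which is strictly positive for $\abs{a}$ small (a non-constant geodesic has $\abs{\dot\gamma}_g\neq 0$), the desired contradiction.

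Note that stability is used only through Theorem~\ref{Thm:NoGo1}: the argument in fact excludes every closed $\tilde g$-geodesic lying in $U_i\setminus V$, and the unstable closed geodesics inside $E_i\subset V$ (those contained in $\C\P^1$) are exactly what being outside $V$ rules out. The step I expect to be most delicate is the quantitative matching in the third paragraph: near $E$ the metric becomes strongly anisotropic ($f_a'\to\infty$ while $(uf_a')'\to 0$ as $u\to0$), so the factor $u/\sqrt{a^2+u^2}$ relating $\abs{\dot z}_\delta$ to $\abs{\dot\gamma}_g$ degenerates and the convexity estimate against $\abs{\dot\gamma}_g^2$ weakens. One must therefore confirm that the neighbourhood $V$ furnished by Theorem~\ref{Thm:NoGo1}---which by the curvature profile \eqref{eq:EHCurvature} is comparable to $\{u\lesssim a^{2/3}\}$---already lies inside the near-Euclidean region $\{u\gg a\}$, where that factor is $1+o(1)$. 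Once this compatibility and the bound $\abs{du}_g=O(1)$ are verified, the comparison closes for all sufficiently small $\abs{a}$.
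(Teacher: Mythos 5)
Your convexity computation is correct as far as it goes: differentiating $u\circ\gamma$ twice, evaluating the real Hessian $D^2_g u$ via the Christoffel symbols \eqref{eq:Christoffel}, and noting that at a maximum of $u\circ\gamma$ the quantity $\sum_\mu \bar z^\mu\dot z^\mu$ is purely imaginary so that the negative term drops out, are all sound, and the error term is controlled exactly as you say by Theorem \ref{Thm:NoGo2}. This is also a genuinely different route from the paper, which runs the maximum-principle argument not on $u$ but on the $g$-distance from $\gamma(t)$ to $\C\P^1$, realized by radial $g$-geodesics. However, there is a genuine gap at the point where you invoke Theorem \ref{Thm:NoGo1}. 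Unwinding your estimates (using $\vert\dot z\vert^2_\delta\geq \frac{u}{\sqrt{a_i^2+u^2}}\vert\dot\gamma\vert^2_g$ and $\vert du\vert_g\sim(a_i^2+u^2)^{1/4}$), the contradiction at the maximum point $t_0$ requires
\[
\vert a\vert^{1/2}\,\frac{(a_i^2+u^2)^{3/4}}{u}\ \text{to be small at } t_0,
\]
and this fails once $u\lesssim \vert a\vert^{2}$. So you need to know that no stable closed geodesic lies entirely inside $\{u\leq C\vert a\vert^2\}$, i.e.\ a quantitative lower bound on the size of the forbidden set $V$. Theorem \ref{Thm:NoGo1} does not supply this: it asserts only that \emph{some} open $V\supset E$ exists, and its proof establishes non-vanishing of the curvature of $\tilde g$ only at points lying \emph{on} $E$; a priori the $V$ it furnishes could shrink arbitrarily fast in $\vert a\vert$. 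Your parenthetical claim that $V$ is comparable to $\{u\lesssim a^{2/3}\}$ is therefore the missing lemma, not a fact you may cite. It is true and provable — one reruns the comparison \eqref{eq:RiemBound1} at points near (not on) $E$, which works because the bounds \eqref{eq:GlobalBounds} are global, and combines it with the Eguchi-Hanson curvature profile \eqref{eq:EHCurvature} to conclude $Riem_{\tilde g}\neq 0$ wherever $u\leq c\,a_i^{2/3}$, then applies Theorem \ref{Thm:BY} — but until this is written out, your proof is incomplete.

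It is instructive to see why the paper's choice of function avoids this issue entirely. In the paper's computation both the positive term and the error term in $\ddot d(T)$ carry the same factor $\frac{u^2}{\sqrt{a_i^2+u^2}}\vert\dot z\vert^2_g$, so the sign of $\ddot d(T)$ is governed by the bracket $-C\vert a\vert^{1/2}+(a_i^2+u^2)^{-1/2}\geq -C\vert a\vert^{1/2}+(a_i^2+1)^{-1/2}$, which is positive for all small $\vert a\vert$ uniformly in $u\in(0,1]$. Hence the paper needs only the qualitative fact that $\gamma$ misses $E_i$ (so the coordinates and radial geodesics are defined), and Theorem \ref{Thm:NoGo1} can be used as a black box. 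Your function $u$ decouples these two scalings near $E$, and that decoupling is precisely what forces the quantitative input you have not supplied.
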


\begin{proof}
Assume there is a closed, stable geodesic $\gamma$ with respect to $\tilde{g}$ in an Eguchi-Hanson patch $U_i$. By Theorem \ref{Thm:NoGo1} we may assume that $\gamma$ stays away from $E_i\cong \C\P^1$. Consider the distance squared $d(t)$  between $\gamma(t)$ and $\C\P^1$. This distance is realized by radial geodesics as follow. Let $\rho_s(t)$ denote the family of radial geodesics with respect to the metric $g$ connecting $\gamma(t)$ and $\C\P^1$, meaning $\rho_0(t)\in \C\P^1$ for all $t$, $\rho_{1}(t)=\gamma(t)$, and $D_s^g \partial_s \rho_s(t)=0$ for all $t\in \S^1$ and $s\in (0,1)$. Then 
\[d(t)=\int_0^1 \vert \partial_s \rho_s(t)\vert^2_g\, ds.\] 
The function $d$ needs to have a maximum, meaning there is some $T$ such that $\dot{d}(T)=0$ and $\ddot{d}(T)\leq 0$. We shall show that \eqref{eq:GlobalGeodClose} implies  $\ddot{d}(T)>0$, hence forcing a contradiction.  

We start by computing $\dot{d}(t)$. Note that $D_t^g \partial_s =D_s^g \partial_t$ (see for instance \cite[3.4 Lemma]{DoCarmo}). Since $s\mapsto \rho_s(t)$ is a geodesic, $D_s^g \partial_s \rho_s(t)=0$. So\footnote{In this section we are thinking of the metric as a \textit{hermitian} metric, whose real part is the corresponding Riemannian metric. This is to make better use of the complex coordinates on $\C^2$.}
\begin{align*}
\dot{d}(t)&=2\text{Re} \int_0^1 \ip{D^g_t \partial_s \rho_s(t)}{\partial_s \rho_s(t)}_g\, ds \\ &=  2\text{Re} \int_0^1 \ip{D^g_s \partial_t \rho_s(t)}{\partial_s \rho_s(t)}_g\, ds\\  & =2\text{Re} \int_0^1 \partial_s  \left( \ip{\partial_t\rho_s(t)}{\partial_s\rho_s(t)}_g\right)\, ds \\ &= 2\text{Re}\left(\ip{\partial_t\rho_s(t)}{\partial_s\rho_s(t)}_g\right)\big\vert_{s=0}^{s=1}.
\end{align*}
The lower limit vanishes for all $t$ since $\partial_s \rho_s(t)_{\vert s=0}$ is tangential to $E_i\cong \C\P^1$ whereas $\partial_t \rho_{s}(t)_{\vert s=0}$ is normal.
Differentiating this again we find
\[\ddot{d}(t)=2\text{Re}\left(\ip{D^g_t\partial_t\rho_s(t)}{\partial_s\rho_s(t)}_g+\ip{\partial_t \rho_s(t)}{D_t^g \partial_s \rho_s(t)}_g\right)\big\vert_{s=1}.\]
To say something more about these expressions we need to recall some basic facts about the Eguchi-Hanson metric $g$.
 The region we are interested in is $(B_1(0)\setminus \{0\})/\mu_2$ with K\"{a}hler potential given by \eqref{eq:EHPot}. We use $z=(z_1, z_2)$ as coordinates and write $u=\vert z\vert^2_{\C^2}$ as before. The K\"{a}hler metric associated to \eqref{eq:EHPot} reads
\begin{equation}
\ip{U}{V}_g=\sqrt{1+\frac{a_i^2}{u^2}} \left(\ip{U}{V}_{\C^2} -\frac{a^2_i}{a^2_i+u^2} \frac{\ip{U}{z}_{\C^2}\ip{z}{V}_{\C^2}}{u}\right),
\label{eq:EHMetric}
\end{equation}
where $\ip{U}{V}_{\C^2}\coloneqq \overline{U_1}V_1+\overline{U_2}V_2$ is the Euclidean inner product.
From this it follows that 
\begin{equation}
\ip{z}{V}_g=\frac{u}{\sqrt{a_i^2+u^2}} \ip{z}{V}_{\C^2}
\label{eq:zIp}
\end{equation}
holds for any $V\in \C^2$.

Using the formula (see for instance \cite[Equation 4.39]{Kahler}) $\Gamma_{\mu\alpha}^{\lambda} = \frac{\partial g_{\mu \overline{\nu}}}{\partial z^\alpha}  g^{\overline{\nu}\lambda}$ we also find
\begin{equation}
\Gamma_{\mu\alpha}^{\lambda} =-\frac{a_i^2}{u(a_i^2+u^2)} \left( \overline{z}_{\mu} \delta^{\lambda}{}_\alpha +\overline{z}_{\alpha} \delta^{\lambda}{}_\mu -3 \frac{\overline{z}_\alpha \overline{z}_\mu}{u} z^{\lambda}\right).
\label{eq:Christoffel}
\end{equation}
We can locally write $\rho_s(t)=\theta(s,t) z(t)$ for some function $\theta$ satisfying (amongst others\footnote{One can find $\theta$ more or less explicitly. Since $\rho_s(t)$ is supposed to be a radial geodesic, the geodesic equation has to be fulfilled, so using \eqref{eq:Christoffel} and \eqref{eq:zIp} we see that $\partial_s^2 \theta + \frac{a_i^2}{(a_i^2 + u^2 \theta^4)\theta} (\partial_s\theta)^2=0$ is the ODE satisfied by $\theta$. This has $\partial_s \theta(s,t) =\frac{\sqrt{d(t)}(a_i^2 + \theta(s,t)^4 u(t)^2)^{\frac{1}{4}}}{\theta(s,t)u(t)}$ as a first integral, where we can write $\sqrt{d(t)}=\int_0^1 \frac{s u(t)}{(a_i^2 +s^4 u(t)^2)^{\frac{1}{4}}}\, ds$. We shall not need these explicit expressions, however.} ) $\theta(1,t)=1$ and $\partial_s \theta >0$ for all $t$. In particular $\partial_t \rho_{\vert s=1}=\dot{z}(t)$. Inserting this into our above expressions for $\dot{d}(t)$,  we find
\begin{align*}
\dot{d}(t)&=2\text{Re}\left( (\partial_t \theta)(\partial_s \theta) \vert z(t)\vert^2_g + \theta (\partial_s \theta) \ip{z(t)}{\dot{z}(t)}_g\right)\big \vert_{s=1}\\
 &= 2(\partial_s \theta)(1,t) \text{Re}  \ip{z(t)}{\dot{z}(t)}_g.
 \end{align*}
 We deduce that $\dot{d}(T)=0\iff \text{Re}\ip{z}{\dot{z}}_g(T)=0$, which by \eqref{eq:zIp} happens if and only if $\text{Re}\ip{z}{\dot{z}}_{\C^2}(T)=0$. We similarly find 
\[\ip{D^g_t\partial_t\rho_s(t)}{\partial_s\rho_s(t)}_g \big\vert_{s=1}=\partial_s \theta (1,t) \ip{D_t^g \dot{z}(t)}{z(t)}_g. \]
The term $\ip{\partial_t \rho_s(t)}{D_t^g \partial_s \rho_s(t)}_g\big\vert_{s=1}$ needs a bit more work. We have
\begin{align*}
D_t^g \partial_s \rho_s^\lambda&=D_t^g(\partial_s \theta  z)^\lambda =\partial_t(\partial_s \theta z)^\lambda +(\partial_s \theta) \Gamma^\lambda_{\mu\alpha} \dot{z}^\mu z^\alpha\\
&= (\partial_t\partial_s\theta) z^{\lambda} +(\partial_s \theta) \dot{z}^{\lambda} +(\partial_s \theta) \Gamma^\lambda_{\mu\alpha} \dot{z}^\mu z^\alpha.
\end{align*}
Here the indices are raised using the Euclidean metric; $z_\mu=z^{\mu}$ and so on.
Using \eqref{eq:Christoffel} we have 
\[\Gamma^\lambda_{\mu\alpha} \dot{z}^\mu z^\alpha=-\frac{a_i^2}{a_i^2 +u^2} \dot{z}^\lambda+\frac{2a_i^2}{u(a_i^2+u^2)} \ip{z}{\dot{z}}_{\C^2}z^{\lambda}\stackrel{\eqref{eq:zIp}}{=}-\frac{a_i^2}{a_i^2 +u^2} \dot{z}^\lambda+\frac{2a_i^2}{\sqrt{a_i^2+u^2}}\ip{z}{\dot{z}}_g z^{\lambda}. \]
 Inserting this, we find
\begin{align*}
D_t^g \partial_s \rho_s^\lambda= (\partial_t\partial_s\theta) z^{\lambda}+(\partial_s\theta) \frac{u^2}{a_i^2+u^2} \dot{z}^{\lambda}+(\partial_s\theta)\frac{2a_i^2}{\sqrt{a_i^2+u^2}}\ip{z}{\dot{z}}_g z^{\lambda}.
\end{align*}
 Hence
\begin{align*}
&\ip{\partial_t \rho_s(t)}{D_t^g \partial_s \rho_s(t)}_g\big\vert_{s=1}=\ip{\dot{z}(t)}{D_t^g \partial_s \rho_s(t)}_g\\
& = (\partial_t\partial_s\theta)\ip{\dot{z}(t)}{z(t)}_g +(\partial_s\theta)\left(\frac{2a_i^2}{\sqrt{a_i^2+u(t)^2}} \vert \ip{z(t)}{\dot{z}(t)}_g\vert^2+\frac{u(t)^2}{a_i^2+u(t)^2}\vert \dot{z}(t)\vert_g^2\right).
\end{align*}
Taking the real part and setting $t=T$ removes the term $ (\partial_t\partial_s\theta)\ip{\dot{z}(t)}{z(t)}_g$ since $\text{Re}\ip{\dot{z}(T)}{z(T)}_g=0$.

We thus conclude
\[\ddot{d}(T)=2(\partial_s \theta(1,T)) \left( \text{Re}\ip{z}{D_t^g \dot{z}}_g +\frac{u^2}{a_i^2+u^2} \vert \dot{z}\vert^2_g +\frac{2a_i^2}{\sqrt{a_i^2+u^2}} \vert \ip{z}{\dot{z}}_g\vert^2\right).\]
We estimate this from below by dropping the non-negative term $\frac{2a_i^2}{\sqrt{a_i^2+u^2}} \vert \ip{z}{\dot{z}}_g\vert^2$ (recalling that $\partial_s \theta>0$) and observing that
\[\text{Re}\ip{z}{D_t^g \dot{z}}_g\geq -\vert z\vert_g \vert D_t^g \dot{z}\vert_g \stackrel{\eqref{eq:GlobalGeodClose}}{\geq} -\vert z\vert_g C \vert a\vert^{\frac{1}{2}} \vert \dot{z}\vert^2_g\stackrel{\eqref{eq:zIp}}{=}-C\frac{u^2}{\sqrt{a_i^2+u^2}} \vert a\vert^{\frac{1}{2}} \vert \dot{z}\vert^2_g.\]
From this, we deduce
\[\ddot{d}(T)\geq \frac{2u^2 (\partial_s \theta)}{\sqrt{a_i^2+u^2}} \vert \dot{z}\vert_g^2\left( -C\vert a\vert^{\frac{1}{2}} +\frac{1}{\sqrt{a_i^2+u^2}}\right)\geq \frac{u^2 (\partial_s \theta)}{\sqrt{a_i^2+u^2}} \vert \dot{z}\vert_g^2\left( -C\vert a\vert^{\frac{1}{2}} +\frac{1}{\sqrt{a_i^2+1}}\right). \]
By choosing $\vert a\vert$ small enough we have $\ddot{d}(T)>0$ and $d$ therefore does not have a maximum. This is the desired contradiction.

\end{proof}

\begin{Rem}
The strategy of the above proof is similar to \cite[Theorem 5.3]{TsaiWang} and \cite[Theorem 8]{LyeEH}. Enlarging slightly to include the neck region in the consideration would not change anything, since a similar argument would go through, where one compares with the Euclidean metric instead of the Eguchi-Hanson metric. See \cite[7.3 Theorem]{PhD} for details. 
\end{Rem}

Let us stress that the above theorem forbids a stable, closed geodesic from \textit{staying} inside of $U_i$. It does not rule out a geodesic entering and leaving $U_i$ and closing up somewhere else in $X$. This is a possibility envisioned in \cite[pp. 12-13]{GD13}, and we do not rule this out, but would note the restrictions imposed upon such a geodesic by Theorem \ref{Thm:NoGo1} and \ref{Thm:NoGo2}.

\section{Curvature of Hyperk\"{a}hler 4-Manifolds}
\label{Section:Stable}
In this section, we analyse the Riemann curvature tensor of a Hyperk\"{a}hler 4-manifold. Most of the statements rely on on having enough isometries. We need some preliminaries first. We will start by recalling some facts from Riemannian geometry.

\begin{Lem}
\label{Lem:RiemIsom}
Assume $(M,g)$ is a Riemannian manifold with Levi-Civita connection $\nabla$. Assume $F\colon M\to M$ is an isometry. Then 
\begin{equation}
F_*(\nabla_U V)=\nabla_{F_*U}F_* V
\label{eq:CovIsom}
\end{equation}
holds for all tangent vector fields $U,V\in \Gamma(TM)$. In particular, if $R$ denotes the Riemann curvature tensor, then
\begin{equation}
\ip{R(V,W)U}{Z}_p =\ip{R(F_*V,F_*W)F_*U}{F_*Z}_{F(p)}
\label{eq:RiemIsom}
\end{equation}
holds for all $p\in M$ and all $U,V,W,Z\in T_pM$.
\end{Lem}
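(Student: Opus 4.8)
The plan is to establish \eqref{eq:CovIsom} first by exploiting the uniqueness of the Levi-Civita connection, and then to deduce \eqref{eq:RiemIsom} by applying \eqref{eq:CovIsom} three times and invoking the isometry property of $F$ on the final inner product.

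For \eqref{eq:CovIsom}, I would introduce a candidate connection $\tilde{\nabla}$ obtained by conjugating the given one through $F$, namely
\[
\tilde{\nabla}_U V \coloneqq (F^{-1})_*\bigl(\nabla_{F_* U}(F_* V)\bigr),
\]
where $F_* X = dF\circ X\circ F^{-1}$ denotes the pushforward of a vector field, which is well defined since $F$ is a diffeomorphism. The goal is to verify that $\tilde{\nabla}$ satisfies the defining axioms of the Levi-Civita connection of $g$, so that $\tilde{\nabla}=\nabla$ by uniqueness; rearranging the resulting identity $\nabla_U V=(F^{-1})_*(\nabla_{F_*U}(F_* V))$ and applying $F_*$ then yields \eqref{eq:CovIsom}. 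The three checks are routine and rest on standard naturality properties: the $C^\infty(M)$-linearity in $U$ and the Leibniz rule in $V$ follow from $F_*(fX)=(f\circ F^{-1})\,F_* X$; torsion-freeness follows from that of $\nabla$ together with the naturality of the Lie bracket $F_*[U,V]=[F_*U,F_*V]$; and metric compatibility follows from that of $\nabla$ together with the isometry identity $g(F_*X,F_*Y)\circ F=g(X,Y)$, i.e. $F^*g=g$.

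With \eqref{eq:CovIsom} in hand, \eqref{eq:RiemIsom} is a direct computation. I would extend the tangent vectors $U,V,W,Z\in T_pM$ arbitrarily to vector fields near $p$ and apply \eqref{eq:CovIsom} repeatedly to each term of $R(V,W)U=\nabla_V\nabla_W U-\nabla_W\nabla_V U-\nabla_{[V,W]}U$, again using $F_*[V,W]=[F_*V,F_*W]$, to obtain the field-level identity $F_*\bigl(R(V,W)U\bigr)=R(F_*V,F_*W)\,F_*U$. Finally, since $F$ is an isometry one has $\ip{F_*A}{F_*B}_{F(p)}=\ip{A}{B}_p$, and applying this with $A=R(V,W)U$ and $B=Z$ gives \eqref{eq:RiemIsom}.

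The only genuinely delicate point is the passage from the vector-field statement \eqref{eq:CovIsom} to the pointwise curvature statement \eqref{eq:RiemIsom}: one must check that the choice of extensions of $U,V,W,Z$ to vector fields does not affect the final values. This is guaranteed precisely by the tensoriality of the Riemann curvature tensor, so that $R(V,W)U$ at $p$ depends only on $U_p,V_p,W_p$. Everything else reduces to bookkeeping with the naturality of the pushforward under the diffeomorphism $F$.
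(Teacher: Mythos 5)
Your proposal is correct and follows essentially the same route as the paper: the paper likewise defines the conjugated connection $\nabla'_U V \coloneqq F_*^{-1}(\nabla_{F_*U}F_*V)$, checks it is a torsion-free metric connection, and concludes $\nabla'=\nabla$ by uniqueness of the Levi-Civita connection. Your additional elaboration of the curvature identity \eqref{eq:RiemIsom} via naturality of the Lie bracket, tensoriality of $R$, and the isometry property of the inner product is exactly the deduction the paper leaves implicit.
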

\begin{proof}
Define $\nabla'$ by $\nabla'_{U}V \coloneqq F_*^{-1} \left(\nabla_{F_*U} F_* V\right)$ and verify that $\nabla'$ is a torsion-free metric connection, hence $\nabla'=\nabla$. 
\end{proof}

\subsection{Curvature of hyperk\"{a}hler 4-manifolds}
We next turn to some facts about hyperk\"{a}hler manifolds in real dimension four and at the same time establish some notation.
\begin{Lem}
\label{Lem:HK}
Let $(X,\tilde{g})$ be a Ricci-flat K\"{a}hler manifold of real dimension $4$ with complex structure $I$. Then there exist complex structures $J$, $K$ such that $IJ=K$. These complex structures are metric compatible, meaning $\nabla J=J\nabla$ and $\nabla K=K\nabla$.
\end{Lem}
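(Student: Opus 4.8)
The plan is to manufacture the two extra complex structures out of the parallel holomorphic volume form $\eta$, and then read off the quaternionic relations by a single pointwise computation in holomorphic Darboux coordinates. The conceptual content is that a Ricci-flat Kähler surface has holonomy in $SU(2)=Sp(1)$, but rather than invoke holonomy abstractly I would realize the reduction concretely via $\eta$.

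First I would record that, with respect to $\tilde{g}$, the pointwise norm $\vert\eta\vert_{\tilde{g}}$ is \emph{constant}. This is immediate from the Monge-Amp\`ere equation \eqref{eq:MA} together with the normalization $\exp(\psi)=\frac{i^{n^2}\eta\wedge\overline{\eta}}{\omega^n}$: indeed $\tilde{\omega}^n=Ae^{\psi}\omega^n=A\,i^{n^2}\eta\wedge\overline{\eta}$, so (with $n=2$) the ratio $\frac{\eta\wedge\overline{\eta}}{\tilde{\omega}^2}$ equals the constant $\frac{1}{A\,i^{n^2}}$, and this ratio is exactly $\vert\eta\vert^2_{\tilde{g}}$ up to a universal factor. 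A holomorphic $(2,0)$-form of constant norm on a Kähler manifold is parallel: the $(0,1)$-part of $\nabla\eta$ vanishes because $\eta$ is holomorphic ($\overline{\partial}\eta=0$), while the $(1,0)$-part is $\partial\log\vert\eta\vert^2_{\tilde{g}}\otimes\eta$, which vanishes since $\vert\eta\vert_{\tilde{g}}$ is constant. Hence $\nabla\eta=0$.

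Next I would pass to real parallel $2$-forms. After fixing a real scale so that the norms match that of the Kähler form $\omega_I:=\tilde{g}(I\,\cdot\,,\cdot\,)$, I would set $\omega_J:=\mathrm{Re}(\eta)$ and $\omega_K:=\mathrm{Im}(\eta)$; since $\nabla\eta=0$ and $\nabla\tilde{g}=0$, both are parallel. I would then define $J,K\in\mathrm{End}(TX)$ by $\tilde{g}(JU,V)=\omega_J(U,V)$ and $\tilde{g}(KU,V)=\omega_K(U,V)$. Because $\tilde{g}$, $\omega_J$, $\omega_K$ are all parallel, differentiating these defining identities and using $\nabla\tilde g=0$ forces $(\nabla_W J)U=0$ and $(\nabla_W K)U=0$ for all $W,U$; that is, $\nabla_U(JV)=J\nabla_U V$ and likewise for $K$, which is precisely the asserted metric compatibility $\nabla J=J\nabla$, $\nabla K=K\nabla$.

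It then remains to check the algebraic relations $J^2=K^2=-\mathrm{Id}$ and $IJ=K$. Since every relevant tensor is parallel, it suffices to verify them at a single point $p$, and there I would use holomorphic Darboux coordinates, in which $\tilde{g}(p)$ is Euclidean and $\eta=\sqrt{2}\,dz^1\wedge dz^2$; thus $\omega_J,\omega_K$ are the standard constant forms built from $\mathrm{Re}$ and $\mathrm{Im}$ of $dz^1\wedge dz^2$. Writing $z^j=x^j+iy^j$ and $I\partial_{x^j}=\partial_{y^j}$, the resulting computation of $J$ and $K$ is the standard linear algebra of the quaternions acting on $\R^4\cong\C^2$, giving $J^2=K^2=-\mathrm{Id}$ and $IJ=-JI=K$ directly; parallel transport then propagates these identities over all of $X$. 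The only genuinely delicate step is the first one — the parallelism of $\eta$ — which in the Kummer/K3 setting is essentially handed to us by \eqref{eq:MA} and the global holomorphic $2$-form, so the argument is short; everything after that is routine.
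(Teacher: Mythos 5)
Your proof is correct, but it follows a genuinely different route from the paper --- which, strictly speaking, gives no proof at all: right after the statement, the paper exhibits $J$ by the explicit local formulas \eqref{eq:Jaction1}--\eqref{eq:Jaction2} in coordinates where $\det(\tilde{g})=A$ is constant, citing Calabi and Bielawski, and leaves $J^2=-\mathbb{1}$, $IJ=K$, and parallelism unverified. Your invariant construction ($\omega_J=\mathrm{Re}(\eta)$, $\omega_K=\mathrm{Im}(\eta)$, parallelism of $\eta$ deduced from constancy of $\vert\eta\vert_{\tilde{g}}$ via \eqref{eq:MA}, quaternion relations checked at one point and propagated by parallel transport) supplies exactly those missing verifications, and it produces the same object: the $2$-form $\tilde{g}(J\,\cdot\,,\cdot\,)$ associated to the paper's $J$ is a constant multiple of $\mathrm{Re}(\eta)$, so the two constructions agree up to normalization. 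What your approach buys is a coordinate-free argument that makes the holonomy reduction to $SU(2)=Sp(1)$ visible; what the paper's formulation buys is the matrix form of $J$, which is what actually gets used downstream (e.g.\ in the proof of Theorem \ref{Thm:FlatTori}). Three remarks. (i) Your argument needs the global nowhere-vanishing $\eta$, i.e.\ a trivialized canonical bundle. This is not among the lemma's stated hypotheses, but it is genuinely needed: the Ricci-flat K\"ahler metric on an Enriques surface admits no global parallel $J$, $K$, so the lemma as literally stated fails in that generality, and the paper's own construction implicitly assumes the same thing through its choice of coordinates; in the Kummer/K3 setting of the paper this is harmless. (ii) Constancy of $\vert\eta\vert_{\tilde{g}}$ can be obtained without \eqref{eq:MA}: Ricci-flatness makes $\log\vert\eta\vert^2_{\tilde{g}}$ pluriharmonic, hence constant on a compact manifold, which decouples the lemma from how $\tilde{g}$ was produced. (iii) A small normalization slip: in holomorphic Darboux coordinates $\det(\tilde{g})=A$, so you cannot have both $\tilde{g}(p)=\mathbb{1}$ and $\eta=\sqrt{2}\,dz^1\wedge dz^2$ at $p$ unless $A=1$; the correct pointwise model is $\tilde{g}(p)=\mathbb{1}$ with $\eta(p)=\sqrt{2/A}\,dz^1\wedge dz^2$, and the real rescaling you allowed at the outset absorbs exactly this factor.
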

In fact, one can write down such complex structures explicitly, following \cite[p. 287]{Calabi} or \cite[p. 6]{Bielawski}. Let $z,w$ be local coordinates such that the metric $\tilde{g}$ takes the matrix form
\[\tilde{g}=\begin{pmatrix}
\tilde{g}_{z\oz}& \tilde{g}_{z\ow}\\ \tilde{g}_{w\oz} & \tilde{g}_{w\ow}
\end{pmatrix}\]
with 
\[\det(\tilde{g})=A\]
being some constant.
Then we may define a complex structure by
\begin{equation}
J\frac{\partial}{\partial z} =\frac{1}{\sqrt{A}}\left(-\tilde{g}_{z\ow} \frac{\partial}{\partial {\oz}}+\tilde{g}_{z\oz} \frac{\partial}{\partial {\ow}}\right)
\label{eq:Jaction1}
\end{equation}
and
\begin{equation}
J\frac{\partial}{\partial w} =\frac{1}{\sqrt{A}}\left(-\tilde{g}_{w\ow} \frac{\partial}{\partial {\oz}}+\tilde{g}_{w\oz} \frac{\partial}{\partial {\ow}}\right).
\label{eq:Jaction2}
\end{equation}

For any $p\in X$ and $V\in T_pM$ we introduce the notation
\[\sigma_{IJ}(V)\coloneqq \ip{R(V,IV)JV}{V}\]
and similarly for $\sigma_{IK}(V), \sigma_{JK}(V), \sigma_{II}(V)$ etc. These satisfy $\sigma_{IJ}(V)=\sigma_{JI}(V)$ and so on. 

The Ricci-flatness condition\footnote{Once one knows that there exists 3 metric-compatible, mutually orthogonal complex structures $I,J,K$, then \eqref{eq:HK-Ricci} is actually a direct consequence of the Bianchi identity.} reads
\begin{equation}
\sigma_{II}(V)+\sigma_{JJ}(V)+\sigma_{KK}(V)=0
\label{eq:HK-Ricci}
\end{equation} 
for any $V\in T_pM$ .

The significance of the above $\sigma'$s is that they determine the Riemann curvature tensor of $X$. 
\begin{Lem}
\label{Lem:SectCurv}
Let $(X,\tilde{g})$ be a hyperk\"{a}hler manifold of real dimension 4. Fix some $V\in T_pX\setminus \{0\}$ for some $p\in X$. Then the holomorphic sectional curvature of any $W\in T_pX\setminus \{0\}$ is uniquely determined by $\sigma_{II}(V), \sigma_{JJ}(V), \sigma_{IJ}(V), \sigma_{IK}(V)$, and $\sigma_{JK}(V)$.
\end{Lem}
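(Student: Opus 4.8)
The plan is to deduce the statement from a stronger fact: that the \emph{entire} Riemann tensor $R_p$ of a hyperk\"{a}hler $4$-manifold is determined at $p$ by the five listed numbers. Once this is known the claim is immediate, since the holomorphic sectional curvature of $W$ (with respect to the given complex structure $I$) is $\sigma_{II}(W)/\abs{W}^4=\ip{R(W,IW)IW}{W}/\abs{W}^4$, an explicit function of $R_p$. The organising principle is that in real dimension four a Ricci-flat K\"{a}hler tensor is captured entirely by the restriction of the curvature operator to the anti-self-dual two-forms, a five-dimensional piece of data matching the five $\sigma$'s.

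First I would normalise and build a quaternionic orthonormal frame: replacing $V$ by $V/\abs{V}$, set $e_0=V$, $e_1=IV$, $e_2=JV$, $e_3=KV$. These are orthonormal because $I,J,K$ are metric-compatible and mutually anticommuting (hence skew and pairwise orthogonalising). In this frame the three K\"{a}hler forms $\omega_I,\omega_J,\omega_K$ are a self-dual basis of $\Lambda^+T_pX$, while $\eta_1=e_0\wedge e_1-e_2\wedge e_3$, $\eta_2=e_0\wedge e_2+e_1\wedge e_3$, $\eta_3=e_0\wedge e_3-e_1\wedge e_2$ form a basis of $\Lambda^-$. A direct decomposition then gives $(V\wedge IV)^-=\tfrac12\eta_1$, $(V\wedge JV)^-=\tfrac12\eta_2$, and $(V\wedge KV)^-=\tfrac12\eta_3$, where $(\cdot)^-$ denotes the anti-self-dual part.

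Next I would record the two structural facts about the curvature operator $\mathcal{R}$ on $\Lambda^2T_pX$, defined (up to the usual sign convention) by $\ip{\mathcal{R}(X\wedge Y)}{Z\wedge W}=\ip{R(X,Y)W}{Z}$. Since the metric is hyperk\"{a}hler the forms $\omega_I,\omega_J,\omega_K$ are parallel, so $\mathcal{R}$ annihilates each of them, hence all of $\Lambda^+$; by self-adjointness of $\mathcal{R}$ (coming from $R_{ijkl}=R_{klij}$) it therefore preserves $\Lambda^-$. Thus $\mathcal{R}$ is pinned down by the symmetric endomorphism $\mathcal{R}^-=\mathcal{R}|_{\Lambda^-}$ of the three-dimensional space $\Lambda^-$, and the Ricci-flat/Bianchi relation \eqref{eq:HK-Ricci} says precisely that $\mathcal{R}^-$ is trace-free. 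A trace-free symmetric operator on a three-dimensional space carries exactly five real parameters.

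Finally I would match the data. Using the identifications above, $\sigma_{II}(V),\sigma_{JJ}(V),\sigma_{IJ}(V),\sigma_{IK}(V),\sigma_{JK}(V)$ equal, up to one fixed universal constant, the matrix entries $M_{11},M_{22},M_{12},M_{13},M_{23}$ of $\mathcal{R}^-$ in the basis $\{\eta_i\}$ (for instance $\sigma_{IJ}(V)=\ip{\mathcal{R}(V\wedge IV)}{V\wedge JV}=\tfrac14\ip{\mathcal{R}^-\eta_1}{\eta_2}$, the self-dual parts dropping out since $\mathcal{R}$ kills $\Lambda^+$ and $\Lambda^\pm$ are orthogonal). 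The remaining entry $M_{33}$ is then recovered from the trace-free condition (equivalently from $\sigma_{KK}=-\sigma_{II}-\sigma_{JJ}$), so $\mathcal{R}^-$, and hence all of $R_p$, is determined. I expect the main obstacle to be the bookkeeping rather than any deep difficulty: establishing $\mathcal{R}(\Lambda^+)=0$ cleanly (the anti-self-duality of Ricci-flat K\"{a}hler surfaces, via parallelism of the K\"{a}hler forms) and keeping the sign and normalisation conventions consistent between $R_{ijkl}$, the operator $\mathcal{R}$, and the $\sigma$'s, so that the claimed correspondence is a genuine linear isomorphism and not merely a surjection with the correct dimension count.
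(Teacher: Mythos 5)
Your proposal is correct, but it takes a genuinely different route from the paper. The paper's proof is a direct computation: it expands $W=\alpha V+\beta IV+\mu JV+\nu KV$ in the quaternionic frame generated by $V$ and, by repeatedly applying hyperk\"{a}hler identities of the form $\ip{R(Iv,Jw)u}{t}=-\ip{R(v,Kw)u}{t}$ together with the usual symmetries of $R$, reduces $\ip{R(W,IW)IW}{W}$ to the explicit polynomial formula \eqref{eq:ExplicitSigma} in $\sigma_{II},\sigma_{JJ},\sigma_{KK},\sigma_{IJ},\sigma_{IK},\sigma_{JK}$, and then eliminates $\sigma_{KK}$ via \eqref{eq:HK-Ricci}. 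You instead prove the stronger statement that the five numbers determine the whole tensor $R_p$: the three K\"{a}hler forms span $\Lambda^+$, the curvature operator kills $\Lambda^+$ and restricts to a trace-free symmetric endomorphism of the three-dimensional space $\Lambda^-$, and the five $\sigma$'s are (up to the uniform factor $4$ coming from $(V\wedge I_iV)^-=\tfrac12\eta_i$) exactly the five independent entries of that endomorphism in the basis $\eta_1,\eta_2,\eta_3$. Your route explains conceptually why the count is five --- it is the statement that Ricci-flat K\"{a}hler surfaces are anti-self-dual --- whereas the paper's route produces the explicit coefficients, which are genuinely needed later: the corollary to Theorem \ref{Thm:GenIsomThm} and Remark \ref{Rem:Angular} substitute concrete values of $\alpha,\beta,\mu,\nu$ into \eqref{eq:ExplicitSigma}. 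So in the context of the paper the computation cannot be wholly dispensed with; your argument is best viewed as an a priori reason that such a formula must exist.

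One step needs more care than your wording suggests: parallelism of a K\"{a}hler form alone does \emph{not} place it in the kernel of the curvature operator. What parallelism gives is the vanishing of the derivation action, i.e.\ $[R(X,Y),I]=0$, while the contraction $\mathcal{R}(\omega_I)$ equals the Ricci form, which is nonzero on $\C\P^2$ for instance. To get $\mathcal{R}(\Lambda^+)=0$ you should either combine the first Bianchi identity with Ricci-flatness, or use all three parallel structures at once: $[R(X,Y),I]=[R(X,Y),J]=[R(X,Y),K]=0$ forces the $2$-form $\ip{R(X,Y)\cdot}{\cdot}$ to lie in $\Lambda^-$, whence $\operatorname{Im}\mathcal{R}\subseteq\Lambda^-$ and, by self-adjointness, $\Lambda^+\subseteq\ker\mathcal{R}$. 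You flagged precisely this point as the place requiring care, and with that fix your proof goes through.
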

\begin{proof}
Let $W=\alpha V +\beta IV +\mu JV +\nu KV\in T_pX$ be any tangent vector. Then, by iteratively using identities like $\ip{R(Iv,Jw)u}{t}=\ip{R(I^2v,IJw)u}{t}=-\ip{R(v,Kw)u}{t}$ for any $t,u,v,w\in T_pX$ and the standard Riemann tensor symmetries, one arrives at
\begin{align}
\ip{R(W,IW)IW}{W}&=\left(\alpha^2+\beta^2-\mu^2-\nu^2\right)^2\sigma_{II}(V)+4(\beta\mu-\alpha\nu)^2 \sigma_{JJ}(V)\notag \\ &+4(\alpha\mu+\beta\nu)^2\sigma_{KK}(V) \notag +4(\beta\mu-\alpha\nu)(\alpha^2+\beta^2-\mu^2-\nu^2)\sigma_{IJ}(V)\notag \\ &+4(\alpha\mu+\beta\nu)(\alpha^2+\beta^2-\mu^2-\nu^2)\sigma_{IK}(V) \notag \\ &+ 8(\alpha\mu+\beta\nu)(\beta\mu-\alpha\nu)\sigma_{JK}(V).
\label{eq:ExplicitSigma}
\end{align}
Using \eqref{eq:HK-Ricci} one can solve away one of the $\sigma$'s, $\sigma_{KK}(V)=-\sigma_{II}(V)-\sigma_{JJ}(V)$, say. So in particular, the holomorphic sectional curvature of $W$ is completely determined by $\sigma_{II}(V), \sigma_{JJ}(V), \sigma_{IJ}(V), \sigma_{IK}(V)$, and $\sigma_{JK}(V)$. 
\end{proof}

\begin{Rem}
\label{Rem:Angular}
A neat way of writing \eqref{eq:ExplicitSigma} is to use the following coordinates on $\S^3$. Assuming $\vert W\vert =\vert V\vert$, we can write
\[\alpha+i\beta=\cos\left(\frac{\theta}{2}\right) e^{\frac{i}{2}(\psi+\phi)}\]
\[\mu+i\nu=\sin\left(\frac{\theta}{2}\right) e^{\frac{i}{2}(\psi-\phi)}\]
where 
\[0\leq \theta <\pi\]
\[0\leq \phi\leq 2\pi\]
\[0\leq \psi\leq 4\pi\]
as in \cite[Eq. 2.4]{EH79}. Then 
\begin{align}
\ip{R(W,IW)IW}{W} &=\cos^2(\theta)\sigma_{II}(V)+\sin^2(\theta) \sin^2(\phi) \sigma_{JJ}(V) \notag \\&+\sin^2(\theta)\cos^2(\phi)\sigma_{KK}(V)  +\sin(2\theta)\sin(\phi)\sigma_{IJ}(V) \notag \\ &+\sin(2\theta)\cos(\phi)\sigma_{IK}(V) + \sin^2(\theta) \sin(2\phi)\sigma_{JK}(V).
\label{eq:ExplicitSigma2}
\end{align}
We note the absence of $\psi$ in \eqref{eq:ExplicitSigma2}, which corresponds to the $U(1)$-symmetry of holomorphic sectional curvature.
\end{Rem}

The next result shows how the presence of symmetry can drastically reduce the available degrees of freedom in the Riemann tensor.
\begin{Thm}
\label{Thm:IsomThm}
Let $(X,\tilde{g})$ be a hyperk\"{a}hler manifold of real dimension 4. Assume $F\colon X\to X$ is a holomorphic isometry of order $4$ with positive-dimensional fixed point set $M\coloneqq Fix(F)$. Then $\sigma_{IJ}(V)=\sigma_{IK}(V)=\sigma_{JK}(V)=0$ and $\sigma_{JJ}(V)=\sigma_{KK}(V)$ for any $V\in T_pM\subset T_pX$ and $p\in M$. 
\end{Thm}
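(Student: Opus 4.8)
The plan is to work at a single fixed point $p\in M$ and extract everything from the linear action of $F_*$ on $T_pX$. Since $F$ is a holomorphic isometry fixing $p$, the differential $F_*\colon T_pX\to T_pX$ is a unitary transformation (with respect to $I$) of order $4$, and $T_pM=\ker(F_*-\mathrm{id})$ is its $+1$-eigenspace. Because $\mathrm{Fix}(F)$ is the fixed locus of a holomorphic map it is a complex submanifold, so $T_pM$ is an $I$-invariant subspace; as $M$ is positive-dimensional while $F\neq\mathrm{id}$ (it has order $4$), $M$ is a proper complex submanifold of the connected surface $X$, whence $\dim_{\mathbb{C}}T_pM=1$.

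First I would fix $V\in T_pM\setminus\{0\}$ and record that $\{V,IV,JV,KV\}$ is an orthogonal basis of $T_pX$, a standard hyperk\"{a}hler fact. Holomorphicity gives $F_*V=V$ and $F_*IV=IF_*V=IV$, so $T_pM=\mathrm{span}_{\mathbb{R}}\{V,IV\}$, and its orthogonal complement is the complex line $\mathrm{span}_{\mathbb{R}}\{JV,KV\}=\mathbb{C}\cdot JV$, on which the unitary map $F_*$ acts by a single scalar $\zeta$. Since $F_*$ has order exactly $4$ while restricting to the identity on $T_pM$, the order of $\zeta$ equals the order of $F_*$, which equals the order of $F$ by the rigidity of isometries on the connected manifold $X$; hence $\zeta$ is a primitive fourth root of unity, i.e. $\zeta=\pm i$ (the values $1$ and $-1$ would force $F$ to have order $1$ or $2$). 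Reading multiplication by $i$ as the action of $I$ on this normal line then yields the key formulas $F_*JV=\pm KV$ and $F_*KV=\mp JV$.

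With these formulas in hand the conclusion follows from the isometry-invariance of the curvature tensor, Lemma \ref{Lem:RiemIsom}. Since $F(p)=p$, equation \eqref{eq:RiemIsom} reads $\ip{R(W_1,W_2)W_3}{W_4}_p=\ip{R(F_*W_1,F_*W_2)F_*W_3}{F_*W_4}_p$ for all $W_i\in T_pX$. Substituting the appropriate $W_i\in\{V,IV,JV,KV\}$ and using $F_*V=V$, $F_*IV=IV$, $F_*JV=\pm KV$, $F_*KV=\mp JV$ permutes the defining expressions of the $\sigma$'s into one another: for instance $\sigma_{IJ}(V)=\ip{R(V,IV)JV}{V}_p=\ip{R(V,IV)(\pm KV)}{V}_p=\pm\sigma_{IK}(V)$, while a second application gives $\sigma_{IK}(V)=\mp\sigma_{IJ}(V)$, forcing $\sigma_{IJ}(V)=\sigma_{IK}(V)=0$. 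The same bookkeeping gives $\sigma_{JJ}(V)=\sigma_{KK}(V)$ directly, and $\sigma_{JK}(V)=-\sigma_{KJ}(V)$, which combined with the symmetry $\sigma_{JK}=\sigma_{KJ}$ noted earlier yields $\sigma_{JK}(V)=0$.

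I expect the main obstacle to be the linear-algebra step of the second paragraph rather than the curvature computation: one must argue carefully that holomorphicity forces $F_*$ to be complex-linear — so that it genuinely fixes $IV$ and acts by a scalar on the normal complex line — and that the global order-$4$ hypothesis descends to the statement that this scalar is primitive, for which the decomposition $T_pX=T_pM\oplus(T_pM)^\perp$ and the rigidity of isometries are essential. Once the action $F_*JV=\pm KV$, $F_*KV=\mp JV$ is justified, the remaining steps are immediate substitutions, and one checks that the two sign choices $\zeta=i$ and $\zeta=-i$ lead to exactly the same conclusions.
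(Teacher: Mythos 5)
Your proposal is correct and takes essentially the same route as the paper's proof: you establish that $F_*$ fixes $V$ and $IV$ and rotates the normal plane $\mathrm{span}_{\R}\{JV,KV\}$ by a quarter turn (ruling out $\zeta=\pm 1$ via rigidity of isometries, exactly as the paper rules out $\theta=0,\pi$), and then feed $F_*JV=\pm KV$, $F_*KV=\mp JV$ into the curvature invariance \eqref{eq:RiemIsom} to force $\sigma_{IJ}=\sigma_{IK}=\sigma_{JK}=0$ and $\sigma_{JJ}=\sigma_{KK}$. The only cosmetic difference is that you phrase the normal action as multiplication by a primitive fourth root of unity $\zeta$ on the complex line $\C\cdot JV$, where the paper writes $F_*(JV)=(\cos\theta+I\sin\theta)JV$ and computes $F_*^4$; these are the same argument.
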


\begin{proof}
Let us start by seeing how $F_*$ acts in a single tangent space. Let $p\in M$, and $V\in T_pM$. Then $V$ is necessarily fixed by $F_*$.  Since $F$ is holomorphic, $M$ is a complex submanifold, and  $IV$ is fixed by $F_*$ as well. Since $p=F(p)$, everything will be taking place at $p$ and we shall be omitting references to the point everywhere.  Consider now $F_*(JV)$. Using that $V$ is fixed, that $F$ is an isometry, and that $JV\perp V$ we deduce $\ip{V}{F_*(JV)}=\ip{F_*V}{F_*(JV)}=\ip{V}{JV}=0$. Similarly $\ip{IV}{F_*(JV)}=0$. It thus follows that there exists $\theta\in [0,2\pi)$ such that $F_*(JV)=(\cos(\theta)+I\sin(\theta))JV$. A direct computation shows $F_*^4(JV)=(\cos(4\theta)+I\sin(4\theta))JV$. By assumption $F_*^4(JV)=JV$, so $\cos(4\theta)=1$, $\sin(4\theta)=0$. The solutions $\theta=0$ and $\theta=\pi$ can be ruled out as these would make $F$ an order 1 or 2 isometry respectively (see \cite[Theorem 1.1, p. 137]{RiemGeomPetersen} for instance). We conclude that $F_*(JV)=\pm KV$ and $F_*(KV)=\mp JV$. In words; $F_*$ fixes $T_pM$ and rotates the orthogonal complement of $T_pM\subset T_pX$ by $90^\circ$.

This already provides us with plenty of information due to Lemma \ref{Lem:RiemIsom}. Recalling the notation of Lemma \ref{Lem:HK}, we next aim to show that for any $p\in M$ and  $V\in T_pX$ the following holds
\begin{equation}
\sigma_{IJ}(V)=\sigma_{IK}(V)=\sigma_{JK}(V)=0
\label{eq:OffDiag}
\end{equation}
and
\begin{equation}
\sigma_{JJ}(V)=\sigma_{KK}(V).
\label{eq:Diag}
\end{equation}
To see this, we use \eqref{eq:RiemIsom} and the above knowledge of how $F_*$ acts
\begin{align*}
\sigma_{JK}(V)&=\ip{R(V,JV)KV}{V} \\ &\overset{\eqref{eq:RiemIsom}}{=} \ip{R(F_*V,F_*(JV))F_*(KV)}{F_*V}\\ &=- \ip{R(V,KV)JV}{V}\\&=-\sigma_{KJ}(V).
\end{align*}

But $\sigma_{JK}(V)=\sigma_{KJ}(V)$, so $\sigma_{JK}(V)=0$. Similarly one computes $\sigma_{IJ}(V)=\pm \sigma_{IK}(V)$ and $\sigma_{IK}(V)=\mp \sigma_{IJ}(V)$, which combine to say $\sigma_{IJ}(V)=-\sigma_{IJ}(V)$ and $\sigma_{IK}(V)=-\sigma_{IK}(V)$. The same kind of computation also gives $\sigma_{JJ}(V)=\sigma_{KK}(V)$.

\end{proof}
The proposition can actually be generalized to isometries of other orders.
\begin{Thm}
\label{Thm:GenIsomThm}
Let $(X,\tilde{g})$ be a hyperk\"{a}hler manifold of real dimension 4. Assume $F\colon X\to X$ is a holomorphic isometry of order $k\geq 2$ with positive-dimensional fixed point set $M\coloneqq Fix(F)$. Then $\sigma_{IJ}(V)=\sigma_{IK}(V)=\sigma_{JK}(V)=0$ for any $V\in T_pM\subset T_pX$ and $p\in M$. If $k>2$, then  $\sigma_{JJ}(V)=\sigma_{KK}(V)$ and $\sigma_{JK}(V)=0$ as well.  
\end{Thm}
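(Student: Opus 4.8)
The plan is to rerun the proof of Theorem \ref{Thm:IsomThm} with the rotation angle left arbitrary. Fix $p\in M$ and $V\in T_pM$. Since $F$ is holomorphic, $M$ is a complex submanifold, so $F_*$ fixes both $V$ and $IV$, and being an isometry it preserves the orthogonal complement $N:=\mathrm{span}\{JV,KV\}$. On $N$ the map $F_*$ commutes with $I$ and is orientation preserving, hence is a rotation: there is a $\theta$ with
\[ F_*(JV)=\cos\theta\,JV+\sin\theta\,KV,\qquad F_*(KV)=-\sin\theta\,JV+\cos\theta\,KV, \]
where I have used $IJV=KV$ and $IKV=-JV$. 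This angle $\theta$ is independent of the choice of $V$, since it is simply the rotation angle of the single linear map $F_*|_N$.

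The first real step is to determine the admissible $\theta$. An isometry of a connected manifold is determined by its $1$-jet at a point (\cite[Theorem 1.1, p. 137]{RiemGeomPetersen}), so the order of $F$ equals the order of $F_*$ at $p$; as $F_*$ is the identity on $T_pM$ and a rotation by $\theta$ on $N$, the rotation must itself have order exactly $k$. Thus $\theta=2\pi m/k$ with $\gcd(m,k)=1$. In particular $\theta\neq 0\pmod{2\pi}$ for every $k\geq 2$, while $\sin\theta=0$ (i.e. $\theta=\pi$) forces $k=2m$ with $\gcd(m,k)=1$, hence $k=2$.

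Next I would substitute these relations into \eqref{eq:RiemIsom}, always using $F_*V=V$ and $F_*(IV)=IV$. Applied to $\sigma_{IJ}(V)$ and $\sigma_{IK}(V)$ this yields the linear system
\[ \begin{pmatrix}\sigma_{IJ}(V)\\ \sigma_{IK}(V)\end{pmatrix}=\begin{pmatrix}\cos\theta & \sin\theta\\ -\sin\theta & \cos\theta\end{pmatrix}\begin{pmatrix}\sigma_{IJ}(V)\\ \sigma_{IK}(V)\end{pmatrix}, \]
so $(\sigma_{IJ},\sigma_{IK})$ is fixed by a nontrivial rotation and therefore vanishes; this already holds for all $k\geq 2$. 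For the remaining components I would expand $\ip{R(V,F_*JV)F_*JV}{V}$, $\ip{R(V,F_*KV)F_*KV}{V}$ and $\ip{R(V,F_*JV)F_*KV}{V}$ with \eqref{eq:RiemIsom} and the symmetry $\sigma_{JK}=\sigma_{KJ}$. Writing $D:=\sigma_{JJ}(V)-\sigma_{KK}(V)$ and $S:=\sigma_{JK}(V)$, these give $\sin^2\theta\,D=2\sin\theta\cos\theta\,S$ and $2\sin^2\theta\,S=-\sin\theta\cos\theta\,D$; cancelling a factor $\sin\theta$ (legitimate precisely when $\sin\theta\neq 0$) leaves a system of determinant $2$, forcing $S=D=0$, i.e. $\sigma_{JK}(V)=0$ and $\sigma_{JJ}(V)=\sigma_{KK}(V)$. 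By the previous paragraph $\sin\theta\neq 0$ is exactly the condition $k>2$, which is where these last two identities appear.

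The main obstacle I anticipate is the arithmetic of the order constraint — establishing cleanly that $\sin\theta=0$ is equivalent to $k=2$ via $\gcd(m,k)=1$ — together with the bookkeeping in the quadratic transformation law for $\sigma_{JJ},\sigma_{KK},\sigma_{JK}$ and the need to cancel $\sin\theta$ only when it is nonzero. The off-diagonal vanishing of $\sigma_{IJ}$ and $\sigma_{IK}$ is a direct rerun of the order-$4$ computation and should pose no difficulty.
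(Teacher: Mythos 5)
Your proposal is correct and follows essentially the same route as the paper's own proof: identify $F_*$ at a fixed point $p\in M$ as the identity on $T_pM$ and a rotation through a non-trivial angle $\theta$ on the normal plane spanned by $JV,KV$, then feed this into \eqref{eq:RiemIsom} to obtain linear constraints that force the stated $\sigma$'s to vanish. If anything, you are more careful than the paper in two harmless spots: you allow the angle $\theta=2\pi m/k$ with $\gcd(m,k)=1$ and justify via $1$-jet rigidity that $\sin\theta=0$ occurs exactly when $k=2$ (the paper simply asserts $\theta=\pm\tfrac{2\pi}{k}$), and you retain the cross term $2\sin\theta\cos\theta\,\sigma_{JK}(V)$ in the transformation law for $\sigma_{JJ}(V)$ — a term the paper's displayed equation omits — so that $\sigma_{JJ}(V)=\sigma_{KK}(V)$ and $\sigma_{JK}(V)=0$ for $k>2$ emerge from a coupled $2\times 2$ system of determinant $2$ rather than from two decoupled equations.
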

\begin{proof}
One argues as above that for $p\in M$ and $V\in T_pM$, $F_*$ acts by rotating the orthogonal complement by $\theta=\pm \frac{2\pi}{k}$. One then computes as above that
\[\sigma_{IJ}(V)=\cos(\theta)\sigma_{IJ}(V)+\sin(\theta)\sigma_{IK}(V)\]
and
\[\sigma_{IK}(V)=\cos(\theta)\sigma_{IK}(V)-\sin(\theta)\sigma_{IJ}(V).\]
This means $(\sigma_{IJ},\sigma_{IK})$ is fixed by a non-trivial rotation, hence has to be 0. Using this, we further compute
\[\sigma_{JJ}(V)=\cos^2(\theta) \sigma_{JJ}(V)+\sin^2(\theta)\sigma_{KK}(V)\]
When $\sin(\theta)\neq 0$, this implies $\sigma_{JJ}(V)=\sigma_{KK}(V)$. Similarly, one computes
\[\sigma_{JK}(V)=\cos(2\theta)\sigma_{JK}(V)+\sin(\theta)\cos(\theta)(\sigma_{KK}(V)-\sigma_{JJ}(V)),\]
hence $\sigma_{JK}(V)=0$. 
\end{proof}

\begin{Cor}
Assume the setup of Theorem \ref{Thm:GenIsomThm} with $k>2$.  Then the Riemann curvature tensor of $X$ at $p\in M\subset X$ is uniquely determined by the Gauss curvature of $M$ at $p$. In fact, if $\mathcal{K}\colon M\to \R$ is the Gauss curvature, $V\in T_pM \subset T_pX$ is a unit vector, and $W=\alpha V +\beta IV +\mu JV+\nu KV\in T_pX$ is arbitrary, then
\[\ip{R(W,IW)IW}{W}=\left( (\alpha^2+\beta^2)^2+(\mu^2+\nu^2)^2-4(\alpha^2+\beta^2)(\mu^2+\nu^2)\right)\mathcal{K}(p)\]
\end{Cor}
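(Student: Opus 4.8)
The plan is to combine the explicit formula \eqref{eq:ExplicitSigma} from Lemma \ref{Lem:SectCurv} with the vanishing/equality relations provided by Theorem \ref{Thm:GenIsomThm} for the case $k>2$. First I would invoke Theorem \ref{Thm:GenIsomThm} to record that, for $p\in M$ and any $V\in T_pM$, we have $\sigma_{IJ}(V)=\sigma_{IK}(V)=\sigma_{JK}(V)=0$ and $\sigma_{JJ}(V)=\sigma_{KK}(V)$. Feeding these into \eqref{eq:ExplicitSigma} immediately kills the three cross terms (those carrying $\sigma_{IJ}, \sigma_{IK}, \sigma_{JK}$), leaving only
\[
\ip{R(W,IW)IW}{W}=\left(\alpha^2+\beta^2-\mu^2-\nu^2\right)^2\sigma_{II}(V)+\left[4(\beta\mu-\alpha\nu)^2+4(\alpha\mu+\beta\nu)^2\right]\sigma_{JJ}(V),
\]
where I have already used $\sigma_{KK}(V)=\sigma_{JJ}(V)$ to merge the second and third terms.

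The next step is to simplify the coefficient of $\sigma_{JJ}(V)$. A short computation gives $(\beta\mu-\alpha\nu)^2+(\alpha\mu+\beta\nu)^2=(\alpha^2+\beta^2)(\mu^2+\nu^2)$, so that coefficient becomes $4(\alpha^2+\beta^2)(\mu^2+\nu^2)$. To finish I must express both $\sigma_{II}(V)$ and $\sigma_{JJ}(V)$ in terms of the Gauss curvature $\mathcal{K}(p)$ of $M$. Since $V,IV\in T_pM$ span the tangent plane to $M$ and $M$ is a complex (hence totally geodesic, being the fixed-point set of a holomorphic isometry) surface, the sectional curvature of $X$ along the plane $\mathrm{span}(V,IV)$ equals the intrinsic Gauss curvature of $M$; up to the normalization implicit in the definition of $\sigma$, this yields $\sigma_{II}(V)=\mathcal{K}(p)$. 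Then the Ricci-flatness relation \eqref{eq:HK-Ricci}, namely $\sigma_{II}+\sigma_{JJ}+\sigma_{KK}=0$ together with $\sigma_{JJ}=\sigma_{KK}$, forces $\sigma_{JJ}(V)=-\tfrac12\sigma_{II}(V)=-\tfrac12\mathcal{K}(p)$.

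Substituting these values gives
\[
\ip{R(W,IW)IW}{W}=(\alpha^2+\beta^2-\mu^2-\nu^2)^2\,\mathcal{K}(p)-2(\alpha^2+\beta^2)(\mu^2+\nu^2)\,\mathcal{K}(p),
\]
and expanding $(\alpha^2+\beta^2-\mu^2-\nu^2)^2=(\alpha^2+\beta^2)^2+(\mu^2+\nu^2)^2-2(\alpha^2+\beta^2)(\mu^2+\nu^2)$ recovers exactly the claimed expression $\left((\alpha^2+\beta^2)^2+(\mu^2+\nu^2)^2-4(\alpha^2+\beta^2)(\mu^2+\nu^2)\right)\mathcal{K}(p)$.

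The main obstacle I anticipate is pinning down the precise identification $\sigma_{II}(V)=\mathcal{K}(p)$ with the correct constant. This requires verifying two things: that $M=\mathrm{Fix}(F)$ is totally geodesic in $X$ (standard for fixed-point sets of isometries, so that the Gauss equation reduces the sectional curvature of the $(V,IV)$-plane to the intrinsic curvature of $M$), and that the normalization conventions for $\sigma_{II}$ and for $\mathcal{K}$ match with no spurious factor. Everything else is the routine algebra of collecting terms and applying the trigonometric-type identity for the cross terms; the conceptual content lives entirely in translating $\sigma_{II}(V)$ into the Gauss curvature and in the combined use of \eqref{eq:HK-Ricci} with the symmetry-derived equalities.
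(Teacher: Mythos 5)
Your proposal is correct and follows essentially the same route as the paper: the paper's proof likewise identifies $\sigma_{II}(V)$ with the Gauss curvature via the Gauss--Codazzi theorem and the totally geodesic property of $M$, and then reads off the formula from \eqref{eq:ExplicitSigma} using the relations of Theorem \ref{Thm:GenIsomThm} together with \eqref{eq:HK-Ricci}. The only difference is that you spell out the algebra (the identity $(\beta\mu-\alpha\nu)^2+(\alpha\mu+\beta\nu)^2=(\alpha^2+\beta^2)(\mu^2+\nu^2)$ and the reduction $\sigma_{JJ}=-\tfrac12\sigma_{II}$) that the paper leaves implicit.
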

\begin{proof}
By the Gauss-Codazzi theorem (see \cite[Theorem 2.5]{DoCarmo} for instance) and the fact that $M$ is totally geodesic, $\sigma_{II}(V)$ equals the holomorphic sectional curvature (of the induced metric) of $M$, and the holomorphic sectional curvature exactly equals the Gauss curvature of a Riemann surface. The expression then follows from \eqref{eq:ExplicitSigma}.
\end{proof}
\begin{Rem}
Yet another corollary of the above result is that if the fixed point set has a connected component which is a torus, then there has to be points where the curvature of $M$ vanishes (simply by the Gauss-Bonnet theorem), hence points where the entire curvature tensor of $X$ vanishes. The zero set of the curvature on a Riemann surface does not have to be made up of geodesics however. Indeed, just compute what happens for a torus embedded in $\R^3$, via $(\theta,\phi)\mapsto \left(((R+r\cos(\theta))\cos(\phi),(R+r\cos(\theta))\sin(\phi),r\sin(\theta)\right)$. One finds that in this case, $\mathcal{K}=0$ on the two circles $\left(R\cos(\phi),R\sin(\phi),\pm r\right)$, neither of which are geodesics. 
\end{Rem}

\begin{Rem}
In the coordinates of Remark \ref{Rem:Angular}, the sectional curvature reads
\[\ip{R(W,IW)IW}{W}=\frac{1}{4}\left(3\cos(2\theta)-1\right)\mathcal{K}\]
\end{Rem}

\subsection{Derivatives of the Curvature}

To proceed, we will formulate some results about the derivative the holomorphic sectional curvature on a hyperk\"{a}hler manifold. We will  write $\nabla$ (instead of $\tilde{\nabla}$) for the covariant derivative associated to $\tilde{g}$. We only refer to a single (hyperk\"{a}hler) metric in this section, so the risk of confusion is minimal. 
\begin{Prop}
\label{Prop:RiemannCrit}
Let $(X,\tilde{g})$ be as in Theorem \ref{Thm:IsomThm}. Let $V$ be a unit vector field defined in some open neighbourhood $U$ of $M$ with the property that $V_p\in T_pM$ for all $p\in M$. Then, for any critical point of the function $\sigma_{II}(V)\colon U\to \R$ lying in $M$, we have 
\begin{align}
\Delta\ip{R(V,IV)IV}{V} &\coloneqq (\nabla_V^2+\nabla_{IV}^2+\nabla_{JV}^2+\nabla_{KV}^2) \ip{R(V,IV)IV}{V} \notag \\
&=-6\sigma_{II}(V)^2 -24 \vert \nabla_{JV} V\vert^2 \sigma_{II}(V).
\end{align} 
\end{Prop}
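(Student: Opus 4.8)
The plan is to compute the Laplacian directly in the orthonormal frame $\{V,IV,JV,KV\}$, which is orthonormal because $V$ is a unit field and $I,J,K$ are parallel, mutually orthogonal complex structures by Lemma \ref{Lem:HK}. Write $f\coloneqq \ip{R(V,IV)IV}{V}=\sigma_{II}(V)$. Since $p\in M$ is a critical point of $f$, the terms $\nabla_{\nabla_e e}f$ vanish, so $\Delta f(p)=\sum_{e}\nabla_e(\nabla_e f)$ with $e$ ranging over the frame. Throughout I use that $\nabla$ commutes with $I,J,K$, so $\nabla_e(IV)=I\nabla_e V$ and likewise for $J,K$; this lets me differentiate $f$ by the product rule treating the curvature tensor and the vector field $V$ as the only non-parallel objects. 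The algebraic input that collapses the final expression is Theorem \ref{Thm:IsomThm}, which gives $\sigma_{IJ}=\sigma_{IK}=\sigma_{JK}=0$ and $\sigma_{JJ}=\sigma_{KK}$, together with Ricci-flatness \eqref{eq:HK-Ricci}, which then forces $\sigma_{JJ}=\sigma_{KK}=-\tfrac12\sigma_{II}$.

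Differentiating $\nabla_e f$ once more and evaluating at $p$, I would organize the result into three groups: a \emph{curvature-Laplacian} group $\sum_e(\nabla^2_{e,e}R)(V,IV,IV,V)$; \emph{mixed} terms pairing one derivative of $R$ with one derivative of $V$; and \emph{algebraic} terms in which both derivatives fall on the vector-field slots, producing $R$ contracted against $\nabla_e V$ and against $\nabla^2_{e,e}V$. For the first group I would apply the second Bianchi identity twice and use $Ric\equiv 0$ (hence $\nabla\,Ric=0$) to rewrite the trace $\sum_e\nabla^2_{e,e}R$ as a universal quadratic expression in the curvature alone; evaluating this on $(V,IV,IV,V)$ in the frame and substituting the $\sigma$-relations reduces it to a multiple of $\sigma_{II}^2$. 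This is the mechanism producing the $-6\sigma_{II}(V)^2$ term.

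For the algebraic and mixed groups the key structural observation is antisymmetry of the slot pattern $(V,IV,IV,V)$: any vector parallel to $V$ or to $IV$ inserted into one of these slots contracts to zero, since it produces a curvature component with a repeated entry. By total geodesy of $M=\mathrm{Fix}(F)$, for $e\in\{V,IV\}$ the derivative $\nabla_e V$ is tangent to $M$, hence parallel to $IV$ (it is $\perp V$ because $|V|\equiv 1$), so these tangential contributions drop out entirely. Only the transverse components of $\nabla_{JV}V$ and $\nabla_{KV}V$ along $JV,KV$ survive; their cross terms are governed by $\sigma_{JK}=\sigma_{IJ}=\sigma_{IK}=0$ and vanish, while the diagonal terms combine with $\sigma_{JJ}=\sigma_{KK}=-\tfrac12\sigma_{II}$ to yield multiples of $|\nabla_{JV}V|^2\sigma_{II}$ and $|\nabla_{KV}V|^2\sigma_{II}$. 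Using the $F$-equivariance of $\nabla$ from Lemma \ref{Lem:RiemIsom} together with $F_*(JV)=\pm KV$ one checks $|\nabla_{JV}V|=|\nabla_{KV}V|$, so these assemble into the single coefficient $-24|\nabla_{JV}V|^2\sigma_{II}(V)$. I would handle the $\nabla^2_{e,e}V$ contributions the same way: their $V$- and $IV$-components die by antisymmetry and their transverse components contract through the vanishing off-diagonal $\sigma$'s.

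The main obstacle is the bookkeeping in the first two groups: correctly tracing the second Bianchi identity through the contraction $\sum_e(\nabla^2_{e,e}R)(V,IV,IV,V)$ so that the non-parallel $V$ is handled consistently, and then verifying that the leftover mixed and algebraic terms assemble with exactly the integer coefficients $-6$ and $-24$ after the $\sigma$-substitutions. These are precisely the curvature-derivative computations that the paper defers to Appendix \ref{App:Riemann}; the conceptual content is that Ricci-flatness plus the order-$4$ symmetry leave only the two invariants $\sigma_{II}^2$ and $|\nabla_{JV}V|^2\sigma_{II}$ available, and the remaining work is pinning down their coefficients.
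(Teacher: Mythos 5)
Your overall route coincides with the paper's (differentiate $\sigma_{II}(V)$ twice in the frame $\{V,IV,JV,KV\}$, split into a curvature-Laplacian group, mixed terms, and algebraic terms, then collapse using Theorem \ref{Thm:IsomThm} and Ricci-flatness, with the second Bianchi identity handling $\Delta R$), but two of your structural claims are incorrect, and one of them hides the step where the critical-point hypothesis actually does its work. First, the antisymmetry mechanism you invoke is false: it is not true that every vector parallel to $V$ or $IV$ inserted into the slot pattern $(V,IV,IV,V)$ contracts to zero. With $\nabla_eV=\alpha IV$ (the tangential case $e\in\{V,IV\}$), the second-derivative term $\ip{R(\nabla_eV,I\nabla_eV)IV}{V}=\alpha^2\ip{R(IV,-V)IV}{V}=\alpha^2\sigma_{II}(V)$ is not zero, and neither is the contribution $-\alpha^2\sigma_{II}(V)$ coming from $\ip{R(\nabla_e^2V,IV)IV}{V}$ via the component $-\vert\nabla_eV\vert^2\,V$ of $\nabla_e^2V$. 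The tangential contributions disappear only because these two terms cancel each other; they do not vanish term by term by a repeated-entry argument. This is reparable (a careful execution would find the cancellation), but the justification as stated is wrong.

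The genuine gap is your treatment of the mixed terms. Differentiating twice produces terms of the form $\ip{\nabla_eR(\nabla_eV,IV)IV}{V}$, which for $e\in\{JV,KV\}$ survive with the transverse coefficients $\mu=\ip{JV}{\nabla_eV}$, $\nu=\ip{KV}{\nabla_eV}$ and reduce to $\mu\ip{\nabla_eR(V,KV)IV}{V}-\nu\ip{\nabla_eR(V,JV)IV}{V}$. These are components of $\nabla R$, not of $R$, so they are not "governed by" $\sigma_{IJ}=\sigma_{IK}=\sigma_{JK}=0$; Theorem \ref{Thm:IsomThm} says nothing about them, and they carry exactly the same coefficients $\mu,\nu$ as the terms that produce $-24\vert\nabla_{JV}V\vert^2\sigma_{II}(V)$, so they cannot be discarded on size grounds either. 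Showing they vanish is a substantive step and is precisely where the critical-point hypothesis enters: at a critical point one first gets $\ip{\nabla_WR(V,IV)IV}{V}=0$ for every $W$ (since the remaining terms in $\nabla_W\sigma_{II}$ vanish on $M$), and then a case-by-case argument combining the second Bianchi identity with the order-$4$ isometry (via Lemma \ref{Lem:RiemIsom}) converts $\ip{\nabla_WR(V,KV)IV}{V}$ and $\ip{\nabla_WR(V,JV)IV}{V}$ into multiples of $\nabla\sigma_{II}$, which vanish. The same issue recurs in your curvature-Laplacian group: since the frame is not parallel, $\sum_e\nabla_e\nabla_eR$ is not the trace Laplacian to which the Lichnerowicz-type "quadratic in $R$" formula applies; the discrepancy consists of first-order $\nabla R$ terms (in the paper these appear as $\nabla_{[JV,V]}R$ etc.\ in Appendix \ref{App:Riemann}, and reduce by Proposition \ref{Prop:LaplaceRiemann} to $(\alpha+\beta)\nabla_V\sigma_{II}(V)-\alpha\nabla_{IV}\sigma_{II}(V)$), and these too die only at a critical point. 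Without these arguments your plan leaves uncontrolled first-derivative-of-curvature terms and does not yield the stated identity.
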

\begin{proof}
Let $W$ be any vector field. Then
\[\nabla_{W} \ip{R(V,IV)IV}{V}=\ip{\nabla_WR(V,IV)IV}{V}+4\ip{R(\nabla_WV,IV)IV}{V}\]
and
\begin{align*}
\nabla^2_W \ip{R(V,IV)IV}{V}&= \ip{\nabla_W^2R(V,IV)IV}{V}+8\ip{\nabla_WR(\nabla_WV,IV)IV}{V} \\
&+4\ip{R(\nabla_W^2V,IV)IV}{V}+4\ip{R(\nabla_WV,I\nabla_WV)IV}{V}\\
&+8\ip{R(\nabla_WV,IV)IV}{\nabla_WV}.
\end{align*}
The terms $\nabla_WR$ and $\nabla_W^2R$ denote covariant derivatives of $R$ as a 4-tensor, and we will deal with these terms last using the second Bianchi identity. Since $\vert V\vert=1$, $\ip{\nabla_W V}{V}=0$ so there are real functions $\alpha,\mu,\nu$ depending on $W$ such that 
\[\nabla_WV=\alpha IV+\mu JV+\nu KV.\]
A quick computation reveals
\[\nabla^2_W V=-(\alpha^2+\mu^2+\nu^2)V +(\nabla_W\alpha)IV +(\nabla_W \mu)JV+(\nabla_W \nu)KV.\]
We compute each of the above terms on the right hand side, using the shorthand $\sigma_{II}(V)$ and so on introduced above.
\[\ip{\nabla_WR(\nabla_WV,IV)IV}{V}=\mu\ip{\nabla_WR(V,KV)IV}{V}-\nu \ip{\nabla_WR(V,JV)IV}{V},\]
\[\ip{R(\nabla_W^2V,IV)IV}{V}=-(\alpha^2+\mu^2+\nu^2)\sigma_{II}(V)+(\nabla_W \mu)\sigma_{IK}(V)-(\nabla_W \nu) \sigma_{IJ}(V),\]
\[\ip{R(\nabla_WV,I\nabla_WV)IV}{V}=(\alpha^2-\mu^2-\nu^2)\sigma_{II}(V)+2\alpha\mu \sigma_{IJ}(V)+2\alpha\nu \sigma_{IK}(V),\]
\[\ip{R(\nabla_WV,IV)IV}{\nabla_WV}=\mu^2 \sigma_{KK}(V)+\nu^2\sigma_{JJ}(V)-2\mu\nu \sigma_{JK}(V).\]
So
\begin{align*}
\nabla^2_W \ip{R(V,IV)IV}{V}&= \ip{\nabla_W^2R(V,IV)IV}{V} \\
&+8(\mu\ip{\nabla_WR(V,KV)IV}{V}-\nu \ip{\nabla_WR(V,JV)IV}{V}) \\
&+8\left(\mu^2\sigma_{KK}(V)+\nu^2 \sigma_{JJ}(V)-(\mu^2+\nu^2)\sigma_{II}(V)\right)\\
&+4\left( (\nabla_W\mu)+2\alpha\nu\right)\sigma_{IK}(V)\\
&+4\left( -(\nabla_W\nu)+2\alpha\mu\right)\sigma_{IJ}(V)\\
&-4\mu\nu \sigma_{JK}(V).
\end{align*}
Evaluating this at a critical point in the fixed point set,  $p\in M$, simplifies the expression vastly. By Theorem \ref{Thm:IsomThm}, $\sigma_{IJ}=\sigma_{JK}=\sigma_{IK}=0$ and $\sigma_{JJ}=\sigma_{KK}=-\frac{1}{2}\sigma_{II}$. Furthermore, being a critical point implies $\nabla_W \sigma_{II}=0$ for all vector fields $W$, hence
\begin{align*}
0&=\nabla_W \sigma_{II}=\ip{\nabla_WR(V,IV)IV}{V}+4\ip{R(\mu JV+\nu KV,IV)IV}{V}\\
&=\ip{\nabla_WR(V,IV)IV}{V}+4\mu \sigma_{IK}-4\nu \sigma_{IJ} =\ip{\nabla_WR(V,IV)IV}{V}.
\end{align*}
This will imply that also $\ip{\nabla_WR(V,KV)IV}{V}=0=\ip{\nabla_WR(V,JV)IV}{V}$, as we now demonstrate case by case. When $W=V$ or $IV$, the result follows by the same argument using isometries as in the proof of Theorem \ref{Thm:IsomThm}. The key being that the expression contains an odd number of $JV$- and  $KV$-factors. When $W=JV$, we use the second Bianchi identity to say
\[\ip{\nabla_{JV} R(V,KV)IV}{V}=-\ip{\nabla_{V} R(KV,JV)IV}{V}-\ip{\nabla_{KV} R(JV,V)IV}{V}.\]
On the last term, we apply the isometry sending $JV\mapsto \pm KV$, $KV\mapsto \mp JV$ to argue
\[\ip{\nabla_{JV} R(V,KV)IV}{V}=-\frac{1}{2}\ip{\nabla_{V} R(KV,JV)IV}{V}=-\frac{1}{2} \ip{\nabla_{V} R(V,IV)IV}{V}.\]
The right hand side vanishes at a critical point. When $W=KV$, we use the Bianchi identity to say
\[\ip{\nabla_{KV} R(V,KV)IV}{V}=-\ip{\nabla_{IV} R(V,KV)V}{KV}-\ip{\nabla_{V} R(V,KV)KV}{IV}.\]
Writing $\nabla_V V=\alpha IV$ and $\nabla_{IV} V=\beta IV$ for some functions $\alpha,\beta \colon M\to \R$, we find
\begin{align*}
-\ip{\nabla_{IV} R(V,KV)V}{KV}&=\nabla_{IV} \sigma_{KK}-4\ip{R(\nabla_{IV}V,KV)KV}{V}\\
&=-\frac{1}{2} \nabla_{IV} \sigma_{II} -4\beta \sigma_{JK}.
\end{align*}
The last term vanishes on $M$, and the first term is $0$ at a critical point. This shows that $\ip{\nabla_WR(V,KV)IV}{V}=0$ at a critical point $p\in M$. Using the isometry mapping $JV$ to $\pm KV$ shows that $\ip{\nabla_WR(V,JV)IV}{V}=0$ as well.

All in all, this shows
\[\nabla_W^2 \sigma_{II}= \ip{\nabla_W^2R(V,IV)IV}{V}-12(\mu^2+\nu^2)\sigma_{II}\]
at any critical point $p\in M$. We recall $\mu=\ip{JV}{\nabla_W V}$, $\nu=\ip{KV}{\nabla_W V}$. Using the order 4 isometry shows
\[0=\ip{JV}{\nabla_V V}=\ip{JV}{\nabla_{IV} V},\]
\[\ip{JV}{\nabla_{JV} V}=\ip{KV}{\nabla_{KV} V},\]
and
\[\ip{KV}{\nabla_{JV} V}=-\ip{JV}{\nabla_{KV} V}.\]
We therefore find
\[\Delta \sigma_{II}=\ip{\Delta R(V,IV)IV}{V}-24 \vert \nabla_{JV} V\vert^2 \sigma_{II}.\]
Using Proposition \ref{Prop:LaplaceRiemann} from Appendix \ref{App:Riemann} for $\Delta R$, we finally find
\[\Delta \sigma_{II}=-6\sigma_{II}^2 -24\vert \nabla_{JV} V\vert^2 \sigma_{II}\]
at a critical point $p\in M$.

\end{proof}

\begin{Rem}
A key fact used is that the Laplacian acting on the Riemann tensor gives something proportional to the square of the Riemann tensor. \eqref{eq:LaplacianGen} is the precise statement. Something similar is true on an arbitrary Ricci-flat K\"{a}hler manifold without assuming it to be hyperk\"{a}hler. The result is (see \cite[Eq. 3.108]{KahlerRicci})
\[-\frac{1}{2}\Delta R_{\mu\overline{\nu}\alpha\overline{\beta}}=
R_{\mu\overline{\nu}\lambda\overline{\sigma}}R^{\overline{\sigma}\lambda}{}_{\alpha\overline{\beta}}
+R_{\mu\overline{\sigma}\lambda \overline{\beta}}R^{\overline{\sigma}}{}_{\overline{\nu}\alpha}{}^\lambda 
- R_{\mu\overline{\lambda}\alpha\overline{\sigma}}R^{\overline{\lambda}}{}_{\overline{\nu}}{}^{\overline{\sigma}}{}_{\overline{\beta}}.\]
The second thing used in the above proof is that the order 4 symmetry restricts the degrees of freedom of the Riemann tensor, allowing us the get a simple expression for the Laplacian.
\end{Rem}

\section{A Special K3 Surface}
\label{Section:SpecialK3}
From now on we will specialize to a special Kummer K3 surface. The precise assumptions and notation is as follows.
\begin{Not}
 Let $\Lambda \coloneqq \Z\{1,i\}\subset \C$ and let $(X,g)$ denote the Kummer K3 surface associated to the torus with lattice $\Gamma\coloneqq \Lambda \oplus \Lambda$. We will also assume that all components of the exceptional divisor have the same volume, $a_i=a_j$ for $1\leq i,j\leq 16$.

We write $Q\colon \C^2\to Y$ for the quotient map (quotienting with respect to both $\Gamma$ and $\mu_2$) and $\pi\colon X\to Y$ for the blow-down map, with inverse $\pi^{-1}\colon Y\setminus Sing(Y)\to X\setminus E$.  

For any suitable affine map $F^{\C}\colon \C^2\to \C^2$, $F^{\C}(\mathbf{z})=B\mathbf{z}+\mathbf{b}$ we denote the induced map (using Proposition \ref{Prop:KummerIsom}) by $F\colon X\to X$. These maps will all have the property that they map $E$ to $E$, so give rise to isometries (using the same name for the restrictions) $F\colon X\setminus E\to X\setminus E$.

These restricted maps satisfy $\pi\circ F \circ \pi^{-1}\circ Q=Q\circ F^{\C}$, a fact which will be used implicitly.

For $M^{\C}\subset \C^2$ with $Q(M^{\C})\subset Y\setminus Sing(Y)$, we write $M\coloneqq \pi^{-1}\circ Q(M^{\C})\subset X$.

\end{Not}
We start by describing the isometries induced by affine maps.
\begin{Prop}
The holomorphic isometries of $(X,g)$ are induced by the affine maps of the form
\[F^{\C}(\mathbf{z})=B\mathbf{z} +\mathbf{b}\]
where 
\[B\in U(2)\cap GL(2,\Z[i])=\left\{\begin{pmatrix} \alpha & 0 \\ 0& \beta\end{pmatrix}, \begin{pmatrix} 0& \alpha\\ \beta& 0\end{pmatrix}\, \Big\vert \, \alpha,\beta\in \mu_4\right\}\]
with $\mu_4\coloneqq \{\pm 1,\pm i\}$, and $\mathbf{b}\in \left(\frac{1}{2}\Lambda\right)^2$. Any anti-holomorphic isometry is induced by one of the above form composed with $\tau^{\C}$ for $\tau^{\C}(\mathbf{z})\coloneqq \overline{\mathbf{z}}$. There are 512 distinct isometries. All of them are isometries of $(X,\tilde{g})$ as well. The maximum order of these isometries is 8.
\end{Prop}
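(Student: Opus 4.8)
The plan is to determine the holomorphic isometries by invoking Proposition \ref{Prop:KummerIsom}, which already tells us that every (anti-)holomorphic isometry of $(X,g)$ is induced by an affine map $F^{\C}(\mathbf{z})=B\mathbf{z}+\mathbf{b}$ with $B\in U(2)$, $B\Gamma=\Gamma$, and $\mathbf{b}\in\frac{1}{2}\Gamma$. So the entire task reduces to making these three conditions explicit for the specific lattice $\Gamma=\Lambda\oplus\Lambda$ with $\Lambda=\Z\{1,i\}$, and then counting. First I would analyze the condition $B\in U(2)$ together with $B\Gamma=\Gamma$. The lattice condition forces $B$ to have entries in the ring of Gaussian integers $\Z[i]$ (since $B$ maps lattice generators to lattice vectors), and the same applies to $B^{-1}=B^{*}$; hence $B\in U(2)\cap GL(2,\Z[i])$. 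A short argument shows that a unitary matrix with Gaussian-integer entries whose inverse also has Gaussian-integer entries must be a signed/phased permutation matrix: each row has unit Euclidean norm, and a sum of squared absolute values of Gaussian integers equals $1$ only if exactly one entry is a unit in $\Z[i]$, i.e.\ lies in $\mu_4=\{\pm1,\pm i\}$. This yields exactly the two displayed families (diagonal and anti-diagonal), giving $4\cdot 4=16$ choices for each family, so $32$ choices of $B$.

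Next I would pin down the translational part. The constraint $\mathbf{b}\in\frac{1}{2}\Gamma$, read modulo $\Gamma$ (since translations by full lattice vectors act trivially on $T$ and hence on $X$), means $\mathbf{b}$ ranges over $\frac{1}{2}\Gamma/\Gamma\cong(\frac{1}{2}\Lambda/\Lambda)^2$, which has $4^2=16$ elements. Combining, the number of holomorphic isometries is $32\times 16=512$. For the anti-holomorphic statement, I would note that $\tau^{\C}(\mathbf{z})=\overline{\mathbf{z}}$ satisfies $\tau^{\C}(\Gamma)=\Gamma$ for this lattice, so by Proposition \ref{Prop:KummerIsom} it induces an isometry; and since composition with $\tau^{\C}$ is a bijection between holomorphic and anti-holomorphic isometries, every anti-holomorphic isometry has the stated form. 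That all $512$ holomorphic isometries are isometries of $\tilde{g}$ as well is immediate from Proposition \ref{Prop:KummerIsom} (or Proposition \ref{Prop:CYIsom}), since they preserve the K\"ahler class and the Monge--Amp\`ere data.

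Finally, for the maximum-order claim I would compute orders directly. The order of the induced map $F$ is the smallest $k$ with $(F^{\C})^k$ equal to the identity modulo $\Gamma$; iterating $F^{\C}(\mathbf{z})=B\mathbf{z}+\mathbf{b}$ gives $(F^{\C})^k(\mathbf{z})=B^k\mathbf{z}+(B^{k-1}+\cdots+B+\mathbb{1})\mathbf{b}$, so I need $B^k=\mathbb{1}$ and $(B^{k-1}+\cdots+\mathbb{1})\mathbf{b}\in\Gamma$. For diagonal $B=\mathrm{diag}(\alpha,\beta)$ with $\alpha,\beta\in\mu_4$, the multiplicative order of $B$ divides $4$; the translational obstruction can at most double this when the summed geometric-series factor fails to clear $\mathbf{b}$ into $\Gamma$, producing order up to $8$. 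For anti-diagonal $B$ one has $B^2=\mathrm{diag}(\alpha\beta,\beta\alpha)$, again of order dividing $4$, so the order of $B$ divides $8$, and the same translational analysis caps the total order at $8$. I would exhibit one explicit example achieving order $8$ (e.g.\ a suitable $\alpha,\beta\in\mu_4$ with a half-lattice translation) to show the bound is attained. The main obstacle I anticipate is the careful bookkeeping in this last step: one must correctly account for how the affine (translational) part interacts with the order of the linear part $B$, since a naive count using only $B$ would miss the order-doubling coming from $\mathbf{b}$, and one must verify no combination exceeds order $8$.
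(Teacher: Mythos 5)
Your reduction to Proposition \ref{Prop:KummerIsom} is the right starting point, and your treatment of the linear and translational parts is correct as far as it goes: $B\Gamma=\Gamma$ together with $B\in U(2)$ forces $B$ and $B^{-1}=B^{*}$ to have entries in $\Z[i]$, the row-norm argument gives exactly the $32$ monomial matrices with entries in $\mu_4$, and $\mathbf{b}$ ranges over the $16$ classes of $\frac{1}{2}\Gamma/\Gamma$. The genuine gap is that you never use the Kummer ($\mu_2$-)identification: $X$ resolves $Y=T/\mu_2$, so the affine maps $F^{\C}$ and $-F^{\C}$ induce the \emph{same} isometry of $X$ (already $\mathbf{z}\mapsto -\mathbf{z}$ induces the identity on $Y$, hence on $X$). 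Since $F^{\C}\mapsto -F^{\C}$ acts freely on your $32\cdot 16=512$ affine maps modulo $\Gamma$ (no affine map equals its own negative), they induce only $256$ distinct holomorphic isometries; the number $512$ in the statement is the \emph{total}, namely $256$ holomorphic plus $256$ anti-holomorphic. Your count ($512$ holomorphic, hence $1024$ in all, given your bijection with the anti-holomorphic ones) is off by exactly this factor of $2$.

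The same oversight breaks your order analysis: the order of the induced isometry is the least $k$ with $(F^{\C})^k\equiv \pm\,\mathrm{id} \bmod \Gamma$, not with $(F^{\C})^k\equiv \mathrm{id}$. Concretely, $F^{\C}(z,w)=(w,iz)$ has order $8$ modulo $\Gamma$ but induces an isometry of order $4$ on $X$, since $(F^{\C})^4=-\mathrm{id}$. Moreover, your claim that a diagonal $B$ with a half-lattice translation can reach order $8$ is false: for $\alpha\in\mu_4$ one has $1+\alpha+\alpha^2+\alpha^3\in\{0,4\}$, so $(1+\alpha+\alpha^2+\alpha^3)b\in\Gamma$ for every $b\in\frac{1}{2}\Lambda$, and every diagonal map already has order dividing $4$ modulo $\Gamma$. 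Order $8$ occurs only for anti-diagonal $B$: there $(F^{\C})^2(\mathbf{z})=\alpha\beta\,\mathbf{z}+\mathbf{d}$ with $\mathbf{d}=(\alpha b_2+b_1,\,\beta b_1+b_2)$, hence for $\alpha\beta=\pm i$ one gets $(F^{\C})^4(\mathbf{z})=-\mathbf{z}+(1+\alpha\beta)\mathbf{d}$, and the induced isometry has order $8$ precisely when $(1+\alpha\beta)\mathbf{d}\notin\Gamma$ (e.g.\ $\alpha=i$, $\beta=1$, $\mathbf{b}=(1/2,0)$) and order $4$ otherwise. Finally, your heuristic that the translation "can at most double the order of $B$", applied to an anti-diagonal $B$ of linear order $8$, only caps the order at $16$; excluding $16$ requires the identity $\sum_{k=0}^{7}B^k=(\mathbb{1}+B)\left(\sum_{j=0}^{3}(\alpha\beta)^j\right)\mathbb{1}=0$ when $\alpha\beta=\pm i$, which you assert but do not verify.
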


These isometries play a central role in the arguments to come. We will single out some of them.

\begin{Not}
We consider the following map $\C^2\to \C^2$
\begin{equation}
f^{\C}\begin{pmatrix} z\\ w\end{pmatrix}\coloneqq \begin{pmatrix} z\\ iw+\frac{1}{2}\end{pmatrix},
\end{equation}
along with the induced map $f\colon X\to X$. This map is an isometry of $(X,g)$ of order 4.

We also consider the following subset
\begin{equation}
M^{\C}\coloneqq \left\{\begin{pmatrix} z\\ \frac{1+i}{4} \end{pmatrix} \Big \vert z\in \C\right\}
\end{equation}
The corresponding set in $X$ is denoted by $M=\pi^{-1} \circ Q(M^{\C})$. Note that $Q(M^{\C})$  lands in $Y\setminus Sing(Y)$ due to the shift by an element not in $\frac{1}{2}\Gamma$, so $\pi^{-1}$  is well-defined.

\end{Not}

\begin{Thm}
\label{Thm:StableGeod}
The submanifold $M\subset X$ described above is a totally geodesic torus. There are points $p\in M$ such that the Riemann tensor of $(X,\tilde{g})$ vanishes at $p$.
\end{Thm}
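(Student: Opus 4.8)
The plan is to exhibit $M$ as a connected component of the fixed-point set of the order-$4$ isometry $f$, which forces $M$ to be totally geodesic, and then to reduce the vanishing of the full curvature tensor along $M$ to the vanishing of the Gauss curvature of $M$, which the Gauss--Bonnet theorem guarantees at some point.

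First I would identify the fixed locus of $f^{\C}$. Solving $f^{\C}(z,w)=(z,w)$ gives $iw+\tfrac12=w$, hence $w=\tfrac{1+i}{4}$, so that $M^{\C}$ is precisely the pointwise fixed set of $f^{\C}$ in $\C^2$. Passing to $Y$ and then to $X\setminus E$, one checks that $Q\vert_{M^{\C}}$ is injective onto its image: the shift by $\tfrac{1+i}{4}\notin\tfrac12\Gamma$ keeps $Q(M^{\C})$ off $Sing(Y)$ and, since $-\tfrac{1+i}{4}\not\equiv\tfrac{1+i}{4}\ (\mathrm{mod}\ \Lambda)$, prevents any $\mu_2$-identification within the $z$-torus. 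Hence $M\cong\C/\Lambda$ is an embedded $2$-torus lying in the smooth Ricci-flat region $X\setminus E$, and $M$ is a connected component of $\mathrm{Fix}(f)$. Since $f$ is a holomorphic isometry of $(X,\tilde g)$ by Proposition~\ref{Prop:KummerIsom}, its fixed-point set is totally geodesic, so $M$ is totally geodesic; this proves the first assertion.

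Next I would invoke the curvature rigidity available because $f$ has order $4$ with positive-dimensional fixed set $M$. Theorem~\ref{Thm:IsomThm} gives, for every $p\in M$ and $V\in T_pM$, the relations $\sigma_{IJ}(V)=\sigma_{IK}(V)=\sigma_{JK}(V)=0$ and $\sigma_{JJ}(V)=\sigma_{KK}(V)$, which together with the Ricci-flatness identity \eqref{eq:HK-Ricci} force $\sigma_{JJ}(V)=\sigma_{KK}(V)=-\tfrac12\sigma_{II}(V)$. By Lemma~\ref{Lem:SectCurv} the entire holomorphic sectional curvature at $p$ is then determined by the single scalar $\sigma_{II}(V)$, and since $M$ is totally geodesic the Gauss--Codazzi equation identifies $\sigma_{II}(V)$ with the Gauss curvature $\mathcal{K}(p)$ of the induced metric on the Riemann surface $M$. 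Consequently, at any $p\in M$ with $\mathcal{K}(p)=0$ all the $\sigma$'s vanish, so by \eqref{eq:ExplicitSigma} the holomorphic sectional curvature of every $W\in T_pX$ vanishes; as the holomorphic sectional curvature determines the full curvature tensor on a K\"ahler manifold, the Riemann tensor of $(X,\tilde g)$ vanishes at $p$.

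It remains to produce such a point. Since $M$ is a torus, the Gauss--Bonnet theorem gives $\int_M \mathcal{K}\,dA=0$; as $M$ is connected, $\mathcal{K}$ either vanishes identically or changes sign and therefore vanishes somewhere by continuity. In either case there exists $p\in M$ with $\mathcal{K}(p)=0$, and by the previous paragraph the Riemann tensor of $(X,\tilde g)$ vanishes at this $p$. I expect the main obstacle to be the bookkeeping of the quotient: verifying carefully that $M^{\C}$ descends to a smoothly embedded torus in $X\setminus E$ which is genuinely a component of $\mathrm{Fix}(f)$ (so that total geodesy transfers from $\C^2$ down to $X$) and that $M$ avoids the exceptional divisor, so that the hyperk\"ahler machinery of Section~\ref{Section:Stable} applies unaltered. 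The curvature reduction and the Gauss--Bonnet step are then essentially immediate.
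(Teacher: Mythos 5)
Your proposal is correct and follows essentially the same route as the paper: realize $M$ as a (component of the) fixed-point set of the order-$4$ holomorphic isometry $f$, so that it is a totally geodesic torus, then apply Theorem \ref{Thm:IsomThm} (via the corollary reducing the full curvature at points of $M$ to the Gauss curvature of $M$) and conclude with Gauss--Bonnet. The extra details you supply -- the injectivity of $Q\vert_{M^{\C}}$, the Gauss--Codazzi identification of $\sigma_{II}$ with $\mathcal{K}$, and the fact that holomorphic sectional curvature determines the full K\"ahler curvature tensor -- are exactly the steps the paper leaves implicit.
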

\begin{proof}
The image $Q(M^{\C})\subset Y$ lands in the smooth part due to the shift by $\frac{1+i}{4}\notin \frac{1}{2}\Gamma$. Clearly $Q(M^{\C})$ is a torus, and $\pi$ is biholomorphic when restricted, $\pi\colon X\setminus E \xrightarrow{\cong} Y\setminus Sing(Y)$, hence $M\subset X$ is a torus.

Furthermore, $M$ is fixed by the order $4$ isometry $f$. So by Theorem \ref{Thm:IsomThm}, the Riemann tensor of $(X,\tilde{g})$ at $p\in M$ is uniquely determined by the Gauss curvature of $M$ at $p$. But any torus has points of vanishing Gauss curvature due to the Gauss-Bonnet theorem, hence there are points where the curvature of $(X,\tilde{g})$ vanish. 
\end{proof}

The next proposition indicates that the torus in Theorem \ref{Thm:StableGeod} could be flat. The proposition contains a non-trivial assumption, however. Assume $M\subset X$ is a torus which is fixed by an order 4 isometry $f$. Let $V$ be a unit vector field on $M$. By an extension of $V$ to a neighbourhood $M\subset U\subset X$ we mean a unit vector field $V\colon U\to TX$ such that

\begin{itemize}
\item $V$ restricts to a tangent vector field on $M$, $V_{\vert M} \colon M\to TM$;
\item $\nabla_W V$  restricts to a tangent vector field on $M$, $(\nabla_W V)_{\vert M} \colon M\to TM$, for all vector fields $W\colon U\to TX$.
\end{itemize} 
A concrete example would be that if $(z,w)$ are local coordinates with $M$ locally being given by $\{w=0\}$, and $V=h \frac{\partial}{\partial z}$ with $h\coloneqq \frac{1}{\vert\partial_z\vert_{\tilde{g}}}$. Then \eqref{eq:Jaction1} along with $\nabla_{\partial_{\oz}} \partial_z =0=\nabla_{\partial_{\ow}} \partial_z$ say $\nabla_{JV} V \parallel V$, and similarly for $\nabla_{KV} V$. 
\begin{Thm}
\label{Thm:FlatTori}
Let $M \subset (X,\tilde{g})$ be a torus which is fixed by an order 4 holomorphic isometry. Let $V$ be a unit vector tangent field of $M$, extended to a neighbourhood $U$ of $M$. If the function $\sigma_{II}(V)\colon U\to \R$ has its minima on $M$, then $M$ is flat. 
\end{Thm}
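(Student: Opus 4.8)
The plan is to combine the curvature identity of Proposition \ref{Prop:RiemannCrit} with the maximum principle and the Gauss-Bonnet theorem. As the fixed point set of a holomorphic isometry, $M$ is a totally geodesic complex curve, so just as in the corollary to Theorem \ref{Thm:GenIsomThm} the Gauss equation identifies the restriction $\sigma_{II}(V)\vert_M$ with the Gauss curvature $\mathcal{K}$ of $M$. Consequently the hypothesis that $\sigma_{II}(V)\colon U\to\R$ attains its minimum on $M$ says precisely that $\min_U \sigma_{II}(V)=\min_M \mathcal{K}$, realized at some point $p\in M$.

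First I would observe that the defining property of an \emph{extension} forces the correction term in Proposition \ref{Prop:RiemannCrit} to vanish. By assumption $\nabla_W V$ restricts to a tangent field of $M$ for every $W$; in particular $\nabla_{JV}V$ is tangent to $M$ along $M$. Since $JV$ and $KV$ are orthogonal to $T_pM$, the components $\ip{JV}{\nabla_{JV}V}$ and $\ip{KV}{\nabla_{JV}V}$ both vanish, and inspection of the proof of Proposition \ref{Prop:RiemannCrit} shows that it is exactly these normal components of $\nabla_{JV}V$ that enter the quantity written $\vert\nabla_{JV}V\vert^2$ there (for the concrete coordinate extension $V=h\,\partial_z$ one even has $\nabla_{JV}V\parallel V$, hence $\nabla_{JV}V=0$, by \eqref{eq:Jaction1} and the Kähler identities $\nabla_{\partial_{\oz}}\partial_z=\nabla_{\partial_{\ow}}\partial_z=0$). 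Hence at any critical point of $\sigma_{II}(V)$ lying in $M$ the proposition collapses to
\[\Delta\,\sigma_{II}(V)=-6\,\sigma_{II}(V)^2.\]

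Next I would run the maximum principle at $p$. Because $U$ is open and $p\in M\subset U$ is a global minimum of $\sigma_{II}(V)$ on $U$, the point $p$ is an interior critical point, so $\nabla\sigma_{II}(V)(p)=0$ and the Laplacian, being the trace (over the orthonormal frame $V,IV,JV,KV$) of a positive semidefinite Hessian, satisfies $\Delta\sigma_{II}(V)(p)\geq 0$. Feeding this into the reduced identity gives $0\leq -6\,\sigma_{II}(V)(p)^2$, which forces $\sigma_{II}(V)(p)=0$; that is, $\min_M \mathcal{K}=0$. Finally, since $M$ is a torus, Gauss-Bonnet yields $\int_M \mathcal{K}\,dA=2\pi\chi(M)=0$, while $\mathcal{K}\geq \min_M\mathcal{K}=0$ pointwise; a non-negative function with vanishing integral is identically zero, so $\mathcal{K}\equiv 0$ and $M$ is flat.

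The step I expect to be the main subtlety is the passage forcing the minimum of $\mathcal{K}$ to be \emph{exactly} zero. Gauss-Bonnet by itself only guarantees that $\mathcal{K}$ changes sign, so a priori $\min_M\mathcal{K}$ could be strictly negative; it is the curvature identity $\Delta\sigma_{II}(V)=-6\sigma_{II}(V)^2$ — available only because the extension kills the $\vert\nabla_{JV}V\vert^2$ term — that upgrades the sign-change statement to $\min_M\mathcal{K}=0$. One should also verify at the outset that the order-$4$ symmetry hypothesis is precisely what is needed to place $(X,\tilde g)$ in the setting of Theorem \ref{Thm:IsomThm}, and hence to apply Proposition \ref{Prop:RiemannCrit}.
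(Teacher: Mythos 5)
Your proof is correct and follows essentially the same route as the paper's: apply Proposition \ref{Prop:RiemannCrit}, kill the $\vert\nabla_{JV}V\vert^2$ term (you use the tangency built into the definition of an extension, while the paper combines that same tangency with the order-4 isometry to get $\nabla_{JV}V=0$ outright — an equivalent observation), then conclude $\sigma_{II}=0$ at the interior minimum via the sign of the Laplacian, and finish with Gauss--Bonnet on the torus. No gaps; the reasoning matches the paper's proof step for step.
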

\begin{proof}
Proposition \ref{Prop:RiemannCrit} gives us an expression for $\Delta \sigma_{II}$ at critical points. We will argue that the last term drops out. 
Using the order 4 isometry $f$, we see that $\nabla_{JV} V$ has to be proportional to a combination of $JV$ and $KV$ when restricted to $M$. Hence  $\nabla_{JV}V=0$ on $M$. Proposition \ref{Prop:RiemannCrit} therefore says
\[\Delta \sigma_{II}=-6\sigma_{II}^2 \]
for any critical point on $M$. Hence a minimum is possible if and only if $\sigma_{II}=0$ at the minimum. But $\int_M \sigma_{II}\, d\Vol_{\tilde{g}_{\vert M}}=0$ by the Gauss-Bonnet theorem, and $\sigma_{II}=0$ everywhere on $M$ as a consequence. 
\end{proof}
\begin{Rem}
Any critical point for $(\sigma_{II})_{\vert M}$ is also a critical point for $\sigma_{II}$ due to the order 4 isometry. Indeed, $\nabla_{JV} \sigma_{II}=-\nabla_{JV} \sigma_{II}$ and similarly for $\nabla_{KV}$. So $\nabla \sigma_{II}=0$ if and only if $\nabla_{V} \sigma_{II}=\nabla_{IV} \sigma_{II}=0$. Hence there are critical points of $\sigma_{II}$ on $M$. What is not clear (hence the assumption in the above result) is that these critical points are minima.
\end{Rem}

We end by pointing out that the lattice $\Gamma=\Lambda\oplus \Lambda $ with $\Lambda=\Z\{1,i\}$ is not the only possible choice leading to a result like Theorem \ref{Thm:StableGeod}. Another possibility is to choose $\zeta\coloneqq \exp\left(\frac{2\pi i}{3}\right)$, $\Lambda\coloneqq \Z\{1,\zeta\}$ and $\Gamma=\Lambda\oplus \Lambda$. The affine map
\[f^{\C}\begin{pmatrix} z\\ w\end{pmatrix}=\begin{pmatrix}
z \\ \zeta w + \frac{1+\zeta}{2}
\end{pmatrix} \]
induces an order 3 holomorphic isometry $f\colon X\to X$. The set 
\[M^{\C}\coloneqq \left\{ \begin{pmatrix} z\\ \frac{1+2\zeta}{6} \end{pmatrix}\,\big\vert \, z\in \C\right\}\]
maps to a torus $M\subset X$ which is a connected component of the fixed point set of $f$. Theorem \ref{Thm:StableGeod} therefore applies to this $M$.

\section{Proof of Kobayashi's estimates}
\label{Section:Estimates}
Here we will go through the proof of Kobayashi's estimates. Most of the key arguments are due to \cite{Kob90}, but we simplify a couple of steps, correct a minor mistake in the $C^2-$estimates, correct the arguments for the H\"{o}lder estimates.

Our notation is as before; $(X,g)$ denotes a Kummer K3 surface with patchwork metric depending on 16 parameters $a_i$ and $\vert a\vert^2=\sum_i a_i^2.$ The constant $A$ is defined in \eqref{eq:ADef} and takes the value given in \eqref{eq:AValue}. We introduce the function $\psi\colon X\to \R$ via 
\begin{equation}
e^{\psi}=\frac{\eta\wedge\overline{\eta}}{\omega^2}.
\label{eq:psidef}
\end{equation}
This can either be interpreted as the Radon-Nikodym derivative, or simply the proportionality function which must exist between the two top forms $\eta\wedge\overline{\eta}$ and $\omega^2$. In holomorphic Darboux coordinates, i.e. where $\eta=\sqrt{2}dz^1 \wedge dz^2$, we have
\[\psi=-\ln \det(g),\]
and \eqref{eq:psidef} is a way of making this function globally well-defined. As we prove in Appendix \ref{App:Param}, \eqref{eq:NeckDetg}, we have
\begin{equation}
\norm{\psi}_{C^0(X)}\leq C\vert a\vert^2.
\label{eq:psibound}
\end{equation}
for some constant $C>0$. The argument is that $\psi=0$ outside of the neck regions, and in the necks one can see that the Euclidean and Eguchi-Hanson metrics differ by terms of order $a^2$.
 With this notation out of the way, we may write the Monge-Amp\`{e}re equation as 
\begin{equation} \tilde{\omega}^2\coloneqq (\omega+i\partial \overline{\partial} \phi)^2=A\eta \wedge\overline{\eta}=A e^{\psi} \omega^2.
\label{eq:NewMA}
\end{equation}

\subsection{$C^0$-estimates}
The $C^0$ estimate of Kobayashi is as follows.
\begin{Prop}[{\cite{Kob90}}]
Assume $\phi$ is the solution to \eqref{eq:NewMA} subject to the normalisation
\[\int_X \phi\, \omega^2=0.\]
Then there is a constant $C>0$ such that for all values of $\vert a\vert$ small enough, we have
\begin{equation}
\norm{\phi}_{C^0(X)}\leq C\vert a\vert^2
\label{eq:C0Corr}
\end{equation}
\end{Prop}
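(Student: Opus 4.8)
The plan is to run a Yau-type $C^0$ estimate by integration by parts followed by Moser iteration, keeping track throughout of the dependence on $|a|$. Write $F\coloneqq Ae^\psi$, so that \eqref{eq:NewMA} reads $\tilde\omega^2=F\omega^2$ and $\omega^2-\tilde\omega^2=-i\partial\overline\partial\phi\wedge(\omega+\tilde\omega)$. The first observation is that the right-hand side is uniformly close to $1$: since $A=1-O(|a|^2)$ by \eqref{eq:AValue} and $\norm{\psi}_{C^0(X)}\le C|a|^2$ by \eqref{eq:psibound}, one gets $\norm{1-F}_{C^0(X)}\le C|a|^2$. Note also that $\int_X(1-F)\,\omega^2=0$ by the definition \eqref{eq:ADef} of $A$, which is the solvability condition and a useful consistency check.

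The core computation is the identity obtained by pairing the equation with $|\phi|^{p-2}\phi$ for $p\ge 2$. Using that $\omega$ and $\tilde\omega$ are closed and positive, I would establish
\[\int_X|\phi|^{p-2}\phi\,(1-F)\,\omega^2=(p-1)\int_X|\phi|^{p-2}\,i\partial\phi\wedge\overline\partial\phi\wedge(\omega+\tilde\omega)\ge (p-1)\int_X|\phi|^{p-2}\,i\partial\phi\wedge\overline\partial\phi\wedge\omega\ge 0.\]
Setting $w\coloneqq \mathrm{sgn}(\phi)|\phi|^{p/2}$, the last integral is a dimensional multiple of $p^{-2}\norm{\nabla w}_{L^2}^2$, while the left-hand side is at most $\norm{1-F}_{C^0}\norm{\phi}_{L^{p-1}}^{p-1}$. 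This yields the differential inequality $\norm{\nabla w}_{L^2}^2\le Cp\,\norm{1-F}_{C^0}\norm{\phi}_{L^{p-1}}^{p-1}$. The base case $p=2$, combined with the Poincar\'e inequality (available because $\int_X\phi\,\omega^2=0$), already gives $\norm{\phi}_{L^2}\le C\norm{1-F}_{C^0}\le C|a|^2$.

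To bootstrap to $L^\infty$, I would feed the gradient bound into the Sobolev inequality in real dimension $4$, $\norm{w}_{L^4}^2\le C_S(\norm{\nabla w}_{L^2}^2+\norm{w}_{L^2}^2)$, which, after absorbing volume factors via H\"older, converts the estimate into a recursion doubling the exponent,
\[\norm{\phi}_{L^{2p}}^p\le C_S\bigl(Cp\,\norm{1-F}_{C^0}\norm{\phi}_{L^{p}}^{p-1}+\norm{\phi}_{L^{p}}^p\bigr).\]
The crucial structural point is that this recursion becomes homogeneous once one factors out $|a|^2$: writing $\norm{\phi}_{L^p}=|a|^2\Psi_p$ and using $\norm{1-F}_{C^0}\le c_0|a|^2$, every power of $|a|$ cancels, leaving an $a$-independent recursion for $\Psi_p$ with bounded initial datum $\Psi_2\le C$. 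Standard Moser iteration over $p_j=2^{j}$ then gives $\Psi_\infty\le C$, i.e. $\norm{\phi}_{C^0(X)}\le C|a|^2$, which is \eqref{eq:C0Corr}.

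The main obstacle is uniformity in $a$ of the functional-analytic constants. As $|a|\to 0$ the patchwork metric $g$ degenerates—the Eguchi--Hanson regions shrink and long necks form—so I must guarantee that the Sobolev constant $C_S$ and the Poincar\'e constant are bounded independently of $a$; without this the cancellation of powers of $|a|$ above is meaningless. This is precisely where genuine geometric input about the orbifold limit is required: the total volumes stay comparable to $\tfrac12\Vol_{Euc}(T)$ by \eqref{eq:AValue} and $(X,g)$ converges to the fixed orbifold $T^4/\mu_2$, from which the uniform inequalities must be extracted, with Assumption \ref{Assumption:ra} ensuring no single component collapses anomalously fast. Establishing these uniform constants (implicit in \cite{Kob90}) is the technical heart of the argument.
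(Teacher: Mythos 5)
Your proposal is correct and follows essentially the same route as the paper: the identity $(1-Ae^{\psi})\omega^2=-i\partial\overline{\partial}\phi\wedge(\omega+\tilde{\omega})$ paired against powers of $\phi$, the Poincar\'{e} inequality for the $L^2$ base case, and Moser iteration via a Sobolev inequality of Li type, with your homogenization in $\vert a\vert^2$ being just a cleaner packaging of the paper's inductive constants $C_n$ and convergent product. You also correctly identify the technical heart --- the $a$-independence of the Poincar\'{e} and Sobolev constants --- which the paper establishes in Appendix \ref{App:Param} via volume, diameter, and Ricci lower bounds (Cheeger--Yau and Li--Croke).
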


\begin{proof}
From the Monge-Amp\`{e}re equation \eqref{eq:NewMA}, we have that

\begin{equation}
(1-Ae^{\psi})\omega^2=\omega^2-\tilde{\omega}^2=-i\partial \overline{\partial}\phi\wedge(\tilde{\omega}+\omega),
\label{eq:intermediate}
\end{equation}
Multiply both sides of \eqref{eq:intermediate} by $\phi \vert\phi\vert^{2(p-1)}$, for $p\geq 1$, and integrate. Integrating the right hand side by parts leads to
\begin{align}
 \int_X(1-Ae^{\psi})\phi \vert\phi\vert^{2(p-1)}\omega^2&=-\int_X i \phi \vert\phi\vert^{2(p-1)}\partial \overline{\partial}\phi\wedge(\tilde{\omega}+\omega) \notag \\
&=(2p-1)\int_X i\vert \phi\vert^{2(p-1)}\partial \phi\wedge \overline{\partial}\phi\wedge(\tilde{\omega}+\omega) \notag \\
&=\frac{(2p-1)}{p^2}\int_X i\partial \vert \phi\vert^{p}\wedge \overline{\partial}\vert \phi\vert^{p}\wedge(\tilde{\omega}+\omega). 
\label{eq:IntegratedDifference2}
\end{align}
Here we have used that $\partial \omega=0=\partial \tilde{\omega}$ since they are K\"{a}hler forms. After we argue that the 2-form $i\partial \vert \phi\vert^{p}\wedge \overline{\partial}\vert \phi\vert^{p}$ is semi-positive\footnote{Recall that a real $(1,1)$ form $\eta$ is called semi-positive (or simply positive, according to some authors) if $\eta=i\eta_{\mu\bar{\nu}}dz^\mu \wedge d\bar{z}^\nu$ in local coordinates, where the matrix $\eta_{\mu\bar{\nu}}$ is positive semi-definite.}, one gets a lower bound on \eqref{eq:IntegratedDifference2} by dropping the first integral (the one with $\tilde{\omega}$);
\begin{equation}
\int_X(1-Ae^{\psi})\phi \vert\phi\vert^{2(p-1)}\omega^2\geq \frac{4(p-1)}{p^2}\int_X i\partial \vert \phi\vert^{p}\wedge \overline{\partial}\vert \phi\vert^{p}\wedge \omega.
\label{eq:stepping}
\end{equation}
To see that the $(1,1)$-form $i\partial \vert \phi\vert^{p}\wedge \overline{\partial}\vert \phi\vert^{p}$ is a semi-positive form, one can observe that the matrix 
\begin{equation}
B_{\mu\overline{\nu}}\coloneqq \left(\frac{\partial \vert \phi\vert^{p}}{\partial z^{\mu}}\right)\left(\frac{\partial \vert \phi\vert^{p}}{\partial \overline{z}^{\nu}}\right)
\notag
\end{equation}
has eigenvalue 0 with multiplicity $n-1$, corresponding to the $n-1$ eigenvectors orthogonal to  $\left(\frac{\partial \vert \phi\vert^{p}}{\partial \overline{z}^{\nu}}\right)$, and eigenvalue $\lambda=\text{Tr}(B)=\left\vert \frac{\partial \vert \phi\vert^{p}}{\partial z}\right\vert^2$ 
with multiplicity 1. These eigenvalues are non-negative everywhere, hence we have a semi-positive $(1,1)$-form.

To further estimate \eqref{eq:stepping}, consider the two sides separately. The left hand side can be roughly estimated by
\begin{equation}
\int_X(1-Ae^{\psi})\phi \vert\phi\vert^{2(p-1)}\omega^2\leq \int_X\left \vert (1-Ae^{\psi})\right\vert \vert\phi\vert^{2p-1}\omega^2.
\label{eq:LHSSTepping}
\end{equation}
For all small enough values of $\vert a\vert$, \eqref{eq:psibound} and \eqref{eq:AValue} tells us that one can find a constant $C>0$, independent of $a$, such that $\left \vert (1-Ae^{\psi})\right\vert\leq C\vert a\vert^2$. We therefore get an estimate on \eqref{eq:LHSSTepping}
\begin{equation}
\int_X(1-Ae^{\psi})\phi \vert\phi\vert^{2(p-1)}\omega^2 \leq C\vert a\vert^2 \int_X \vert \phi\vert^{p-1} \omega^2.
\label{eq:LHS}
\end{equation}

The right-hand side of \eqref{eq:stepping} can be massaged a bit. Write $\zeta\coloneqq \vert \phi\vert^{p}$ temporarily. Then $\partial \zeta$ is an exact $(1,0)$-form, and by some linear algebra, we have the equality.
\begin{equation}
i\partial\zeta\wedge \overline{\partial}\zeta \wedge\omega=\vert d\zeta\vert_{g}^2 \omega^2
\label{eq:Kahler}
\end{equation}

We will write $\vert d\zeta\vert^2$ and not mention that it depends on the metric $g$. Inserting \eqref{eq:Kahler} into the right hand side of \eqref{eq:stepping} and \eqref{eq:LHS} into the left hand side yields

\begin{equation}
\int_X \vert d\vert \phi\vert^{p}\vert^2 \omega^2\leq C\vert a\vert ^2 p\int_X  \vert \phi\vert^{2p-1}\omega^2.
\label{eq:dphi_and_g}
\end{equation}

The constant $C$ on the right hand side of \eqref{eq:dphi_and_g} depends on neither $p$ nor $a$.

\textbf{Notation:} For the rest of the proof, we will use a shorthand for $L^p$-norms, $1\leq p<\infty$, namely
\begin{equation}
\norm{f}_p^p\coloneqq \int_X f^p \omega^2. 
\notag
\end{equation}

Setting $p=1$, we can apply the Poincar\'{e} inequality \eqref{eq:Poincare} to the function $\phi$   on the left hand side of \eqref{eq:dphi_and_g}, resulting in\footnote{There are three things to remark here. Firstly, $\int_X \phi \,\omega^2=0$ by the normalization requirement. Secondly, $\vert d\vert \phi\vert \vert=\vert d\phi\vert$, so one can indeed apply the Poincar\'{e} inequality. We refer the reader to the Proposition \ref{Prop:PoincareIndep} in Appendix \ref{App:Param} for a proof that the constant in the Poincar\'{e} inequality can estimated independently of $\vert a\vert$.}
\begin{equation}
C \norm{\phi}_2^2\leq \int_X \left\vert d\vert \phi\vert \,\right\vert ^2\omega^2.
\label{eq:LHS2}
\end{equation}
To the right hand side of \eqref{eq:dphi_and_g}, we apply the H\"{o}lder inequality 
\begin{equation}
\int_X  \vert \phi\vert\omega^2\leq \left(\Vol_g(X)\right)^{\frac{1}{2}}\left(\int_X \vert \phi\vert^{2} \omega^2\right)^{\frac{1}{2}}\leq C\norm{\phi}_2 ,
\label{eq:RHS2}
\end{equation}
where the proof of Proposition \ref{Prop:PoincareIndep} has been implicitly invoked to say that $\Vol_g(X)$ can be estimated independent of $a$ for all small enough values of $\vert a\vert$.

 Combining the separate approximations (\eqref{eq:LHS2} and \eqref{eq:RHS2}) of the left and right hand sides of \eqref{eq:dphi_and_g} gives our $L^2$-estimate of $\phi$,
 \begin{equation}
 \norm{\phi}_2\leq C_0 \vert a\vert ^2.
 \label{eq:L2_est}
 \end{equation}
The $L^2$-bound \eqref{eq:L2_est} is going to be the start of a Moser iteration process, which will end up proving \eqref{eq:C0Corr}. 

Return to \eqref{eq:dphi_and_g} with arbitrary $p\geq 2$. Apply Li's Sobolev inequality \eqref{eq:LiSob} with $f=\vert \phi\vert ^{p}$ to the left hand side. On the right hand side, use the H\"{o}lder inequality with $q=\frac{2p}{2p-1}$ to derive the bound
\begin{align}
\norm{\phi}_{4p}^{2p}&\leq C \norm{d\vert \phi\vert ^{2p}}_2^2+C'\norm{\phi^{2p}}_2^2 \notag \\
&\leq Cp \vert a\vert^2 \norm{\phi}_{2p}^{2p-1}+C'\norm{\phi}_{2p}^{2p},\notag
\end{align}
hence
\begin{equation}
\norm{\phi}_{4p}^{2p}\leq C\left( 2p\vert a\vert^2 +\norm{\phi}_{2p}\right)\norm{\phi}_{2p}^{2p-1},
\label{eq:Estim1}
\end{equation}
where  $C>0$ is a constant (which depends on neither $p$ nor $a$).

Let now $p_n\coloneqq 2^{n+1}$ for $n\geq 0$. Define $C_n$ inductively by 
\begin{equation}
C_{n+1}^{p_n} \coloneqq C C_n^{p_n} \left(1+\frac{p_n}{C_n}\right)
\label{eq:C_nDef}
\end{equation}

where $C$ is the constant in \eqref{eq:Estim1} and $C_0$ is the constant in \eqref{eq:L2_est}.
For $n=0$, we have shown in \eqref{eq:L2_est} that 
\begin{equation}
\norm{\phi}_{p_n}\leq C_n \vert a\vert^2.
\label{eq:InductionAssumption}
\end{equation} 
That this inequality holds for all $n$ follows inductively from \eqref{eq:Estim1}.


We next have to show that iterating \eqref{eq:InductionAssumption} gives a uniform bound for all $n$. Let $c\coloneqq \inf_{n\geq 0} C_n$. Then $c>0$ since if not, then $\phi=0$ from \eqref{eq:InductionAssumption}, which we know is not the case. With this, \eqref{eq:C_nDef} can be estimated as
\[C_{n+1}\leq C_n \cdot \left(C+\frac{C}{c}p_n\right)^{1/p_n}\eqqcolon C_n\cdot \alpha_n,\]
hence
\[C_{n+1}\leq \left(\prod_{k=0}^n \alpha_k\right) C_0.\]
To see that the product $\prod\limits_{k=0}^n \alpha_k$ is uniformly bounded, write
\[\log\prod_{k=0}^n \alpha_k=\sum_{k=0}^n \frac{1}{2^{k+1}}\log\left(C+\frac{C}{c}2^{k+1}\right).\]
The right hand side is convergent as $n\to \infty$ by the ratio test. Hence
\[\norm{\phi}_{\infty}\leq \left(\prod_{k=0}^\infty \alpha_k\right)C_0 \vert a\vert^2.\]

%

\end{proof}

\subsection{$C^2$-estimates}
The $C^2$-estimates follow from the Monge-Amp\`{e}re equation as soon as we have estimates on $\Delta \phi$. To see this, we recall the function
\[\exp(\psi)=\frac{\eta\wedge \overline{\eta}}{\omega^2}.\]
In holomorphic Darboux coordinates, the Monge-Amp\`{e}re equation \eqref{eq:NewMA} reads
\[\det\left( g + \nabla^2\phi\right)=A,\]
where $A$ is defined by  \eqref{eq:ADef}. The determinant of the complex $2\time 2$-matrix we write as
\[\det\left( g + \nabla^2\phi\right)=\det(g)\det\left( 1 + g^{-1} \nabla^2\phi\right)=\exp(-\psi)\left(1+\tr(g^{-1} \nabla^2 \phi) + \det(g^{-1} \nabla^2 \phi)\right).\]
Now $\tr(g^{-1} \nabla^2 \phi)=\Delta \phi$ and $\tr(g^{-1} \nabla^2\phi g^{-1} \nabla^2 \phi)= \vert \nabla^2 \phi\vert^2_g$ per definition, and 
\begin{align*}
\det(g^{-1} \nabla^2 \phi)&=\frac{1}{2} \left(\tr(g^{-1} \nabla^2 \phi)^2 -\tr(g^{-1} \nabla^2\phi g^{-1} \nabla^2\phi)\right)\\
&=\frac{1}{2}\left((\Delta \phi)^2 -\vert \nabla^2\phi\vert_g^2\right).
\end{align*}
With this, we may write the Monge-Amp\`{e}re equation as 
\begin{equation}
2(A \exp(\psi) -1)=2\Delta \phi + (\Delta \phi)^2 -\vert \nabla^2\phi\vert_g^2.
\label{eq:NormMA}
\end{equation}
We note how \eqref{eq:NormMA} is independent of the choice of coordinates.

This tells us  that a bound on $\Delta \phi$ directly translates into a bound on $\nabla^2 \phi$.  So we set about bounding $\Delta \phi$ as in \cite{Yau78} and \cite{Kob90}. Let $r_a=\frac{\max_i a_i}{\min_i a_i}$. Then we will prove there is a constant $C>0$ independent of $\vert a\vert$ such that 
\begin{equation}
-C \vert a\vert^2 \leq \Delta \phi\leq C r_a \vert a\vert
\label{eq:C2Est}
\end{equation}

holds for any point in $X$.
 The trick to be employed in proving \eqref{eq:C2Est} is essentially a maximum principle. We will consider the function $F(x)\coloneqq \exp(-C\phi(x))\Tr_{g}(\tilde{g})(x)= \exp(-C\phi(x))(2+\Delta\phi)$ for some positive constant $C$ (to be determined later). The function $F$ has a maximum due to the compactness of $X$, and at a maximum we have $\tilde{\Delta}F\leq 0$, where we have introduced $\tilde{\Delta}F=\tr(\tilde{g}^{-1} \nabla^2 F)=\tilde{g}^{\on \mu} \partial_\mu \partial_{\on} F$. Below there will be an extensive computation deriving a lower bound on $\tilde{\Delta}F$. This lower bound at a single point will essentially establish (a stronger version of) \eqref{eq:C2Est} at a single point, which then gets translated into a proof of \eqref{eq:C2Est} at an arbitrary point. 

The proof is long, and is subdivided into several steps. In Appendix \ref{App:Yau}, we go through some preliminary calculations, which culminate in a proof of Proposition \ref{Prop:Yau222}, which is \cite[Equation 2.22]{Yau78}. 

\begin{Prop}{\cite[Equation 2.22]{Yau78}}
\label{Prop:Yau222}
Choose holomorphic normal coordinates at $p$ and diagonalize\footnote{See \cite[Prop. 3.1.1]{KahlerRicci} for instance for a proof that these coordinates exist.} $\nabla^2 \phi(p)$, meaning $g_{\mu\bar{\nu}}(p)=\delta_{\mu\bar{\nu}}$, $g_{\mu\bar{\nu},\alpha}(p)=0$, and $\phi_{\mu\bar{\nu}}(p)=\delta_{\mu\bar{\nu}}\phi_{\mu\bar{\mu}}(p)$. Then the following inequality holds for any positive real number $C$ at the single point $p$.
\begin{align}
&\exp(C\phi)\tilde{\Delta}\Big(\exp(-C\phi)\Tr_g(\tilde{g})\Big)\notag \\&\geq \Delta \psi-4R_{1\bar{1}2\bar{2}}  -2C\Tr_g(\tilde{g})+\left(C+R_{1\bar{1}2\bar{2}}\right)    \frac{\Tr_g(\tilde{g})^2}{\det(\tilde{g})}.
\label{eq:Yau222}
\end{align}
\end{Prop}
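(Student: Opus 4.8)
The plan is to prove the inequality pointwise at $p$, in the stated coordinates, by separating the contribution of the conformal weight $\exp(-C\phi)$ from that of $\tilde{\Delta}\Tr_g(\tilde{g})$, and then absorbing the leftover indefinite terms into a manifestly nonnegative square. First I would expand the weight with the product rule for $\tilde{\Delta}$, obtaining
\[
\exp(C\phi)\tilde{\Delta}\big(\exp(-C\phi)\Tr_g(\tilde{g})\big)=\tilde{\Delta}\Tr_g(\tilde{g})-C(\tilde{\Delta}\phi)\Tr_g(\tilde{g})+C^2\vert\nabla\phi\vert^2_{\tilde{g}}\Tr_g(\tilde{g})-2C\,\text{Re}\big(\tilde{g}^{\overline{\nu}\mu}\phi_\mu\,\partial_{\overline{\nu}}\Tr_g(\tilde{g})\big).
\]
Since $\tilde{\Delta}\phi=\tilde{g}^{\overline{\nu}\mu}(\tilde{g}_{\mu\overline{\nu}}-g_{\mu\overline{\nu}})=2-\Tr_{\tilde{g}}(g)$, and because we are in complex dimension $2$ one has $\Tr_{\tilde{g}}(g)=\Tr_g(\tilde{g})/\det(\tilde{g})$ at $p$, the second term already produces exactly $-2C\Tr_g(\tilde{g})+C\,\Tr_g(\tilde{g})^2/\det(\tilde{g})$, that is, the $-2C\Tr_g(\tilde{g})$ summand and the $C$-part of the last summand in \eqref{eq:Yau222}.

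Next I would compute $\tilde{\Delta}\Tr_g(\tilde{g})$ directly. Differentiating $\Tr_g(\tilde{g})=g^{\overline{\nu}\mu}\tilde{g}_{\mu\overline{\nu}}$ twice and using the normal-coordinate relations $g_{\mu\overline{\nu},\alpha}(p)=0$ and $g_{\mu\overline{\nu},\alpha\overline{\beta}}(p)=-R_{\mu\overline{\nu}\alpha\overline{\beta}}(p)$ gives curvature terms together with the fourth-order term $\tilde{g}^{\overline{\alpha}\alpha}\phi_{\mu\overline{\mu}\alpha\overline{\alpha}}$. This fourth-order term is eliminated by differentiating the Monge–Amp\`ere equation in the form $\log\det(\tilde{g})=\log\det(g)+\psi+\log A$ twice and contracting (the content of the preliminary identities in Appendix \ref{App:Yau}), using the Kähler symmetry $\phi_{\mu\overline{\mu}\alpha\overline{\alpha}}=\phi_{\alpha\overline{\alpha}\mu\overline{\mu}}$ to match the $\tilde{g}$-weight on $\alpha$. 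The outcome replaces $\tilde{g}^{\overline{\alpha}\alpha}\phi_{\mu\overline{\mu}\alpha\overline{\alpha}}$ by $\Delta\psi$, further curvature terms, and a manifestly nonnegative third-derivative quantity $T=\sum\tilde{g}^{\overline{p}p}\tilde{g}^{\overline{\mu}\mu}\vert\phi_{\mu\overline{p}\alpha}\vert^2\geq 0$. A key simplification, writing $\lambda_i\coloneqq 1+\phi_{i\overline{i}}(p)$, is that the coefficient of each diagonal curvature component $R_{\mu\overline{\mu}\mu\overline{\mu}}$ collapses to zero, so only the bisectional term $R_{1\bar{1}2\bar{2}}$ survives; in dimension $2$ its coefficient simplifies to $\lambda_1/\lambda_2+\lambda_2/\lambda_1-2=\Tr_g(\tilde{g})^2/\det(\tilde{g})-4$. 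Hence
\[
\tilde{\Delta}\Tr_g(\tilde{g})\geq \Delta\psi-4R_{1\bar{1}2\bar{2}}+R_{1\bar{1}2\bar{2}}\frac{\Tr_g(\tilde{g})^2}{\det(\tilde{g})}+T.
\]

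Assembling the two computations reproduces the right-hand side of \eqref{eq:Yau222} exactly, up to the leftover $T+C^2\vert\nabla\phi\vert^2_{\tilde{g}}\Tr_g(\tilde{g})-2C\,\text{Re}\big(\tilde{g}^{\overline{\nu}\mu}\phi_\mu\,\partial_{\overline{\nu}}\Tr_g(\tilde{g})\big)$. The main obstacle is to show this leftover is nonnegative, since the cross term is indefinite and contains the uncontrolled first derivative $\phi_\mu$. I would handle it by completing the square: at $p$ one has $\partial_{\overline{\nu}}\Tr_g(\tilde{g})=\sum_\alpha\phi_{\alpha\overline{\alpha}\overline{\nu}}$, so a weighted arithmetic–geometric mean inequality with weights $\lambda_\alpha$ bounds the cross term by $C^2\vert\nabla\phi\vert^2_{\tilde{g}}\Tr_g(\tilde{g})$ plus $\sum_{\nu,\alpha}\vert\phi_{\alpha\overline{\alpha}\nu}\vert^2/(\lambda_\alpha\lambda_\nu)$, and this last sum is exactly one of the nonnegative pieces contained in $T$. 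This gives $T+C^2\vert\nabla\phi\vert^2_{\tilde{g}}\Tr_g(\tilde{g})-2C\,\text{Re}(\cdots)\geq 0$ and finishes the proof. The delicate point throughout is the bookkeeping of the third-derivative terms, so that the portion of $T$ one needs remains available after the AM–GM step; this is precisely where following the appendix computations carefully pays off.
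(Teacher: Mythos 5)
Your proposal is correct and follows essentially the same route as the paper's own proof: the product-rule expansion of $\exp(C\phi)\tilde{\Delta}\bigl(\exp(-C\phi)\Tr_g(\tilde{g})\bigr)$, the identity $\tilde{\Delta}\phi=2-\Tr_g(\tilde{g})/\det(\tilde{g})$, and the twice-differentiated Monge--Amp\`{e}re equation trading the fourth-order term for $\Delta\psi$ plus the surviving bisectional term $R_{1\bar{1}2\bar{2}}\bigl(\Tr_g(\tilde{g})^2/\det(\tilde{g})-4\bigr)$ plus the nonnegative third-derivative sum are exactly the paper's intermediate lemmas. Your weighted AM--GM handling of the cross term is the same estimate as the paper's completion of the square followed by absorbing $\Tr_g(\tilde{g})^{-1}\vert\partial\Tr_g(\tilde{g})\vert^2_{\tilde{g}}$ into the diagonal part of the third-derivative sum, so the two arguments coincide in content.
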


\begin{Lem}
Let $K\colon X\to \R_{\geq 0}$, $K\coloneqq \vert Riem\vert^2_g$ denote the Kretchsmann scalar for the patchwork metric $g$. 
Introduce 
\begin{equation}
R_a\coloneqq \norm{\sqrt{K}}_{L^\infty(X,g)}.
\end{equation}
Then there are constants $C_1,C_2>0$ (independent of $a$) such that for all small enough values of $\vert a\vert$, we have the bounds
\begin{equation}
C_1 \frac{r_a}{\vert a\vert}\leq R_a\leq C_2 \frac{r_a}{\vert a \vert}.
\label{eq:Rabound}
\end{equation}
 In particular, in the special coordinates of Proposition \ref{Prop:Yau222}, we have
\begin{equation}
R_{1\bar{1}2\bar{2}}(p)\leq \sqrt{K}(p)\leq C\frac{r_a}{\vert a\vert}
\end{equation}
for an arbitrary point $p\in X$, and
\begin{equation}
R_{1\bar{1}2\bar{2}}(p)\leq \sqrt{K}(p)\leq C\vert a\vert^2
\label{eq:Sectionalatpointinneck}
\end{equation}
if $p$ is in a neck region.
\end{Lem}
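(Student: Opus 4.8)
The plan is to exploit the three-region structure of the patchwork metric --- the flat region, the Eguchi-Hanson patches, and the necks --- and bound $\sqrt{K}$ on each separately. On the flat region $K\equiv 0$, so nothing is needed there. On the Eguchi-Hanson patch $Bl_0(B_1(0)/\mu_2)$ attached to the $i$-th singular point the metric is \emph{exactly} Eguchi-Hanson with parameter $a_i$, so \eqref{eq:EHCurvature} gives $\sqrt{K}=\sqrt{24}\,a_i^2(a_i^2+u^2)^{-3/2}$ with $u=\vert z\vert_{\C^2}^2$. This is strictly decreasing in $u$, hence attains its maximum $\sqrt{24}/a_i$ at the centre $u=0$ and is bounded by this value throughout the patch.

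The neck is where the real work lies. On $N_i$ one has $u\geq 1$, and the representation \eqref{eq:NeckPotential} writes the potential as $\Phi_{a_i}=u+a_i^2\xi_{a_i}$ with $\xi_{a_i}$ regular as $a_i\to 0$. The point is that because $u$ is bounded away from $0$ on the neck, the difference $f_{a_i}-f_{Euc}$ and all of its derivatives are genuinely $\mathcal{O}(a_i^2)$, so $\xi_{a_i}$ and enough of its derivatives are bounded uniformly in $a_i$. Consequently the metric is $g=\mathbb{1}+a_i^2\nabla^2\xi_{a_i}$, a perturbation of the flat metric of size $\mathcal{O}(a_i^2)$. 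Feeding this into the K\"{a}hler curvature formula used in the proof of Theorem \ref{Thm:NoGo1}, the leading term is $\mathcal{O}(a_i^2)$ and the quadratic term is $\mathcal{O}(a_i^4)$, so $\sqrt{K}\leq C a_i^2\leq C\vert a\vert^2$ on the neck. This is exactly \eqref{eq:Sectionalatpointinneck}, and I would invoke Appendix \ref{App:Param} (where \eqref{eq:NeckDetg} already controls $\det(g)$ in the necks) for the uniform-in-$a$ derivative bounds on $\xi_{a_i}$.

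With these three estimates in hand, the global supremum is, for small $\vert a\vert$, dominated by the Eguchi-Hanson centres: the neck contribution is $\mathcal{O}(\vert a\vert^2)\to 0$ while the centres contribute $\sqrt{24}/a_i\to\infty$, so $R_a=\max_i \sqrt{24}/a_i=\sqrt{24}/\min_i a_i$. It then remains to convert $1/\min_i a_i$ into $r_a/\vert a\vert$, which is elementary: since there are $16$ components, $\max_i a_i\leq\vert a\vert\leq 4\max_i a_i$, and $\min_i a_i=\max_i a_i/r_a$, whence $r_a/\vert a\vert\leq 1/\min_i a_i\leq 4r_a/\vert a\vert$. This yields \eqref{eq:Rabound} with $C_1=\sqrt{24}$ and $C_2=4\sqrt{24}$.

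Finally, for the pointwise claims I would note that in the orthonormal frame of Proposition \ref{Prop:Yau222} the norm is $K=\vert Riem\vert_g^2=\sum_{\mu\overline{\nu}\alpha\overline{\beta}}\vert R_{\mu\overline{\nu}\alpha\overline{\beta}}\vert^2$, which dominates the single component, and the K\"{a}hler symmetry $\overline{R_{1\overline{1}2\overline{2}}}=R_{1\overline{1}2\overline{2}}$ makes it real; hence $R_{1\overline{1}2\overline{2}}(p)\leq\vert R_{1\overline{1}2\overline{2}}(p)\vert\leq\sqrt{K}(p)$. Combining this with $\sqrt{K}(p)\leq R_a\leq C r_a/\vert a\vert$ at an arbitrary point, and with \eqref{eq:Sectionalatpointinneck} when $p$ lies in a neck, gives both displayed pointwise bounds. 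The only genuine obstacle is the uniform derivative control of $\xi_{a_i}$ on the neck needed for the $\mathcal{O}(\vert a\vert^2)$ curvature bound there; everything else is either an explicit formula or elementary algebra.
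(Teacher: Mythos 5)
Your proposal is correct and follows essentially the same route as the paper: the same three-region decomposition (flat, Eguchi--Hanson, neck), the explicit maximum $\sqrt{24}/a_i$ of \eqref{eq:EHCurvature} at $u=0$, the $\mathcal{O}(a_i^2)$ neck bound via \eqref{eq:NeckPotential} (whose uniform derivative control is indeed supplied in Appendix \ref{App:Param}, cf.\ \eqref{eq:Neckmetric}), and the conversion $1/\min_i a_i = r_a/\max_i a_i$ combined with the comparison between $\max_i a_i$ and $\vert a\vert$. Your treatment of the pointwise inequality $R_{1\bar{1}2\bar{2}}(p)\leq\sqrt{K}(p)$ is slightly more explicit than the paper's (which leaves it implicit), but this is a matter of detail, not of method.
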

\begin{proof}
In the Euclidean region, we have $K=0$. In a neck region $N_i$, \eqref{eq:NeckPotential} implies there is a constant $C>0$ such that $K\leq C  a_i^4$ for small enough values of $a_i$. In an Eguchi-Hanson patch $U_i$, \eqref{eq:EHCurvature} says $K(z)=\frac{24a_i^4}{(a_i^2+u^2)^3}$ with $u=\vert z\vert^2_{\C^2}$. All in all, we find 
\[R_a^2=\max_{1\leq i\leq 16} \frac{24}{a_i^2},\]
from which it follows that\footnote{The argument written out is this. We have $\max_i \frac{1}{a_i}=\frac{r_a}{\max_i a_i}$. Then one uses the comparison between the max-norm and Euclidean norm; $\max_i a_i\leq \vert a\vert\leq 16\max_i a_i$.} 
\[2\sqrt{6} \frac{r_a}{\vert a\vert} \leq R_a \leq 32\sqrt{6} \frac{r_a}{\vert a\vert}.\]
\end{proof}

\begin{Lem}
Let $x_m\in X$ denote a maximum of the function $F(x)=\exp(-2R_a\phi(x))\Tr_g(\tilde{g})(x)$. Then there is a constant $C>0$ such that for all small enough values of $\vert a\vert$, we have
\begin{equation}
\Delta \phi(x_m) \leq C\vert a\vert^2.
\label{eq:FirstStep}
\end{equation}
\end{Lem}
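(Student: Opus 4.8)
The plan is to run Yau's maximum-principle argument, feeding the maximum $x_m$ of $F=\exp(-2R_a\phi)\Tr_g(\tilde g)$ into Proposition \ref{Prop:Yau222} with the weight constant $C=2R_a$. Since $F$ is smooth and $X$ is compact, at $x_m$ the complex Hessian of $F$ is negative semi-definite, so $\tilde\Delta F(x_m)\le 0$; as $\exp(2R_a\phi)>0$, the right-hand side of \eqref{eq:Yau222} is forced to be $\le 0$ at $x_m$. First I would record the two substitutions that make this usable: in the normal coordinates of Proposition \ref{Prop:Yau222} one has $\Tr_g(\tilde g)(x_m)=2+\Delta\phi(x_m)$, and the Monge-Amp\`ere equation \eqref{eq:NewMA} gives $\det(\tilde g)(x_m)=A\exp(\psi(x_m))$ because $\det g=1$ at the centre of the chart.

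Writing $\tau\coloneqq\Delta\phi(x_m)$ and $R\coloneqq R_{1\bar{1}2\bar{2}}(x_m)$, I would then expand the resulting inequality about $\Tr_g(\tilde g)=2$. The point of the weight $2R_a$ is that the terms of order $R_a$ cancel at $\tau=0$, leaving
\[
4(R_a+R)\tau+(2R_a+R)\tau^2+(2R_a+R)\left(\frac{1}{A\exp(\psi)}-1\right)(2+\tau)^2+\Delta\psi\le 0,
\]
all quantities evaluated at $x_m$. Three structural facts drive the rest: by \eqref{eq:Rabound} one has $\vert R\vert\le\sqrt{K}(x_m)\le R_a$, so $2R_a+R\ge R_a>0$ and the linear coefficient $R_a+R$ is non-negative; moreover $R_a\ge C_1 r_a/\vert a\vert\ge C_1/\vert a\vert$.

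The main obstacle is precisely that the weight $2R_a$ is borderline: the linear coefficient $R_a+R$ is only non-negative, not bounded below by a positive multiple of $R_a$, so one cannot simply divide through. I would resolve this by a case analysis on the location of $x_m$, exploiting that curvature concentration and the non-Ricci-flatness of $g$ occur in disjoint regions. Where $\sqrt{K}$ is large, namely in the Eguchi-Hanson patches, the metric $g$ is Ricci-flat, hence $\psi\equiv 0$ and $\Delta\psi=0$ there (and likewise in the flat region); the term $\Delta\psi$ is supported only in the necks, where \eqref{eq:Sectionalatpointinneck} forces $\sqrt{K}(x_m)\le C\vert a\vert^2$. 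Concretely: (a) if $x_m$ lies in a Ricci-flat region then $\psi(x_m)=0$, $\Delta\psi(x_m)=0$, and $\tfrac{1}{A}-1>0$ by \eqref{eq:AValue}, so for $\tau\ge 0$ every summand above is non-negative with the determinant term strictly positive, a contradiction, forcing $\tau<0\le C\vert a\vert^2$; (b) if $x_m$ lies in a neck then $\vert R\vert\le C\vert a\vert^2$ and $\vert\Delta\psi(x_m)\vert\le C\vert a\vert^2$, so $R_a+R\ge\tfrac12 R_a$, and dropping the non-negative $\tau^2$-term and estimating $\big\vert\frac{1}{A\exp(\psi)}-1\big\vert\le C\vert a\vert^2$ (from \eqref{eq:psibound} and \eqref{eq:AValue}) reduces the inequality to
\[
2R_a\tau\le C\vert a\vert^2\left(1+(2+\tau)^2\right).
\]
Dividing by $R_a\gtrsim\vert a\vert^{-1}$ then yields $\tau\le C\vert a\vert^2$, in fact $O(\vert a\vert^3)$.

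In every case $\Delta\phi(x_m)=\tau\le C\vert a\vert^2$, which is \eqref{eq:FirstStep}. I expect the delicate point to be case (b): the determinant correction $\frac{1}{A\exp(\psi)}-1$ is $O(\vert a\vert^2)$ but multiplied by $2R_a+R=O(\vert a\vert^{-1})$, so absorbing the factor $(2+\tau)^2$ requires a preliminary $O(1)$ bound on $\Delta\phi$, available from the classical Yau $C^2$-estimate. The conceptual heart of the argument, however, is the observation that the borderline weight $2R_a$ survives only because curvature concentrates exactly where $g$ is Ricci-flat, decoupling it from the source term $\Delta\psi$.
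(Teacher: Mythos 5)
Your skeleton is the same as the paper's: you feed the maximum $x_m$ into Proposition \ref{Prop:Yau222} with weight $C=2R_a$, use $\tilde{\Delta}F(x_m)\le 0$, and split according to whether $x_m$ lies in a neck, exploiting that $\Delta\psi$ is supported in the necks while the curvature of size $R_a$ lives in the Ricci-flat Eguchi--Hanson patches. Your expansion of the resulting inequality around $\Tr_g(\tilde{g})=2$ is algebraically correct, and your case (a) is sound --- in fact somewhat cleaner than the paper's Case 1, which reaches the same conclusion $\Delta\phi(x_m)\le 0$ by completing a square and manipulating square roots.

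The gap is in case (b). First, your displayed reduction drops a factor: the determinant term is $(2R_a+R)\left\vert\frac{1}{Ae^{\psi}}-1\right\vert(2+\tau)^2\le C R_a\vert a\vert^2(2+\tau)^2$, so the correct inequality is $2R_a\tau\le C\vert a\vert^2+CR_a\vert a\vert^2(2+\tau)^2$, not $2R_a\tau\le C\vert a\vert^2\left(1+(2+\tau)^2\right)$; dividing by $R_a$ then gives $\tau\le C\vert a\vert^2$, and your claimed $O(\vert a\vert^3)$ is an artifact of the missing factor (your own prose about the $O(\vert a\vert^{-1})$ multiplier has it right). Second, and more seriously, this route genuinely needs the preliminary bound you invoke: the inequality $2\tau\le C\vert a\vert^3+C\vert a\vert^2(2+\tau)^2$ is quadratic in $\tau$ and is also satisfied by $\tau\gtrsim\vert a\vert^{-2}$, so without an a priori bound $\tau=o(\vert a\vert^{-2})$ you cannot select the small root. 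You cannot cite ``the classical Yau $C^2$-estimate'' as a black box for this: its constant depends on a lower bound for the bisectional curvature of $g$, which here is of order $-R_a\sim-\vert a\vert^{-1}$, so the classical constant is not $\vert a\vert$-uniform. The fix is short and non-circular: run the same maximum-principle inequality crudely at $x_m$ (bound $\vert k\vert\le 1$, $\det(\tilde{g})\le C$, $\vert\Delta\psi\vert/R_a\le C\vert a\vert^3$) to get $\Tr_g(\tilde{g})(x_m)^2\le C\Tr_g(\tilde{g})(x_m)+C$, hence $\tau=O(1)$, and then bootstrap with your refined inequality. The paper avoids the dichotomy altogether: in \eqref{eq:CompletingSquares}--\eqref{eq:Case2Step0.5} it completes the square a second time, takes square roots, and reads off $\Tr_g(\tilde{g})\le\frac{4\det(\tilde{g})}{2+k}-\frac{1}{4R_a}\left(\Delta\psi-4kR_a\right)=2+O(\vert a\vert^2)$ in one stroke, which is what makes its Case 2 self-contained.
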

\begin{proof}

At the single point $x_m$ we introduce holomorphic normal coordinates as before such that $g_{\mu\bar{\nu}}(x_m)=\delta_{\mu\bar{\nu}}$, $g_{\mu\bar{\nu},\alpha}(x_m)=0$, and $\phi_{\mu\bar{\nu}}(x_m)=\delta_{\mu\bar{\nu}}\phi_{\mu\bar{\mu}}(x_m)$. In these coordinates, we set 
\begin{equation}
k(x_m)\coloneqq  R_{1\bar{1}2\bar{2}}(x_m)/R_a.
\end{equation}
Note that per definition, $\vert k(x_m)\vert \leq 1$.

At the maximum of $\exp(-2R_a\phi)\Tr_g(\tilde{g})$, the left hand side of \eqref{eq:Yau222} with $C=2R_a$ has to be non-positive. With our choice of notation, we may write this as
\begin{equation}
0\geq \frac{\Delta \psi(x_m)}{R_a}-4k(x_m)-4\Tr_g(\tilde{g})+(2+k(x_m))\frac{\Tr_g(\tilde{g})^2}{\det(\tilde{g})}.
\notag
\end{equation} 
%
%
Complete a square in $\Tr_g(\tilde{g})$  to arrive at
\begin{align}
&\frac{4\det(\tilde{g})^2}{(2+k(x_m))^2}-\frac{\det(\tilde{g})}{R_a(2+k(x_m))}\left(\Delta \psi(x_m)-4 k(x_m) R_a\right)\notag \\ &\geq \left(\Tr_g(\tilde{g})-\frac{2\det(\tilde{g})}{2+k(x_m)}\right)^2.
\label{eq:CompletingSquares}
\end{align}
There are now two possible cases. Either $x_m$ lies inside a neck region $N_i$ or it lies outside of all the neck regions. 
\par 
\noindent\textbf{Case 1 -- $x_m$ lies outside of the neck regions:}
\par
Outside of the necks, $\psi(x_m)=\Delta\psi(x_m)=0$ by \eqref{eq:psidef} by construction of $g$. Inserting this into \eqref{eq:CompletingSquares} gives
\begin{equation}
 4\frac{\det(\tilde{g})^2}{(2+k(x_m))^2}+4\frac{k(x_m)\det(\tilde{g})}{2+k(x_m)}\geq \left(\Tr_g(\tilde{g})-\frac{2\det(\tilde{g})}{2+k(x_m)}\right)^2.
 \label{eq:Case1Step1}
\end{equation}
 The Monge-Amp\`{e}re equation, \eqref{eq:NewMA} says in holomorphic normal coordinates for $g$ that $\det(\tilde{g})=Ae^{\psi}=1-\Upsilon \vert a\vert^2<1$, where $\Upsilon>0$ is some positive constant which was computed in  \eqref{eq:AValue}. Hence one may overestimate the first term on the left hand side of \eqref{eq:Case1Step1} by 
 \begin{equation}
 4\frac{\det(\tilde{g})^2}{(2+k(x_m))^2}<4\frac{\det(\tilde{g})}{(2+k(x_m))^2}.
 \end{equation}
  Inserting this back into \eqref{eq:Case1Step1} and recognizing a square allows one to conclude
 \begin{equation}
 4\det(\tilde{g})\frac{(k(x_m)+1)^2}{(k(x_m)+2)^2}\geq \left(\Tr_g(\tilde{g})-\frac{\det(\tilde{g})}{2+k(x_m)}\right)^2. 
 \notag
 \end{equation}
Taking a square root on both sides here and using $\det(\tilde{g})<1$ twice, one can conclude that
\begin{align}
&\frac{2(k(x_m)+1)}{k(x_m)+2}> \sqrt{\det(\tilde{g})}\frac{2(k(x_m)+1)}{k(x_m)+2}\notag \\ &\geq \Tr_g(\tilde{g})-\frac{2\det(\tilde{g})}{2+k(x_m)}>\Tr_g(\tilde{g})-\frac{2}{2+k(x_m)},
\notag
\end{align}
or
\begin{equation}
2=\frac{2(k(x_m)+2)}{k(x_m)+2}\geq \Tr_g(\tilde{g}).
\notag
\end{equation}
This proves that
\begin{equation}
\Delta\phi(x_m)\leq 0
\notag
\end{equation}
when $x_m$ lies outside of the neck regions.
\par 
\noindent\textbf{Case 2 -- $x_m$ lies inside the neck regions:}
\par
When the maximum $x_m$ lies in a neck region, we 
return to \eqref{eq:CompletingSquares} and complete the square on the left hand side. Suppressing the point $x_m$ from the notation, the result is
\begin{align}
&\frac{4\det(\tilde{g})^2}{(2+k)^2}-\frac{\det(\tilde{g})}{R_a(2+k)}\left(\Delta \psi-4 k R_a\right)\notag \\
&=\left(\frac{2\det(\tilde{g}}{2+k}-\frac{1}{4R_a}\left(\Delta \psi -4k R_a\right)\right)^2-\frac{1}{16R_a^2}\left(\Delta \psi -4k R_a\right)^2 \notag \\
&\leq \left(\frac{2\det(\tilde{g}}{2+k}-\frac{1}{4R_a}\left(\Delta \psi -4k R_a\right)\right)^2.
\label{eq:Case2Step0.5}
\end{align}
Hence \eqref{eq:CompletingSquares} says
\begin{equation}
\Tr_g(\tilde{g})\leq \frac{4\det(\tilde{g})}{2+k}-\frac{1}{4R_a}(\Delta \psi-4kR_a)\notag
\end{equation}
or
\begin{equation}
\Delta\phi(x_m) \leq \frac{4\det(\tilde{g})}{2+k}+k-2-\frac{\Delta\psi}{4R_a}=\frac{4(\det(\tilde{g})-1)+k^2}{2+k}-\frac{\Delta\psi}{4R_a}
\notag
\end{equation}
 Since we are in a neck region, the curvature $k$ is bounded, $\vert k\vert \leq C \frac{\vert a\vert^3}{r_a}$. This follows by \eqref{eq:Neckmetric}. In normal coordinates for $g$, the Monge-Amp\`{e}re equation reads $\det(\tilde{g})=Ae^{\psi}$, hence $\vert\det(\tilde{g})-1\vert \leq C\vert a\vert^2$ by \eqref{eq:AValue} and \eqref{eq:psibound}. From \eqref{eq:NeckDetg} it also follows that $\vert\Delta \psi\vert \leq C \vert a\vert^2$, and so
\[\Delta \phi(x_m) \leq C\vert a\vert^2\]
for all $\vert a\vert$ small enough.
This proves \eqref{eq:FirstStep}.
 \end{proof}
 
\begin{proof}[Proof of Equation \eqref{eq:C2Est}] 
To get an upper bound of $\nabla \phi$ at an arbitrary point, we can do as follows, where the first inequality is the definition of $x_m$.
\begin{align}
\Tr_g(\tilde{g})=&e^{2R_a\phi(x)}\Big( e^{-2R_a\phi(x)}\Tr_g(\tilde{g})\Big)\notag\\
&\leq e^{2R_a\phi(x)}\Big( e^{-2R_a\phi(x_m)}(\Tr_g(\tilde{g})(x_m))\Big) \notag\\
&\stackrel{\eqref{eq:FirstStep}}{\leq} e^{2R_a(\phi(x)-\phi(x_m))}( 2+C\vert a\vert^2)\notag\\
&\stackrel{\eqref{eq:C0Corr},\eqref{eq:Rabound}}{\leq} e^{Cr_a\vert a\vert}(2+C\vert a\vert^2) \notag \\ &\leq 2+\tilde{C}r_a\vert a\vert.
\notag
\end{align}
 This proves the upper bound in \eqref{eq:C2Est}.

 The lower bound in \eqref{eq:C2Est} is considerably easier. Let $\alpha,\beta$ denote the eigenvalues of $g^{-1}\tilde{g}$ in some local coordinates.  Then
\begin{equation}
\alpha+\beta=\Tr_g(\tilde{g}),
\end{equation}
and by the inequality of arithmetic and geometric means,
\begin{equation}
\frac{\Tr_g(\tilde{g})}{2} =\frac{\alpha+\beta}{2}\geq \sqrt{\alpha\beta}=\sqrt{\det(g^{-1}\tilde{g})}=\sqrt{\frac{\det(\tilde{g})}{\det(g)}}=\sqrt{Ae^\psi},
\label{eq:LowerBoundDet}
\end{equation}
where we have inserted the Monge-Amp\`{e}re equation, \eqref{eq:NewMA}, in the final step.
By  \eqref{eq:AValue} and \eqref{eq:psibound},  one can find a constant $C>0$ such that 
\begin{equation}
Ae^\psi\geq (1-C \vert a\vert^2)^2,
\notag
\end{equation} 
for sufficiently small values of $\vert a\vert$. This can be inserted into \eqref{eq:LowerBoundDet} to establish
\begin{equation}
2+\Delta\phi=\Tr_g(\tilde{g})\geq 2\sqrt{Ae^\psi}\geq 2(1-C \vert a\vert^2),
\notag
\end{equation}
which proves the lower bound in \eqref{eq:C2Est}.

\end{proof}

\begin{Cor}
\label{Cor:ComplexHessian}
There is a constant $C>0$ independent of $\vert a\vert$ such that
\begin{equation}
 \vert \nabla^2 \phi \vert^2_g \leq C  r_a\vert a\vert.
\label{eq:C2Norm}
\end{equation}
hold everywhere on $X$.
\end{Cor}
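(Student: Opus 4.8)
The plan is to read $\vert \nabla^2\phi\vert^2_g$ off directly from the Monge-Amp\`{e}re equation in the coordinate-free form \eqref{eq:NormMA} and then substitute the estimates already in hand. Solving \eqref{eq:NormMA} for the Hessian norm yields the pointwise identity
\[\vert \nabla^2\phi\vert^2_g = 2\Delta\phi + (\Delta\phi)^2 - 2\left(A e^{\psi}-1\right),\]
valid at every point of $X$. This turns the corollary into the problem of bounding the three terms on the right, for each of which a bound is already available.

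First I would control the two terms containing $\Delta\phi$ using the $C^2$-estimate \eqref{eq:C2Est}, which gives $-C\vert a\vert^2 \leq \Delta\phi \leq C r_a \vert a\vert$ and hence $\vert\Delta\phi\vert \leq C r_a\vert a\vert$ (recall $r_a\geq 1$ and $\vert a\vert\leq 1$, so $\vert a\vert^2\leq r_a\vert a\vert$). Therefore $2\Delta\phi = O(r_a\vert a\vert)$ and $(\Delta\phi)^2 \leq C r_a^2\vert a\vert^2$. For the remaining term, \eqref{eq:AValue} gives $A=1-\Upsilon\vert a\vert^2$ and \eqref{eq:psibound} gives $\norm{\psi}_{C^0(X)}\leq C\vert a\vert^2$, so that $A e^{\psi}-1 = O(\vert a\vert^2)$.

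Finally I would collect the three contributions. Invoking Assumption \ref{Assumption:ra} (so $r_a\leq C$ uniformly) together with the smallness of $\vert a\vert$, the quadratic term obeys $r_a^2\vert a\vert^2 = (r_a\vert a\vert)(r_a\vert a\vert)\leq C r_a\vert a\vert^2 \leq C r_a\vert a\vert$, while the last term obeys $\vert a\vert^2\leq r_a\vert a\vert$; both are thus absorbed into the linear contribution $2\Delta\phi$, giving $\vert \nabla^2\phi\vert^2_g\leq C r_a\vert a\vert$. The only point requiring care is the bookkeeping of which power of $\vert a\vert$ and $r_a$ dominates; there is no real obstacle here, since the identity extracted from \eqref{eq:NormMA} reduces the statement to an elementary algebraic consequence of the already-proven $C^2$-bound.
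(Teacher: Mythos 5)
Your proposal is correct and follows essentially the same route as the paper: rearrange the pointwise identity \eqref{eq:NormMA} to express $\vert\nabla^2\phi\vert^2_g$ in terms of $\Delta\phi$ and $Ae^{\psi}-1$, then bound each term using \eqref{eq:C2Est}, \eqref{eq:AValue}, and \eqref{eq:psibound}, absorbing the quadratic term $r_a^2\vert a\vert^2$ and the $\vert a\vert^2$ term into $Cr_a\vert a\vert$. Your explicit bookkeeping of the powers of $r_a$ and $\vert a\vert$ (via Assumption \ref{Assumption:ra} and the smallness of $\vert a\vert$) is slightly more careful than what the paper writes out, but the argument is the same.
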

\begin{proof}
From \eqref{eq:NormMA}, \eqref{eq:AValue}, \eqref{eq:psibound}, and \eqref{eq:C2Est}, it follows that
\[\vert \nabla^2 \phi\vert^2_g \leq 2 Cr_a \vert a\vert+C^2 r_a^2 \vert a\vert^2+2(1-A\exp(\psi))\leq \tilde{C} r_a \vert a\vert.\] 
This shows
\begin{equation}
\vert \nabla^2 \phi \vert^2_g\leq C r_a \vert a\vert.
\label{eq:C2Norm1}
\end{equation}
\end{proof}

\begin{Rem}
The appearance of $r_a$ in the $C^2$-estimate is a consequence of using Yau's maximum principle, where the maximal holomorphic sectional curvature appears from a double derivative of the Monge-Amp\`{e}re equation. This factor of $r_a$ then propagates into the higher order estimates. We do not know if this is reflected in the actual behaviour of the solution, or if it is an artefact of the proof. We assume $r_a$ is uniformly bounded in $a$ (Assumption \ref{Assumption:ra}).

It would in general be interesting (and useful for studying the K\"{a}hler-Ricci flow on singular manifolds) if one can derive Yau's $C^2$-bounds without using the maximum of the sectional curvature.
\end{Rem}

%

\subsection{H\"{o}lder regularity}
From the above bound on the complex Hessian $\nabla^2 \phi$, we will follow Siu \cite{Siu87} and B\l ocki, \cite{Blocki00}, \cite{Blocki12}, to derive H\"{o}lder bounds on the real Hessian $D^2\phi$. Since the real and complex Hessians differ, the corresponding real and complex Monge-Amp\`{e}re equations differ. Hence one cannot simply apply the real theory directly, as \cite[p. 302]{Kob90} does. The methods go back to Evans \cite{Evans82}, 
\cite{Evans83}, Krylov \cite{Krylov82} and Trudinger \cite{Trudinger83}.

\begin{Prop}
\label{Prop:Holder}
There are constants $C>0$ and $0<\alpha<1$ which do not depend on $a$ such that
\[\vert \phi\vert_{C^{2,\alpha}(X,g)}\leq C.\]
holds for all values of $a$ small enough.
\end{Prop}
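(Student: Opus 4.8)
The plan is to apply the interior complex Evans--Krylov estimate to the Monge--Amp\`ere equation, exploiting the fact that in holomorphic Darboux coordinates its right-hand side is a \emph{constant}. Writing the local K\"ahler potential of $g$ as $\rho$ (so $g_{\mu\overline{\nu}}=\rho_{\mu\overline{\nu}}$, with $\rho=\Phi_{a_i}$ in a patch $U_i$ and $\rho=u$ in the flat region) and setting $u\coloneqq\rho+\phi$, the equation $\tilde{\omega}^2=A\eta\wedge\overline{\eta}$ together with $\omega^2=\det(g)\,\eta\wedge\overline{\eta}$ gives $\det(\tilde{g})=A$, that is
\[\det\left(u_{\mu\overline{\nu}}\right)=A.\]
The operator $F(\cdot)=\log\det(\cdot)$ is concave on the cone of positive Hermitian matrices and the right-hand side $\log A$ has vanishing H\"older seminorm, which is the ideal situation for the Evans--Krylov machinery. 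This is also why one cannot appeal to the real theory as \cite{Kob90} does: the equation constrains the \emph{complex} Hessian $\phi_{\mu\overline{\nu}}$ and not the real Hessian $D^2\phi$, and bridging this gap is exactly the content of Siu \cite{Siu87} and B\l ocki \cite{Blocki00}, \cite{Blocki12}.

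First I would establish uniform ellipticity. By Corollary~\ref{Cor:ComplexHessian} and \eqref{eq:C2Est}, under Assumption~\ref{Assumption:ra} one has $\vert\nabla^2\phi\vert_g\leq C\vert a\vert^{1/2}\to 0$, so the eigenvalues of $g^{-1}\tilde{g}$ lie in a fixed compact subinterval of $(0,\infty)$ for all small $\vert a\vert$. Hence the linearised operator $\tilde{g}^{\overline{\nu}\mu}\partial_\mu\partial_{\overline{\nu}}$ is uniformly elliptic relative to $g$, with constants independent of $a$, and the $C^{1,1}$-norm of $u$ is uniformly controlled. The remaining difficulty is that $(X,g)$ does not have $a$-uniformly bounded geometry, since the curvature of $g$ is of order $1/\vert a\vert$ near the exceptional divisor. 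To get $a$-independent constants I would rescale the high-curvature Eguchi--Hanson regions to unit scale using the homothety of Lemma~\ref{Lem:Homothety}, which carries the problem to a patchwork metric with exceptional divisor of size $O(1)$; on this rescaled family the geometry is uniformly bounded and the structure of the equation (constant right-hand side, uniform ellipticity, uniform $C^{1,1}$ bound) is preserved.

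On each unit-scale coordinate ball of this rescaled, uniformly controlled family the complex interior Evans--Krylov theorem (originating in \cite{Evans82}, \cite{Krylov82}, \cite{Trudinger83}, in the complex form of \cite{Siu87}, \cite{Blocki12}) then yields $\norm{u}_{C^{2,\beta}}\leq C$ on a concentric half-ball, with $C$ and $\beta$ independent of the parameter. Since the patchwork potential $\rho$ has uniformly bounded $C^{2,\beta}$-norm at this scale, the same bound passes to $\phi=u-\rho$. Away from the divisor the metric already has bounded geometry and fixed-size charts suffice directly. Patching the finitely many charts and transferring the bounds back through the scaling gives the claimed $\vert\phi\vert_{C^{2,\alpha}(X,g)}\leq C$, with $\alpha$ the minimum of the H\"older exponents over the cover.

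The main obstacle is the heart of the Evans--Krylov argument, which I would quote rather than reprove. Differentiating the equation twice in a fixed direction $\gamma$ and using concavity of $\log\det$ gives the subsolution inequality
\[\tilde{g}^{\overline{\nu}\mu}\partial_\mu\partial_{\overline{\nu}}\,u_{\gamma\overline{\gamma}}\geq 0,\]
and the Krylov--Safonov weak Harnack inequality then converts this into H\"older control of the \emph{mixed} second derivatives $\phi_{\mu\overline{\nu}}$. The genuinely delicate point, and the reason the complex case is not a formal consequence of the real one, is upgrading this to H\"older control of the full real Hessian $D^2\phi$ --- equivalently, of the pure holomorphic second derivatives $\phi_{\mu\nu}$, which the equation does not constrain directly. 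This is precisely Siu's refinement, and I would invoke it as a black box rather than redo the Krylov-type oscillation estimate.
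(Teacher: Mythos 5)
Your strategy coincides with the paper's: the same reduction to $\det(u_{\mu\overline{\nu}})=A$ in holomorphic Darboux coordinates, the same complex Evans--Krylov/oscillation machinery of Siu and B\l ocki (which the paper reproves in its Steps 2--5 and you quote as a black box), the same recovery of $\phi=u-\rho$ from uniform control of the patchwork potential (the paper's Steps 6--7), and a rescaling to normalize the degenerate geometry near $E$ --- the paper rescales coordinates only (Lemma \ref{Lem:Rescaled}), while you rescale the metric itself via the homothety of Lemma \ref{Lem:Homothety}. Up to this repackaging the two arguments run in parallel.

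The gap sits in the single phrase ``transferring the bounds back through the scaling'': H\"older seminorms of Hessians are not scale-invariant, and the transfer is exactly where uniformity can be lost. Concretely, with $S_{\alpha}\colon (X,\alpha^2 g_a)\to (X_{\alpha},g_{\alpha^2 a})$ and $\phi_{\alpha^2 a}\circ S_{\alpha}=\alpha^2\phi_a$ as in Lemma \ref{Lem:Homothety}, where $\alpha^2\sim a_i^{-1}$ so that the rescaled divisor has unit size, one has for covariant $2$-tensors $\vert T\vert_{\alpha^2 g}=\alpha^{-2}\vert T\vert_{g}$ and $d_{\alpha^2 g}=\alpha\, d_g$, whence
\[
\vert D^2 \phi_a\vert_{\beta,\,g_a}=\alpha^{\beta}\,\vert D^2\phi_{\alpha^2 a}\vert_{\beta,\, g_{\alpha^2 a}}\sim a_i^{-\beta/2}\,\vert D^2 \phi_{\alpha^2 a}\vert_{\beta,\, g_{\alpha^2 a}},
\]
where the subscripts indicate which metric is used for tensor norms and distances; the $C^0$-part of the norm is scale-invariant, but the seminorm is not. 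So the merely uniform Evans--Krylov bound that your argument produces on the rescaled family yields only $\vert D^2\phi_a\vert_{\beta,\,g_a}\leq C a_i^{-\beta/2}$, which blows up in the orbifold limit and does not prove the proposition. To close the argument you must feed in the quantitative smallness available on the rescaled side --- by Corollary \ref{Cor:ComplexHessian}, which is a scale-invariant statement, $\norm{\nabla^2\phi_{\alpha^2 a}}_{C^0(g_{\alpha^2 a})}=\norm{\nabla^2\phi_a}_{C^0(g_a)}\leq C\sqrt{r_a\vert a\vert}$ --- and work at the correct scale. This is precisely the bookkeeping the paper performs near $E$ in its Step 5: the eigenvalue bound $\Lambda\leq C\sqrt{\vert a\vert}$ together with the ball rescaling $R\mapsto\sqrt{a_i}R$ produce the oscillation bound $\osc(r)\leq C(r/R)^{\alpha}\vert a\vert^{(1-\alpha)/2}$, whose small prefactor is exactly what compensates the factor $a_i^{-\beta/2}$ above. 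Alternatively, one can interpolate between the small $C^0$-bound and your uniform $C^{0,\beta'}$-bound, at the price of shrinking the final exponent to $\beta\leq\beta'/(1+\beta')$. Either way, some tracking of powers of $\vert a\vert$ through the H\"older estimate is indispensable, and it is missing from the proposal as written.
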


The strategy is the following. Locally, we may write $\tilde{g}=\nabla^2 \tilde{\phi}$ for some K\"{a}hler potential $\tilde{\phi}$. In suitable coordinates, the Monge-Amp\`{e}re equation reads $\det(\nabla^2 \tilde{\phi})=const.$, and by taking derivatives of this equation, we get an elliptic equation we can analyse using a local Harnack inequality. By combining bounds on sub- and supersolutions, we get a bound on the oscillation of $\tilde{\phi}$, which leads to the H\"{o}lder bound on $\tilde{\phi}$.  Since $\tilde{g}=g+\nabla^2\phi$, we may choose $\tilde{\phi}=\Phi+\phi$, where $\Phi$ is a K\"{a}hler potential for $g$ (e.g. \eqref{eq:GluedPot}). We will show that $\Phi$ is in $C^{2,\alpha}$ uniformly in the same local coordinates as for $\tilde{\phi}$. Hence $\phi$ will be uniformly bounded as well. We divide the proof into 7 steps.

\noindent\textbf{Step 1 -- Bounds on the patchwork metric:}
 
We first prove that we can cover $X$ by coordinate charts in such a way that the Monge-Amp\`{e}re equation becomes simple and the eigenvalues of the patchwork metric are under control away from the exceptional divisor. 
\begin{Lem}
\label{Lem:Eigenvalues}
For all $\vert a\vert$ small enough, there is an $a$-independent finite cover $V_i$ of holomorphic Darboux coordinate charts\footnote{If $z,w$ are coordinates on $V_i$, then $\eta=\sqrt{2} dz\wedge dw$.} of $X$ and a constant $C>0$ such that the eigenvalues of the metrics $g$ and $\tilde{g}$ in the local coordinates lie in the interval between $C^{-1}\frac{\vert a\vert}{r_a}$ and $C\frac{r_a}{\vert a\vert}$. 

For any compact set $K\subset X\setminus E$, there is an $a$-independent constant $C_K$ such that the eigenvalues of $g$ and $\tilde{g}$ in the local coordinate charts $K\cap V_i$ lie between $C_{K}^{-1}$ and $C_K$.
\end{Lem}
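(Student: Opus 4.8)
The plan is to exhibit an explicit finite, $a$-independent atlas of holomorphic Darboux charts subordinate to the three regions of $X$ (the flat region, the necks $N_i$, and the interiors of the Eguchi--Hanson patches), to estimate the eigenvalues of the Hermitian matrix $[g_{\mu\on}]$ chart by chart, and only at the end to transfer the bounds to $\tilde g$ using the $C^2$-estimates.

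First I would fix the charts. Away from the $U_i$ the metric is flat and the coordinates $(z_1,z_2)$ inherited from $\C^2$ (using local branches on the free quotient $(\C^2\setminus\{0\})/\mu_2$) are Darboux, since $\eta=\sqrt2\,dz_1\wedge dz_2$ is $\mu_2$-invariant; finitely many such charts cover the flat region. Near each $E_i\cong\C\P^1$ I would use the two affine charts of the blow-up, which on the quotient descend to honest coordinates $(Z,W)=(z_1^2/2,\,z_2/z_1)$ and $(Z',W')=(z_2^2/2,\,z_1/z_2)$ -- the square on the normal variable is exactly what absorbs the $\mu_2$-action. One checks $dz_1\wedge dz_2=dZ\wedge dW$, so these are Darboux as well, and (restricted to $\vert W\vert\le 2$, etc.) they cover a neighbourhood of $E_i$ including the neck $N_i$. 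Because $X$ and these charts depend only on the fixed complex manifold, the cover is $a$-independent.

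Next I would bound the eigenvalues of $g$. On the flat and neck charts this is routine: by \eqref{eq:NeckPotential} the local potential is $u+a^2\xi_a$ with $\xi_a$ regular as $a\to0$, so on the good part of each chart $g$ differs from a fixed $a$-independent metric by an $\mathcal O(\vert a\vert^2)$ term and its eigenvalues are pinched between two positive constants. The essential case is the interior of an Eguchi--Hanson patch, $0<u\le1$, where the metric is \eqref{eq:EHMetric} with parameter $a_i$. Here the two structural inputs are that $\det g=e^{-\psi}\equiv1$ in Darboux coordinates by \eqref{eq:psidef} (as $\psi=0$ off the necks) and that $g_{\vert E_i}=a_ig_{FS}$. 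To get a clean uniform statement I would invoke the homothety of Lemma \ref{Lem:Homothety}: in the coordinates $(Z,W)$ it acts by $Z\mapsto \alpha^2Z$, $W\mapsto W$, so taking $\alpha=a_i^{1/2}$ expresses the parameter-$a_i$ metric through the fixed parameter-$1$ model by the anisotropic rule
\[
[g_{\mu\on}^{(a_i)}](Z,W)=\mathrm{diag}(a_i^{-1/2},a_i^{1/2})\,[g_{\mu\on}^{(1)}](Z/a_i,W)\,\mathrm{diag}(a_i^{-1/2},a_i^{1/2}).
\]
Since the model $[g^{(1)}_{\mu\on}]$ has determinant $1$, the eigenvalues of $[g^{(a_i)}_{\mu\on}]$ lie in $[c\,a_i,\,C\,a_i^{-1}]$, the extreme values being attained near $E_i$ and the intermediate $\mathcal O(1)$ values near $u\sim1$. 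Finally, using $a_i\ge\min_j a_j=\max_j a_j/r_a$ together with $\max_j a_j\le\vert a\vert\le16\max_j a_j$, these become the stated uniform bounds $C^{-1}\frac{\vert a\vert}{r_a}$ and $C\frac{r_a}{\vert a\vert}$.

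To pass to $\tilde g=g+\nabla^2\phi$ I would use the $C^2$-estimates. Writing $\alpha,\beta$ for the eigenvalues of $g^{-1}\tilde g$, we have $\alpha+\beta=\Tr_g(\tilde g)=2+\Delta\phi$, which by \eqref{eq:C2Est} lies in $[2(1-C\vert a\vert^2),\,2+Cr_a\vert a\vert]$, while $\alpha\beta=\det(g^{-1}\tilde g)=Ae^{\psi}$ is within $\mathcal O(\vert a\vert^2)$ of $1$ by \eqref{eq:AValue} and \eqref{eq:psibound}. Hence $\alpha,\beta$ are pinched between two positive $a$-independent constants, so by the Courant--Fischer principle the eigenvalues of $\tilde g$ differ from those of $g$ by only a bounded factor and lie in the same interval after enlarging $C$. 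For a compact $K\subset X\setminus E$ one has $u\ge u_0(K)>0$ on $K$, so on the Eguchi--Hanson charts the factor $\sqrt{1+a_i^2/u^2}$ and the correction term in \eqref{eq:EHMetric} stay bounded and converge to their Euclidean values as $\vert a\vert\to0$; together with the flat and neck charts this gives $a$-independent two-sided bounds $C_K^{\pm1}$, and the same $g^{-1}\tilde g$ argument carries them over to $\tilde g$. I expect the main obstacle to be the Eguchi--Hanson eigenvalue estimate itself: the metric \eqref{eq:EHMetric} is written in the singular $z$-coordinates and $\sqrt{1+a_i^2/u^2}$ diverges as $u\to0$, so one must verify that in the $(Z,W)$-charts the normal direction $\partial_Z$ and the divisor direction $\partial_W$ carry precisely the $a_i^{-1}$- and $a_i$-scalings. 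This is what the homothety reduction isolates, with $g_{\vert E_i}=a_ig_{FS}$ fixing the tangential scale and $\det g\equiv1$ fixing the normal one; checking that the two directions decouple well enough across the whole range $0<u\le1$ is the real content.
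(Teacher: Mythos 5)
There is a genuine gap, and it sits exactly where you flag "the real content." Your atlas, your flat/neck estimates via \eqref{eq:NeckPotential}, and your transfer from $g$ to $\tilde g$ (pinching the eigenvalues $\alpha,\beta$ of $g^{-1}\tilde g$ via $\alpha+\beta=\Tr_g(\tilde g)$ and $\alpha\beta=Ae^{\psi}$) are all sound and essentially the paper's argument; indeed your $(Z,W)=(z_1^2/2,\,z_2/z_1)$ are precisely the coordinates $(z,\zeta)$ the paper obtains from the map $f_1(z,\zeta^2z)=\sqrt2\left[(\sqrt z,\zeta\sqrt z)\right]$. But the core estimate --- the eigenvalue bound for the Eguchi--Hanson metric in these charts on the whole region $0\le u\le 1$ --- is never proved, and the one-line deduction you substitute for it ("since the model $[g^{(1)}]$ has determinant $1$, the eigenvalues of $[g^{(a_i)}]$ lie in $[c\,a_i,\,C a_i^{-1}]$") is a non sequitur. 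The homothety gives $[g^{(a_i)}](Z,W)=D\,[g^{(1)}](Z/a_i,W)\,D$ with $D=\mathrm{diag}(a_i^{-1/2},a_i^{1/2})$, so the model must be controlled on the region $u'\le 1/a_i$, which is \emph{unbounded} as $a_i\to 0$: no compactness argument applies, and on that region the model's own eigenvalues genuinely spread over $[c\,a_i,\,C/a_i]$ (the $\partial_z$-coefficient decays like $1/u'$ while the $\partial_\zeta$-coefficient grows like $u'$). Feeding such two-sided model bounds naively through conjugation by $D$ yields only $[c\,a_i^{2},\,C\,a_i^{-2}]$, which is too weak. To do better one must know \emph{which} direction carries which scaling at every $u'$, i.e.\ one needs the entries of the metric in the blow-up chart --- exactly the computation the rescaling was meant to avoid.

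The repair is what the paper does: pull the metric back under $f_1$ to get the explicit expression \eqref{eq:EHonBundle1} (and its mirror for $\vert\zeta\vert\ge1$ after $z=y/\zeta^2$, $\zeta=-1/\upsilon$), valid on all of $u\le 1$, $\vert\zeta\vert\le1$. From it one reads off that the diagonal entries and the cross term are bounded by $C/a_i$, hence $\tr\le C/a_i$; and since in Darboux coordinates $\det[g]=e^{-\psi}=1$ on the Eguchi--Hanson region, the two eigenvalues of the Hermitian $2\times2$ matrix multiply to $1$, so the trace bound alone already forces $\lambda\in[a_i/C,\,C/a_i]$. Note that $\det D=1$, so your rescaled matrix also has determinant $1$; the ingredient missing from your argument is thus not the determinant identity but the trace bound in the chart, and that bound requires the explicit formula \eqref{eq:EHonBundle1} (the heuristic "$g_{\vert E_i}=a_ig_{FS}$ fixes the tangential scale" only pins things down on the divisor itself, not across $0<u\le1$). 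With that step inserted, the rest of your proposal goes through, and your $\tilde g$-transfer via trace and determinant of $g^{-1}\tilde g$ is a perfectly good (if anything slightly cleaner) variant of the paper's use of Corollary \ref{Cor:ComplexHessian}.
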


\begin{proof}
We start with the second part. As long as one stays away from the exceptional divisor, the patchwork metric $g$ can locally be written
\[g=g_{Euc} + \vert a\vert^2 h,\]
where $h$ is bounded with bounded derivatives. Cover the compact set $K$ with finitely many such coordinate charts to deduce the statement for $g$. The statement for $\tilde{g}$ follows by Corollary \ref{Cor:ComplexHessian} as we next show. Let $v$ be an eigenvector for $\tilde{g}$. Then
\[\vert \lambda v \vert  \leq \vert g v\vert + \vert (\nabla^2 \phi) g^{-1} g v\vert\leq C_K \vert v\vert(1+ C\sqrt{\vert a\vert}).\]
Similarly for the lower bound.

To estimate the eigenvalues also near the exceptional divisor, we take a careful look at the Eguchi-Hanson metric.
The Eguchi-Hanson metric on $\C^2\setminus \{0\}$ reads
\[g_{EH}=\sqrt{1+\frac{a^2}{u^2}}\left( \mathbb{1}-\frac{a^2}{a^2+u^2}\frac{\overline{z}\otimes z}{u}\right),\]
where $u=\vert z\vert^2$ is the Euclidean distance squared and we are writing $a$ instead of $a_i$. The eigenvalues in these coordinates are $\frac{\sqrt{a^2+u^2}}{u}$ and $\frac{u}{\sqrt{u^2+a^2}}$, hence are not bounded. We therefore need to choose different coordinates.
The metric extends to a complete metric on the total space of the cotangent bundle of $\C\P^1$, $\mathcal{O}_{\C\P^1}(-2)$, and we will first use coordinate patches on this total space. Recall that 
\[\mathcal{O}_{\C\P^1}(-2)=\left\{((z,w),(\xi:\varsigma))\, \vert \, z\varsigma^2 =w\xi^2\right\}\subset \C^2\times \C\P^1.\]
Working on the coordinate chart $\{\xi\neq 0\}$ and writing $\zeta\coloneqq \frac{\varsigma}{\xi}$, we have $w=\zeta^2 z$.  On this coordinate patch, we introduce the map
\[f_1\colon \mathcal{O}_{\C\P^1}(-2)\to \C^2/\mu_2\]
\[f_1(z,\zeta^2 z)=\sqrt{2}\left[(\sqrt{z},\zeta\sqrt{z})\right].\]
Here the brackets on the right hand side means the $\mu_2$-orbit.
This map is well-defined. The map $f_1$ along with its partner $f_2$ defined similarly on the set $\varsigma\neq 0$ realise $\mathcal{O}_{\C\P^1}(-2)$ as the blow-up of $\C^2/\mu_2$. See \cite{LyeEH} for more details. Pulling back the Eguchi-Hanson line element using $f_1$ then yields
\begin{equation}
ds^2_{EH}=\frac{1}{\sqrt{a^2+u^2}}\left((1+\vert \zeta\vert^2)^2\vert dz\vert^2+\frac{a^2+(1+\vert\zeta\vert^2)u^2}{(1+\vert \zeta\vert^2)^2} \vert d\zeta\vert^2 +4(1+\vert \zeta\vert^2)\text{Re}(\overline{z}\zeta dz d\overline{\zeta})\right),
\label{eq:EHonBundle1}
\end{equation}
where $u=2\vert z\vert(1+\vert \zeta\vert^2)$. 
Completely analogous expressions will be found on the other set $\{\varsigma\neq 0\}$, so we do not write these out.


This removes the divergence in the metric at $z=0\iff u=0$. Indeed, we simply have
\[ds^2_{EH, z=0}=\frac{(1+\vert \zeta\vert^2)^2}{a} \vert dz\vert^2 +\frac{a}{(1+\vert \zeta\vert^2)^2} \vert d\zeta\vert^2.\]
The eigenvalues of this metric are clearly uniformly bounded by $\frac{C}{a}$ and $Ca$ on the set $\vert \zeta\vert\leq 1$ and $u\leq 1$. 
For $\vert\zeta\vert\geq 1$, we make one final change of coordinates, writing
\[z=\frac{y}{\zeta^2} \;\;\;\;\; \&\;\;\;\;\; \zeta = -\frac{1}{\upsilon}.\]
In these coordinates, still at $u=0$, we find
\[ds^2_{EH}=\frac{(1+\vert \upsilon\vert^2)^2}{a} \vert dy\vert^2+\frac{a}{(1+\vert \upsilon\vert^2)^2}\vert d\upsilon\vert^2.\] These components are again uniformly bounded by $Ca$ and $\frac{C}{ a}$ for all $\vert \upsilon\vert \leq 1$.
These estimates were for a single component of the exceptional divisor with parameter $a_i$, so the upper bound has to be modified to $C\frac{1}{a_i}\leq C\frac{r_a}{\vert a\vert}$ and the lower bound to $C \vert a\vert \leq \frac{\max_i a_i}{\min_i a_i} \min_i a_i\leq r_a  a_i$. Combined with the estimates away from $E$, we the statement about the eigenvalues of $g$ follow. The bounds on $\tilde{g}$ follow from Corollary \ref{Cor:ComplexHessian} exactly as before. 

To see that the above coordinates are holomorphic Darboux coordinates, we look at what happens to the holomorphic volume form $\eta=\sqrt{2}dz_1 \wedge dz_2$ under these coordinate transformations. With the above coordinates, $z_1=\sqrt{2z}$, $ z_2=\zeta \sqrt{2z}$, $z=\frac{y}{\zeta^2}$ and $\zeta=-\frac{1}{\upsilon}$,  we find
\[dz_1\wedge dz_2= dz\wedge d\zeta = dy\wedge d\upsilon.\]
\end{proof}

%

The holomorphic Darboux coordinates are not suited for local analysis near the exceptional divisor since the ratio of the eigenvalues of $g$ is unbounded as $a\to 0$. This can be remedied by rescaling the fibre coordinate $z$ or $y$ in the above proof.
\begin{Lem}
\label{Lem:Rescaled}
Near a component $E_i$ of the exceptional divisor, we may choose finitely many coordinate patches and coordinates $(z_a, \zeta)$ such that 
\[C^{-1} a_i \leq g\leq C a_i\]
and
\[\eta=a_i \cdot \sqrt{2} dz_a \wedge d\zeta\]
in these coordinates. We shall refer to the above coordinates as \textbf{rescaled holomorphic Darboux coordinates}.
 
\end{Lem}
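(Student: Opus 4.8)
The plan is to take the holomorphic Darboux coordinates $(z,\zeta)$ on the bundle chart $\{\xi\neq 0\}$ constructed in the proof of Lemma \ref{Lem:Eigenvalues}, in which $\eta=\sqrt{2}\,dz\wedge d\zeta$ and the Eguchi-Hanson metric is given by \eqref{eq:EHonBundle1}, and simply rescale the fibre coordinate by setting $z=a_i z_a$. The whole point is that Eguchi-Hanson space is scale-invariant: the homogeneity $f_{a_i}(\sqrt{a_i}\,\cdot)=a_i f_1(\cdot)$ of the potential \eqref{eq:EHPot} (which is the content of Lemma \ref{Lem:Homothety}) shows that this rescaling turns the metric with parameter $a_i$ into $a_i$ times the fixed, $a_i$-independent Eguchi-Hanson metric $h$ with parameter $1$. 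Concretely, writing $\tilde{u}\coloneqq u/a_i=2\vert z_a\vert(1+\vert\zeta\vert^2)$, which is manifestly independent of $a_i$, one has $\sqrt{a_i^2+u^2}=a_i\sqrt{1+\tilde{u}^2}$, and substitution into \eqref{eq:EHonBundle1} lets one factor out an overall $a_i$, leaving precisely \eqref{eq:EHonBundle1} with $a=1$ and $(z,u)$ replaced by $(z_a,\tilde{u})$.

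First I would carry out this substitution to obtain $g=a_i h$ in the $(z_a,\zeta)$ coordinates. Restricting to the region $\tilde{u}\leq R$ (equivalently $u\leq R a_i$) with $\vert\zeta\vert\leq 1$ — which is the neighbourhood of $E_i$ where the curvature concentrates — the coordinate $z_a$ ranges over a fixed compact set on which $h$ is a smooth, positive-definite, $a_i$-independent metric. Its eigenvalues are therefore bounded above and below by positive constants, and the identity $g=a_i h$ immediately yields $C^{-1}a_i\leq g\leq C a_i$. For the volume form I would compute directly $\eta=\sqrt{2}\,dz\wedge d\zeta=\sqrt{2}\,(a_i\,dz_a)\wedge d\zeta=a_i\sqrt{2}\,dz_a\wedge d\zeta$, which is exactly the claimed identity.

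To cover a full neighbourhood of $E_i$ I would repeat the argument on the second bundle chart $\{\varsigma\neq 0\}$ using the coordinate $y$, and, for the part of the base with $\vert\zeta\vert\geq 1$, the further change $\zeta=-1/\upsilon$ already introduced in Lemma \ref{Lem:Eigenvalues}; the same factor-out-$a_i$ computation applies verbatim, and the corresponding volume-form identity follows from $dz\wedge d\zeta=dy\wedge d\upsilon$, established in the proof of Lemma \ref{Lem:Eigenvalues}. Finitely many such rescaled Darboux charts then cover the neighbourhood $\{u\leq R a_i\}\supset E_i$ of the divisor.

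The only real subtlety — and the step I expect to require the most care — is the bookkeeping of domains: one must check that the rescaled region $\tilde{u}\leq R$ genuinely contains a neighbourhood of $E_i$ (it does, since $E_i=\{\tilde{u}=0\}$) and that it is covered by finitely many charts uniformly in $a_i$, while at the chart overlaps (the $\zeta\leftrightarrow\upsilon$ transition over the base $\C\P^1$) the rescaled metric $h$ stays uniformly elliptic. Both hold because, after rescaling, everything is governed by the single fixed metric $h$ of parameter $1$, so there is no residual $a_i$-dependence to control.
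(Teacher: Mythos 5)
Your proposal is correct and follows essentially the same route as the paper: the paper's proof performs exactly the rescaling $z_a=z/a_i$ (with $u_a=u/a_i$) in \eqref{eq:EHonBundle1}, factors out the overall $a_i$ to obtain the $a_i$-independent expression \eqref{eq:EHonBundle3}, reads off the eigenvalue bounds on the region $u_a\leq 1$, $\vert\zeta\vert\leq 1$, and handles $\vert\zeta\vert\geq 1$ via $y_a=y/a_i$ and $\zeta=-1/\upsilon$. Your additional remarks — citing the homothety of the Eguchi-Hanson potential as the conceptual reason the factorization works, and writing out the volume-form identity $\eta=\sqrt{2}\,dz\wedge d\zeta=a_i\sqrt{2}\,dz_a\wedge d\zeta$ — are points the paper leaves implicit, but they do not change the argument.
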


\begin{proof}
We return to the expression \eqref{eq:EHonBundle1}. 
By a change of scale, $z_a\coloneqq \frac{z}{a}$ and $u_a\coloneqq 2\vert z_a\vert (1+\vert\zeta\vert^2)$, \eqref{eq:EHonBundle1} becomes
\begin{equation}
ds^2_{EH}=\frac{a}{\sqrt{1+u_a^2}}\left((1+\vert \zeta\vert^2)^2\vert dz_a\vert^2+\frac{\left(1+ u_a^2(1+\vert \zeta\vert^2)\right)}{(1+\vert \zeta\vert^2)^2}\vert d\zeta \vert^2+ 4(1+\vert\zeta\vert^2)\text{Re}\left(\overline{z_a}\zeta dz_a d\overline{\zeta}\right)\right).
\label{eq:EHonBundle3}
\end{equation}
This expression has eigenvalues bounded by $Ca$ and $C^{-1} a$ for all $u\leq a$ (i.e. $ u_a\leq 1$) and $\vert \zeta\vert\leq 1$. For $\vert \zeta\vert\geq 1$, we use $y_a\coloneqq y/a$ and $\zeta=-1/\upsilon$ as before to get the same bounds $Ca$ and $C^{-1} a$.

\end{proof}

We will from now on be working locally on an open subset $V_j\subset X$. By Lemma \ref{Lem:Eigenvalues} and \ref{Lem:Rescaled}, we may choose this open set to have (rescaled) holomorphic Darboux coordinates. In particular, $V_j$ may be taken biholomorphic to a Euclidean ball $B_{2R}$ in $\C^2$ centered on the origin.  The Ricci-flat metric $\tilde{g}$ satisfied the Monge-Amp\`{e}re equation, which in these coordinates simply reads
\[\det(\tilde{g})=const.,\]
where the constant is A ($a_i^2 A$).
We may assume there exists a locally defined K\"{a}hler potential $\tilde{\phi}\colon V_j\to \R$ with $\nabla^2 \tilde{\phi}=\tilde{g}$. We will write $\tilde{\phi}$ and not $\tilde{\phi}_j$ to not clutter the notation.

\noindent\textbf{Step 2 -- A local Harnack inequality:}
The key analysis result will be the following.
\begin{Prop}[{\cite{Siu87}[p. 102]}]
Let $g$ be a K\"{a}hler metric on $B_{2R}$, the ball of radius $2R$ centered on $0\in \C^n$. Let $q>n$. Then there exists a $p>0$ and $C>0$ such that if $g^{\on \mu} \partial_{\mu}\partial_{\on} v\leq \theta$ and $v>0$ on $B_{2R}$, then
\begin{equation}
\label{eq:Harnack}
R^{-2n/p} \norm{v}_{L^p(B_R,g)}\leq C\left(\inf_{B_R} v + R^{2(q-n)/q} \norm{\theta}_{L^q(B_{2R,g})}\right).
\end{equation}
The constant $C$ depends on $n$, $\diam(B_r,g)$, $\Vol(B_r,g)$ and the constant in the Sobolov inequality
\[\norm{f}^2_{L^{2n/(n-1)}(B_{2R},g)}\leq C_{Sob}\left(\norm{\nabla f}^2_{L^2(B_{2R},g)}+ \norm{f}^2_{L^2(B_{2R},g)}\right)\]
for all compactly supported $f$.
\end{Prop}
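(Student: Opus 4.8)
The plan is to recognize the stated estimate as nothing more than the classical weak Harnack inequality for positive supersolutions of a uniformly elliptic, divergence-form equation, and to prove it by the De Giorgi--Nash--Moser iteration. The first point to record is that, because $g$ is K\"ahler, the operator $v\mapsto g^{\on\mu}\partial_\mu\partial_{\on}v$ coincides (up to the constant factor relating the complex and real Laplacians mentioned in the notation section) with the Laplace--Beltrami operator $\Delta_g$. Crucially, this operator has no first-order terms and is in divergence form, $\Delta_g v=\frac{1}{\sqrt{\det g}}\,\partial_i\!\left(\sqrt{\det g}\,g^{ij}\partial_j v\right)$. Hence the hypothesis $g^{\on\mu}\partial_\mu\partial_{\on}v\le\theta$ says exactly that the positive function $v$ is a supersolution of $\Delta_g v=\theta$ on $B_{2R}$, which is the standard setting for Moser's weak Harnack estimate. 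All the Euclidean machinery applies verbatim once the Lebesgue measure is replaced by $d\Vol_g$ and the Euclidean Sobolev inequality by the one hypothesized in the statement.

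To run the iteration I would first absorb the inhomogeneity by setting $\ob v\coloneqq v+k$ with $k\coloneqq R^{2(q-n)/q}\norm{\theta}_{L^q(B_{2R},g)}$; then it suffices to prove $R^{-2n/p}\norm{\ob v}_{L^p(B_R,g)}\le C\inf_{B_R}\ob v$, since $\norm{v}_{L^p}\le\norm{\ob v}_{L^p}$ and $\inf\ob v=\inf v+k$ recover the asserted inequality. The two analytic inputs are a Caccioppoli/energy estimate, obtained by testing the weak form of the inequality against $\eta^2\ob v^{\beta}$ for suitable negative exponents $\beta$ with $\eta$ a cutoff, and the Sobolev inequality of the proposition, which feeds each step of the iteration. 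Iterating over negative powers yields the one-sided bound $\sup_{B_R}\ob v^{-1}\le C R^{-2n/p_0}\norm{\ob v^{-1}}_{L^{p_0}(B_{2R},g)}$ for some $p_0>0$, i.e.\ control of $(\inf_{B_R}\ob v)^{-1}$ by a negative $L^{p_0}$ average of $\ob v$.

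The crossover from negative to positive exponents is the crux. Here I would test against $\eta^2\ob v^{-1}$ to obtain the logarithmic energy bound $\int_{B_R}\abs{\nabla\log\ob v}^2\,d\Vol_g\le C R^{2n-2}$, which places $w\coloneqq\log\ob v$ in BMO with controlled seminorm, and then invoke the John--Nirenberg lemma to produce a small $p_0>0$ with $\left(\tfrac{1}{\Vol_g(B_R)}\int_{B_R}\ob v^{p_0}\,d\Vol_g\right)\left(\tfrac{1}{\Vol_g(B_R)}\int_{B_R}\ob v^{-p_0}\,d\Vol_g\right)\le C$. Combining this with the negative-exponent bound gives $\left(\tfrac{1}{\Vol_g(B_R)}\int_{B_R}\ob v^{p_0}\,d\Vol_g\right)^{1/p_0}\le C\inf_{B_R}\ob v$, which is exactly the claim with $p=p_0$ after undoing the averaging. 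No genuinely new idea beyond Moser's is required; the main obstacle is purely bookkeeping, namely ensuring that every constant produced by the cutoff manipulations, the Sobolev inequality, and the John--Nirenberg step depends only on $n$, $\diam(B_R,g)$, $\Vol(B_R,g)$ and $C_{Sob}$, and tracking the powers of $R$ through the parabolic-type scaling so that the normalizations $R^{-2n/p}$ and $R^{2(q-n)/q}$ emerge correctly (the exponent $2n$ reflecting the real dimension of $B_{2R}\subset\C^n$).
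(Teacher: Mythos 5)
The paper never proves this Proposition: it is quoted from Siu, and the text immediately following it says ``We refer to \cite{Siu87}[pp.\ 107-112] for a proof.'' So the only comparison available is with Siu's cited argument, which is of the same type as your plan: a Moser-style iteration for the weak Harnack inequality. Within that scheme several of your observations are exactly the right ones: the K\"ahler condition is what makes $g^{\on\mu}\partial_{\mu}\partial_{\on}$ a divergence-form operator with respect to $d\Vol_g$ (indeed $\partial_{\mu}(\det(g)\,g^{\on\mu})=0$ precisely because the metric is K\"ahler), the Sobolev exponent $2n/(n-1)$ and the powers $R^{-2n/p}$ and $R^{2(q-n)/q}=R^{2-2n/q}$ are the correct ones for real dimension $2n$, and the reduction to the homogeneous case via $v+k$ with $k=R^{2(q-n)/q}\norm{\theta}_{L^q(B_{2R},g)}$ is the standard one.

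The genuine gap is in the steps you classify as ``purely bookkeeping,'' which is where essentially all of the cited six pages are spent. Your claim that ``all the Euclidean machinery applies verbatim once the Lebesgue measure is replaced by $d\Vol_g$'' fails for the ingredients the crossover rests on: (i) converting the Caccioppoli bound $\int_{B}\vert\nabla\log(v+k)\vert^2_g\,d\Vol_g\leq CR^{2n-2}$ into a BMO bound requires a Poincar\'{e} inequality on all small balls, and (ii) the John--Nirenberg lemma is proved by a Calder\'{o}n--Zygmund decomposition, which requires the underlying measure to be doubling. For Lebesgue measure on Euclidean balls both are free; for $d\Vol_g$ and the geometry of $g$ neither follows from the data the constant is allowed to depend on: a global Sobolev inequality for compactly supported functions together with volume and diameter bounds implies neither local doubling nor local Poincar\'{e} inequalities. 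If you instead run John--Nirenberg on Euclidean cubes with Lebesgue measure, you need two-sided ellipticity bounds on $g$ in the coordinates---also not a permitted dependency, and dispensing with such bounds is the entire point of the Proposition, since the paper invokes it near the exceptional divisor where the ellipticity ratio of $\tilde{g}$ in Darboux coordinates degenerates as $a\to 0$; the $a$-independence of $C$ in the Corollary that follows is deduced from volume, diameter, Sobolev control and Ricci-flatness alone. The same objection already touches your energy estimates: a cutoff adapted to the Euclidean balls $B_R\subset B_{2R}$ has $\vert\nabla\eta\vert_g$ controlled only through ellipticity bounds, so even the Caccioppoli step is not a verbatim transplant. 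In short, your outline names the correct scheme (the one Siu's proof follows), but the decisive estimate---the crossover $\bigl(\Vol_g(B_R)^{-1}\int_{B_R}(v+k)^{p_0}\,d\Vol_g\bigr)\bigl(\Vol_g(B_R)^{-1}\int_{B_R}(v+k)^{-p_0}\,d\Vol_g\bigr)\leq C$ with $C$ depending only on $n$, $\diam$, $\Vol$ and $C_{Sob}$---is asserted rather than proved, and it is exactly the part that cannot be borrowed unchanged from the Euclidean theory.
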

We refer to \cite{Siu87}[pp. 107-112] for a proof. 
\begin{Cor}
Let $X$ be a Kummer K3 surface with Ricci-flat metric $\tilde{g}$. Let $V_j\cong B_{2R}$ be a holomorphic Darboux coordinate patch.   Then the constant $C$ in \eqref{eq:Harnack} can be chosen independently of $a$. 
\end{Cor}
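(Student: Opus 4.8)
The plan is to read off from Siu's proposition exactly what the constant $C$ depends on --- the dimension $n$, the diameter $\diam(B_R,g)$, the volume $\Vol(B_R,g)$, and the Sobolev constant $C_{Sob}$ --- and to show that each of these can be bounded independently of $a$ for the metric we actually analyse on $V_j$. Since $n=2$ is fixed, the entire task reduces to uniform bounds on the three geometric inputs.

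First I would reduce to a metric that is uniformly comparable to the Euclidean metric on the fixed coordinate ball $B_{2R}$. Away from the exceptional divisor this is automatic: by the second part of Lemma \ref{Lem:Eigenvalues} the eigenvalues of $\tilde g$ in ordinary holomorphic Darboux coordinates lie in $[C_K^{-1},C_K]$ with $C_K$ independent of $a$. Near a component $E_i$ this fails for $\tilde g$ itself, but Lemma \ref{Lem:Rescaled} provides rescaled holomorphic Darboux coordinates in which $C^{-1}a_i\le \tilde g\le Ca_i$. In these coordinates the natural metric to feed into Siu's proposition is $\hat g\coloneqq a_i^{-1}\tilde g$, whose eigenvalues lie in $[C^{-1},C]$ and which satisfies the clean Monge-Amp\`ere equation $\det(\hat g)=A$. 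Since the subsequent steps of the H\"older argument are local and can be carried out verbatim for $\hat g$, it suffices to establish \eqref{eq:Harnack} with an $a$-independent constant for a metric that is uniformly equivalent to the Euclidean metric on $B_{2R}$.

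With a metric $\hat g$ satisfying $C^{-1}g_{Euc}\le \hat g\le Cg_{Euc}$ on $B_{2R}$, the diameter and volume are immediate: $\diam(B_R,\hat g)$ is comparable to the Euclidean diameter $2R$ and $\Vol(B_R,\hat g)$ is comparable to the Euclidean volume of $B_R$, both with $a$-independent two-sided comparison constants. For the Sobolev constant I would invoke the stability of the Sobolev inequality under uniform ellipticity: the Euclidean Sobolev inequality on $B_{2R}$ transfers to $\hat g$ with a constant depending only on the ellipticity ratio and on $R$, because the gradient term, the $L^{2n/(n-1)}$-term, and the $L^2$-term each change only by factors controlled by that ratio. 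As the ratio is bounded uniformly in $a$, so is $C_{Sob}$, and the corollary follows.

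The main obstacle is precisely this last point --- the uniform control of $C_{Sob}$. The diameter and volume follow directly from two-sided eigenvalue comparison, but the Sobolev constant is sensitive to the \emph{ratio} of the eigenvalues, which for $\tilde g$ itself degenerates like $r_a^2/\vert a\vert^2$ near $E$ by Lemma \ref{Lem:Eigenvalues}. It is exactly for this reason that one must pass to the rescaled holomorphic Darboux coordinates of Lemma \ref{Lem:Rescaled}, in which the eigenvalue ratio of $\hat g$ is bounded uniformly in $a$; once this uniform ellipticity is in hand, transferring the Euclidean Sobolev inequality is routine, and the remaining bookkeeping is harmless.
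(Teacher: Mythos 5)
Your reduction has a genuine gap: the two lemmas you cite do not together cover $X$, and on the uncovered region uniform ellipticity genuinely fails, so the reduction to a metric uniformly equivalent to the Euclidean one cannot be carried out. The second part of Lemma \ref{Lem:Eigenvalues} gives $a$-independent eigenvalue bounds only on a compact set $K\subset X\setminus E$, with a constant $C_K$ that degenerates as $K$ approaches $E$; and the rescaled coordinates of Lemma \ref{Lem:Rescaled} give $C^{-1}a_i\le g\le C a_i$ only on the shrinking region $u\le a_i$ (this is exactly what its proof establishes; the two-sided bound is false at a fixed distance $u=\delta$, where the eigenvalues of $\tilde{g}$ in the rescaled coordinates range from roughly $a_i^2/\delta$ to $\delta$). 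Hence the transition region $a_i\lesssim u\lesssim 1$ is covered by neither chart family, and this is not mere bookkeeping: at a point with $u=\sqrt{a_i}$, the ratio of the largest to smallest eigenvalue of $\tilde{g}$ is of order $1/(a_i^2+u^2)\approx 1/a_i$ in the coordinates \eqref{eq:EHonBundle1} and of order $1+u^2/a_i^2\approx 1/a_i$ in the coordinates \eqref{eq:EHonBundle3}, so it blows up as $\vert a\vert\to 0$ in both; no constant rescaling $\hat{g}=\lambda\tilde{g}$ can repair this, since the eigenvalue ratio is scale-invariant. This degeneration reflects the conical geometry of the annular region around the singular point of $\C^2/\mu_2$, and since the H\"older estimate of Proposition \ref{Prop:Holder} is global, patches meeting this region cannot simply be discarded.

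The paper's own proof is structured precisely so as to avoid this issue, and this is the essential difference from your approach: Siu's constant depends only on the intrinsic quantities $\diam(B_r,\tilde{g})$, $\Vol(B_r,\tilde{g})$, and the Sobolev constant for compactly supported functions, and the paper bounds all three globally, with no reference to how the metric looks in the chart. The volume is prescribed by the Monge-Amp\`{e}re equation \eqref{eq:NewMA} (in holomorphic Darboux coordinates $\det(\tilde{g})=A$); the diameter is bounded by combining the diameter bound for $g$ from the proof of Proposition \ref{Prop:PoincareIndep} with the $C^2$-estimate \eqref{eq:C2Norm}; and the Sobolev constant comes from Croke's isoperimetric estimate and Li's lemma as in the proof of Proposition \ref{Prop:LiSob}, whose only curvature input is a lower Ricci bound, which is free because $Ric_{\tilde{g}}=0$. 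The resulting global Sobolev inequality on $(X,\tilde{g})$ restricts to compactly supported functions on any coordinate ball. Parts of your argument are correct and would survive as local simplifications where uniform ellipticity is actually available (on compact sets away from $E$, and on $u\le a_i$ after rescaling, where your observation that replacing $\tilde{g}$ by $\hat{g}=a_i^{-1}\tilde{g}$ is harmless for \eqref{eq:Harnack} is sound), but as written the proposal does not prove the corollary for all holomorphic Darboux patches.
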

\begin{proof}
The volume bound follows from the Monge-Amp\`{e}re equation directly, which prescribes the volume form of $\tilde{g}$. The diameter bound for $g$ is argued in the proof of Proposition \ref{Prop:PoincareIndep} in Appendix \ref{App:Param}. The diameter bound for $\tilde{g}$ follows from this and the bound on $\nabla^2 \phi$,  \eqref{eq:C2Norm}. The Sobolev constant can be controlled as long as one has upper and lower bounds on the volume and diameter and a lower bound on the Ricci curvature - see the proof of Proposition \ref{Prop:LiSob} in Appendix \ref{App:Param}. The Ricci curvature vanishes for $\tilde{g}$, hence we have a uniform Sobolev constant.
\end{proof}

\noindent\textbf{Step 3 -- Harnack inequality for supersolutions:}
The Monge-Amp\`{e}re equation reads
\[\det(\tilde{g})=const.\]
in (rescaled) holomorphic Darboux coordinates. Let $\zeta\in \C^2$ with $\vert \zeta \vert=1$ be arbitrary. Differentiating the logarithm of the equation and using the Jacobi formula yields
\[\Tr(\tilde{g}^{-1} \nabla^2 \tilde{\phi}_\zeta)=0\]
and
\[\Tr(\tilde{g}^{-1} \nabla^2 \tilde{\phi}_{\zeta\overline{\zeta}})-\Tr(\tilde{g}^{-1}\nabla^2 \tilde{\phi}_{\overline{\zeta}} \tilde{g}^{-1} \nabla^2 \tilde{\phi}_\zeta)=0.\]
The first term on the left hand side is per definition $\tilde{\Delta}\tilde{\phi}_{\zeta\overline{\zeta}}$. The second term is the tensor norm of $\nabla^2 \tilde{\phi}_{\zeta}$, hence can be dropped to give the inequality
\[\tilde{\Delta}\tilde{\phi}_{\zeta\overline{\zeta}} \geq 0.\]
This allows us to apply the Harnack inequality to the function
\[v_{sup} \coloneqq \sup_{B_{2R}} \tilde{\phi}_{\zeta\overline{\zeta}}  -\tilde{\phi}_{\zeta\overline{\zeta}} \geq 0\]
to deduce

\begin{equation}
R^{-4/p} \norm{v_{sup}}_{L^p(B_R,\tilde{g})}\leq C \inf_{B_R} v_{sup}.
\label{eq:SupHarnack}
\end{equation}

The constant $C$ on the right hand side can be chosen to be independent of $a$, as discussed in step 2.

\noindent\textbf{Step 4 -- Harnack inequality for subsolutions:}
We need two linear algebra results. The first goes as follows.
\begin{Lem}[{\cite[Lemme 1]{Gav}}]
Let $\mathcal{H}_+$ denote all $n\times n$ hermitian matrices with positive eigenvalues. Let $A\in \mathcal{H}_+$. Then
\[\det(A)^{1/n}=\frac{1}{n} \inf \{ \tr(AB)\, \vert\, B\in \mathcal{H}_+,\, \det(B)=1\}.\]
\end{Lem}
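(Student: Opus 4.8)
The plan is to reduce everything to the scalar AM--GM inequality applied to eigenvalues. Since $A\in\mathcal{H}_+$ it has a positive-definite Hermitian square root $A^{1/2}$, and for any competitor $B\in\mathcal{H}_+$ with $\det(B)=1$ I would set $C\coloneqq A^{1/2}BA^{1/2}$. This $C$ is again positive-definite Hermitian, so its eigenvalues $\lambda_1,\dots,\lambda_n$ are all positive. Using the cyclicity of the trace I get $\tr(AB)=\tr(A^{1/2}BA^{1/2})=\tr(C)=\sum_i\lambda_i$, while multiplicativity of the determinant gives $\det(C)=\det(A)\det(B)=\det(A)$, hence $\prod_i\lambda_i=\det(A)$.

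The lower bound then follows at once from AM--GM:
\[
\frac{1}{n}\tr(AB)=\frac{1}{n}\sum_{i=1}^n\lambda_i\geq\left(\prod_{i=1}^n\lambda_i\right)^{1/n}=\det(A)^{1/n},
\]
and this holds for every admissible $B$, so that $\frac{1}{n}\inf\{\tr(AB)\mid B\in\mathcal{H}_+,\det(B)=1\}\geq\det(A)^{1/n}$.

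To see that the infimum is in fact attained, and equals this bound, I would exhibit the explicit minimizer $B_0\coloneqq\det(A)^{1/n}A^{-1}$. It is positive-definite Hermitian, and $\det(B_0)=\bigl(\det(A)^{1/n}\bigr)^n\det(A)^{-1}=1$, so $B_0$ is admissible; moreover $\tr(AB_0)=\det(A)^{1/n}\tr(\mathbb{1})=n\det(A)^{1/n}$. Combining the two halves yields the claimed equality. There is no real obstacle here: the only steps deserving a word of care are the existence of $A^{1/2}$ and the equality case in AM--GM (attained exactly when all $\lambda_i$ coincide, which is what the choice $B_0$ enforces), both of which are standard.
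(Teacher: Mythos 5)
Your proof is correct, and there is nothing in the paper to compare it against: the paper does not prove this lemma, it simply imports it as a black box via the citation \cite[Lemme 1]{Gav}. Your argument is the standard self-contained one and every step checks out. Conjugation gives $C\coloneqq A^{1/2}BA^{1/2}$ Hermitian and positive-definite with $\tr(C)=\tr(AB)$ and $\det(C)=\det(A)\det(B)=\det(A)$, so AM--GM on the eigenvalues of $C$ yields $\frac{1}{n}\tr(AB)\geq\det(A)^{1/n}$ for every admissible $B$; and the competitor $B_0=\det(A)^{1/n}A^{-1}$ is admissible (here $\det(B_0)=\det(A)\det(A)^{-1}=1$, using $\det(cM)=c^n\det(M)$) and achieves $\tr(AB_0)=n\det(A)^{1/n}$, so the infimum is attained and equals the lower bound. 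The only cosmetic remark: for the equality statement you do not even need to discuss the equality case of AM--GM, since the two inequalities $\inf\geq n\det(A)^{1/n}$ and $\inf\leq\tr(AB_0)=n\det(A)^{1/n}$ already pin down the value; the observation that $B_0$ makes all eigenvalues of $C$ coincide is a nice consistency check but not logically required. Your proof could be spliced into the paper verbatim to make this step self-contained rather than cited.
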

Let $x,y\in B_{2R}$. We will specify $x$ later, and $y$ will be integrated. 
Let $B=\kappa \tilde{g}^{-1}(y)$, where $\kappa=\sqrt{A}$ ($\kappa=a_i \sqrt{A}$) when we are in (rescaled) holomorphic Darboux coordinates. Then the Monge-Amp\`{e}re equation says $\det(B)=1$, and the lemma, implies
\[\sqrt{\kappa}=\sqrt{\det(\tilde{g}(x))}\leq  \frac{1}{2}\tr(B\tilde{g}(x)).\]
On the other hand, we trivially have
\[\tr(B \tilde{g}(y))=2\sqrt{\kappa},\]
so
\begin{equation}
\tr(B(\tilde{g}(y)-\tilde{g}(x))\leq 0.
\label{eq:SubHarnack1}
\end{equation}
To proceed, we need the second linear algebra result.
\begin{Lem}[{\cite[p.103]{Siu87},\cite{Blocki12}[Lemma 5.17]}]
For $0<\lambda<\Lambda<\infty$, let $S(\lambda,\Lambda)$ denote the set of hermitian $n\times n$-matrices with eigenvalues in the interval $[\lambda,\Lambda]$. Then one can find unit vectors $\zeta_1,\dots, \zeta_N\in \C^n$ and $0<\lambda_*< \Lambda_*<\infty$ depending only on $n,\lambda,$ and $\Lambda$ such that every $H\in S(\lambda,\Lambda)$ can be written
\[H=\sum_{k=1}^N \beta_k \zeta_k \otimes \overline{\zeta_k}\]
with $\beta_k\in [\lambda_*,\Lambda_*]$.

\end{Lem}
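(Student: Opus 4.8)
The plan is to argue by compactness in the real vector space $\mathcal{H}$ of Hermitian $n\times n$ matrices, which has real dimension $n^2$. The rank-one matrices $\zeta\otimes\overline{\zeta}$ with $\abs{\zeta}=1$ form a compact set $\mathcal{P}\subset\mathcal{H}$; they span $\mathcal{H}$ over $\R$ (for instance $e_j\otimes\overline{e_j}$, $(e_j+e_k)\otimes\overline{(e_j+e_k)}$ and $(e_j+ie_k)\otimes\overline{(e_j+ie_k)}$ give a spanning family), and their conical hull is exactly the cone $\mathcal{H}_+$ of positive semi-definite matrices. The target set $S(\lambda,\Lambda)$ consists of positive-\emph{definite} matrices, hence lies in the interior of $\mathcal{H}_+$, and it is compact.

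First I would produce a \emph{local} decomposition near each $H_0\in S(\lambda,\Lambda)$. Being an interior point of $\mathcal{H}_+=\mathrm{cone}(\mathcal{P})$, $H_0$ is a strictly positive combination $H_0=\sum_{i=1}^m\alpha_i Q_i$ of finitely many $Q_i\in\mathcal{P}$ which moreover span $\mathcal{H}$: one starts from a Carathéodory representation, and if its vectors span only a proper subspace $W$ one picks $Q'\in\mathcal{P}\setminus W$ (available since $\mathcal{P}$ spans $\mathcal{H}\not\subset W$), uses that $H_0-tQ'\in\mathcal{H}_+$ for small $t>0$, and averages the two representations to strictly enlarge the span; this terminates after at most $n^2$ steps. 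The linear map $\R^m\to\mathcal{H}$, $\beta\mapsto\sum_i\beta_i Q_i$, is then surjective, hence open, and $(\alpha_i)$ lies in the open positive orthant. By openness there is a neighbourhood $\mathcal{U}_{H_0}$ of $H_0$ in $\mathcal{H}$ such that every $H\in\mathcal{U}_{H_0}$ is $\sum_i\beta_i Q_i$ with coefficients $\beta_i$ in a compact interval $[\lambda_*^{H_0},\Lambda_*^{H_0}]\subset(0,\infty)$ near the $\alpha_i$.

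Next I would cover the slightly enlarged compact set $S(\lambda/2,\Lambda)$ by these neighbourhoods $\mathcal{U}_{H_0}$ and extract a finite subcover. Let $\zeta_1,\dots,\zeta_N$ be the union of all unit vectors occurring in the corresponding finitely many local decompositions, and let $\Lambda_*$ be the maximum of the finitely many upper bounds. Then every $H\in S(\lambda/2,\Lambda)$ can be written $H=\sum_{k=1}^N\gamma_k\,\zeta_k\otimes\overline{\zeta_k}$ with $0\le\gamma_k\le\Lambda_*$, where the vectors outside the relevant chart are assigned coefficient $0$.

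The only remaining issue, and the crux of the argument, is that the conclusion demands that \emph{all} coefficients be bounded \emph{below} by a positive $\lambda_*$, whereas the unused vectors above carry coefficient $0$; the open-mapping step only yields positivity near a single point, and naive patching leaves many coefficients zero. I would repair this by a uniform shift. Set $Q\coloneqq\sum_{k=1}^N\zeta_k\otimes\overline{\zeta_k}$; since the $\zeta_k$ span $\C^n$ (the vectors from one chart already span $\mathcal{H}$, hence $\C^n$), $Q$ is positive definite, so $Q\preceq\norm{Q}_{\mathrm{op}}\mathbb{1}$. Given $H\in S(\lambda,\Lambda)$, put $\epsilon\coloneqq\frac{\lambda}{2\norm{Q}_{\mathrm{op}}}$. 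Then $H-\epsilon Q$ has eigenvalues in $[\lambda/2,\Lambda]$, i.e. $H-\epsilon Q\in S(\lambda/2,\Lambda)$, so by the previous step $H-\epsilon Q=\sum_k\gamma_k\,\zeta_k\otimes\overline{\zeta_k}$ with $0\le\gamma_k\le\Lambda_*$. Hence $H=\sum_k(\gamma_k+\epsilon)\,\zeta_k\otimes\overline{\zeta_k}$ with $\beta_k=\gamma_k+\epsilon\in[\epsilon,\Lambda_*+\epsilon]$, and the lemma follows with $\lambda_*=\epsilon$ and upper bound $\Lambda_*+\epsilon$, all depending only on $n,\lambda,\Lambda$. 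The main obstacle is thus this positive lower bound, and the $\epsilon Q$-shift is the device that secures it.
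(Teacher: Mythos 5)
The paper does not actually prove this lemma: it is imported verbatim from Siu \cite[p. 103]{Siu87} and B\l ocki \cite[Lemma 5.17]{Blocki12}, with only a remark recording side properties of the cited proof. So the comparison can only be against those standard arguments, and on its own merits your proof is correct. I checked the three load-bearing steps. First, the span-enlargement loop in your local step works: if the rank-one matrices in a positive representation of $H_0$ span only $W\subsetneq\mathcal{H}$, a projection $Q'\notin W$ exists, $H_0-tQ'\succeq(\lambda-t)\mathbb{1}\succ 0$ for small $t$, and averaging the two positive representations strictly increases the span, so after at most $n^2$ rounds you have a positive combination over a spanning family; surjectivity of $\beta\mapsto\sum_i\beta_iQ_i$ then gives openness and the local coefficient bounds. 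Second, the finite-cover step over $S(\lambda/2,\Lambda)$ is routine. Third, and this is the genuine crux you correctly isolate, the uniform shift repairs the zero coefficients: since $\zeta_1,\dots,\zeta_N$ span $\C^n$, $Q=\sum_k\zeta_k\otimes\overline{\zeta_k}$ satisfies $0\preceq\epsilon Q\preceq\tfrac{\lambda}{2}\mathbb{1}$ with $\epsilon=\lambda/(2\norm{Q}_{\mathrm{op}})$ fixed, so $H-\epsilon Q\in S(\lambda/2,\Lambda)$ for every $H\in S(\lambda,\Lambda)$, and adding $\epsilon$ to each coefficient of its decomposition gives the uniform lower bound. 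This is the same family of ideas as the cited proofs (compactness plus openness of the linear evaluation map), but your global $\epsilon Q$-shift is a cleaner device for the lower bound than the local absorption arguments in the references. Note also that your construction delivers the paper's follow-up Remark for free: one may enlarge the collection $\{\zeta_k\}$ by any fixed orthonormal basis before forming $Q$, and the shift automatically assigns those vectors coefficients at least $\epsilon$.
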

\begin{Rem}
The proof yields $\lambda_*<\lambda/N$ and $\Lambda_*>\Lambda$, but can otherwise be chosen arbitrarily. We may also assume that the finite set of vectors contains an orthonormal basis. 
\end{Rem}
With this at hand, we find locally defined functions $\beta_k$ with $\lambda_*\leq \beta_k \leq \Lambda_*$ such that
\[B=\sqrt{\kappa}\tilde{g}^{-1}(y)=\sum_{k=1}^N \beta_k(y) \zeta_k \otimes \overline{\zeta_k}.\]
Hence
\[\tr(B(\tilde{g}(y)-\tilde{g}(x)) =\sum_{k=1}^N \beta_k(y) \left(\tilde{\phi}_{\zeta_k \overline{\zeta_k}}(y)-\tilde{\phi}_{\zeta_k \overline{\zeta_k}}(x)\right).\]

Let
\[M_{k,R}\coloneqq \sup_{B_R} \tilde{\phi}_{\zeta_k \overline{\zeta_k}}\]
\[m_{k,R}\coloneqq \inf_{B_R} \tilde{\phi}_{\zeta_k \overline{\zeta_k}},\]
and introduce the oscillation
\[\osc(R)\coloneqq \sum_{k=1}^N (M_{k,R}-m_{k,R}).\]
We also introduce the short-hand
\[w_k\coloneqq \tilde{\phi}_{\zeta_k \overline{\zeta_k}}.\]
Let $\ell \in \{1,\dots,N\}$ be arbitrary. Then the Harnack inequality tells us
\begin{align*}
R^{-4/p} \norm{\sum_{k\neq \ell}\left( M_{k,2R}-w_k\right)}_{L^p(B_R)}&\leq R^{-4/p} \sum_{k\neq \ell}\norm{ M_{k,2R}-w_k}_{L^p(B_R)} \\ 
&\leq C\left(\sum_{k\neq \ell} (M_{k,2R}-M_{k,R})\right).
\end{align*}
The last sum can be estimate a bit. Since $M_{k,2R}-M_{k,R}\geq 0$, we can include the term $k=\ell$ on the right hand side. We further have 
\[M_{k,2R}-M_{k,R}\leq M_{k,2R}-M_{k,R}+(m_{k,R}-m_{k,2R})=(M_{k,2R}-m_{k,2R})-(M_{k,R}-m_{k,R}).\]
So
\begin{equation}
R^{-4/p} \norm{\sum_{k\neq \ell}\left( M_{k,2R}-w_k\right)}_{L^p(B_R)}\leq C(\osc(2R)-\osc(R)).
\label{eq:OscStep}
\end{equation}
From \eqref{eq:SubHarnack1} and $\lambda_*\leq \beta_k\leq \Lambda_*$, we find
\[\lambda_* \vert w_{\ell}(y)-w_{\ell}(x)\vert\leq \Lambda_* \sum_{k\neq \ell}\vert M_{k,2R}-w_k(y)\vert.\]
Taking averaged $L^p$-norms here and using \eqref{eq:OscStep} gives us
\[R^{-4/p}\norm{w_{\ell}-w_{\ell}(x)}_{L^p(B_R)}\leq \frac{\Lambda_*}{\lambda_*} C(\osc(2R)-\osc(R)),\]
where all the integrals are with respect to $y$.
For any $\epsilon>0$, we can find $x\in B_{2R}$ such that $w_{\ell}(x)=m_{\ell,2R}+\epsilon$. Using this on the left hand side, we deduce
\[R^{-4/p}\norm{w_{\ell}-m_{\ell,2R}}_{L^p(B_R)}\leq \frac{\Lambda_*}{\lambda_*} C(\osc(2R)-\osc(R)) + \Vol(B_1) \epsilon.\]
Since $\epsilon$ was arbitrary, we can send it to $0$ and deduce
\begin{equation}
R^{-4/p}\norm{w_{\ell}-m_{\ell,2R}}_{L^p(B_R)}\leq \frac{\Lambda_*}{\lambda_*} C(\osc(2R)-\osc(R))
\label{eq:SubHarnack}
\end{equation}
for any $\ell \in \{1,\dots,N\}$.

\noindent\textbf{Step 5 -- Combining both Harnack estimates:}
Let $\ell \in \{1,\dots, N\}$. Then we have
\begin{align*}
\Vol(B_R)^{1/p}(M_{\ell,2R}-m_{\ell,2R})&=\norm{M_{\ell,2R}-m_{\ell,2R}}_{L^p(B_R)}\\
&\leq \norm{w_{\ell}-m_{\ell,2R}}_{L^p(B_R)}+\norm{M_{\ell,2R}-w_\ell}_{L^p(B_R)}.
\end{align*} 
Multiplying both sides by $R^{-4/p}$ and using both \eqref{eq:SupHarnack} and \eqref{eq:SubHarnack} yields
\[M_{\ell,2R}-m_{\ell,2R} \leq C \frac{\Lambda_*}{\lambda_*}\left(\osc(2R)-\osc(R)\right).\]
Summing over $\ell$ gives us
\begin{equation}
\osc(R)\leq \delta \osc(2R),
\label{eq:OscEquation}
\end{equation}
where $\delta\coloneqq 1-\frac{\lambda_*}{\Lambda_*CN}$.


The inequality \eqref{eq:OscEquation} gives us the H\"{o}lder regularity by \cite[Lemma 8.23]{GT}. Indeed, let $r<R$. Choose $m>0$ so that 
\[2^{-m} R \leq r\leq 2^{-m+1} R,\]
i.e.
\[m\geq \frac{\log\left(\frac{R}{r}\right)}{\log(2)}.\]
Then \eqref{eq:OscEquation} and the monotonicity of $\osc$ say
\[\osc(r)\leq \osc(2^{-m+1} R)\leq \delta^{m-1} \osc(R)\leq \frac{1}{\delta}\cdot \delta^{\frac{\log\left(\frac{R}{r}\right)}{\log(2)}} \osc(R)=\frac{1}{\delta}\left(\frac{r}{R}\right)^{-\frac{\log(\delta)}{\log(2)}} \osc(R).\]
The oscillation $\osc(R)$ can be bounded by bounding $\vert \tilde{\phi}_{\zeta\overline{\zeta}}\vert$ for arbitrary $\zeta$. For $\zeta\in \C^2$, there are $\mu,\nu\in \C$ such that $\zeta=\mu z+\nu w$, hence
\[\tilde{\phi}_{\zeta\overline{\zeta}}=\vert \mu\vert^2 \tilde{\phi}_{z\overline{z}}+2\text{Re}(\mu\overline{\nu} \tilde{\phi}_{z\overline{w}})+\vert \nu\vert^2 \tilde{\phi}_{w\overline{w}}=\ip{\begin{pmatrix}
\mu \\ \nu 
\end{pmatrix}}{\tilde{g}\begin{pmatrix}
\mu\\ \nu
\end{pmatrix}}.\]
The right hand side can be bounded by a constant (depending on $\zeta$) and the largest eigenvalue of $\tilde{g}$. Hence, by Lemma \ref{Lem:Eigenvalues} and \ref{Lem:Rescaled},
\[\vert \tilde{\phi}_{\zeta\overline{\zeta}}\vert \leq C \Lambda,\]
and 
\begin{equation}
\label{eq:Osc}
\osc(r)\leq C \Lambda\left(\frac{r}{R}\right)^\alpha 
\end{equation}
with
\[\alpha=-\frac{\log(1-\frac{\lambda_*}{C\Lambda_*})}{\log(2)}.\]
To finish, we have to bound $\alpha$. We distinguish between being near and away from the exceptional divisor. Lemma \ref{Lem:Eigenvalues} yields uniform bounds on the eigenvalues of $\tilde{g}$ away from the exceptional divisor. So $\frac{\lambda_*}{\Lambda_*}$ is uniformly bounded, and thus $\alpha<1$ uniformly. 

Near the exceptional divisor, the eigenvalues $\lambda$, $\Lambda$ are poorly behaved in the coordinates \eqref{eq:EHonBundle1}, so we use the rescaled coordinates \eqref{eq:EHonBundle3}. In these coordinates, $\frac{\lambda}{\Lambda}$ is uniformly bounded by Lemma \ref{Lem:Rescaled} as long as $u\leq a_i$. This can be achieved by rescaling $R\mapsto \sqrt{a_i}R$. By Lemma \ref{Lem:Rescaled} and Corollary \ref{Cor:ComplexHessian}, we have $\Lambda\leq C\sqrt{\vert a\vert}$. 
Hence
\[\osc(r)\leq C\left(\frac{r}{R}\right)^{\alpha} \vert a\vert^{\frac{1-\alpha}{2}}\]
 with $\alpha<1$ uniformly. 
So we get H\"{o}lder estimates also near the exceptional divisor.

\noindent\textbf{Step 6 -- H\"{o}lder bounds on $g$:}

\begin{Lem} 
 \label{Lem:PhiBound}
 Let $\{V_i\}$ be the cover of (rescaled) holomorphic Darboux coordinate neighbourhoods of Lemma \ref{Lem:Eigenvalues} and \ref{Lem:Rescaled}. Let   $\Phi_j \colon V_i \to \R$ be K\"{a}hler potentials for the patchwork metric $g$. Then there are uniform constants $C>0$ and $0< \alpha\leq 1$ such that
 \[\norm{\Phi_j}_{C^{2,\alpha}(V_i,g)}\leq C.\] 
\end{Lem}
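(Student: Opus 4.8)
The plan is to exploit the fact that, in contrast to $\tilde g$, the patchwork metric $g$ and its local Kähler potentials are completely explicit, so the required bound can simply be read off once the right coordinates are fixed. I would split the cover $\{V_i\}$ into the charts that remain on a fixed compact set $K\subset X\setminus E$, handled by the second part of Lemma \ref{Lem:Eigenvalues}, and the charts surrounding a component $E_i$ supplied by Lemma \ref{Lem:Rescaled}, and treat these two families separately.

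On a chart away from $E$ the argument is immediate. By \eqref{eq:GluedPot}, together with the expansion of \eqref{eq:EHPot} for $u\geq 1$ (where $f_a-f_{Euc}=-\tfrac{a^2}{2u}+O(a^4)$ on the neck and $\Phi_j=\vert z\vert^2$ outside it), the potential is of the form $\Phi_j=\vert z\vert^2+\vert a\vert^2 h_j$ with $h_j$ smooth and with all derivatives bounded independently of $a$. Since here $g=g_{Euc}+\vert a\vert^2 h$, Lemma \ref{Lem:Eigenvalues} pins the eigenvalues of $g$ into a fixed interval, so $g$-distances are comparable to coordinate distances uniformly in $a$, and $\norm{\Phi_j}_{C^{2,\alpha}(V_i,g)}\leq C$ follows with $C$ independent of $a$. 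A finite subcover disposes of all such charts at once.

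The substance is near $E$, where I would pass to the rescaled holomorphic Darboux coordinates $(z_a,\zeta)$ of Lemma \ref{Lem:Rescaled}. Substituting $z=a_i z_a$, $u=a_i u_a$ into \eqref{eq:EHPot} and \eqref{eq:EHonBundle1} shows that both the metric and the potential factor off the scale $a_i$: one obtains $g=a_i\hat g$ and $\Phi_j=a_i\hat\Phi$, where $\hat g=\partial\bar\partial\hat\Phi$ and $\hat\Phi=\sqrt{1+u_a^2}-\arsinh(1/u_a)$ (modulo a pluriharmonic term) are $a$-independent. The decisive point is that by Lemma \ref{Lem:Rescaled} the rescaled metric $\hat g$ is uniformly elliptic, $C^{-1}\leq\hat g\leq C$, on the fixed chart $\{u_a\leq 1,\ \vert\zeta\vert\leq 1\}$, so the whole rescaled geometry — distances, balls, and parallel transport — is controlled independently of $a$. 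The estimate then reduces to the $a$-independent statement that $\hat g$, being a fixed smooth metric on this fixed chart, is Hölder (indeed one may take any $0<\alpha\leq 1$). I would repeat the computation verbatim on the overlapping patch $\vert\zeta\vert\geq 1$ in the coordinates $(y_a,\upsilon)$ used in the proof of Lemma \ref{Lem:Rescaled}.

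The main obstacle, and the point I would be most careful about, is the bookkeeping of the scale $a_i$ together with the behaviour at the zero section. What is actually used downstream — when one writes $\nabla^2\phi=\tilde g-g$ and combines this lemma with the oscillation estimate \eqref{eq:Osc} — is the uniform Hölder control of the metric tensor $\nabla^2\Phi_j=g$, equivalently of $\hat g$ in the fixed rescaled geometry; the lower-order pieces of the bare potential carry the scale $a_i$ and, near $E$, the pluriharmonic $\arsinh(1/u_a)\sim-\log u_a$ term, so it is the Hölder control of the metric rather than of the raw potential that carries the argument. The only remaining care is to verify that $\hat g$ extends as a smooth, uniformly elliptic metric across $u_a=0$, where the fibre coordinate degenerates; this is already contained in Lemma \ref{Lem:Rescaled} and in \eqref{eq:EHonBundle3}, so no new curvature computation (beyond Corollary \ref{Cor:ComplexHessian}, which governs the companion bound for $\tilde g$) is required.
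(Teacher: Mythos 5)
Your proof is correct, and on the region away from the exceptional divisor it coincides with the paper's: both rest on the explicit expansion $\Phi_j=f_{Euc}+a_i^2\chi\,\xi_K$ (equivalently \eqref{eq:NeckPotential}), which makes the bound immediate there. Near $E$, however, you take a genuinely different route. The paper remarks that one \emph{could} ``compute the real Hessian of the Eguchi-Hanson potential directly and compare'', but declares it easier to re-run steps 3--5 of the H\"older argument, using that $\Phi_j$ satisfies $\det(\nabla^2\Phi_j)=const.$ in (rescaled) holomorphic Darboux coordinates; so the paper's bound near $E$ comes from the same Harnack/oscillation machinery, applied to $\Phi_j$ in place of $\tilde{\phi}$. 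You instead carry out the direct computation, organized through scale invariance: with $u=a_iu_a$ one has $f_{a_i}=a_i\bigl(\sqrt{1+u_a^2}-\arsinh(1/u_a)\bigr)$, hence $\Phi_j=a_i\hat{\Phi}$ and $g=a_i\hat{g}$ in the coordinates of Lemma \ref{Lem:Rescaled}, with $\hat{g}$ a fixed, uniformly elliptic, smooth metric on a fixed chart (cf. \eqref{eq:EHonBundle3}). Your route is more elementary (no second pass through the Harnack inequality) and yields a stronger conclusion---the rescaled metric is smooth, so any $\alpha\leq 1$ works, whereas Evans--Krylov only produces some $\alpha<1$; what the paper's route buys is uniformity of method (the identical oscillation argument covers $\tilde{\phi}$, the continuity-path potentials $\phi_t$, and $\Phi_j$) and it never has to touch the singular structure of the bare potential.

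On that last point, one assertion of yours needs correcting, although it does not damage the argument. The term $\arsinh(1/u_a)$ is \emph{not} pluriharmonic. One has $\arsinh(1/u_a)=-\log u_a+\log\bigl(1+\sqrt{1+u_a^2}\bigr)$ and $\log u_a=\log\vert z_a\vert+\log(1+\vert\zeta\vert^2)+\log 2$; only the piece $\log\vert z_a\vert$ is pluriharmonic, while $\log(1+\vert\zeta\vert^2)$ is smooth but not pluriharmonic---it is precisely the piece producing the Fubini--Study form on the zero section. The statement your argument actually needs is that $\hat{\Phi}-\log\vert z_a\vert$ extends smoothly across $u_a=0$ (it equals a smooth function of $u_a^2$ plus $\log(1+\vert\zeta\vert^2)$ plus a constant) and is a K\"ahler potential for $\hat{g}$ on the whole chart, since $\partial\overline{\partial}\log\vert z_a\vert=0$ away from the zero section. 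So the admissible choice of local potential near $E_i$ is $a_i\bigl(\hat{\Phi}-\log\vert z_a\vert\bigr)$, and the uniform $C^{2,\alpha}$ bound then follows, as you say, from the smoothness of this fixed $a$-independent function in the fixed rescaled geometry.
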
 
 
\begin{proof}
For the Euclidean region, this is clear. On an annulus $K$ around a component of the exceptional divisor $E_i$, the potential \eqref{eq:EHPot} can be written
\[f_{EH}=f_{Euc}+ a_i^2 \xi_K\]
for some smooth function $\xi_K$ which is regular as $a_i\to 0$. In the neck region, the patchwork potential can thus be written
\[\Phi=f_{Euc}+a_i^2 \chi \xi_K\]
where $\chi$ is a smooth cutoff function as described in Section \ref{Section:Kummer}. These expressions then give the required bound.

Near the exceptional divisor, one could compute the real Hessian of the Eguchi-Hanson potential \eqref{eq:EHPot} directly and compare. But it is probably easier to just repeat steps 3-5, since the potential $\Phi$ satisfies 
\[\det(\nabla^2 \Phi)=const.\]
where the constant is 1 ($a_i^2$) in (rescaled) holomorphic Darboux coordinates. 
\end{proof}

\noindent\textbf{Step 7 -- H\"{o}lder bounds on $\phi$:}
In a coordinate patch $V_i$, we write
\[\tilde{\phi}=\Phi_i + \phi,\]
hence
\[\vert  D^2\phi\vert^2_g\leq \vert  D^2\tilde{\phi}\vert^2_g+\vert D^2\Phi\vert^2_g\leq C\vert  D^2\tilde{\phi}\vert^2_{\tilde{g}}+C\leq \tilde{C},\]
where we have used Corollary \ref{Cor:ComplexHessian} to compare the $g$- and $\tilde{g}$-norm.

\subsection{$C^1$ estimates}
The $C^1$ estimates of $\phi$ follow from a general result in Riemannian geometry.
\begin{Lem}
Let $(M,g)$ be a compact, connected Riemannian manifold with diameter $d\coloneqq \diam(M,g)$. Then there exists a monotonely increasing function $\alpha\colon [0,\infty)\to \R$ such that 
\begin{equation}
\sup_{p\in M} \vert \nabla f\vert^2_g \leq \alpha(d) \sup_{p\in M} \vert D^2 f\vert^2_g
\end{equation}
holds for any $f\in C^2(M;\R)$.
\end{Lem}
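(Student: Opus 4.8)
The plan is to reduce everything to a one-dimensional computation along a single geodesic, and to convert the resulting one-dimensional bound into a genuine gradient bound by playing the oscillation of $f$ off against the diameter. Write $m\coloneqq \sup_{p\in M}\vert \nabla f\vert_g$ and $H\coloneqq \sup_{p\in M}\vert D^2 f\vert_g$; if $m=0$ there is nothing to prove, so assume $m>0$. First I would pick a point $p^*\in M$ (which exists by compactness) at which $\vert \nabla f(p^*)\vert_g=m$, set the unit vector $V\coloneqq \nabla f(p^*)/m$, and let $\gamma\colon[0,\infty)\to M$ be the unit-speed geodesic with $\gamma(0)=p^*$ and $\dot{\gamma}(0)=V$. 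This geodesic is defined for all time since $M$ is compact, hence complete.

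The key one-dimensional object is $\phi(t)\coloneqq \ip{\nabla f(\gamma(t))}{\dot{\gamma}(t)}=(f\circ\gamma)'(t)$. Because $\gamma$ is a geodesic, $\phi'(t)=D^2f(\dot{\gamma},\dot{\gamma})$, and since $\dot{\gamma}$ is a unit vector the operator-norm bound $\vert D^2 f(\dot{\gamma},\dot{\gamma})\vert\le \vert D^2 f\vert_g\le H$ (the Hilbert--Schmidt norm dominating the operator norm) gives $\phi'(t)\ge -H$. With $\phi(0)=m$ this integrates to $\phi(t)\ge m-Ht$, and integrating once more yields, at $t_0\coloneqq m/H$,
\[
(f\circ\gamma)(t_0)-(f\circ\gamma)(0)=\int_0^{t_0}\phi(t)\,dt\ge \int_0^{t_0}(m-Ht)\,dt=\frac{m^2}{2H}.
\]
If $H=0$ the same computation would force $f\circ\gamma$ to be unbounded, contradicting compactness, so in that degenerate case $m=0$ automatically and the inequality is trivial.

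On the other hand, the left-hand side is bounded above by the oscillation of $f$, which is itself controlled by the gradient and the diameter: for any $x,y\in M$ a minimizing geodesic gives $\vert f(x)-f(y)\vert\le d(x,y)\,m\le d\,m$, so $\osc(f)\le d\,m$. Combining the two estimates gives $\tfrac{m^2}{2H}\le d\,m$, hence $m\le 2dH$. Squaring produces
\[
\sup_{p\in M}\vert \nabla f\vert_g^2\le 4d^2\,\sup_{p\in M}\vert D^2 f\vert_g^2,
\]
so the claim holds with the monotonely increasing function $\alpha(d)=4d^2$.

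The only genuinely delicate point is that integrating the Hessian along a geodesic controls merely the directional derivative $\ip{\nabla f}{\dot{\gamma}}$, not the full gradient, so a naive integration cannot by itself see $\vert\nabla f\vert$. The device that resolves this is to launch the geodesic in the gradient direction \emph{at the maximizing point} $p^*$, so that $\phi(0)$ already equals the quantity we want to bound, and then to sandwich $\osc(f)$ between the lower bound $m^2/(2H)$ coming from the integration and the upper bound $dm$ coming from the diameter. This sandwiching is exactly what upgrades the one-dimensional estimate into a bound on the multidimensional quantity $\sup\vert\nabla f\vert$.
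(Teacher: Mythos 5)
Your proof is correct, but it takes a genuinely different route from the paper's. The paper fixes a \emph{critical point} $q$ of $f$, joins it to an arbitrary point by a near-minimizing unit-speed curve, and runs a Gronwall-type argument on $\xi(t)=\vert\nabla f\vert^2_g(\gamma(t))$, using $\xi'\leq 2\vert D^2f\,\nabla f\vert_g\leq \vert D^2f\vert_g^2+\vert\nabla f\vert_g^2$; integrating the differential inequality $\frac{d}{dt}\left(e^{-t}\xi\right)\leq e^{-t}\vert D^2f\vert_g^2$ gives $\alpha(d)=de^{d}$, and only a subsequent remark --- which needs a Morse-density approximation to dodge the non-differentiability of $\vert\nabla f\vert_g$ at critical points --- improves this to $\alpha(d)=d^2$. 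You instead start at a point $p^*$ \emph{maximizing} $\vert\nabla f\vert_g$, launch a geodesic in the gradient direction, integrate the one-dimensional inequality $(f\circ\gamma)''\geq -H$ twice to force an oscillation of $f$ of at least $m^2/(2H)$, and cap the oscillation by $dm$ via the gradient bound along minimizing geodesics; the sandwich $m^2/(2H)\leq dm$ yields $m\leq 2dH$, i.e.\ $\alpha(d)=4d^2$. Each step checks out: completeness of $M$ gives the globally defined geodesic, $(f\circ\gamma)''=D^2f(\dot{\gamma},\dot{\gamma})$ holds precisely because $\gamma$ is a geodesic, the operator norm is indeed dominated by the Hilbert--Schmidt norm $\vert D^2f\vert_g$, and your treatment of the degenerate case $H=0$ is sound. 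Your argument is more elementary --- no Gronwall, no critical point, no $\epsilon$-limiting of curve lengths, no Morse approximation --- and it lands directly on a quadratic constant, within a factor $4$ of the paper's sharpened remark and far better than the exponential constant $de^d$ of the paper's main proof for large diameter. What the paper's method buys in exchange is robustness: a differential inequality for $\xi$ tolerates extra lower-order terms and does not require the comparison curve to be a geodesic, whereas your sandwich uses rather rigidly that $\gamma$ is a geodesic and that the oscillation of $f$ is globally controlled by the diameter.
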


\begin{proof}
The idea is simply to integrate the double derivative along a curve. Here are the details.

For an $f\in C^2(M;\R)$, let $q\in M$ denote a critical point. This point exists due to the compactness of $M$. Let $p\in M$ be any point. For $\epsilon>0$, let $\gamma\colon [0,T]$ denote a unit speed curve with $\gamma(0)=q$, $\gamma(T)=p$, and $T=L(\gamma)\leq d(p,q)+\epsilon$. Let $\xi\colon [0,T]\to \R$ denote the function
\[\xi(t)\coloneqq \vert \nabla f\vert^2_g(\gamma(t)).\]
This is differentiable, and
\[\xi'(t)\leq 2\vert D^2 f \nabla f\vert_g \leq \vert D^2 f\vert_g^2 + \vert \nabla f\vert_g^2,\]
where we have used the unit speed condition and the inequality between arithmetic and geometric mean. This differential inequality tells us
\[\frac{d}{dt} e^{-t}\xi(t)\leq e^{-t}\vert D^2 f\vert_g^2(\gamma(t)),\]
which integrates to
\[e^{-T} \xi(T)\leq \int_0^T e^{-t}\vert D^2 f\vert_g^2(\gamma(t))\, dt\leq T \sup_{p\in M}\vert D^2 f\vert_g^2(p).\]
This then yields
\[\xi(T)\leq (d+\epsilon)e^{d+\epsilon} \sup_{p\in M}\vert D^2 f\vert_g^2(p).\]
Doing the same construction for other points tells us
\[\sup_{p\in M} \vert \nabla f\vert^2_g(p) \leq (d+\epsilon)e^{d+\epsilon} \sup_{p\in M}\vert D^2 f\vert_g^2(p).\]
Letting $\epsilon\to 0$ gives the required bound with $\alpha(d)=d e^d$.
\end{proof}
\begin{Rem}
The above constructed $\alpha$ is not sharp. Indeed, if there are no critical points for $f$ between $\gamma(0)=q$ and $\gamma(T)=p$, then the function $\xi(t)=\vert \nabla f\vert_g(\gamma(t))$ is differentiable, and repeating the above argument yields $\xi'(t)\leq \vert D^2 f\vert_g(\gamma(t))$,
hence
\[ \sup_{p\in M} \vert \nabla f\vert_g \leq d \sup_{p\in M} \vert D^2 f\vert_g.\]
Since Morse functions are dense in the $C^2$-topology, we can approximate an arbitrary function with a function with isolated critical points. By shifting the curve $\gamma$ a bit, we can make sure there are no critical points between $q$ and $p$, and use the above estimate. All in all one finds $\alpha(d)=d^2$ as a better constant.  
\end{Rem}

\subsection{Higher order estimates}
 Kobayashi's approach to proving the higher order derivatives is worth sketching here. For $t\in [0,1]$ consider the equation
\begin{equation}
\left(\omega+i\partial \overline{\partial} \phi_t\right)^2=\left(1+t\left(Ae^{\psi}-1\right)\right) \omega^2
\label{eq:MAt}
\end{equation}
subject to
\[\int_X \phi_t\, \omega^2=0.\]
Here $A,\psi$ and $\omega$ are as before. This is what one considers for the continuity method to show that the Monge-Amp\`{e}re equation has a solution. Computing the $t$-derivative of \eqref{eq:MAt} yields
\begin{equation}
\tilde{\Delta}_t\left(\frac{\partial \phi_t}{\partial t}\right)=\frac{Ae^{\psi}-1}{1+t(Ae^{\psi}-1)},
\label{eq:Poisson}
\end{equation}
where we have introduced 
\[\tilde{\Delta}_t\coloneqq \Tr\left(\left(g+\nabla^2 \phi_t\right)^{-1}\nabla^2\right).\] 
Then \eqref{eq:AValue} and \eqref{eq:psibound} tell us that the right hand side is uniformly bounded from above and below,
\[\left\vert \frac{Ae^{\psi}-1}{1+t(Ae^{\psi}-1)}\right\vert\leq C\vert a\vert^2.\]
Indeed, writing $\varpi \coloneqq \frac{Ae^{\psi}-1}{1+t(Ae^{\psi}-1)}$, we have
\[\norm{\varpi}_{C^k(X,g)}\leq C_k \vert a\vert^2\]
for all $k\geq 0$.
Repeating the proofs\footnote{There is a slight added difficulty with the H\"{o}lder bound. The Monge-Amp\`{e}re equation for $\phi_t$ in holomorphic Darboux coordinates reads $\det(g+\nabla^2 \phi_t)=(1-t)e^{-\psi}+tA$, so the right hand side is not constant in the neck region when $t\neq 1$. The same proof goes through, however, with some harmless additional terms appearing in \eqref{eq:OscEquation}. We refer to \cite[pp. 100-107]{Siu87} for the necessary modifications.} of the $C^{2.\alpha}$-bounds for $\phi_t$ instead of $\phi$ gives us uniform H\"{o}lder bounds on $\nabla^2 \phi_t$. Staying away from the exceptional divisor, we can write $g=g_{Euc}+\mathcal{O}(\vert a\vert^2)$. So \eqref{eq:Poisson} is a Poisson equation with H\"{o}lder continuous coefficients and H\"{o}lder continuous right hand side. So Schauder estimates, \cite[Theorem 6.2]{GT} from elliptic theory gives local $C^{k,\alpha}$-bounds on $\partial_t \phi_t$, hence also on $\phi=\phi_1$ by integrating and using $\phi_0=0$. 

Near the exceptional divisor, one has to use coordinates like \eqref{eq:EHonBundle1} or \eqref{eq:EHonBundle3}. The last coordinates require a rescaling, hence the drop in powers of $a$ in \cite[equation 47, 48]{Kob90}.

\section{Discussion and Outlook}
\label{Section:Disc}
\subsection{Flatness of the tori}
In the author's unpublished PhD thesis \cite{PhD} there is an argument that a certain special Lagrangian torus $L$, which can be found by similar arguments as in Theorem \ref{Thm:StableGeod} using an anti-holomorphic isometry of rank 2  of Theorem is flat. The argument was that a special Lagrangian submanifold has a prescribed volume form
\[ \Vol_L=\sqrt{A} \text{Re}(\eta)_{\vert L} =\sqrt{A} g(\cdot,J_0 \cdot),\]
where $J_0$ is a complex structure on the flat torus $T$ which is orthogonal to $I$ and $A$ is the constant in \eqref{eq:AValue}. But $L$ is K\"{a}hler with the K\"{a}hler form $\Vol_L=\omega_J=\tilde{g}(\cdot, J\cdot)$. Hence
\[\tilde{g}(\cdot, J\cdot)=\sqrt{A} g(\cdot,J_0 \cdot).\]
Using the isometries again \cite{PhD}, argues $J_{\vert L}=J_0$, hence $\tilde{g}_{\vert L}=g_{\vert L}$ and the torus is flat. The proof of $J_{\vert L}=J_0$ is sadly wrong, however.

\subsection{Reducing the amount of symmetry}
Theorem \ref{Thm:StableGeod} only works for very special tori. A natural questions is what happens when one perturbs the underlying lattice $\Gamma$ or the sizes $a_i$. We do now know the answer to this, but would remark that closed geodesics need not behave nicely under metric perturbations. The oldest example of this is due to Morse \cite[Chapter IX, Theorem 4.1]{Morse}, where he studies ellipsoids which are almost spherical. What he finds is  that there are closed geodesics which become infinitely long when the ellipsoid is deformed to a round sphere. There is a modern proof in \cite{KlingenbergRiemann}, and generalizations by Ballmann in \cite{Bal83} and Bangert \cite{Ban86}. What this tells us is that one cannot in general expect the closed geodesics of a perturbed metric to be perturbations of the original closed geodesics.

We do now know if the situation is improved when considering hyperk\"{a}hler deformations, and this is a question we hope to return to in the future.

\section{Acknowledgements}
This work is both a summary and continuation of the authors PhD thesis \cite{PhD} written at the Albert-Ludwigs-Universit\"{a}t Freiburg under the expert guidance of Nadine Gro\ss e and Katrin Wendland. The project was suggested by them, and several ideas and suggestions (and the absence of several mistakes) are due to them. They both took the time to read several earlier drafts of this paper in detail and provided helpful feedback. Any remaining errors are solely the author's fault.

\appendix

\section{Laplacian of the Riemann tensor}
\label{App:Riemann}
Here we present part of the computation underlying Proposition \ref{Prop:RiemannCrit}. We start with a couple of lemmas.
\begin{Lem}
\label{Lem:LaplaceLemma1}
Assume $(X,\tilde{g})$ is a  hyperk\"{a}hler 4-manifold. Let $V$ be any (locally defined) tangent vector field. Then
\begin{align}
\ip{\Delta R(V,IV)IV}{V}&=\ip{[\nabla_{JV},\nabla_{V}]R(V,KV)IV}{V}+\ip{[\nabla_{JV},\nabla_{IV}]R(V,JV)IV}{V}\notag \\ &+\ip{[\nabla_{V},\nabla_{KV}]R(V,JV)IV}{V}+\ip{[\nabla_{KV},\nabla_{IV}]R(V,KV)IV}{V},
\end{align}
where 
\[\Delta R=(\nabla^2_V+\nabla^2_{IV}+\nabla_{JV}^2+\nabla_{KV}^2)R\]
is the Laplacian acting on 4-tensors and all the commutators on the right hand side are acting on the 4-tensor $R$.
\end{Lem}
\begin{proof}
This follows from using the second Bianchi identity twice along with some hyperk\"{a}hler identities. By the second Bianchi identity and $IJ=K$, we find
\[\nabla_{JV}R(V,IV)=-\nabla_VR(IV,JV)-\nabla_{IV}R(JV,V)=\nabla_V R(V,KV)+\nabla_{IV}R(V,JV).\]
Hence
\begin{align}
\nabla^2_{JV}R(V,IV)&=\nabla_{JV}\nabla_V R(V,KV)+\n_{JV} \n_{IV} R(V,JV) \notag \\
&=[\n_{JV},\n_{V}]R(V,KV)+\n_V\n_{JV} R(V,KV) \notag  \\ 
&+[\n_{JV},\n_{IV}]R(V,JV)+\n_{IV}\n_{JV}R(V,JV).
\end{align}
To the second term we apply the second Bianchi identity again to get
\begin{align*}
\n_V\n_{JV} R(V,KV)&=-\n_V^2 R(KV,JV) -\n_V \n_{KV}R(JV,V) \\
&=-\n_V^2 R(V,IV)+\n_V \n_{KV}R(V,JV).
\end{align*}
The fourth term we handle similarly, using the second Bianchi identity in the last two components to write
\begin{align*}
\ip{\n_{IV}\n_{JV}R(V,JV)IV}{V} &=-\ip{\n_{IV}^2R(V,JV)V}{JV}-\ip{\n_{IV} \n_V R(V,JV)JV}{IV}\\
&=\ip{\n_{IV}^2R(V,JV)JV}{V}+\ip{\n_{IV} \n_V R(V,JV)KV}{V}.
\end{align*}
Hence we have
\begin{align*}
\ip{\nabla^2_{JV}R(V,IV)IV}{V}&= \ip{[\n_{JV},\n_{V}]R(V,KV)IV}{V}\\
&+\ip{[\n_{JV},\n_{IV}]R(V,JV)IV}{V} \\
&-\ip{\n_V^2 R(V,IV)IV}{V}+\ip{\n_V \n_{KV}R(V,JV)IV}{V}\\
&+\ip{\n_{IV}^2R(V,JV)JV}{V}+\ip{\n_{IV} \n_V R(V,JV)KV}{V}.
\end{align*}
Performing the same computation with $KV$ instead of $JV$, we find

\begin{align*}
\ip{\nabla^2_{KV}R(V,IV)IV}{V}&= \ip{[\n_{V},\n_{KV}]R(V,JV)IV}{V}\\
&+\ip{[\n_{KV},\n_{IV}]R(V,KV)IV}{V}\\
&-\ip{\n_V^2 R(V,IV)IV}{V}-\ip{\n_{V} \n_{JV}R(V,KV)IV}{V}\\
&+\ip{\n_{IV}^2R(V,KV)KV}{V}-\ip{\n_{IV} \n_V R(V,KV)JV}{V}.
\end{align*}
Using the Bianchi identity again, we rewrite
\[-\ip{\n_{V} \n_{JV}R(V,KV)IV}{V}=\ip{\n_{V}^2R(KV,JV)IV}{V}+\ip{\n_V \n_{KV}R(JV,V)IV}{V},\]
so
\begin{align*}
\ip{\nabla^2_{KV}R(V,IV)IV}{V}&= \ip{[\n_{V},\n_{KV}]R(V,JV)IV}{V}\\
&+\ip{[\n_{KV},\n_{IV}]R(V,KV)IV}{V}\\
&+\ip{\n_V \n_{KV}R(JV,V)IV}{V}\\
&+\ip{\n_{IV}^2R(V,KV)KV}{V}-\ip{\n_{IV} \n_V R(V,KV)JV}{V}.
\end{align*}
Adding these two expressions and using the Ricci-flatness (equivalently; the first Bianchi identity) to write
\[\ip{\n_{IV}^2R(V,JV)JV}{V}+\ip{\n_{IV}^2R(V,KV)KV}{V}=-\ip{\n_{IV}^2R(V,IV)IV}{V},\]
we arrive at
\begin{align*}
\ip{\Delta R(V,IV)IV}{V}&=\ip{[\nabla_{JV},\nabla_{V}]R(V,KV)IV}{V}+\ip{[\nabla_{JV},\nabla_{IV}]R(V,JV)IV}{V}\notag \\ &+\ip{[\nabla_{V},\nabla_{KV}]R(V,JV)IV}{V}+\ip{[\nabla_{KV},\nabla_{IV}]R(V,KV)IV}{V}
\end{align*}
as announced.
\end{proof}

\begin{Lem}
\label{Lem:LaplaceLemma2}
Assume $(X,\tilde{g})$ is a hyperk\"{a}hler 4-manifold. Let $U,V,W$ be any tangent vector fields and write $(I_1,I_2,I_3)=(I,J,K)$ for the complex structures satisfying $IJ=K$. Then
\begin{align}
\ip{[\nabla_W,\nabla_U]R(V,I_iV)I_jV}{V}&=\ip{\nabla_{[W,U]}R(V,I_iV)I_jV}{V}\notag \\ &-2\ip{R(R(W,U)V,I_iV)I_jV}{V} \notag \\
&-2\ip{R(V,I_iV)I_jV}{R(W,U)V} 
\end{align} 

\end{Lem}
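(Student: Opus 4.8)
The plan is to read this as a Ricci (curvature) commutation identity for second covariant derivatives acting on $R$ viewed as a $(0,4)$-tensor. With the sign convention $R(W,U)=\n_W\n_U-\n_U\n_W-\n_{[W,U]}$ used throughout the paper, the operator $[\n_W,\n_U]-\n_{[W,U]}$ acts on any tensor as the algebraic (derivation) action of the curvature endomorphism $R(W,U)\in\mathfrak{so}(T_pX)$. First I would record that, applied to the $(0,4)$-tensor $R$ and evaluated on the arguments $(V,I_iV,I_jV)$ paired against $V$, this action distributes over the four slots, each with a minus sign:
\begin{align*}
\ip{([\n_W,\n_U]-\n_{[W,U]})R(V,I_iV)I_jV}{V}&=-\ip{R(R(W,U)V,I_iV)I_jV}{V}-\ip{R(V,R(W,U)(I_iV))I_jV}{V}\\
&\quad-\ip{R(V,I_iV)R(W,U)(I_jV)}{V}-\ip{R(V,I_iV)I_jV}{R(W,U)V}.
\end{align*}
The antisymmetry $\ip{R(W,U)X}{Y}=-\ip{X}{R(W,U)Y}$ (that is, $R(W,U)\in\mathfrak{so}$) is exactly what makes the derivation act this way and is consistent with $\n g=0$.

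Next I would invoke that the complex structures $I_i,I_j$ are parallel (Lemma \ref{Lem:HK}), so the curvature endomorphism commutes with them, $R(W,U)(I_iV)=I_iR(W,U)V$ and likewise for $I_j$. Writing $Z\coloneqq R(W,U)V$, the four terms become
\[-\ip{R(Z,I_iV)I_jV}{V}-\ip{R(V,I_iZ)I_jV}{V}-\ip{R(V,I_iV)I_jZ}{V}-\ip{R(V,I_iV)I_jV}{Z}.\]

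The only real work — and the step I expect to be the main obstacle, though it is short — is to show that the two ``interior'' terms coincide with the two ``boundary'' terms, producing the factor $2$ claimed:
\[\ip{R(V,I_iZ)I_jV}{V}+\ip{R(V,I_iV)I_jZ}{V}=\ip{R(Z,I_iV)I_jV}{V}+\ip{R(V,I_iV)I_jV}{Z}.\]
I would prove this termwise using the K\"ahler curvature identities that follow from parallelism of each $I_k$, namely $\ip{R(A,I_kB)C}{D}=-\ip{R(I_kA,B)C}{D}$ and $\ip{R(A,B)I_kC}{D}=-\ip{R(A,B)C}{I_kD}$, together with the usual antisymmetries of $R$. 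Concretely, in the first interior term one moves $I_i$ off $Z$ onto the $V$ in the first slot, and the first-pair antisymmetry then turns it into the first boundary term; in the second interior term one moves $I_j$ off $Z$ onto the $V$ in the last slot, and the last-pair antisymmetry turns it into the second boundary term.

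Combining the last two displays shows that $([\n_W,\n_U]-\n_{[W,U]})R$, evaluated on these arguments, equals $-2\ip{R(Z,I_iV)I_jV}{V}-2\ip{R(V,I_iV)I_jV}{Z}$. Restoring $Z=R(W,U)V$ and moving the $\n_{[W,U]}R$ term to the right-hand side yields exactly the stated identity, and I would remark that no use of the first Bianchi identity is needed — only the curvature symmetries and the parallelism of the hyperk\"ahler complex structures.
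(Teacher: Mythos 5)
Your proof is correct, and it takes a genuinely different (and shorter) route than the paper's. The paper proves the lemma by brute force: it expands $\nabla_U\ip{R(V,I_iV)I_jV}{V}$ and then $\nabla_W\nabla_U\ip{R(V,I_iV)I_jV}{V}$ by the Leibniz rule, subtracts the same expression with $U$ and $W$ interchanged, and uses that on scalars $[\nabla_W,\nabla_U]f=\nabla_{[W,U]}f$ together with $[\nabla_W,\nabla_U]V=R(W,U)V+\nabla_{[W,U]}V$ applied to the vector field $V$. In that computation the factor $2$ is already built into the first-derivative formula
\[\nabla_U\ip{R(V,I_iV)I_jV}{V}=\ip{\nabla_UR(V,I_iV)I_jV}{V}+2\ip{R(\nabla_UV,I_iV)I_jV}{V}+2\ip{R(V,I_iV)I_jV}{\nabla_UV},\]
where the two K\"ahler identities you quote are used implicitly to merge the second slot with the first and the third slot with the fourth. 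You instead invoke the tensorial Ricci identity as a known fact — $[\nabla_W,\nabla_U]-\nabla_{[W,U]}$ acts on the $(0,4)$-tensor as minus the curvature endomorphism in each of the four slots — and only then apply $R(I_kA,B)=-R(A,I_kB)$ and $R(A,B)I_k=I_kR(A,B)$ explicitly to pair the four terms into two, producing the factor $2$ at the end; both of your pairwise reductions check out. The two arguments thus rest on exactly the same ingredients (commutation of covariant derivatives and parallelism of $I,J,K$), but yours avoids the eleven-term second-derivative expansion, makes manifest that the identity is algebraic and pointwise in $V,W,U$ once the Ricci identity is granted, and correctly notes that the first Bianchi identity is never used. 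What the paper's version buys is self-containedness — it derives the needed commutation rule from scratch rather than quoting it — and its factor-$2$ Leibniz formula is the same device reused in the proof of Proposition \ref{Prop:RiemannCrit}. One cosmetic imprecision on your side: the derivation property of $[\nabla_W,\nabla_U]-\nabla_{[W,U]}$ on tensors follows from the definition of the connection on tensor bundles (Leibniz rule and compatibility with contractions), not from the skewness $\ip{R(W,U)X}{Y}=-\ip{X}{R(W,U)Y}$; skewness is what makes the action annihilate $g$, so that the $(1,3)$ and $(0,4)$ readings of the statement agree. This does not affect the validity of your argument.
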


\begin{proof}
This follows from the definitions and is quite standard. We start by computing
\begin{align*}
\nabla_U \ip{R(V,I_iV)I_jV}{V}&= \ip{\nabla_UR(V,I_iV)I_jV}{V}\\ &+2 \ip{R(\nabla_UV,I_iV)I_jV}{V}\\ &+ 2\ip{R(V,I_iV)I_jV}{\nabla_UV}.
\end{align*}
Similarly,
\begin{align*}
\nabla_W\nabla_U \ip{R(V,I_iV)I_jV}{V}&= \ip{\nabla_W\nabla_UR(V,I_iV)I_jV}{V}\\ 
&+ 2\ip{\nabla_UR(\nabla_W V,I_iV)I_jV}{V}\\
&+2\ip{\nabla_UR(V,I_iV)I_jV}{\nabla_WV}\\
&+2 \ip{\nabla_WR(\nabla_UV,I_iV)I_jV}{V}\\
&+ 2 \ip{R(\nabla_W\nabla_UV,I_iV)I_jV}{V}\\
&+2 \ip{R(\nabla_UV,I_i\nabla_WV)I_jV}{V} \\
&+4 \ip{R(\nabla_UV,I_iV)I_jV}{\n_WV}\\
&+2\ip{\nabla_W R(V,I_iV)I_jV}{\nabla_UV}\\
&+4\ip{R(\nabla_WV,I_iV)I_jV}{\nabla_UV}\\
&+2\ip{R(V,I_iV)I_j\nabla_WV}{\nabla_UV}\\
&+2\ip{R(V,I_iV)I_jV}{\nabla_W\nabla_UV}.
\end{align*}
Subtracting the same expression with $U$ and $W$ swapped, we get
\begin{align*}
\nabla_{[W,U]} \ip{R(V,I_iV)I_jV}{V}&= \ip{[\nabla_W,\nabla_U]R(V,I_iV)I_jV}{V}\\ 
&+ 2 \ip{R([\nabla_W,\nabla_U]V,I_iV)I_jV}{V}\\
&+2\ip{R(V,I_iV)I_jV}{[\nabla_W,\nabla_U]V}.
\end{align*}
The left hand side can be written
\begin{align*}
\nabla_{[W,U]} \ip{R(V,I_iV)I_jV}{V} &=  \ip{\nabla_{[W,U]}R(V,I_iV)I_jV}{V} \\ 
&+  2\ip{R(\nabla_{[W,U]}V,I_iV)I_jV}{V} \\
&+  2\ip{R(V,I_iV)I_jV}{\nabla_{[W,U]}V}.
\end{align*}
On the right hand side, we use the definition of the curvature tensor to write
\[[\nabla_W,\nabla_U]V=R(W,U)V+\n_{[W,U]}V.\]
This yields the claimed formula.
\end{proof}

From now on, we assume $V$ is a (locally defined) unit vector field. 
We recall that $\sigma_{IJ}=\sigma_{IJ}(V)$ etc. are defined so that
\[R(V,IV)V=-\sigma_{II}IV-\sigma_{IJ}JV-\sigma_{IK}KV\]
and so on for $R(V,JV)$ and $R(V,KV)$. Using this and the previous two lemmas, we arrive at
\begin{Prop}

Assume $(X,\tilde{g})$ is a hyperk\"{a}hler 4-manifold. Let $V$ be any locally defined unit tangent vector field. Then
\begin{align}
\label{eq:LaplacianGen}
\ip{\Delta R(V,IV)IV}{V}&=\ip{\nabla_{[JV,IV]}R(V,JV)IV}{V} \\  \notag
&+\ip{\nabla_{[JV,V]}R(V,KV)IV}{V} \\ \notag
&+\ip{\nabla_{[V,KV]}R(V,JV)IV}{V}\\ \notag
&+\ip{\nabla_{[KV,IV]}R(V,KV)IV}{V}\\ \notag
&-4(\sigma_{II}^2+2\sigma_{JJ}\sigma_{KK}+\sigma_{IJ}^2+\sigma_{IK}^2-2\sigma_{JK}^2) \notag
\end{align}
\end{Prop}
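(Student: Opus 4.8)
The plan is to assemble the proposition directly from the two preceding lemmas, after which only a finite algebraic computation remains. Lemma \ref{Lem:LaplaceLemma1} already rewrites $\ip{\Delta R(V,IV)IV}{V}$ as a sum of four commutator terms of the shape $\ip{[\nabla_W,\nabla_U]R(V,I_iV)I_jV}{V}$, with $(W,U)$ running through $(JV,V),(JV,IV),(V,KV),(KV,IV)$. I would apply Lemma \ref{Lem:LaplaceLemma2} to each of these, which splits a commutator into a $\nabla_{[W,U]}$-term plus two purely algebraic contractions $-2\ip{R(R(W,U)V,I_iV)I_jV}{V}$ and $-2\ip{R(V,I_iV)I_jV}{R(W,U)V}$. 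The four resulting $\nabla_{[W,U]}$-terms reproduce verbatim the four covariant-derivative terms of \eqref{eq:LaplacianGen} (e.g. $(W,U)=(JV,V)$ gives $\ip{\nabla_{[JV,V]}R(V,KV)IV}{V}$), so the entire remaining content is to show that the eight algebraic contractions sum to $-4(\sigma_{II}^2+2\sigma_{JJ}\sigma_{KK}+\sigma_{IJ}^2+\sigma_{IK}^2-2\sigma_{JK}^2)$.

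The computation of those algebraic terms is the heart of the argument. First I would express each $R(W,U)V$ in the orthonormal frame $\{V,IV,JV,KV\}$. For $(W,U)=(JV,V)$ this is immediate from the $\sigma$-definitions, since $R(JV,V)V=-R(V,JV)V=\sigma_{IJ}IV+\sigma_{JJ}JV+\sigma_{JK}KV$. The two cases $(JV,IV)$ and $(KV,IV)$, where neither slot is $V$, are reduced to this type using the Kähler identity $R(IX,IY)=R(X,Y)$ together with $IJ=K$: writing $JV=I(-KV)$ and $IV=I(V)$ gives $R(JV,IV)=R(-KV,V)=R(V,KV)$, and likewise $R(KV,IV)=R(JV,V)$. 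The resulting vectors, and the frames $R(V,I_aV)I_bV=I_b\,R(V,I_aV)V$ needed for the contractions, all expand via the $\sigma$-coefficients and the quaternionic relations $JI=-K$, $KI=J$, $IK=-J$.

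With these expansions, the eight inner products reduce to bookkeeping. The four contractions of the second type, $\ip{R(V,I_iV)IV}{R(W,U)V}$, each evaluate to $\sigma_{JJ}\sigma_{KK}-\sigma_{JK}^2$. The four of the first type, $\ip{R(R(W,U)V,I_iV)IV}{V}$, are handled most efficiently through the antisymmetric table $T_{ab}:=\ip{R(I_aV,I_bV)IV}{V}$, whose entries (diagonal zero, off-diagonal $\pm\sigma_{II},\pm\sigma_{IJ},\pm\sigma_{IK}$) I would compute once from the frame expansion of $R(V,IV)\cdot$; contracting each $R(W,U)V$ against $T$ and summing gives $2\sigma_{II}^2+2\sigma_{IJ}^2+2\sigma_{IK}^2$ after invoking the Ricci-flatness identity \eqref{eq:HK-Ricci} in the form $\sigma_{JJ}+\sigma_{KK}=-\sigma_{II}$ to collapse the mixed term $-2\sigma_{II}(\sigma_{JJ}+\sigma_{KK})$ into $2\sigma_{II}^2$. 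Multiplying both groups by the prefactor $-2$ and adding produces exactly $-4\sigma_{II}^2-4\sigma_{IJ}^2-4\sigma_{IK}^2-8\sigma_{JJ}\sigma_{KK}+8\sigma_{JK}^2$, the claimed constant. The main obstacle is not conceptual but the sign discipline: one must keep the quaternionic relations, the curvature symmetries $\ip{R(X,Y)Z}{W}=\ip{R(Z,W)X}{Y}=-\ip{R(X,Y)W}{Z}$, and the parallelism of $I,J,K$ mutually consistent across all eight contractions, since a single sign error would corrupt the final coefficients.
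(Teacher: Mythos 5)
Your proposal is correct and takes essentially the same route as the paper's proof: apply Lemma \ref{Lem:LaplaceLemma2} to the four commutators supplied by Lemma \ref{Lem:LaplaceLemma1}, evaluate the eight algebraic contractions by expanding each $R(W,U)V$ in the frame $\{V,IV,JV,KV\}$ via the quaternionic and K\"{a}hler identities, and collapse the sum using $\sigma_{JJ}+\sigma_{KK}=-\sigma_{II}$. Your intermediate values (each type-2 contraction equal to $\sigma_{JJ}\sigma_{KK}-\sigma_{JK}^2$, the type-1 contractions summing to $2\sigma_{II}^2+2\sigma_{IJ}^2+2\sigma_{IK}^2$) agree exactly with the paper's term-by-term computation.
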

\begin{proof}
Lemma \ref{Lem:LaplaceLemma2} says
\begin{align*}
 \ip{[\nabla_{JV},\nabla_V]R(V,KV)IV}{V}&= \ip{\nabla_{[JV,V]}R(V,KV)IV}{V}\notag \\ &-2\ip{R(R(JV,V)V,KV)IV}{V} \notag \\
&-2\ip{R(V,KV)IV}{R(JV,V)V}. 
\end{align*}
Using $R(JV,V)V=\sigma_{IJ}IV+\sigma_{JJ}JV+\sigma_{JK}KV$, we find
\begin{align*}
\ip{R(R(JV,V)V,KV)IV}{V}&=\sigma_{IJ}\ip{R(IV,KV)IV}{V}+\sigma_{JJ}\ip{R(JV,KV)IV}{V}\\ &=\sigma_{IJ}^2-\sigma_{II}\sigma_{JJ}.
\end{align*}
and
\begin{align*}
\ip{R(V,KV)IV}{R(JV,V)V}&=\sigma_{JJ}\ip{R(V,KV)IV}{JV}+\sigma_{JK}\ip{R(V,KV)IV}{KV}\\ &=\sigma_{JJ}\sigma_{KK}-\sigma_{JK}^2.
\end{align*}
Hence
\begin{align*}
 \ip{[\nabla_{JV},\nabla_V]R(V,KV)IV}{V}&= \ip{\nabla_{[JV,V]}R(V,KV)IV}{V}\notag \\ &+2\left(\sigma_{JJ}\sigma_{II}+\sigma_{JK}^2-\sigma_{JJ}\sigma_{KK}-\sigma_{IJ}^2\right). 
\end{align*}
Similar computations yield
\begin{align*}
 \ip{[\nabla_{JV},\nabla_{IV}]R(V,JV)IV}{V}&= \ip{\nabla_{[JV,IV]}R(V,JV)IV}{V}\notag \\ &+2\left(\sigma_{II}\sigma_{KK}+\sigma_{JK}^2-\sigma_{JJ}\sigma_{KK}-\sigma_{IK}^2\right), 
\end{align*}
\begin{align*}
 \ip{[\nabla_{V},\nabla_{KV}]R(V,JV)IV}{V}&= \ip{\nabla_{[V,KV]}R(V,JV)IV}{V}\notag \\ &+2\left(\sigma_{II}\sigma_{KK}+\sigma_{JK}^2-\sigma_{JJ}\sigma_{KK}-\sigma_{IK}^2\right), 
\end{align*}
and
\begin{align*}
 \ip{[\nabla_{KV},\nabla_{IV}]R(V,KV)IV}{V}&= \ip{\nabla_{[KV,IV]}R(V,KV)IV}{V}\notag \\ &+2\left(\sigma_{JJ}\sigma_{II}+\sigma_{JK}^2-\sigma_{JJ}\sigma_{KK}-\sigma_{IJ}^2\right). 
\end{align*}
Adding up these four contributions, writing $\sigma_{KK}+\sigma_{JJ}=-\sigma_{II}$, and using Lemma \ref{Lem:LaplaceLemma1} gives \eqref{eq:LaplacianGen}.
\end{proof}

Along the fixed point set $M$ of an order 4 holomorphic isometry $f\colon X\to X$ we may simplify greatly.
\begin{Prop}
\label{Prop:LaplaceRiemann}
Assume the setup of Proposition \ref{Prop:RiemannCrit}. Let $\alpha\coloneqq  \ip{IV}{\nabla_{V}V}$ and $\beta\coloneqq \ip{IV}{\nabla_{IV} V}$. Then 
\begin{equation}
\ip{\Delta R(V,IV)IV}{V}=\left(\alpha+\beta\right)\n_V \sigma_{II}(V)-\alpha \n_{IV}\sigma_{II}(V)-6\sigma_{II}^2
\label{eq:LaplacianonM}
\end{equation}
holds for any point on $M$.
\end{Prop}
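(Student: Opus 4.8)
The plan is to feed the general identity \eqref{eq:LaplacianGen} into the special geometry provided by the order $4$ isometry fixing $M$. The right-hand side of \eqref{eq:LaplacianGen} splits into a purely algebraic part $-4(\sigma_{II}^2+2\sigma_{JJ}\sigma_{KK}+\sigma_{IJ}^2+\sigma_{IK}^2-2\sigma_{JK}^2)$ and four terms of the form $\ip{\nabla_{[\cdot,\cdot]}R(\cdots)}{V}$. On $M$, Theorem \ref{Thm:IsomThm} gives $\sigma_{IJ}(V)=\sigma_{IK}(V)=\sigma_{JK}(V)=0$ and $\sigma_{JJ}(V)=\sigma_{KK}(V)=-\tfrac12\sigma_{II}(V)$, so the algebraic part collapses at once to $-6\sigma_{II}^2$. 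The entire content of the proposition therefore lies in showing that the four bracket terms assemble into the tangential-derivative expression in \eqref{eq:LaplacianonM}.

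First I would record the frame structure equations on $M$. Since $M$ is totally geodesic and $\abs{V}=1$, one has $\nabla_V V=\alpha IV$ and $\nabla_{IV}V=\beta IV$, while the order $4$ isometry forces $\nabla_{JV}V$ and $\nabla_{KV}V$ to lie in the normal bundle $\mathrm{span}(JV,KV)$, of the form $\nabla_{JV}V=cJV+dKV$ and $\nabla_{KV}V=-dJV+cKV$ (exactly the relations derived inside the proof of Proposition \ref{Prop:RiemannCrit}). Using $\nabla I=I\nabla$, $\nabla J=J\nabla$, $\nabla K=K\nabla$ from Lemma \ref{Lem:HK} together with the quaternion relations $IJ=K$ etc., I would compute $\nabla_W(I_iV)$ for each $W\in\{V,IV,JV,KV\}$ and hence the four Lie brackets $[JV,IV]$, $[JV,V]$, $[V,KV]$, $[KV,IV]$. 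The key structural observation is that all four brackets turn out to be \emph{normal}, i.e.\ lie in $\mathrm{span}(JV,KV)$, so every bracket term reduces to a linear combination (with coefficients linear in $\alpha,\beta,c,d$) of the four scalars $\ip{\nabla_{JV}R(V,JV)IV}{V}$, $\ip{\nabla_{KV}R(V,JV)IV}{V}$, $\ip{\nabla_{JV}R(V,KV)IV}{V}$, $\ip{\nabla_{KV}R(V,KV)IV}{V}$.

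The crux is evaluating these four scalars on $M$. Here I would combine the second Bianchi identity with the isometry invariance of Lemma \ref{Lem:RiemIsom} (under $f_*$, which fixes $V,IV$ and sends $JV\mapsto\pm KV$, $KV\mapsto\mp JV$) and the K\"{a}hler symmetry $R(JX,JY)=R(X,Y)$. For example, cycling $\ip{\nabla_{JV}R(V,KV)IV}{V}$ by the second Bianchi identity and applying $f_*$ to one summand identifies it with $-\tfrac12\ip{\nabla_V R(V,IV)IV}{V}$, exactly as in the proof of Proposition \ref{Prop:RiemannCrit}; and since $\sigma_{IJ}=\sigma_{IK}=0$ on $M$ the identity $\nabla_W\sigma_{II}=\ip{\nabla_W R(V,IV)IV}{V}$ holds there, so this scalar equals $-\tfrac12\nabla_V\sigma_{II}$. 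The same mechanism, applied to the remaining three scalars, expresses each as a half-multiple of $\nabla_V\sigma_{II}$ or $\nabla_{IV}\sigma_{II}$, up to terms proportional to $\sigma_{JK}$ and to $\sigma_{JJ}-\sigma_{KK}$, both of which vanish on $M$. Substituting back, the normal structure functions $c,d$ cancel in the sum and one is left with a combination of $\nabla_V\sigma_{II}$ and $\nabla_{IV}\sigma_{II}$ whose coefficients are built from $\alpha$ and $\beta$; adding the algebraic $-6\sigma_{II}^2$ then yields \eqref{eq:LaplacianonM}.

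I expect the main obstacle to be precisely this last reduction: the Bianchi-plus-isometry bookkeeping is delicate, since each of the four mixed scalars generates several curvature cross-terms, and one must verify carefully that every contribution not proportional to $\nabla_V\sigma_{II}$ or $\nabla_{IV}\sigma_{II}$ — in particular the $\sigma_{JK}$-, $\sigma_{IJ}$-, $\sigma_{IK}$-terms and the combination $\sigma_{JJ}-\sigma_{KK}$ — genuinely vanishes when restricted to $M$. Tracking signs through the quaternion relations and through the two applications of the second Bianchi identity is where errors are most likely to creep in, and it is worth double-checking the final coefficients of $\nabla_V\sigma_{II}$ and $\nabla_{IV}\sigma_{II}$ against the consistency requirement that, at a critical point of $\sigma_{II}$ on $M$, both tangential derivatives vanish and the formula must reduce to $-6\sigma_{II}^2$, as used in Proposition \ref{Prop:RiemannCrit}.
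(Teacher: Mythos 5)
Your proposal is correct and takes essentially the same route as the paper's proof: both start from \eqref{eq:LaplacianGen}, collapse the algebraic part to $-6\sigma_{II}^2$ via Theorem \ref{Thm:IsomThm}, derive the frame equations $\nabla_V V=\alpha IV$, $\nabla_{IV}V=\beta IV$, $\nabla_{JV}V=\mu JV+\nu KV$, $\nabla_{KV}V=\mu KV-\nu JV$ from the order-4 isometry, and reduce the mixed third-derivative curvature scalars to half-multiples of $\nabla_V\sigma_{II}$ and $\nabla_{IV}\sigma_{II}$ by combining the second Bianchi identity with isometry invariance (Lemma \ref{Lem:RiemIsom}), exactly as in the proof of Proposition \ref{Prop:RiemannCrit}. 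The only cosmetic difference is that the paper first uses $f_*(JV)=\pm KV$ to pair the four bracket terms of \eqref{eq:LaplacianGen} into two before computing any brackets, whereas you compute all four; the bookkeeping agrees, the normal structure functions cancel in both versions, and the coefficients $(\alpha+\beta)$ and $-\alpha$ come out the same.
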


\begin{proof} 
Let $f\colon X\to X$ denote the isometry with $M$ as fixed point set.
Due to $f_*(JV)=\pm KV$, we find
\begin{align}
\label{eq:LaplaceStep}
\ip{\Delta R(V,IV)IV}{V}&=2\ip{\nabla_{[JV,IV]}R(V,JV)IV}{V} \\ \notag
&+2\ip{\nabla_{[JV,V]}R(V,KV)IV}{V} \\ \notag
&-6\sigma_{II}^2
\end{align}
along $M$. So we analyze the commutators. 
 Since $V$ is unit speed, we have $\ip{V}{\nabla_W V}=0$ for any $W$. Since $f_*$ fixes $V,IV$ and rotates $JV, KV$, there have to be functions $\alpha,\beta,\mu,\nu\colon M\to \R$ such that
\[\n_V V=\alpha IV\]
\[\n_{IV}V=\beta IV,\]
\[\n_{JV}V=\mu JV+\nu KV,\]
\[\n_{KV}V=\mu KV-\nu JV.\]
Using the formula $[W,U]=\n_WU-\n_UW$ we thus arrive at
\[[JV,IV]=-\nu JV +(\mu+\beta)KV,\]
\[[JV,V]=\mu JV+(\alpha+\nu)KV.\]
 The second Bianchi identity tells us
\[\ip{\nabla_{JV}R(V,JV)IV}{V}=\ip{\nabla_{IV}R(V,JV)JV}{V}-\ip{\nabla_V R(V,JV)JV}{V}\] 
 and
\[\ip{\nabla_{KV}R(V,JV)IV}{V}=\ip{\nabla_{V}R(V,IV)IV}{V}+\ip{\nabla_{JV} R(V,KV)IV}{V}.\] 
Applying the isometry $f$ to these further gives
 \[\ip{\nabla_{JV}R(V,JV)IV}{V}=\ip{\nabla_{KV}R(V,KV)IV}{V}\]
 and
 \[\ip{\nabla_{KV}R(V,JV)IV}{V}=-\ip{\nabla_{JV}R(V,KV)IV}{V},\]
 hence
 \[2\ip{\nabla_{KV}R(V,JV)IV}{V}=\ip{\nabla_{V}R(V,IV)IV}{V}.\]
 We have
 \[\ip{\nabla_{IV}R(V,JV)JV}{V}=\nabla_{IV} \sigma_{JJ}-4\ip{R(\nabla_{IV}V,JV)JV}{V}=\nabla_{IV} \sigma_{JJ}+4\beta \sigma_{JK},\]
 so $\sigma_{JJ}+\sigma_{KK}=-\sigma_{II}$ along with $\sigma_{JJ}=\sigma_{KK}$ tell us
 \[2\ip{\nabla_{IV}R(V,JV)JV}{V}=-\nabla_{IV} \sigma_{II}\]
 on $M$. Similarly
 \[2\ip{\nabla_{V}R(V,JV)JV}{V}=-\nabla_{V} \sigma_{II}.\]
 So
 \[2\ip{\nabla_{JV}R(V,JV)IV}{V}=\nabla_V \sigma_{II}-\nabla_{IV} \sigma_{II}=2\ip{\nabla_{KV}R(V,KV)IV}{V}\]
 and
 \[2\ip{\nabla_{KV}R(V,JV)IV}{V}=\nabla_V \sigma_{II}=-2\ip{\nabla_{JV}R(V,KV)IV}{V}\]
Inserting these into \eqref{eq:LaplaceStep} then results in
\begin{align*}
\ip{\Delta R(V,IV)IV}{V}&=-2\nu \ip{\nabla_{JV}R(V,JV)IV}{V}+2(\mu+\beta)\ip{\nabla_{KV}R(V,JV)IV}{V}\\
&+2\mu\ip{\nabla_{JV}R(V,KV)IV}{V}+2(\alpha+\nu)\ip{\nabla_{KV}R(V,KV)IV}{V} \\
&=(\alpha+\beta)\n_V \sigma_{II}(V)-\alpha \n_{IV}\sigma_{II}(V)-6\sigma_{II}^2
\end{align*}
For any point on $M$. 
\end{proof}
\begin{Rem}
We note how the right hand side of \eqref{eq:LaplacianonM} is expressed solely in quantities computable on $M$, even though the Laplacian on the left hand side involves derivatives normal to $M$.
\end{Rem}


\section{Appendix - Parameter Independence}
\label{App:Param}
In this appendix, we prove the parameter independence of the inequalities used in Kobayashi's estimates.

\begin{Prop}
\label{Prop:PoincareIndep}
Let $(X,g)$ denote a Kummer K3 with patchwork metric $g$ and K\"{a}hler form $\omega$. Then, for all $\vert a\vert$ small enough, there is a constant $C>0$ independent of $\vert a\vert$ such that
\begin{equation}
\int_X f^2 \omega^2 \leq C \int_X \vert df\vert^2_g \,\omega^2
\label{eq:Poincare}
\end{equation}
holds for all $f\in H^1(X,g)=\left\{f\in L^2(X,g)\, \big\vert \, \vert df\vert_g \in L^2(X,g)\right\}$ with 
\[\int_X f \, \omega^2=0.\]
\end{Prop}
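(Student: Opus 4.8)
The plan is to read \eqref{eq:Poincare} as a uniform lower bound on the first nonzero eigenvalue $\lambda_1(X,g)$ of the Laplacian: for mean-zero $f$ one has $\int_X \vert df\vert_g^2\,\omega^2 \geq \lambda_1 \int_X f^2\,\omega^2$, so \eqref{eq:Poincare} holds with $C=1/\lambda_1$. Since the patchwork metric $g$ is a genuine smooth metric on the closed manifold $X$, I would obtain such a bound from a standard eigenvalue comparison estimate (see \cite{RiemGeomPetersen}): if $Ric_g\geq -(n-1)K$ with $K\geq 0$ and $\diam(X,g)\leq D$, then $\lambda_1(X,g)\geq \Lambda(n,K,D)>0$ for an explicit positive $\Lambda$. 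The proof then reduces to producing, uniformly in $a$, a lower Ricci bound and an upper diameter bound; the latter is the crux.

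The Ricci bound is immediate from the construction. On the Euclidean region $g$ is flat and on each Eguchi--Hanson patch $g$ is Ricci-flat, so the only curvature comes from the necks $N_i$, where the estimate \eqref{eq:Sectionalatpointinneck} gives $\vert Riem_g\vert \leq C\vert a\vert^2$, hence $Ric_g \geq -C\vert a\vert^2 \geq -1$ for all $\vert a\vert$ small. Thus one may take $K=K(a)\to 0$, and the comparison constant $\Lambda(4,K(a),D)$ stays bounded away from $0$ as $a\to 0$.

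For the diameter bound I would split a minimising path into pieces. Away from the patches $U_i$ the metric is the flat orbifold metric, whose diameter is controlled by $\diam_{Euc}(T)$. Inside a single patch it suffices to bound the distance from the core $E_i\cong \C\P^1$ out to the neck $N_i$: writing $u=\vert z\vert^2_{\C^2}$ and $r=\sqrt u$, formula \eqref{eq:zIp} gives $\ip{z}{z}_g = u^2/\sqrt{a_i^2+u^2}$, so the unit Euclidean radial vector has $g$-length $r/(a_i^2+r^4)^{1/4}$, and the radial distance from $u=0$ to $u=1$ equals
\[\int_0^1 \frac{r}{(a_i^2+r^4)^{1/4}}\,dr,\]
which tends to $1$ as $a_i\to 0$ and is bounded uniformly in $a_i$; moreover the core itself has $g$-diameter $O(\sqrt{a_i})$ since $g_{\vert E_i}=a_i g_{FS}$. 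Combining the pieces yields $\diam(X,g)\leq D$ uniformly for small $\vert a\vert$.

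With the uniform bounds $Ric_g\geq -1$ and $\diam(X,g)\leq D$ in hand, the eigenvalue estimate gives $\lambda_1(X,g)\geq c>0$ independently of $a$, and \eqref{eq:Poincare} follows with $C=1/c$. The main obstacle is precisely the diameter bound: one must verify that the Eguchi--Hanson throats, although collapsing in volume as $a\to 0$, remain of \emph{bounded depth}, which is exactly what the convergence of the radial integral above confirms.
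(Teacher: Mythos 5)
Your proof is correct, and its skeleton --- reading \eqref{eq:Poincare} as a uniform lower bound on $\lambda_1(X,g)$ and then producing $a$-independent geometric bounds --- is the same as the paper's; the difference is in the black box that converts geometry into a spectral bound. The paper goes through Cheeger's inequality $\lambda_1\geq h^2/4$ and then bounds the Cheeger constant from below by Yau's isoperimetric estimate, which consumes three inputs: a lower Ricci bound, an upper diameter bound, \emph{and} a lower volume bound (which the paper verifies from $\det(g)=1+\mathcal{O}(\vert a\vert^2)$ in the necks). You instead invoke a Li--Yau type eigenvalue comparison ($Ric\geq -(n-1)K$ and $\diam\leq D$ imply $\lambda_1\geq \Lambda(n,K,D)>0$), which needs only the Ricci and diameter bounds, so you never have to control $\Vol_g(X)$ --- a slightly leaner route. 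Your verification of the two remaining bounds is essentially the paper's: the Ricci bound comes from the $\mathcal{O}(\vert a\vert^2)$ curvature of the necks (the paper derives it from $Ric=-\nabla^2\ln\det(g)$ and \eqref{eq:Neckmetric}; your citation of \eqref{eq:Sectionalatpointinneck} is legitimate, since that bound concerns only the patchwork metric $g$ and creates no circularity with the Monge--Amp\`{e}re estimates), and the diameter bound rests on the same computation that radial Eguchi--Hanson paths have length controlled by their Euclidean counterparts --- your length integral $\int_0^1 r\,(a_i^2+r^4)^{-1/4}\,dr\leq 1$ is the estimate the paper obtains by bounding the energy of the curve $z(t)=tp$. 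The one blemish is bibliographic rather than mathematical: the comparison theorem you want is Li--Yau's first-eigenvalue estimate, and I am not certain it appears in \cite{RiemGeomPetersen} in the form you state; since the result is classical, this is a matter of pointing at the right reference, not a gap in the argument.
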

\begin{proof}
This is the Poincar\'{e} inequality, with the optimal constant $\lambda^{-1}_1$, where $\lambda_1$ is the first eigenvalue of the scalar Laplacian $-\Delta$. So we need to deduce an $\vert a\vert$-independent lower bound on $\lambda_1$. We do this via Cheeger's isoperimetric constant $h(X)$.

\begin{Thm}[{\cite{Che70}}]
Let $(M,g)$ be a compact Riemannian manifold without boundary. Let $\lambda_1(M)$ denote the first non-zero eigenvalue of the Laplacian. Then
\begin{equation}
\lambda_1(M)\geq \frac{h(M)^2}{4}.
\label{eq:CheegerThm}
\end{equation}
\end{Thm} 

What Yau shows in \cite{Yau75} is the following (the first of the two Theorem 7's in his paper).
\begin{Thm}[{\cite{Yau75}[Theorem 7]}]
Let $(M,g)$ be an $m$-dimensional Riemannian manifold without boundary whose Ricci curvature is bounded from below by $-(m-1)K$ for some constant $K>0$. Let $\omega_m$ denote the (Euclidean) volume of the $(m-1)$-dimensional unit sphere. Then the following inequality holds.
\begin{equation}
h(M)^{-1}\leq \omega_m \text{diam}_g(M)\Vol_g(M)^{-1}\int_{0}^{\diam_g(M)}\left(\frac{\sinh(\sqrt{K}r)}{\sqrt{K}}\right)^{m-1}\, dr.
\label{eq:YauBound}
\end{equation}
\end{Thm}
Via Cheeger's theorem, Yau's bound gives a lower bound on $\lambda_1(M)$ using the diameter, volume, and a lower bound on the Ricci curvature of $M$. We therefore set about bounding these quantities.

That the diameter is bounded, can be seen as follows. The distance between suitably close pairs of points in the Euclidean region is independent of $a$. For pairs of point $p,q$ in the Eguchi-Hanson region, one can use the triangle inequality and compare with radial geodesics going from $p$ to the zero section and out again to $q$. The radial distance is bounded by the Euclidean distance, as one sees by computing the length of the curve $z(t)=tp$,
\[d_{EH}(0,p)^2=\int_0^1 g_{EH}(\dot{z},\dot{z})\, dt=\vert p\vert^4_{\C^2} \int_0^1 \frac{t^2}{\sqrt{a_i^2+t^4\vert p\vert^4_{\C^2}}}\, dt \leq \vert p\vert^2_{\C^2}=d_{Euc}(0,p)^2.\]
In the neck region, the distance is close to Euclidean by \eqref{eq:NeckPotential}. So the distance between any two points in $X$ can be bounded (using the triangle inequality several times) uniformly by $a$-independent quantities.

 The volume form is Euclidean ($\omega^2 =\eta \wedge \overline{\eta}$) outside of the neck regions where the gluing takes place. The Ricci curvature vanishes outside of the necks. So it only remains to bound the volume form and Ricci-curvature in the neck regions. However, on any compact subset of the complement of the zero section in Eguchi-Hanson space, $K\subset \mathcal{O}_{\C\P^1}(-2)\setminus \C\P^1$, it follows from the explicit form of the K\"{a}hler potential \eqref{eq:EHPot} that we can find a smooth function $\xi_K$ such that
\[f_{EH}=f_{Euc}+a^2 \xi_K.\]
Furthermore, the function $\xi_K$ is regular as $a\to 0$.
From this it follows that the potential for the patchwork metric in the neck regions reads
\[\Phi_a=f_{Euc}+\vert a\vert^2\chi \xi_K,\]
and the patchwork metric reads
\begin{equation}
g=g_{Euc} + \vert a\vert^2 h
\label{eq:Neckmetric}
\end{equation}
for some uniformly bounded $h$ with uniformly bounded derivatives. This then tells us
\begin{equation}
\det(g)=1+\vert a\vert^2 \tr(h)+\vert a\vert^4 \det(h)
\label{eq:NeckDetg}
\end{equation}
has the form $\det(g)=1+\vert a\vert^2 \cdot bounded$ and 
\[Ric=-\nabla^2 \ln \det(g)=\vert a\vert^2 \tilde{\xi}\]
for some uniformly bounded $\tilde{\xi}$. All in all, there is an $\vert a\vert-$ independent  constant $C>0$ such that 
\[(1-C\vert a\vert^2)\eta \wedge \overline{\eta} \leq \omega^2\leq (1+C\vert a\vert^2)\eta \wedge \overline{\eta}  \]
and
\[ \vert Ric\vert_g\leq C\vert a\vert^2\]
hold on all of $X$. With these, we get $\vert a\vert-$ independent bounds on the volume of $X$, and we may choose $K=-3C^2 \alpha^2$ as lower bound Ricci-curvature of $g$ for all $\vert a\vert\leq \alpha$. 
This shows that the upper bound \eqref{eq:YauBound} can be chosen independently of $\vert a\vert$ (as long as $\vert a\vert$ is bounded from above).

\end{proof}

Next we need to uniformly bound a Sobolev constant. This follows by a version due to Peter Li and some isoperimetric estimates due to Christopher B. Croke.
\begin{Prop}
\label{Prop:LiSob}
Let $(X,g)$ denote a Kummer K3 with patchwork metric $g$ and K\"{a}hler form $\omega$. Then, for all $\vert a\vert$ small enough, there is a constant $C>0$ independent of $\vert a\vert$ such that
\begin{equation}
\norm{df}^2_{L^2(X,g)} \geq C \left( \norm{f}^2_{L^4(X,g)}-\Vol_g(X)^{-1/2} \norm{f}^2_{L^2(X,g)}\right)
\label{eq:LiSob}
\end{equation}
holds for all $f\in H^1(X,g)$.
\end{Prop}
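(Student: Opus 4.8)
The plan is to derive \eqref{eq:LiSob} from a Sobolev inequality whose constant is controlled \emph{only} by the volume, the diameter, and a lower bound on the Ricci curvature of $(X,g)$, all three of which are uniform in $\vert a\vert$. Concretely, I would invoke Peter Li's form of the Neumann-Sobolev inequality, which bounds the constant $C$ in \eqref{eq:LiSob} in terms of the dimension and the isoperimetric (Neumann) constant of the manifold, and then Croke's lower bound for that isoperimetric constant, which is expressed through the dimension, the total volume, the diameter, and a lower Ricci bound. The reason for routing through these two results, rather than through a more elementary covering argument, is that neither of them requires a lower bound on the injectivity radius or any upper curvature bound.

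The first step is to collect the geometric inputs. In the proof of Proposition \ref{Prop:PoincareIndep} we already established, for all $\vert a\vert$ small enough, the two-sided bounds
\[(1-C\vert a\vert^2)\,\eta\wedge\overline{\eta}\leq \omega^2\leq (1+C\vert a\vert^2)\,\eta\wedge\overline{\eta}\]
on the volume form (hence uniform upper and lower bounds on $\Vol_g(X)$), a uniform upper bound on $\diam_g(X)$, and the curvature estimate $\vert Ric\vert_g\leq C\vert a\vert^2$, which yields a uniform lower Ricci bound $Ric\geq -(m-1)K$ with $K$ independent of $\vert a\vert$. These are precisely the quantities feeding Croke's and Li's inequalities.

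The second step is to insert these bounds into Croke's isoperimetric estimate to obtain an $\vert a\vert$-independent lower bound on the isoperimetric constant of $(X,g)$, and then into Li's inequality to produce \eqref{eq:LiSob} with an $\vert a\vert$-independent constant $C$. The subtracted term $\Vol_g(X)^{-1/2}\norm{f}^2_{L^2(X,g)}$ is the standard Neumann correction accounting for the constant functions lying in $H^1(X,g)$, and it is harmless here because $\Vol_g(X)$ is uniformly bounded below.

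The main obstacle is conceptual rather than computational: as $\vert a\vert\to 0$ the Eguchi-Hanson necks shrink and the injectivity radius of $(X,g)$ near $E$ tends to $0$, so any Sobolev constant estimate relying on an injectivity-radius lower bound or a two-sided curvature bound would fail to be uniform. The resolution, and the reason for using the Li-Croke pair specifically, is that their bounds depend only on volume, diameter, and a lower Ricci bound, each of which remains uniformly controlled in $\vert a\vert$ by Step 1. Verifying that the hypotheses of Croke's and Li's theorems are genuinely met throughout the degenerating family, in particular that the lower Ricci bound may be taken uniform despite the concentrating curvature, is the only delicate point, and it follows from \eqref{eq:Neckmetric} and \eqref{eq:NeckDetg}.
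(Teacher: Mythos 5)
Your proposal is correct and follows essentially the same route as the paper's own proof: Li's Sobolev inequality reduced to an isoperimetric constant, Croke's lower bound for that constant via volume, diameter, and a lower Ricci bound, with all three geometric inputs taken uniformly in $\vert a\vert$ from the proof of Proposition \ref{Prop:PoincareIndep}. Nothing essential is missing.
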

\begin{proof}
The first piece is Li's Sobolev inequality. 
\begin{Prop}[{\cite{Li80}[Lemma 2]}]
Let $(M,g)$ be a compact Riemannian of dimension $m\geq 3$. Let $C_0$ denote the optimal Sobolev constant in
\[C_0 \inf_{a\in \R} \norm{f-a}^m_{L^{\frac{m}{m-1}}(M,g)}\leq \norm{df}^m_{L^1(M,g)}.\]
for all functions $f\in W^{1,1}(M,g)$. Then there exists a constant $C=C(m)$ depending only on the dimension such that
\[\norm{df}^2_{L^2(M,g)}\geq C(m)C_0^{2/m}\left(\norm{f}^2_{L^{\frac{2m}{m-2}}(M,g)}-\Vol_g(M)^{-2/m} \norm{f}^2_{L^2(M,g)}\right).\] 
\end{Prop}

What remains to prove in the case of a Kummer K3 is that $C_0$ can be uniformly bounded. But the Sobolev constant $C_0$ is related to an isoperimetric constant $C_1$, namely the optimal constant in
\[C_1 \left(\min\{\Vol_g(M_1),\Vol_g(M_2)\}\right)^{m-1}\leq \Vol_g(N)^{m},\]
where the notation is as in Li's Sobolev inequality, $M=M_1\cup M_2$ and $N=\partial M_1=\partial M_2$. The volume on the right hand side is the $m-1$-dimensional volume on $n$ induced by $g$. The relation between the constants is
\[2C_1\geq C_0\geq C_1,\]
see \cite{GeometricAnalysis}[Theorem 9.6] for a proof. So we are left bounding $C_1$, which is done by Croke.
\begin{Thm}[{\cite{Croke}[Theorem 13]}]
Assume $(M,g)$ is a compact Riemannian manifold of dimension $m$. Let $D=\diam(M)$ and let $K\in \R$ be such that $Ric\geq -(m-1)Kg$. Then there is a constant $C=C(m)$ depending only on the dimension such that
\[C_1\geq C(m)\left( \frac{\Vol_g(M)}{\int_0^D (\sqrt{1/K}\sinh(r\sqrt{K}))^{m-1}\, dr}\right)^{m+1}.\]
\end{Thm} 
The quantities involved in this lower bound are the same as in Yau's estimate \eqref{eq:YauBound}. Hence we may uniformly bound $C_1$ from below for all finite values of $\vert a\vert$.
\end{proof}

\begin{Rem}
Looking of the above proofs, one could formulate similar uniform Poincar\'{e} and Sobolev estimates for any compact manifold $M$ with a family of metric $g_{\lambda}$ as long as there are $\lambda$-independent 
\begin{itemize}
\item upper and lower bounds on the volume,
\[C\leq \Vol_{g_{\lambda}}(X)\leq C^{-1},\]
\item  upper and lower bounds on the diameter
\[C\leq \diam_{g_{\lambda}}(X)\leq C^{-1},\]
\item and a lower bound on the the Ricci-curvature
\[-C g_{\lambda}\leq Ric_{g_{\lambda}}.\]
\end{itemize}
\end{Rem}

\section{Appendix  - Yau Estimates}
\label{App:Yau}
In this section we give a slightly easier proof of \cite[Equation 2.22]{Yau78} in complex dimension 2.

Let us start by computing a couple of derivatives of the Monge-Amp\`{e}re equation.
Recall that for a matrix $B$ depending differentiably on a parameter $t$, the Jacobi formula says
\begin{equation}
\frac{d}{dt}\det(B(t))=\tr\left(\adj(B)\frac{dB(t)}{dt}\right).
\label{eq:JacobiFormula}
\end{equation}

The Monge-Amp\`{e}re equation \eqref{eq:MA} can, in local coordinates, be written as
\begin{equation}
\det(\tilde{g})=\det\left(g+ \nabla^2  \phi\right)=Ae^\psi \det(g),
\label{eq:MongeRecap}
\end{equation}
with $\psi$ and $A$ given by \eqref{eq:psidef} and \eqref{eq:ADef} respectively.
 
Let $t$ be some coordinate, $t=z^\mu$ or $t=\bar{z}^\mu$. By \eqref{eq:JacobiFormula} and the fact that both $g$ and $\tilde{g}$ are invertible, we find as derivatives of \eqref{eq:MongeRecap}
\begin{equation}
\det(\tilde{g})\tr\left(\tilde{g}^{-1}\frac{\partial\tilde{g}}{\partial t}\right)=Ae^\psi \det(g)\left(\frac{\partial \psi}{\partial t}+\tr\left(g^{-1}\frac{\partial g}{\partial t}\right)\right).
\label{eq:MongeFirstDerStep0}
\end{equation} 
Using \eqref{eq:MongeRecap}, we can simplify \eqref{eq:MongeFirstDerStep0}, resulting in

\begin{equation}
\tr\left(\tilde{g}^{-1}\frac{\partial\tilde{g}}{\partial t}\right)=\frac{\partial \psi}{\partial t}+\tr\left(g^{-1}\frac{\partial g}{\partial t}\right).
\label{eq:PsiderNoCor}
\end{equation} 
Let $s$ be another coordinate, $s=z^\nu$ or $s=\bar{z}^{\nu}$. Then we can compute the second derivative using that the trace satisfies Leibniz' rule, along with the formula $\frac{dA^{-1}}{ds}=-A^{-1}\frac{dA}{ds}A^{-1}$ for any invertible matrix $A$. The result is as follows.

\begin{align}
&-\tr\left(\tilde{g}^{-1}\frac{\partial \tilde{g}}{\partial s}\tilde{g}^{-1}\frac{\partial \tilde{g}}{\partial t}\right)+\tr\left(\tilde{g}^{-1}\frac{\partial^2\tilde{g}}{\partial s\partial t}\right)\notag\\&=\frac{\partial^2 \psi}{\partial t\partial s}-\tr\left(g^{-1}\frac{\partial g}{\partial s}g^{-1}\frac{\partial g}{\partial t}\right)+\tr\left(g^{-1}\frac{\partial^2 g}{\partial s\partial t}\right).
\label{eq:PsidderNoCor}
\end{align}

Choosing $t=z^\alpha$ and $s=\bar{z}^\beta$, both equations \eqref{eq:PsiderNoCor} and \eqref{eq:PsidderNoCor} can be written out using indices. The results look as follows.

\begin{equation}
\frac{\partial \psi}{\partial z^\alpha}=\tilde{g}^{\bar{\nu}\mu}\left(\frac{\partial g_{\mu\bar{\nu}}}{\partial z^\alpha}+\frac{\partial^3 \phi}{\partial z^\mu \partial \bar{z}^\nu \partial z^\alpha}\right)-g^{\bar{\nu}\mu}\frac{\partial g_{\mu\bar{\nu}}}{\partial z^\alpha},
\label{eq:PsiderCor}
\end{equation}
and
\begin{align}
\frac{\partial^2 \psi}{\partial z^\alpha\partial \bar{z}^\beta}&= -\tilde{g}^{\bar{\sigma}\rho}\tilde{g}^{\bar{\nu}\mu}\left(\frac{\partial g_{\rho\bar{\nu}}}{\partial z^\alpha}+\frac{\partial^3\phi}{\partial z^\alpha \partial z^\rho \partial\bar{z}^\nu}\right)\left(\frac{\partial g_{\mu\bar{\sigma}}}{\partial \bar{z}^\beta}+\frac{\partial^3 \phi}{\partial \bar{z}^\beta \partial z^\mu \partial \bar{z}^\sigma}\right)\notag \\
&+\tilde{g}^{\bar{\nu}\mu}\left(\frac{\partial^2 g_{\mu\bar{\nu}}}{\partial z^\alpha \partial\bar{z}^\beta}+\frac{\partial^4\phi}{\partial z^\alpha\partial\bar{z}^\beta\partial z^\mu \partial \bar{z}^\nu}\right) \notag \\
&+g^{\bar{\sigma}\rho}g^{\bar{\nu}\mu}\frac{\partial g_{\rho\bar{\nu}}}{\partial z^\alpha}\frac{\partial g_{\mu\bar{\sigma}}}{\partial \bar{z}^\beta}-g^{\bar{\nu}\mu}\frac{\partial^2 g_{\mu\bar{\nu}}}{\partial z^\alpha \partial \bar{z}^\beta}. 
\label{eq:PsidderCor}
\end{align}
Equations \eqref{eq:PsiderCor} and \eqref{eq:PsidderCor}  are \cite[Eq. 2.4]{Yau78} and \cite[Eq. 2.5]{Yau78} respectively. The last line in equation \eqref{eq:PsidderCor} can be identified as $R_{\alpha\bar{\beta}}$. The term $\frac{\partial^2 g_{\mu\bar{\nu}}}{\partial z^\alpha \partial\bar{z}^\beta}$ can be written $-R_{\mu\bar{\nu}\alpha\bar{\beta}}+g^{\bar{\sigma}\rho}\frac{\partial g_{\mu\bar{\sigma}}}{\partial z^\alpha} \frac{\partial g_{\rho\bar{\nu}}}{\partial\bar{z}^\beta}$. Inserting these two observations into \eqref{eq:PsidderCor}, we may write
\begin{align}
\frac{\partial^2 \psi}{\partial z^\alpha\partial \bar{z}^\beta}&= -\tilde{g}^{\bar{\sigma}\rho}\tilde{g}^{\bar{\nu}\mu}\left(\frac{\partial g_{\rho\bar{\nu}}}{\partial z^\alpha}+\frac{\partial^3\phi}{\partial z^\alpha \partial z^\rho \partial\bar{z}^\nu}\right)\left(\frac{\partial g_{\mu\bar{\sigma}}}{\partial \bar{z}^\beta}+\frac{\partial^3 \phi}{\partial \bar{z}^\beta \partial z^\mu \partial \bar{z}^\sigma}\right)\notag \\
&+\tilde{g}^{\bar{\nu}\mu}\left(-R_{\mu\bar{\nu}\alpha\bar{\beta}}+g^{\bar{\sigma}\rho}\frac{\partial g_{\mu\bar{\sigma}}}{\partial z^{\al}}\frac{\partial g_{\rho\bar{\nu}}}{\partial \overline{z}^\beta}+\frac{\partial^4\phi}{\partial z^\alpha\partial\bar{z}^\beta\partial z^\mu \partial \bar{z}^\nu}\right) +R_{\alpha\bar{\beta}}. 
\label{eq:PsidderCor2}
\end{align}

From just the definition of $\Delta f=g^{\on \mu} \partial_\mu \partial_{\on} f$ and $\tilde{\Delta}=\tilde{g}^{\on \mu} \partial_\mu \partial_{\on} f$ one easily computes the following.
\begin{align}
\tilde{\Delta}(\Delta \phi)&=\tilde{g}^{\bar{\nu}\mu}\frac{\partial^2}{\partial z^\mu\partial\bar{z}^\nu}\left(g^{\bar{\beta}\alpha}\frac{\partial^2\phi}{\partial z^\alpha\partial \bar{z}^\beta}\right)\notag \\
&=\tilde{g}^{\bar{\nu}\mu} g^{\bar{\beta}\alpha}\frac{\partial^4 \phi}{\partial z^\alpha\partial \bar{z}^\beta \partial z^\mu\partial \bar{z}^\nu}+\tilde{g}^{\mu\bar{\nu}}\frac{\partial^2 g^{\bar{\beta}\alpha}}{\partial z^\mu \partial \bar{z}^\nu}\frac{\partial^2\phi}{\partial z^\alpha\partial \bar{z}^\beta}\notag\\
&+\tilde{g}^{\bar{\nu}\mu}\frac{\partial g^{\bar{\beta}\alpha}}{\partial z^\mu}\frac{\partial^3 \phi}{\partial z^\alpha \partial\bar{z}^\beta \partial \bar{z}^\nu}+\tilde{g}^{\bar{\nu}\mu}\frac{\partial g^{\bar{\beta}\alpha}}{\partial \bar{z}^\nu}\frac{\partial^3 \phi}{\partial z^\alpha \partial \bar{z}^\beta \partial z^\mu}.
\label{eq:PhiDer}
\end{align}
Let us introduce the same abbreviations as in \cite{Yau78}, namely
\begin{equation}
\phi_{\alpha\bar{\beta}}\coloneqq \frac{\partial^2 \phi}{\partial z^\alpha \partial \bar{z}^\beta},
\notag
\end{equation}
\begin{equation}
\phi_{\alpha\bar{\beta}\mu}\coloneqq \frac{\partial^3 \phi}{\partial z^\alpha \partial \bar{z}^\beta\partial z^\mu},
\notag
\end{equation}
and similarly for other indices. We are ready for our first lemma.
\begin{Lem}
The following holds at any point $p\in X$.
\begin{align}
\tilde{\Delta}(\Delta \phi)=\Delta \psi-R+\tilde{g}^{\bar{\nu}\mu}\left(R_{\mu\bar{\nu}}+R_{\mu\bar{\nu}}{}^{\bar{\beta}\alpha}\phi_{\alpha\bar{\beta}}\right)+\tilde{g}^{\bar{\nu}\mu}\tilde{g}^{\bar{\sigma}\rho}g^{\bar{\beta}\alpha}\phi_{\alpha\rho\bar{\nu}}\phi_{\bar{\beta}\mu\bar{\sigma}}.
\label{eq:PhiDerLemma}
\end{align}
Here $R_{\mu\bar{\nu}}\coloneqq g^{\bar{\beta}\alpha}R_{\mu\bar{\nu}\alpha\bar{\beta}}$ and $R\coloneqq g^{\bar{\nu}\mu}R_{\mu\bar{\nu}}$ are the Ricci and scalar curvatures respectively.
\end{Lem}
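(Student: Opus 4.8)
The plan is to eliminate the fourth-order term $\tilde{g}^{\bar{\nu}\mu}g^{\bar{\beta}\alpha}\phi_{\alpha\bar{\beta}\mu\bar{\nu}}$, which appears in \eqref{eq:PhiDer} and, after a single contraction, also in \eqref{eq:PsidderCor2}, by playing the two equations against each other. Since the asserted identity \eqref{eq:PhiDerLemma} is fully contracted and hence coordinate-independent, it suffices to verify it at an arbitrary point $p$ in holomorphic normal coordinates for $g$, where $g_{\mu\bar{\nu}}(p)=\delta_{\mu\bar{\nu}}$, $g_{\mu\bar{\nu},\alpha}(p)=0$, and $g_{\mu\bar{\nu},\alpha\bar{\beta}}(p)=-R_{\mu\bar{\nu}\alpha\bar{\beta}}(p)$. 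Because the first derivatives of $g$ vanish at $p$, so do those of $g^{-1}$.

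First I would contract \eqref{eq:PsidderCor2} with $g^{\bar{\beta}\alpha}$ to form $\Delta\psi=g^{\bar{\beta}\alpha}\psi_{\alpha\bar{\beta}}$. Evaluating at $p$ annihilates every term carrying a bare first derivative of $g$, while the curvature terms collapse to $-\tilde{g}^{\bar{\nu}\mu}R_{\mu\bar{\nu}}$ (from $g^{\bar{\beta}\alpha}\tilde{g}^{\bar{\nu}\mu}(-R_{\mu\bar{\nu}\alpha\bar{\beta}})$) and $+R$ (from $g^{\bar{\beta}\alpha}R_{\alpha\bar{\beta}}$). What survives is
\[
\Delta\psi=\tilde{g}^{\bar{\nu}\mu}g^{\bar{\beta}\alpha}\phi_{\alpha\bar{\beta}\mu\bar{\nu}}-\tilde{g}^{\bar{\nu}\mu}R_{\mu\bar{\nu}}+R-\tilde{g}^{\bar{\nu}\mu}\tilde{g}^{\bar{\sigma}\rho}g^{\bar{\beta}\alpha}\phi_{\alpha\rho\bar{\nu}}\phi_{\bar{\beta}\mu\bar{\sigma}},
\]
which I then solve for the fourth-derivative term $\tilde{g}^{\bar{\nu}\mu}g^{\bar{\beta}\alpha}\phi_{\alpha\bar{\beta}\mu\bar{\nu}}$.

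Next I would simplify \eqref{eq:PhiDer} at $p$. Its last two terms vanish since $\partial_\mu g^{\bar{\beta}\alpha}(p)=\partial_{\bar{\nu}}g^{\bar{\beta}\alpha}(p)=0$. For the remaining lower-order term, differentiating $g^{-1}g=\mathbb{1}$ twice and discarding the first-derivative pieces at $p$ gives $\partial_\mu\partial_{\bar{\nu}}g^{\bar{\beta}\alpha}(p)=-g_{\alpha\bar{\beta},\mu\bar{\nu}}(p)=R_{\alpha\bar{\beta}\mu\bar{\nu}}(p)$, so that term becomes $\tilde{g}^{\bar{\nu}\mu}R_{\mu\bar{\nu}\alpha\bar{\beta}}\phi_{\alpha\bar{\beta}}$ after invoking the pair symmetry $R_{\alpha\bar{\beta}\mu\bar{\nu}}=R_{\mu\bar{\nu}\alpha\bar{\beta}}$ and $g(p)=\mathbb{1}$. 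Thus at $p$, \eqref{eq:PhiDer} reduces to $\tilde{\Delta}(\Delta\phi)=\tilde{g}^{\bar{\nu}\mu}g^{\bar{\beta}\alpha}\phi_{\alpha\bar{\beta}\mu\bar{\nu}}+\tilde{g}^{\bar{\nu}\mu}R_{\mu\bar{\nu}\alpha\bar{\beta}}\phi_{\alpha\bar{\beta}}$. Substituting the solved fourth-derivative term from the previous step and regrouping the curvature contractions into $\tilde{g}^{\bar{\nu}\mu}\left(R_{\mu\bar{\nu}}+R_{\mu\bar{\nu}}{}^{\bar{\beta}\alpha}\phi_{\alpha\bar{\beta}}\right)$ yields precisely \eqref{eq:PhiDerLemma}.

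The one genuinely delicate point is the bookkeeping of index placement: one must be consistent about whether indices are raised with $g$ or with $\tilde{g}$, and confirm that the mixed factor $\tilde{g}^{\bar{\nu}\mu}\tilde{g}^{\bar{\sigma}\rho}g^{\bar{\beta}\alpha}\phi_{\alpha\rho\bar{\nu}}\phi_{\bar{\beta}\mu\bar{\sigma}}$ produced by contracting \eqref{eq:PsidderCor2} is exactly the quadratic term of \eqref{eq:PhiDerLemma}. The second-derivative-of-$g^{-1}$ computation, though routine, is the other place where sign and conjugation slips could enter, so I would carry it out carefully at $p$. Everything else is a matter of evaluating at a normal-coordinate point and collecting terms; tensoriality of the fully contracted identity then upgrades the equality at $p$ to the asserted identity at every point.
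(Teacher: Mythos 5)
Your proposal is correct and takes essentially the same route as the paper's own proof: both evaluate \eqref{eq:PhiDer} and the trace of \eqref{eq:PsidderCor2} at a point in holomorphic normal coordinates, eliminate the common fourth-order term $\tilde{g}^{\bar{\nu}\mu}g^{\bar{\beta}\alpha}\phi_{\alpha\bar{\beta}\mu\bar{\nu}}$, and then pass from the single point to all of $X$ by coordinate-independence. The only place to be slightly more careful than your write-up is the final upgrade: the third partials $\phi_{\alpha\rho\bar{\nu}}$ are not a priori tensors, so the right-hand side is genuinely scalar only because, at the normal-coordinate point, they coincide with the covariant derivatives $\nabla_{\alpha}\phi_{\rho\bar{\nu}}$ (the paper records exactly this in a footnote).
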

\begin{proof}
Choose holomorphic normal coordinates at $p$, meaning $g_{\mu\bar{\nu}}(p)=\delta_{\mu\bar{\nu}}$ and $g_{\mu\bar{\nu},\alpha}(p)=0$. At this point in the chosen coordinates, one sees from \eqref{eq:PhiDer} that
\begin{equation}
\tilde{\Delta}(\Delta\phi)(p)=\tilde{g}^{\bar{\nu}\mu}g^{\bar{\beta}\alpha}\phi_{\alpha\bar{\beta}\mu\bar{\nu}}+R^{\bar{\beta}\alpha}{}_{\mu\bar{\nu}}\tilde{g}^{\bar{\nu}\mu}\phi_{\alpha\bar{\beta}}.
\label{eq:PhiDerAtP}
\end{equation}

Furthermore, one sees from \eqref{eq:PsidderCor2} that, at the point $p$ in special coordinates, we have
\begin{equation}
\Delta\psi(p)=-\tilde{g}^{\bar{\sigma}\rho}\tilde{g}^{\bar{\nu}\mu}g^{\bar{\beta}\alpha}\phi_{\alpha\rho\bar{\nu}}\phi_{\bar{\beta}\mu\bar{\sigma}}+\tilde{g}^{\bar{\nu}\mu}(-R_{\mu\bar{\nu}}+g^{\bar{\beta}\alpha}\phi_{\alpha\bar{\beta}\mu\bar{\nu}})+R.
\label{eq:PsiDerAtP}
\end{equation}
Combining equations \eqref{eq:PhiDerAtP} and \eqref{eq:PsiDerAtP} yields, at the point $p$,
\begin{align}
\tilde{\Delta}(\Delta \phi)(p)=\Delta \psi-R+\tilde{g}^{\bar{\nu}\mu}\left(R_{\mu\bar{\nu}}+R_{\mu\bar{\nu}}{}^{\bar{\beta}\alpha}\phi_{\alpha\bar{\beta}}\right)+\tilde{g}^{\bar{\nu}\mu}\tilde{g}^{\bar{\sigma}\rho}g^{\bar{\beta}\alpha}\phi_{\alpha\rho\bar{\nu}}\phi_{\bar{\beta}\mu\bar{\sigma}}.
\notag
\end{align}

This shows that the claimed identity, \eqref{eq:PhiDerLemma}, holds at a point with a particular choice of coordinates. Since this is a scalar equation (the equation consists of tensors\footnote{There is a small subtlety here. The expressions $\phi_{\alpha\rho\bar{\nu}}$ and $\phi_{\bar{\beta}\mu\bar{\sigma}}$ are a priori \textit{not} tensors, but in our chosen coordinates, we have $\phi_{\alpha\rho\bar{\nu}}(p)=\nabla_{\alpha} \phi_{\rho\bar{\nu}}(p)$ and $\phi_{\bar{\beta}\mu\bar{\sigma}}(p)=\nabla_{\bar{\beta}} \phi_{\mu\bar{\sigma}}(p)$, which are clearly tensors. Here, the covariant derivative is with respect to the metric $g$.}   contracted by the metric on the right hand side, and a concatenation of Laplacians on the left hand side), it has to hold everywhere independent of coordinates.
\end{proof}

\begin{Lem}
Choose holomorphic normal coordinates at a point $p$ and diagonalize $\nabla^2 \phi$ in this point. Then we may write, at the single point $p$,
\begin{equation}
\tilde{\Delta}(\Delta\phi)(p)=\Delta \psi+R_{1\bar{1}2\bar{2}}\left(\frac{\Tr_g(\tilde{g})^2}{\det(\tilde{g})}-4\right)+\sum_{\mu\nu\alpha}(1+\phi_{\mu\bar{\mu}})^{-1}(1+\phi_{\nu\bar{\nu}})^{-1}\vert \phi_{\alpha\bar{\mu}\nu}\vert^2.
\label{eq:PhiDerExpr}
\end{equation}
\end{Lem}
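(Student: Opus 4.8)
The plan is to start from the coordinate-free identity \eqref{eq:PhiDerLemma} and evaluate every tensor at $p$ in the special coordinates. There $g_{\mu\bar\nu}(p)=\delta_{\mu\bar\nu}$ forces $g^{\bar\beta\alpha}(p)=\delta^{\alpha\beta}$, while the diagonalization of $\nabla^2\phi$ gives $\tilde g_{\mu\bar\nu}(p)=\delta_{\mu\bar\nu}(1+\phi_{\mu\bar\mu})$ and hence $\tilde g^{\bar\nu\mu}(p)=\delta^{\mu\nu}(1+\phi_{\mu\bar\mu})^{-1}$. With these substitutions the three pieces of \eqref{eq:PhiDerLemma} — the term $\Delta\psi$, the curvature block $-R+\tilde g^{\bar\nu\mu}(R_{\mu\bar\nu}+R_{\mu\bar\nu}{}^{\bar\beta\alpha}\phi_{\alpha\bar\beta})$, and the cubic term — can be treated one at a time. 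The $\Delta\psi$ term is carried over unchanged, so the work is entirely in the other two.

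For the curvature block I would first use that at $p$ the Ricci tensor is the trace $R_{\mu\bar\nu}(p)=\sum_\alpha R_{\mu\bar\mu\alpha\bar\alpha}$ and that $\phi_{\alpha\bar\beta}$ is diagonal, which combines the two Ricci-type contributions into the single sum
\[\tilde g^{\bar\nu\mu}\left(R_{\mu\bar\nu}+R_{\mu\bar\nu}{}^{\bar\beta\alpha}\phi_{\alpha\bar\beta}\right)(p)=\sum_{\mu,\alpha}\frac{1+\phi_{\alpha\bar\alpha}}{1+\phi_{\mu\bar\mu}}\,R_{\mu\bar\mu\alpha\bar\alpha}.\]
Subtracting $R=\sum_{\mu,\alpha}R_{\mu\bar\mu\alpha\bar\alpha}$, the diagonal terms $\mu=\alpha$ (which are exactly $R_{1\bar11\bar1}$ and $R_{2\bar22\bar2}$, with coefficient $1$) cancel, so in complex dimension $2$ only the off-diagonal component survives. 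Invoking the Kähler symmetry $R_{1\bar12\bar2}=R_{2\bar21\bar1}$, the remainder collapses to
\[R_{1\bar12\bar2}\left(\frac{1+\phi_{2\bar2}}{1+\phi_{1\bar1}}+\frac{1+\phi_{1\bar1}}{1+\phi_{2\bar2}}-2\right),\]
and it then remains to recognize the bracket as $\tfrac{\Tr_g(\tilde g)^2}{\det(\tilde g)}-4$: since $\Tr_g(\tilde g)(p)=(1+\phi_{1\bar1})+(1+\phi_{2\bar2})$ and $\det(\tilde g)(p)=(1+\phi_{1\bar1})(1+\phi_{2\bar2})$, both expressions equal $\tfrac{(\phi_{1\bar1}-\phi_{2\bar2})^2}{(1+\phi_{1\bar1})(1+\phi_{2\bar2})}$ after a one-line expansion.

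For the cubic term I would substitute the diagonal inverse metrics together with $g^{\bar\beta\alpha}(p)=\delta^{\alpha\beta}$ and use the reality of $\phi$, which after commuting the holomorphic derivatives gives $\phi_{\bar\beta\mu\bar\sigma}(p)=\overline{\phi_{\beta\bar\mu\sigma}(p)}$; this turns $\tilde g^{\bar\nu\mu}\tilde g^{\bar\sigma\rho}g^{\bar\beta\alpha}\phi_{\alpha\rho\bar\nu}\phi_{\bar\beta\mu\bar\sigma}$ directly into the stated sum of moduli squared. This last step and the $\Delta\psi$ term are pure substitution once the coordinates are fixed. The only genuine content — and the step I expect to require the most care — is the curvature bookkeeping of the preceding paragraph: verifying that the $\mu=\alpha$ terms drop out and correctly matching the surviving coefficient to $\tfrac{\Tr_g(\tilde g)^2}{\det(\tilde g)}-4$. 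This simplification is special to complex dimension $2$, where a single off-diagonal sectional curvature $R_{1\bar12\bar2}$ carries all the relevant information.
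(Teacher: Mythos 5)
Your proposal is correct and follows essentially the same route as the paper: evaluate \eqref{eq:PhiDerLemma} in the normal/diagonalizing coordinates, reduce the curvature block via the trace identities and the K\"ahler symmetry $R_{1\bar{1}2\bar{2}}=R_{2\bar{2}1\bar{1}}$ to the single off-diagonal component, match the resulting bracket with $\tfrac{\Tr_g(\tilde g)^2}{\det(\tilde g)}-4$, and use the reality of $\phi$ for the cubic term. The only differences are cosmetic (you verify the coefficient identity by showing both sides equal $\tfrac{(\phi_{1\bar{1}}-\phi_{2\bar{2}})^2}{(1+\phi_{1\bar{1}})(1+\phi_{2\bar{2}})}$, where the paper massages the sum over $\mu\neq\nu$ directly, and your displayed trace formula $R_{\mu\bar\nu}(p)=\sum_\alpha R_{\mu\bar\mu\alpha\bar\alpha}$ has a harmless index typo).
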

\begin{proof}
This is mostly a matter of inserting the coordinate choice into equation \eqref{eq:PhiDerLemma}, where one needs to observe a couple of things. With our special coordinates, we have
\begin{equation}
R_{\alpha\bar{\beta}}(p)=\sum_{\mu=1}^2 R_{\mu\bar{\mu}\alpha\bar{\beta}}(p)
\notag
\end{equation}
and
\begin{equation}
R(p)=\sum_{\mu,\nu=1}^2 R_{\mu\bar{\mu}\nu\bar{\nu}}(p).
\notag
\end{equation}
Inserting these two equations into \eqref{eq:PhiDerLemma} results in (at the point $p$)
\begin{align}-R+\tilde{g}^{\bar{\nu}\mu}(R_{\mu\bar{\nu}}+R_{\mu\bar{\nu}}{}^{\bar{\beta}\alpha}\phi_{\alpha\bar{\beta}})&=\sum_{\mu,\nu=1}^2 \left(-1+\frac{1}{1+\phi_{\mu\bar{\mu}}}(1+\phi_{\nu\bar{\nu}})\right)R_{\mu\bar{\mu}\nu\bar{\nu}}\notag \\
&=\sum_{\mu,\nu=1}^2 \left(\frac{\phi_{\nu\bar{\nu}}-\phi_{\mu\bar{\mu}}}{1+\phi_{\mu\bar{\mu}}}\right)R_{\mu\bar{\mu}\nu\bar{\nu}}.
\label{eq:RicciandScalaratpoint}
\end{align}
The sum on the right hand side of \eqref{eq:RicciandScalaratpoint} becomes of course a sum over $\nu\neq \mu$, i.e. a sum with two terms. Note that the denominators in \eqref{eq:RicciandScalaratpoint} are never $0$, seeing how $\tilde{g}_{\mu\bar{\nu}}(p)=\delta_{\mu\bar{\nu}}+\phi_{\mu\bar{\mu}}(p)\delta_{\mu\bar{\nu}}$ is supposed to be a metric. Since $R_{1\bar{1}2\bar{2}}=R_{2\bar{2}1\bar{1}}$, we may write \eqref{eq:RicciandScalaratpoint} as

\begin{align}
-R+\tilde{g}^{\bar{\nu}\mu}(R_{\mu\bar{\nu}}+R_{\mu\bar{\nu}}{}^{\bar{\beta}\alpha}\phi_{\alpha\bar{\beta}})&=R_{1\bar{1}2\bar{2}}\sum_{\mu\neq \nu}\frac{\phi_{\nu\bar{\nu}}-\phi_{\mu\bar{\mu}}}{1+\phi_{\mu\bar{\mu}}}\notag \\&=R_{1\bar{1}2\bar{2}}\sum_{\mu\neq \nu}\frac{1+\phi_{\nu\bar{\nu}}-(1+\phi_{\mu\bar{\mu}})}{1+\phi_{\mu\bar{\mu}}}\notag \\&=R_{1\bar{1}2\bar{2}}\left(\sum_{\mu\neq \nu}\left(1+\frac{1+\phi_{\nu\bar{\nu}}}{1+\phi_{\mu\bar{\mu}}}\right)-4\right)\notag \\ &= R_{1\bar{1}2\bar{2}}\left(\frac{(2+\phi_{1\bar{1}}+\phi_{2\bar{2}})^2}{(1+\phi_{1\bar{1}})(1+\phi_{2\bar{2}})}-4\right)\notag \\&=R_{1\bar{1}2\bar{2}}\left(\frac{\Tr_g(\tilde{g})^2}{\det(\tilde{g})}-4\right).
\notag
\end{align}

A last observation one needs to make in deriving \eqref{eq:PhiDerExpr} is that $\phi_{\alpha\mu\bar{\nu}}=\overline{\phi_{\bar{\alpha}\bar{\mu}\nu}}$ since $\phi$ is real.
\end{proof}

The stage has been set, and we proceed with the estimates of the expression
 $\tilde{\Delta}((\exp(-C\phi)\Tr_g(\tilde{g}))$ for some constant $C>0$. Here is the first step.
\begin{Lem}
Let $C$ be any positive real constant. Then we have, at an arbitrary point $p\in X$, that
\begin{align}
&\exp(C\phi)\tilde{\Delta}\Big(\exp(-C\phi)\text{Tr}_g(\tilde{g})\Big)\notag \\
&=\Tr_g(\tilde{g})\left\vert C\partial \phi-\frac{\partial \Tr_g(\tilde{g})}{\Tr_g(\tilde{g})}\right\vert_{\tilde{g}}^2-\Tr_g(\tilde{g})^{-1} \vert \partial \Tr_g(\tilde{g})\vert_{\tilde{g}}^2+\tilde{\Delta}(\Delta \phi)-C(\tilde{\Delta}\phi)\Tr_g(\tilde{g})
\notag
\end{align}
and thus
\begin{equation}
\exp(C\phi)\tilde{\Delta}\Big(\exp(-C\phi)\Tr_g(\tilde{g})\Big)\geq -\Tr_g(\tilde{g})^{-1} \vert \partial \Tr_g(\tilde{g})\vert_{\tilde{g}}^2 +\tilde{\Delta}(\Delta(\phi))-C\Tr_g(\tilde{g})(\tilde{\Delta}\phi).
\label{eq:Yau212}
\end{equation}
\end{Lem}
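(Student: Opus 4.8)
The plan is to compute $\tilde{\Delta}\big(e^{-C\phi}\Tr_g(\tilde{g})\big)$ directly via the product rule and then reorganize the resulting first-order terms into a perfect square. First I would record the elementary but crucial fact that in complex dimension $2$ one has $\Tr_g(\tilde{g})=g^{\bar{\nu}\mu}\tilde{g}_{\mu\bar{\nu}}=2+\Delta\phi$, so that $\Tr_g(\tilde{g})$ is a positive function (being the trace of one K\"ahler metric against another) and its $\tilde{g}$-Laplacian is exactly $\tilde{\Delta}(\Delta\phi)$; this is the term that later feeds into Proposition~\ref{Prop:Yau222}. Writing $T\coloneqq \Tr_g(\tilde{g})$ for brevity, I would apply $\partial_\mu$ and then $\partial_{\bar{\nu}}$ to $F=e^{-C\phi}T$, obtaining
\[
\partial_{\bar{\nu}}\partial_\mu F=e^{-C\phi}\Big[C^2(\partial_\mu\phi)(\partial_{\bar{\nu}}\phi)T-C(\partial_{\bar{\nu}}\phi)\partial_\mu T-C(\partial_\mu\phi)\partial_{\bar{\nu}}T-C(\partial_\mu\partial_{\bar{\nu}}\phi)T+\partial_\mu\partial_{\bar{\nu}}T\Big].
\]

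Multiplying by $e^{C\phi}$ and contracting with $\tilde{g}^{\bar{\nu}\mu}$ then produces five terms, two of which are immediately identifiable. Using that $\tilde{\Delta}$ annihilates the constant $2$, the term $\tilde{g}^{\bar{\nu}\mu}\partial_\mu\partial_{\bar{\nu}}T=\tilde{\Delta}T=\tilde{\Delta}(\Delta\phi)$, and the term $-C\,T\,\tilde{g}^{\bar{\nu}\mu}\partial_\mu\partial_{\bar{\nu}}\phi=-C\Tr_g(\tilde{g})(\tilde{\Delta}\phi)$; these are precisely the last two summands of the claimed identity. The remaining three first-order terms are $\tilde{g}^{\bar{\nu}\mu}\big[C^2T(\partial_\mu\phi)(\partial_{\bar{\nu}}\phi)-C(\partial_\mu\phi)\partial_{\bar{\nu}}T-C(\partial_{\bar{\nu}}\phi)\partial_\mu T\big]$, and here I would complete the square. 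Expanding
\[
T\Big\vert C\partial\phi-\tfrac{\partial T}{T}\Big\vert_{\tilde{g}}^2=T\,\tilde{g}^{\bar{\nu}\mu}\Big(C\partial_\mu\phi-\tfrac{\partial_\mu T}{T}\Big)\Big(C\partial_{\bar{\nu}}\phi-\tfrac{\partial_{\bar{\nu}}T}{T}\Big)
\]
and invoking the reality of $\phi$ and $T$ (so that complex conjugation merely swaps barred and unbarred indices) shows that these three terms equal $T\big\vert C\partial\phi-\tfrac{\partial T}{T}\big\vert_{\tilde{g}}^2-T^{-1}\vert\partial T\vert_{\tilde{g}}^2$, which are exactly the first two summands of the identity. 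Assembling all five contributions yields the stated equality.

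Finally, the inequality \eqref{eq:Yau212} is immediate: since $T=\Tr_g(\tilde{g})>0$ and $\big\vert C\partial\phi-\tfrac{\partial T}{T}\big\vert_{\tilde{g}}^2\geq 0$, the first term on the right-hand side is non-negative and may be dropped. There is no genuine obstacle in this lemma; the computation is entirely mechanical. The only points demanding care are the bookkeeping of which factors are complex conjugates of which (which is where reality of $\phi$ and $T$ enters), and checking that the cross terms produced by completing the square match the first-order terms exactly, leaving no remainder beyond the $-T^{-1}\vert\partial T\vert_{\tilde{g}}^2$ piece.
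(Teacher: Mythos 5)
Your proposal is correct and follows essentially the same route as the paper: the paper packages your term-by-term expansion as the Leibniz rule $\tilde{\Delta}(FG)=F\tilde{\Delta}G+G\tilde{\Delta}F+2\,\text{Re}\ip{\partial F}{\partial G}_{\tilde{g}}$ and then completes the square via the same identity $\vert a\vert^2+2\,\text{Re}\ip{a}{b}=\vert a+b\vert^2-\vert b\vert^2$ with $a=C\partial\phi$, $b=-\partial \Tr_g(\tilde{g})/\Tr_g(\tilde{g})$, exactly as you do. Your identifications of the terms $\tilde{\Delta}(\Delta\phi)$ and $-C\Tr_g(\tilde{g})(\tilde{\Delta}\phi)$, and the final step of dropping the non-negative square using $\Tr_g(\tilde{g})>0$, all match the paper's argument.
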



\begin{proof}
Using that $\tilde{\Delta} (F\cdot G)=F\tilde{\Delta} G+G\tilde{\Delta} F+2\text{Re}\ip{\partial F}{\partial G}_{\tilde{g}}$ holds for any functions $F,G$, we find 

\begin{align}
&\tilde{\Delta}\Big(\exp(-C\phi)\Tr_g(\tilde{g})\Big)\notag \\
&=\exp(-C\phi)\left(C^2\vert\partial \phi\vert_g^2\Tr_g(\tilde{g})-2C\text{Re}\ip{\partial \phi}{\partial \Tr_g(\tilde{g})}_{\tilde{g}}-C(\tilde{\Delta}\phi)\Tr_g(\tilde{g})+\tilde{\Delta}(\Delta \phi)\right).
\label{eq:Trick2}
\end{align}
The first two terms on the right hand side of \eqref{eq:Trick2} can be written differently, using that for any pair of vectors $a,b$, 
\begin{equation}
\vert a\vert^2+2\text{Re}(\ip{a}{b})=\vert a+b\vert^2-\vert b\vert^2
\notag
\end{equation}
holds for any Hermitian inner product. In our case, if we set $a=C\partial \phi$ and $b=-\frac{\partial \Tr_g(\tilde{g})}{\Tr_g(\tilde{g})}$,
we arrive at
\begin{align}&\exp(-C\phi)\left(C^2\vert\partial \phi\vert_g^2\Tr_g(\tilde{g})-2C\text{Re}\ip{\partial \phi}{\partial \Tr_g(\tilde{g})}_{\tilde{g}}\right)\notag \\&=\exp(-C\phi)\Tr_g(\tilde{g})\left( \left\vert C\partial \phi-\frac{\partial \Tr_g(\tilde{g})}{\Tr_g(\tilde{g})}\right\vert_{\tilde{g}}^2-\frac{\vert \partial \Tr_g(\tilde{g})\vert_{\tilde{g}}^2}{\Tr_g(\tilde{g})^2}\right)\notag\\&=\exp(-C\phi)\Tr_g(\tilde{g})\left\vert C\partial \phi-\frac{\partial \Tr_g(\tilde{g})}{\Tr_g(\tilde{g})}\right\vert_{\tilde{g}}^2-\exp(-C\phi)\Tr_g(\tilde{g})^{-1} \vert \partial \Tr_g(\tilde{g})\vert_{\tilde{g}}^2.
\label{eq:Trick3}
\end{align}
The first term on the right hand side of \eqref{eq:Trick3} is clearly non-negative, hence
\begin{align}
&\exp(-C\phi)\left(C^2\vert\partial \phi\vert_g^2\Tr_g(\tilde{g})-2C\text{Re}\ip{\partial \phi}{\partial \Tr_g(\tilde{g})}_{\tilde{g}}\right)\notag \\ &\geq -\exp(-C\phi)\Tr_g(\tilde{g})^{-1} \vert \partial \Tr_g(\tilde{g})\vert_{\tilde{g}}^2.
\label{eq:Trick05}
\end{align}
Inserting \eqref{eq:Trick05} into \eqref{eq:Trick2} allows us to deduce \eqref{eq:Yau212}.

\end{proof}
For those keeping track, \eqref{eq:Yau212} is precisely \cite[Eq. 2.13]{Yau78}, albeit with slightly different notation.

\begin{Lem}
\label{Lem:Yau217}
Choose holomorphic normal coordinates at a point $p$ and diagonalize $\nabla^2 \phi$ in this point. Then the following inequality holds at $p$.
\begin{align}
\exp(+C\phi)\tilde{\Delta}\Big(\exp(-C\phi)\Tr_g(\tilde{g})\Big)(p)&\geq \Delta \psi+R_{1\bar{1}2\bar{2}}\left(\frac{\Tr_g(\tilde{g})^2}{\det(\tilde{g})}-4\right)\notag \\& -C \Tr_g(\tilde{g})\tilde{\Delta}(\phi).
\label{eq:Yau217}
\end{align}
\end{Lem}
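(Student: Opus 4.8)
The plan is to combine the two identities already established, namely the lower bound \eqref{eq:Yau212} and the explicit expression \eqref{eq:PhiDerExpr} for $\tilde{\Delta}(\Delta\phi)$, and then to show that the only genuinely new term one picks up is harmless. Substituting \eqref{eq:PhiDerExpr} into \eqref{eq:Yau212} reproduces exactly the right-hand side of \eqref{eq:Yau217} plus the difference
\[
\sum_{\mu\nu\alpha}(1+\phi_{\mu\bar{\mu}})^{-1}(1+\phi_{\nu\bar{\nu}})^{-1}\vert \phi_{\alpha\bar{\mu}\nu}\vert^2-\Tr_g(\tilde{g})^{-1}\vert\partial\Tr_g(\tilde{g})\vert_{\tilde{g}}^2 ,
\]
so the whole lemma reduces to proving that this difference is non-negative at $p$. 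This is the standard Yau third-order trick, and it is the one step requiring real work.

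First I would compute the gradient of the trace in the chosen coordinates. Since $g_{\mu\bar\nu}(p)=\delta_{\mu\nu}$ and $g_{\mu\bar\nu,\alpha}(p)=0$, differentiating $\Tr_g(\tilde{g})=g^{\bar\nu\mu}\tilde{g}_{\mu\bar\nu}$ annihilates the derivative of $g^{\bar\nu\mu}$ and leaves $\partial_\beta\Tr_g(\tilde{g})(p)=\sum_\mu\phi_{\mu\bar\mu\beta}$, while $\tilde{g}^{\bar\lambda\kappa}(p)=\delta_{\kappa\lambda}(1+\phi_{\kappa\bar\kappa})^{-1}$ gives
\[
\Tr_g(\tilde{g})^{-1}\vert\partial\Tr_g(\tilde{g})\vert_{\tilde{g}}^2=\frac{1}{\Tr_g(\tilde{g})}\sum_\beta(1+\phi_{\beta\bar\beta})^{-1}\Big\vert\sum_\mu\phi_{\mu\bar\mu\beta}\Big\vert^2 .
\]
Next I would apply the Cauchy--Schwarz inequality for each fixed $\beta$, writing $\phi_{\mu\bar\mu\beta}=(1+\phi_{\mu\bar\mu})^{1/2}\cdot(1+\phi_{\mu\bar\mu})^{-1/2}\phi_{\mu\bar\mu\beta}$, to obtain $\vert\sum_\mu\phi_{\mu\bar\mu\beta}\vert^2\le\Tr_g(\tilde{g})\sum_\mu(1+\phi_{\mu\bar\mu})^{-1}\vert\phi_{\mu\bar\mu\beta}\vert^2$. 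Dividing by $\Tr_g(\tilde{g})$ and summing over $\beta$ bounds the gradient term by $\sum_{\mu\beta}(1+\phi_{\mu\bar\mu})^{-1}(1+\phi_{\beta\bar\beta})^{-1}\vert\phi_{\mu\bar\mu\beta}\vert^2$.

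Finally, I would observe that this double sum is precisely the diagonal part $\alpha=\mu$ of the triple sum $\sum_{\mu\nu\alpha}(1+\phi_{\mu\bar{\mu}})^{-1}(1+\phi_{\nu\bar{\nu}})^{-1}\vert \phi_{\alpha\bar{\mu}\nu}\vert^2$ (relabelling $\beta$ as $\nu$). Since every discarded term with $\alpha\neq\mu$ is manifestly non-negative, the triple sum dominates the gradient term, the displayed difference is $\ge 0$, and \eqref{eq:Yau217} follows. The main obstacle is purely bookkeeping: one must keep careful track of which of the three indices on $\phi_{\alpha\bar\mu\nu}$ is anti-holomorphic, so that the Cauchy--Schwarz output matches the $\alpha=\mu$ slice of the triple sum rather than some other contraction. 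Once the index pattern is pinned down the inequality is immediate, and no curvature or smallness hypothesis on $\vert a\vert$ enters; the statement is a pointwise algebraic consequence of the earlier identities.
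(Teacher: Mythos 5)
Your proposal is correct and follows essentially the same route as the paper's proof: substitute \eqref{eq:PhiDerExpr} into \eqref{eq:Yau212}, compute $\partial_\beta \Tr_g(\tilde{g})(p)=\sum_\mu \phi_{\mu\bar{\mu}\beta}$ in the chosen coordinates, bound the gradient term via a weighted Cauchy--Schwarz inequality, and recognize the result as the diagonal slice of the triple sum in \eqref{eq:PhiDerExpr}, discarding the remaining non-negative terms. If anything, your direct weighted Cauchy--Schwarz is cleaner than the paper's chain \eqref{eq:Phider3}, whose first displayed step $\left\vert \sum_\nu \phi_{\nu\bar{\nu}\mu}\right\vert^2 \leq \sum_{\nu}\vert \phi_{\nu\bar{\nu}\mu}\vert^2$ is not valid as an isolated inequality (it misses the dimensional factor), even though the combined conclusion of that chain agrees with yours.
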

\begin{proof}
We will use \eqref{eq:Yau212} along with our expression for $\tilde{\Delta} (\Delta \phi)(p)$ given as \eqref{eq:PhiDerExpr}. First, we need to have a closer look at a term from  \eqref{eq:Yau212}. The first claim we need to establish is that
\begin{equation}
\vert \partial \Tr_g(\tilde{g})\vert_{\tilde{g}}^2(p)=\sum_\mu \frac{1}{1+\phi_{\mu\bar{\mu}}}\left\vert \sum_\nu \phi_{\nu\bar{\nu}\mu}\right\vert^2. 
\label{eq:Claim1}
\end{equation}
This will be seen to be a consequence of the coordinate choice. Using that $g^{\bar{\beta}\alpha}(p)=\delta^{\bar{\beta}\alpha}$, $g^{\bar{\beta}\alpha}{}_{,\mu}(p)=0$, and $\tilde{g}^{\bar{\nu}\mu}(p)=\delta^{\bar{\nu}\mu}(1+\phi_{\mu\bar{\mu}})^{-1}$, we find that
\begin{equation}
\partial_\mu (\Tr_{g}(\tilde{g}))(p)=\partial_\mu (2+\Delta\phi)=\partial_\mu( g^{\bar{\beta} \alpha} \phi_{\alpha\bar{\beta}})=\sum_{\alpha} \phi_{\alpha\bar{\alpha}\mu}.
\notag
\end{equation}
From this it follows that
\begin{align}
\vert \partial \Tr_g(\tilde{g})\vert_{\tilde{g}}^2(p)&=\tilde{g}^{\bar{\nu}\mu}\partial_\mu \Tr_g(\tilde{g}) \partial_{\bar{\nu}}\Tr_g(\tilde{g})\notag \\ &=\sum_\mu \frac{1}{1+\phi_{\mu\bar{\mu}}}\left(\sum_\alpha \phi_{\alpha\bar{\alpha}\mu}\right)\left(\sum_\beta \phi_{\beta\bar{\beta}\bar{\mu}}\right)\notag \\ &=\sum_\mu \frac{1}{1+\phi_{\mu\bar{\mu}}}\left\vert \sum_\nu \phi_{\nu\bar{\nu}\mu}\right\vert^2,
\notag
\end{align}
which is precisely \eqref{eq:Claim1}.
The second claim is that
\begin{equation}
\Tr_g(\tilde{g})^{-1}\vert \partial \Tr_g(\tilde{g})\vert^2_{\tilde{g}}(p)\leq \sum_{\alpha\mu\nu} \frac{1}{1+\phi_{\mu\bar{\mu}}}\frac{1}{1+\phi_{\nu\bar{\nu}}}\vert \phi_{\mu\bar{\nu}\alpha}\vert^2.
\label{eq:Claim2}
\end{equation}
To see this, observe that we have the following chain of (elementary) inequalities.
\begin{align}
\left \vert \sum_\nu \phi_{\nu\bar{\nu}\mu}\right\vert^2 &\leq \sum_{\nu}\vert \phi_{\nu\bar{\nu}\mu}\vert^2\notag \\
&= \sum_{\nu} \frac{1}{1+\phi_{\nu\bar{\nu}}} (1+\phi_{\nu\bar{\nu}})\vert\phi_{\nu\bar{\nu}\mu}\vert^2\notag \\
&\leq \sum_{\nu} \left(\frac{1}{1+\phi_{\nu\bar{\nu}}} \vert \phi_{\nu\bar{\nu}\mu}\vert^2\right)\sum_{\beta}(1+\phi_{\beta\bar{\beta}}).
\label{eq:Phider3}
\end{align}
One recognizes the last factor on the right hand side of \eqref{eq:Phider3} as $\sum_{\beta} (1+\phi_{\beta\bar{\beta}})=2+\Delta \phi=\Tr_{g}(\tilde{g})$ in our special coordinates. Inserting \eqref{eq:Phider3} into \eqref{eq:Claim1} and dividing by $\Tr_g(\tilde{g})$,  we establish
\begin{align}
\Tr_g(\tilde{g})^{-1}\vert \partial \Tr_g(\tilde{g})\vert^2_{\tilde{g}}(p)&\leq \sum_{\mu\nu} \frac{1}{1+\phi_{\mu\bar{\mu}}}\frac{1}{1+\phi_{\nu\bar{\nu}}}\vert \phi_{\mu\bar{\nu}\nu}\vert^2\notag \\ &\leq\sum_{\alpha\mu\nu} \frac{1}{1+\phi_{\mu\bar{\mu}}}\frac{1}{1+\phi_{\nu\bar{\nu}}}\vert \phi_{\mu\bar{\nu}\alpha}\vert^2.
\notag
\end{align}
This is what we claimed as \eqref{eq:Claim2}.
Next, insert \eqref{eq:Claim2} into \eqref{eq:PhiDerExpr}  to conclude
\begin{align}
\tilde{\Delta}(\Delta\phi)(p)-\Tr_g(\tilde{g})^{-1}\vert \partial \Tr_g(\tilde{g})\vert^2_{\tilde{g}}(p)&\geq \Delta \psi +\left(\frac{\Tr_g(\tilde{g})^2}{\det(\tilde{g})}-4\right)R_{1\bar{1}2\bar{2}}.
\notag
\end{align}
Inserting this back into \eqref{eq:Yau212} gives us \eqref{eq:Yau217}.
\end{proof}
We have almost got the inequality we need. We only need to make one more observation.
\begin{Lem}
\label{Lem:CorrectedLapLemma}
In holomorphic normal coordinates at a point $p$ and diagonalized $\nabla^2 \phi$ in  this point, we can write
\begin{equation}
\tilde{\Delta}(\phi)(p)=2-\frac{\Tr_g(\tilde{g})}{\det(\tilde{g})}(p).
\label{eq:CorrectedLapLemma}
\end{equation}
\end{Lem}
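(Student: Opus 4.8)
The plan is a direct computation in the coordinates specified by the hypothesis, with no analytic input required. First I would record the shape of the two metrics at $p$. Because we have chosen holomorphic normal coordinates for $g$ and diagonalized $\nabla^2\phi$, we have $g_{\mu\overline{\nu}}(p)=\delta_{\mu\overline{\nu}}$ and $\phi_{\mu\overline{\nu}}(p)=\delta_{\mu\overline{\nu}}\phi_{\mu\overline{\mu}}(p)$, so the perturbed metric
\[
\tilde{g}_{\mu\overline{\nu}}(p)=g_{\mu\overline{\nu}}(p)+\phi_{\mu\overline{\nu}}(p)=\delta_{\mu\overline{\nu}}\bigl(1+\phi_{\mu\overline{\mu}}(p)\bigr)
\]
is diagonal, whence $\tilde{g}^{\overline{\nu}\mu}(p)=\delta^{\overline{\nu}\mu}\bigl(1+\phi_{\mu\overline{\mu}}(p)\bigr)^{-1}$. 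This immediately gives the left-hand side: by the very definition $\tilde{\Delta}\phi=\tilde{g}^{\overline{\nu}\mu}\phi_{\mu\overline{\nu}}$, so
\[
\tilde{\Delta}\phi(p)=\sum_{\mu}\frac{\phi_{\mu\overline{\mu}}}{1+\phi_{\mu\overline{\mu}}}(p)=2-\sum_{\mu}\frac{1}{1+\phi_{\mu\overline{\mu}}}(p).
\]

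Next I would express the two quantities on the right-hand side in the same coordinates. Since $g^{\overline{\nu}\mu}(p)=\delta^{\overline{\nu}\mu}$, the trace is $\Tr_g(\tilde{g})(p)=\sum_\mu\tilde{g}_{\mu\overline{\mu}}(p)=2+\phi_{1\overline{1}}+\phi_{2\overline{2}}$, and because $\tilde{g}$ is diagonal at $p$ the determinant is $\det(\tilde{g})(p)=(1+\phi_{1\overline{1}})(1+\phi_{2\overline{2}})$. The only genuine content of the lemma is then the elementary algebraic identity, valid precisely in complex dimension $2$, that the sum of the reciprocals of the two diagonal entries equals their sum divided by their product:
\[
\frac{1}{1+\phi_{1\overline{1}}}+\frac{1}{1+\phi_{2\overline{2}}}
=\frac{(1+\phi_{1\overline{1}})+(1+\phi_{2\overline{2}})}{(1+\phi_{1\overline{1}})(1+\phi_{2\overline{2}})}
=\frac{\Tr_g(\tilde{g})}{\det(\tilde{g})}(p).
\]
Substituting this into the formula for $\tilde{\Delta}\phi(p)$ displayed above yields $\tilde{\Delta}\phi(p)=2-\Tr_g(\tilde{g})/\det(\tilde{g})\,(p)$, which is the claim.

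I expect no real obstacle here; the statement is a pointwise bookkeeping identity rather than a theorem with a hard core. The one point worth flagging explicitly in the write-up is that the step identifying $\sum_\mu\prod_{\nu\neq\mu}(1+\phi_{\nu\overline{\nu}})$ with $\Tr_g(\tilde{g})$ is special to $n=2$ (in higher dimension one would obtain the trace of the adjugate instead), which is consistent with the paper restricting to complex surfaces. It is also worth noting that, although the identity is derived in a particular coordinate system, both sides are scalars built from the metrics and $\phi$, so the stated equality is coordinate-independent as a relation between functions at $p$.
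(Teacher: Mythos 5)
Your proposal is correct and follows essentially the same route as the paper's own proof: both compute $\tilde{\Delta}\phi(p)=\sum_\mu \phi_{\mu\overline{\mu}}/(1+\phi_{\mu\overline{\mu}})=2-\sum_\mu (1+\phi_{\mu\overline{\mu}})^{-1}$ in the diagonalizing normal coordinates and then identify the sum of reciprocals with $\Tr_g(\tilde{g})/\det(\tilde{g})$ via the two-dimensional identity. Your added remarks on the $n=2$ specificity and coordinate independence are accurate but not needed beyond what the paper records.
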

\begin{proof}
This is a short computation:
\begin{align}
\tilde{\Delta}(\phi)(p)&=\sum_{\mu} \frac{\phi_{\mu\bar{\mu}}}{1+\phi_{\mu\bar{\mu}}}=2-\sum_{\mu}\frac{1}{1+\phi_{\mu\bar{\mu}}}\notag \\&=2-\frac{2+\phi_{1\bar{1}}+\phi_{2\bar{2}}}{(1+\phi_{1\bar{1}})(1+\phi_{2\bar{2}})}=2-\frac{\Tr_g(\tilde{g})}{\det(\tilde{g})}.
\notag
\end{align}
\end{proof}
Armed with Lemmas \ref{Lem:Yau217} and \ref{Lem:CorrectedLapLemma}, we are able to prove what is a special case of \cite[Eq. 2.22]{Yau78}. The statement is as follows.
\begin{Prop}
\label{Prop:Yau222Appendix}
Choose holomorphic normal coordinates at a point $p$ and diagonalize $\nabla^2 \phi$ in this point. Then the following inequality holds for any positive real number $C$.
\begin{align}
&\exp(C\phi)\tilde{\Delta}\Big(\exp(-C\phi)\Tr_g(\tilde{g})\Big)(p)\notag \\&\geq \Delta \psi-4R_{1\bar{1}2\bar{2}}  -2C\Tr_g(\tilde{g})+\left(C+R_{1\bar{1}2\bar{2}}\right)    \frac{\Tr_g(\tilde{g})^2}{\det(\tilde{g})}.
\label{eq:Yau222Appendix}
\end{align}
\end{Prop}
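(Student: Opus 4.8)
The plan is to combine the two lemmas immediately preceding the statement, since all of the analytic content has already been extracted. The starting point is Lemma \ref{Lem:Yau217}, which in the special (diagonalized, holomorphic-normal) coordinates at $p$ gives the lower bound
\[\exp(C\phi)\tilde{\Delta}\Big(\exp(-C\phi)\Tr_g(\tilde{g})\Big)(p)\geq \Delta \psi+R_{1\bar{1}2\bar{2}}\left(\frac{\Tr_g(\tilde{g})^2}{\det(\tilde{g})}-4\right) -C \Tr_g(\tilde{g})\,\tilde{\Delta}(\phi).\]
The only term that is not yet in the desired form is the last one, $-C\Tr_g(\tilde{g})\tilde{\Delta}(\phi)$. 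First I would invoke Lemma \ref{Lem:CorrectedLapLemma}, which states $\tilde{\Delta}(\phi)(p)=2-\frac{\Tr_g(\tilde{g})}{\det(\tilde{g})}(p)$, and substitute it in, obtaining
\[-C\Tr_g(\tilde{g})\,\tilde{\Delta}(\phi)= -2C\,\Tr_g(\tilde{g})+C\,\frac{\Tr_g(\tilde{g})^2}{\det(\tilde{g})}.\]

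With this substitution made, the remaining step is purely bookkeeping: collect the two terms proportional to $\frac{\Tr_g(\tilde{g})^2}{\det(\tilde{g})}$ — namely the $R_{1\bar{1}2\bar{2}}$-coefficient from Lemma \ref{Lem:Yau217} and the new $C$-coefficient — into a single factor $\left(C+R_{1\bar{1}2\bar{2}}\right)\frac{\Tr_g(\tilde{g})^2}{\det(\tilde{g})}$, while leaving the $\Delta\psi$ term, the $-4R_{1\bar{1}2\bar{2}}$ term, and the $-2C\,\Tr_g(\tilde{g})$ term as they stand. This reproduces \eqref{eq:Yau222Appendix} verbatim. I do not anticipate any genuine obstacle here: the positivity of the gradient square term and the Cauchy–Schwarz manipulations that could have caused trouble were already absorbed into the inequalities \eqref{eq:Yau212} and \eqref{eq:Claim2} during the proof of Lemma \ref{Lem:Yau217}, so the present statement follows by an algebraic identity valid at the single point $p$ in the chosen coordinates. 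The one point worth stating explicitly is that $C>0$ was used only to guarantee that the discarded non-negative term $\Tr_g(\tilde{g})\left\vert C\partial\phi-\frac{\partial\Tr_g(\tilde{g})}{\Tr_g(\tilde{g})}\right\vert^2_{\tilde{g}}$ in Lemma \ref{Lem:Yau217} has the correct sign, so no further hypothesis on $C$ is needed.
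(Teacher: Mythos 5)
Your proposal is correct and is exactly the paper's own proof: insert Lemma \ref{Lem:CorrectedLapLemma} into the last term of Lemma \ref{Lem:Yau217} (an exact substitution, so the inequality direction is unaffected) and collect the two $\frac{\Tr_g(\tilde{g})^2}{\det(\tilde{g})}$ terms. Your closing remark is also accurate---indeed the discarded square term in \eqref{eq:Yau212} is non-negative for any real $C$, so no further hypothesis on $C$ enters at this stage.
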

\begin{proof}
Insert \eqref{eq:CorrectedLapLemma} into \eqref{eq:Yau217} and rearrange slightly.
\end{proof}

\bibliographystyle{halpha}
\bibliography{PhD}

\end{document}